\documentclass[11pt]{amsart}

\usepackage{amsmath,amsfonts,amssymb,amsthm,mathrsfs}
\usepackage{hyperref}
\usepackage{enumitem}
\hypersetup{
    colorlinks=true,
    linkcolor=blue,
    citecolor=blue,
    urlcolor=blue
}
\newtheorem{definition}{Definition}[section]
\newtheorem{theorem}[definition]{Theorem}
\newtheorem{proposition}[definition]{Proposition}
\newtheorem{remark}[definition]{Remark}
\title[Spherical Milnor Spaces and Topological Structures]{
Spherical Milnor Spaces II: Projective Quotients and Higher Topological Structures
}

\author{Jean-Pierre Magnot}

\address{{SFR MATHSTIC, LAREMA, Universit\'e d’Angers, 2 Bd Lavoisier, 
49045 Angers cedex 1, France;  Lyc\'ee Jeanne d'Arc, 40 avenue de Grande Bretagne, 63000 Clermont-Ferrand, 
France}; Lepage Research Institute, 17 novembra 1, 081 16 Presov, Slovakia}
\email{\small magnot@math.cnrs.fr; jean-pierr.magnot@ac-clermont.fr; jp.magnot@gmail.com}

\subjclass[2020]{55R10, 57R20, 58B25, 53C27}

\keywords{
Diffeology, Milnor classifying space, projective structures, 
principal bundles, $\mathbb{Z}_2$-actions, non-abelian gerbes, higher bundles
}

\begin{document}

\begin{abstract}
We introduce a spherical variant of Milnor's classifying construction for diffeological groups, based on quadratic normalization of barycentric coordinates. This construction gives rise to a contractible diffeological space endowed with commuting actions of a group $G$ and of $\mathbb{Z}_2$, leading naturally to a hierarchy of quotient spaces.

We investigate the topological and geometric properties of these quotients, including a projective model and a double quotient space which encodes twisted and higher structures. In particular, we show that this framework provides a natural setting for the study of principal bundles with $\mathbb{Z}_2$-twists, and leads to obstruction classes in low-degree cohomology.

The construction is further related to non-abelian gerbes and higher bundles, providing a bridge between diffeological geometry, classifying space theory, and higher topological structures.
\end{abstract}

\maketitle
\tableofcontents
\section{Introduction}

The construction of classifying spaces is one of the central organizing
principles of modern topology and geometry. Since Milnor's construction of
universal bundles \cite{Milnor1956I,Milnor1956II}, classifying spaces have
provided a natural framework for the study of principal bundles,
characteristic classes, and homotopical invariants. The later developments of
Segal, Dold, Rector, Dupont, Kamber--Tondeur and Karoubi placed this circle of
ideas in a broader homotopical, cohomological and \(K\)-theoretic framework
\cite{Segal1968,Dold1963,Rector1971,Dupont1978,KamberTondeur1975,Karoubi1978}.
In particular, the interaction between classifying spaces, local coefficient
systems, twisted characteristic classes and generalized cohomology theories is
now a standard but still very active part of global geometry.

The present paper is the second part of a program devoted to spherical
versions of Milnor classifying spaces. In Part~I \cite{Magnot-MilnorI}, we
introduced the spherical Milnor model
\[
E^{\mathrm{sph}}(G),
\]
defined by replacing the usual barycentric relation
\[
\sum_i t_i=1,\qquad t_i\geq0,
\]
by the quadratic normalization
\[
\sum_i x_i^2=1.
\]
This change has substantial geometric consequences. The resulting space is not
merely a reparametrization of Milnor's original model: its diffeology differs
from the Milnor--Watts diffeology because no stationarity condition is imposed
at the hypersurfaces \(x_i=0\). The diffeology of Milnor's classifying space
and the role of boundary-adapted plots were studied in \cite{MagnotWatts2017},
within the broader framework of diffeology developed by Souriau and
Iglesias-Zemmour \cite{Souriau1980,IglesiasZemmour2013}. The present paper
builds on that analysis and focuses on the topological, gerbe-theoretic, and
\(K\)-theoretic consequences of the spherical construction.

Diffeology provides a convenient setting for generalized smooth spaces beyond
manifolds. It is flexible enough to include quotients, mapping spaces,
singular spaces, and infinite-dimensional examples, while retaining a usable
notion of smooth parametrization. Foundational and comparative aspects of
diffeology appear in \cite{Hector1995,Stacey2011,Watts2012Thesis,GMW2023}.
The \(D\)-topology and homotopy theory of diffeological spaces were developed
in \cite{ChristensenSinnamonWu2014,ChristensenWu2014Homotopy}, and smooth
classifying spaces were studied in \cite{ChristensenWu2021Classifying}. The
tangent, exterior, and vector-space aspects of diffeology play a role in the
background of this work and are treated in
\cite{ChristensenWu2016Tangent,ChristensenWu2017Filtered,ChristensenWu2019Vector,ChristensenWu2022Exterior}.
Vector pseudo-bundles, pushforwards, and diffeological connections provide the
appropriate framework for the geometric constructions used in Part~I
\cite{Pervova2016,Pervova2017,Pervova2018,Wu2023,MagnotConnections}. Related
views on smooth generalized spaces and infinite-dimensional calculus are also
found in convenient analysis and synthetic differential geometry
\cite{KrieglMichor1997,Kock2006}. The example of the irrational torus shows
how diffeology may interact with noncommutative geometric phenomena
\cite{IglesiasLaffineurZemmour2020}.

The main aim of the present article is to study the topological structures
naturally attached to the spherical Milnor construction. A key feature is the
presence of a canonical \(\mathbb Z_2\)-symmetry
\[
(x_i,g_i)\mapsto (-x_i,g_i),
\]
which leads to projective and twisted quotients. When this symmetry interacts
with an action
\[
\sigma:\mathbb Z_2\to \operatorname{Aut}(G),
\]
one obtains semidirect products
\[
H_\sigma=G\rtimes_\sigma\mathbb Z_2
\]
and corresponding twisted spherical classifying spaces
\[
B^{\mathrm{sph}}_\sigma(G).
\]
The associated transition functions naturally split into a
\(\mathbb Z_2\)-valued cocycle and a \(G\)-valued cocycle twisted by the local
system determined by the first one. This leads to a non-abelian cocycle
picture, closely related to the classical theory of non-abelian cohomology
\cite{Giraud1971} and to higher-categorical approaches to non-abelian
topology \cite{BrownHiggins1981,BrownHigginsSivera2011,BaezLauda2004}.

A central part of the paper is devoted to lifting problems and gerbes. Given
an extension
\[
1\to A\to \widehat H_{\widehat\sigma}\to H_\sigma\to 1,
\]
a principal \(H_\sigma\)-bundle has, in general, an obstruction to being lifted
to a principal \(\widehat H_{\widehat\sigma}\)-bundle. In the presence of the
\(\mathbb Z_2\)-twist, this obstruction lives not in an ordinary coefficient
system but in a local system
\[
A_\alpha
\]
determined by the projective class
\[
\alpha\in H^1(X;\mathbb Z_2).
\]
The corresponding lifting gerbe is banded by \(A_\alpha\), and its class lies
in
\[
H^2(X;A_\alpha).
\]
This point of view connects the present construction with the classical and
modern theories of gerbes, bundle gerbes, and higher stacks
\cite{Breen1994,Breen2008,BreenMessing2005,Murray1996,Stevenson2000,Waldorf2007,NikolausWaldorf2013}.
The language of stacks and orbifold-like quotient structures is also relevant
to the background of these constructions \cite{MoerdijkPronk1997,Noohi2005,Noohi2012}.
The higher bundle and \(\infty\)-geometric viewpoint developed in
\cite{NikolausSchreiberStevenson2012,Schreiber2013,FiorenzaSchreiberStasheff2007}
provides a natural conceptual extension of the present picture. In particular,
the relation between differential cocycles, higher principal bundles and
higher characteristic classes motivates the interpretation of the obstruction
classes introduced below. Waldorf's transgression-regression methods and
models for string \(2\)-groups also provide a useful comparison point
\cite{Waldorf2010}, while Wockel's work relates such questions to classifying
spaces of principal \(2\)-bundles \cite{Wockel2008}.

When \(A=U(1)\), the lifting gerbe determines a twisted
Dixmier--Douady class. Because of the \(\mathbb Z_2\)-local system, this class
naturally belongs to
\[
H^3(X;\mathbb Z_\alpha),
\]
rather than to ordinary integral cohomology. This gives a twisted analogue of
the usual Dixmier--Douady invariant of a \(U(1)\)-banded gerbe. Such classes
are closely related to the bundle-theoretic interpretation of continuous trace
algebras and twisted \(K\)-theory \cite{Rosenberg1989,AtiyahSegal2004}.
The graded Brauer-group approach of Donovan and Karoubi is particularly
relevant here, since the spherical construction naturally combines a degree-one
\(\mathbb Z_2\)-twist with a gerbe-type degree-three twist
\cite{DonovanKaroubi1970}. The present paper therefore associates to the
spherical construction a twisting datum
\[
(\alpha,\operatorname{DD}_\alpha)
\]
and studies the corresponding twisted \(K\)-groups. These recover the usual
gerbe-twisted \(K\)-theory when \(\alpha=0\), and they encode local
coefficient phenomena when \(\alpha\neq0\).

Another motivation comes from operator-theoretic defects. In Part~I, Hodge-
and Dirac-type structures were constructed on the spherical Milnor model under
appropriate finite-rank or analytic assumptions. Such structures need not
descend to the quotient unless they are compatible with the group action. The
failure of descent defines a defect. In favorable restricted Hilbert settings,
this defect can be related to Schwinger cocycles, central extensions and
lifting gerbes. The classical model for this phenomenon is the theory of loop
groups and restricted Grassmannians \cite{PressleySegal1986}, together with
the appearance of gerbes and anomalies in index theory and Hamiltonian
quantization \cite{CareyMickelssonMurray1997,Freed1995}. Abelian and
non-abelian extensions of infinite-dimensional Lie groups provide the natural
background for these constructions \cite{Neeb2004,Neeb2007}. In the present
article, we formulate the Schwinger-cocycle interpretation carefully as a
conditional construction: it requires a polarization, restricted operators, a
well-defined Schwinger cocycle, and an integrability condition to pass from
Lie algebra cocycles to group extensions.

The paper is organized as follows. Section~\ref{sec:prelim} recalls the
diffeological Milnor and spherical Milnor constructions needed in the sequel.
We emphasize the difference between the simplicial and spherical diffeologies,
the role of the projective \(\mathbb Z_2\)-quotient, and the relation with
Part~I. The next sections study strata, smoothness criteria, and the behavior
of forms and maps with respect to the final diffeology. We then introduce
twisted spherical classifying spaces associated with semidirect products
\[
G\rtimes_\sigma \mathbb Z_2
\]
and describe their transition cocycles. The subsequent sections develop the
lifting gerbe associated with an extension of \(H_\sigma\), prove that its
class lies in the twisted group
\[
H^2(X;A_\alpha),
\]
and identify the corresponding twisted Dixmier--Douady invariant when
\(A=U(1)\). We then discuss the associated twists of \(K\)-theory. Finally, we
analyze the descent of geometric structures, define Hodge and Dirac defects
under appropriate analytic assumptions, and relate restricted equivariance
defects to Schwinger cocycles, central extensions and gerbes.

The main contribution of this article is therefore to show that the spherical
Milnor construction does not merely provide an alternative model for
classifying spaces. It naturally produces:
\[
\alpha\in H^1(X;\mathbb Z_2),
\qquad
\beta\in H^2(X;A_\alpha),
\qquad
\operatorname{DD}_\alpha\in H^3(X;\mathbb Z_\alpha),
\]
together with associated twisted \(K\)-theory classes and defect gerbes. In
this sense, the spherical model provides a unified diffeological framework
linking projective quotients, non-abelian cocycles, gerbes, twisted
Dixmier--Douady classes, and operator-theoretic anomalies.
\section{Preliminaries on diffeological Milnor constructions}
\label{sec:prelim}

We briefly recall the constructions and notions developed in Part~I \cite{Magnot-MilnorI} that will
be used throughout the present article. General references on diffeology are
\cite{IglesiasZemmour2013,GMW2023}.

\subsection{Diffeological spaces and quotient constructions}

A diffeological space is a set endowed with a family of parametrizations
(called plots) stable under smooth reparametrization and satisfying locality
and covering conditions. A map between diffeological spaces is smooth if it
pulls back plots to plots.

Given a family of maps
\[
f_i:X_i\to X,
\]
the final diffeology on \(X\) is the finest diffeology making all \(f_i\)
smooth. Quotient diffeologies are special cases of final diffeologies.

A diffeological group is a group equipped with a compatible diffeology for
which multiplication and inversion are smooth.

\subsection{Milnor classifying spaces}

Let \(G\) be a diffeological group.

The Milnor total space is obtained from
\[
\widetilde E(G)
=
\left\{
(t_i,g_i)
\;\middle|\;
t_i\ge0,\ 
\sum_i t_i=1,\ 
g_i\in G,
\text{ finite support}
\right\}
\]
by quotienting with respect to the relation
\[
(t_i,g_i)\sim(t_i,g_i')
\quad\Longleftrightarrow\quad
g_i=g_i'
\text{ whenever }t_i\neq0.
\]

The resulting quotient space is denoted
\[
E(G),
\]
and the classifying space is
\[
B(G)=E(G)/G,
\]
where \(G\) acts diagonally on the group coordinates.

The diffeology on \(E(G)\) is the quotient of the final diffeology generated
by the local models
\[
\Delta_I\times G^I,
\]
where
\[
\Delta_I
=
\left\{
(t_i)_{i\in I}
\;\middle|\;
t_i\ge0,\ 
\sum_{i\in I}t_i=1
\right\}.
\]

In the Milnor--Watts framework \cite{MagnotWatts2017}, the simplices
\(\Delta_I\) are equipped with a boundary-adapted diffeology forcing plots to
be stationary when some coordinate vanishes.

The bundle
\[
E(G)\to B(G)
\]
is a universal diffeological principal \(G\)-bundle for \(D\)-numerable
bundles.

\subsection{The spherical Milnor model}

The spherical Milnor construction replaces the affine condition
\[
\sum_i t_i=1
\]
by the quadratic normalization
\[
\sum_i x_i^2=1.
\]

More precisely,
\[
\widetilde E^{\mathrm{sph}}(G)
=
\left\{
(x_i,g_i)
\;\middle|\;
\sum_i x_i^2=1,\ 
g_i\in G,
\text{ finite support}
\right\},
\]
and the quotient relation is
\[
(x_i,g_i)\sim(x_i,g_i')
\quad\Longleftrightarrow\quad
g_i=g_i'
\text{ whenever }x_i\neq0.
\]

The spherical Milnor space is
\[
E^{\mathrm{sph}}(G)
=
\widetilde E^{\mathrm{sph}}(G)/\sim.
\]

Its diffeology is the quotient of the final diffeology generated by the maps
\[
S_I\times G^I
\longrightarrow
E^{\mathrm{sph}}(G),
\]
where
\[
S_I
=
\left\{
(x_i)_{i\in I}
\;\middle|\;
\sum_{i\in I}x_i^2=1
\right\}
\]
is equipped with the subset diffeology inherited from \(\mathbb R^I\).

Contrary to the simplicial Milnor model, no stationarity condition is imposed
at the loci \(x_i=0\).

\subsection{Comparison of the two models}

There exists a smooth embedding
\[
\iota:E(G)\longrightarrow E^{\mathrm{sph}}(G),
\qquad
(t_i,g_i)\mapsto(\sqrt{t_i},g_i),
\]
because the Milnor diffeology forces stationarity at \(t_i=0\).

Conversely, the map
\[
(x_i,g_i)\mapsto(x_i^2,g_i)
\]
is generally not smooth from the spherical model to the simplicial one.

Therefore the two diffeological constructions are not equivalent. This
difference is responsible for the distinct geometric properties developed in
Part~I.

\subsection{Projective quotient}

The spherical model carries a natural \(\mathbb Z_2\)-action:
\[
\varepsilon\cdot(x_i,g_i)
=
(\varepsilon x_i,g_i),
\qquad
\varepsilon=\pm1.
\]

This action commutes with the \(G\)-action and defines the projective quotient
\[
E^{\mathrm{proj}}(G)
=
E^{\mathrm{sph}}(G)/\mathbb Z_2.
\]

Similarly, one defines the spherical classifying space
\[
B^{\mathrm{sph}}(G)
=
E^{\mathrm{sph}}(G)/(G\times\mathbb Z_2).
\]

\subsection{Differential forms and horizontal geometry}

Differential forms on a diffeological space are defined plotwise: a
\(k\)-form associates to each plot
\[
p:U\to X
\]
a smooth differential form on \(U\), compatible with smooth
reparametrizations.

In Part~I, the spherical Milnor model was endowed with:
\begin{itemize}
\item an effective horizontal pseudo-bundle
\[
\mathcal H
=
TE^{\mathrm{sph}}(G)/\mathcal K;
\]
\item a non-degenerate horizontal metric;
\item horizontal differential forms;
\item Hodge-type operators;
\item Clifford structures and Dirac-type operators.
\end{itemize}

The present article focuses on the topological and homotopical aspects of
these constructions.

\section{Diffeological smoothness and gluing} \label{sec:glue}
In this section we describe more precisely the basic definitions and the properties of the diffeology setteled in \cite{Magnot-MilnorI} on the spherical classifying space. 
\subsection{Final diffeology and plots}

Recall that $E^{\mathrm{sph}}(G)$ is endowed with the quotient of the final diffeology generated by the maps
\[
\psi_I : S_I \times G^I \longrightarrow \widetilde E^{\mathrm{sph}}(G),
\]
followed by the projection to $E^{\mathrm{sph}}(G)$.

\begin{proposition}[Characterization of plots]
A map $p : U \to E^{\mathrm{sph}}(G)$ is a plot if and only if for every $u \in U$, there exist:
\begin{itemize}
\item an open neighborhood $V \subset U$ of $u$,
\item a finite subset $I \subset \mathbb N$,
\item a smooth map $\widetilde p : V \to S_I \times G^I$,
\end{itemize}
such that
\[
p|_V = \psi_I \circ \widetilde p.
\]
\end{proposition}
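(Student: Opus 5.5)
The plan is to unwind the definition of a quotient of a final diffeology and use the standard description of plots in each. The diffeology on $\widetilde E^{\mathrm{sph}}(G)$ is the final diffeology generated by the maps $\psi_I$ for finite $I\subset\mathbb N$. Here $S_I\times G^I$ carries the product of the subset diffeology on $S_I$ and the diffeology of $G^I$. The diffeology on $E^{\mathrm{sph}}(G)$ is its pushforward along the projection $\pi:\widetilde E^{\mathrm{sph}}(G)\to E^{\mathrm{sph}}(G)$. I read $\psi_I$ in the statement as the composite $\pi\circ\psi_I$, following the sentence preceding the proposition.

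I will rely on two general facts from diffeology, both in \cite{IglesiasZemmour2013}. First, for a family $f_i:X_i\to X$ whose images cover $X$, the final diffeology consists exactly of the maps $p:U\to X$ that locally lift: each $u\in U$ has an open neighborhood $V$ and an index $i$ with $p|_V=f_i\circ q$ for some plot $q$ of $X_i$. (The constant maps are covered by this, since every point lies in some image.) Second, a plot of a quotient $X/\!\sim$ is a map that locally lifts through the projection to a plot of $X$. The images of the $\psi_I$ cover $\widetilde E^{\mathrm{sph}}(G)$, because every element has finite support and is therefore the image of a point of $S_I\times G^I$ with $I$ containing its support.

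\textbf{The ``if'' direction.} Each $\psi_I$ is smooth into $\widetilde E^{\mathrm{sph}}(G)$ by definition of the final diffeology, and $\pi$ is smooth. A smooth $\widetilde p:V\to S_I\times G^I$ is a plot of $S_I\times G^I$. So $\pi\circ\psi_I\circ\widetilde p$ is a plot on $V$. The axiom of locality then shows that $p$ is a plot on all of $U$.

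\textbf{The ``only if'' direction.} Let $p$ be a plot and $u\in U$. By the quotient description there are an open $V_1\ni u$ and a plot $q:V_1\to\widetilde E^{\mathrm{sph}}(G)$ with $p|_{V_1}=\pi\circ q$. By the final-diffeology description there are an open $V\ni u$ inside $V_1$, a finite $I$, and a plot $\widetilde p:V\to S_I\times G^I$ with $q|_V=\psi_I\circ\widetilde p$. Composing gives the required factorization.

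The main obstacle is the locality step in the final-diffeology description. The finest diffeology containing the $\psi_I$ is a priori generated by the $\psi_I\circ q$ under locality and by constant maps. One has to check that "locally factors through a single $\psi_I$" already defines a diffeology, so that no further closure is needed. Two points need verification. Stability under smooth reparametrization holds because the pullback of a local factorization is again a local factorization. Locality holds by construction. The only remaining subtlety is that different neighborhoods may need different index sets $I$. This is harmless, because the statement allows $I$ to depend on the point $u$.
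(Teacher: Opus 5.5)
Your argument is correct and is essentially the paper's proof, which just invokes the standard description of plots of a final diffeology (local factorization through a single generator) together with that of a quotient diffeology (local lifting along the projection); you have written these two facts out and composed them, adding care about constant plots and about the generated family already being a diffeology. The only step that depends on how one reads ``finite support'' in $\widetilde E^{\mathrm{sph}}(G)$ is your claim that the images of the $\psi_I$ cover it, and even that is harmless: if they did not, a locally constant lift would still project to a constant map into $E^{\mathrm{sph}}(G)$, which factors through $\pi\circ\psi_I$ for any $I$ containing the active support.
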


\begin{proof}
This is the standard characterization of the final diffeology, combined with the quotient diffeology.
\end{proof}

\subsection{Local finiteness}

\begin{proposition}
Every plot $p : U \to E^{\mathrm{sph}}(G)$ is locally supported on a fixed finite index set.
\end{proposition}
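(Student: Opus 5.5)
The plan is to read this off directly from the preceding Proposition (Characterization of plots). Fix a plot $p:U\to E^{\mathrm{sph}}(G)$ and a point $u\in U$. That proposition gives an open neighborhood $V\subset U$ of $u$, a finite set $I\subset\mathbb N$, and a smooth lift $\widetilde p:V\to S_I\times G^I$ with $p|_V=\psi_I\circ\widetilde p$. Hence every value $p(v)$, $v\in V$, is represented by a family $(x_i,g_i)$ with $x_i=0$ for $i\notin I$. So on $V$ the plot takes values in the image of $S_I\times G^I$, a fixed finite index set. This is exactly the local statement: $U$ is covered by such open sets $V$, each with its own finite $I_V$.

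Two points need care. The first is to make precise what "supported on $I$" means in the quotient. Since $\sim$ only identifies the $g_i$ with $x_i=0$ and never changes the $x_i$, the support $\{i: x_i\neq 0\}$ is a well-defined function of the class in $E^{\mathrm{sph}}(G)$. So I state the conclusion as $\operatorname{supp}(p(v))\subset I$ for all $v\in V$. The inclusion can be strict where some coordinates of $\widetilde p$ vanish; this is harmless, and is precisely the feature allowing plots to cross the loci $x_i=0$ without stationarity.

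The second point, and the only real obstacle, is to justify that the characterization is genuinely local with a \emph{single} $I$ per neighborhood. A priori, the final diffeology generated by the family $\{\psi_I\}$ (followed by the quotient) consists of maps that locally factor through one generator. This is the standard description of the diffeology generated by a family of maps (Iglesias-Zemmour): plots are the maps that are locally either constant or of the form $\psi_I\circ\widetilde p$. Passing to the quotient diffeology adds only local lifts, so each point still has a neighborhood where one $\psi_I$ suffices. Constant plots are covered too, since a constant point has finite support and lifts constantly to the corresponding $S_I\times G^I$.

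I will remark that compactness is not needed. The statement is only local, and a global finite index set fails in general, for instance for a plot defined on $\mathbb R$ that travels through infinitely many coordinates.
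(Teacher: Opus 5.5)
Your proposal is correct and follows the same route as the paper: the paper's one-line proof also invokes the definition of the final diffeology to obtain, around each point, a factorization through a single $\psi_I$ with $I$ finite. Your additions are accurate refinements of that argument: the support $\{i : x_i\neq 0\}$ is well defined on the quotient because $\sim$ never alters the $x_i$, the generated diffeology is locally given by a single generator (constants included), and no global finite index set exists in general.
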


\begin{proof}
By definition of the final diffeology, every point admits a neighborhood factoring through some $\psi_I$ with $I$ finite.
\end{proof}

\begin{remark}
Thus all local computations reduce to finite-dimensional models.
\end{remark}

\subsection{Gluing of plots}

\begin{proposition}[Sheaf condition]
Let $(U_\alpha)_\alpha$ be an open cover of $U$, and let $p_\alpha : U_\alpha \to E^{\mathrm{sph}}(G)$ be plots such that
\[
p_\alpha = p_\beta \quad \text{on } U_\alpha \cap U_\beta.
\]
Then the map $p : U \to E^{\mathrm{sph}}(G)$ defined by $p|_{U_\alpha} = p_\alpha$ is a plot.
\end{proposition}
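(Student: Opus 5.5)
The gluing property will follow directly from the locality axiom built into any diffeology, combined with the characterization of plots given in the Proposition (Characterization of plots) above. The point is that being a plot of $E^{\mathrm{sph}}(G)$ is a local condition on $U$: a map is a plot exactly when each point of the domain has a neighborhood on which it factors smoothly through some $\psi_I$. So I will check this local criterion for the glued map $p$ at every point, and nothing more is needed.

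First, $p$ is well defined as a set map. The hypothesis $p_\alpha = p_\beta$ on $U_\alpha\cap U_\beta$ ensures that the prescription $p|_{U_\alpha}=p_\alpha$ does not depend on the chosen index. Second, fix $u\in U$ and pick $\alpha$ with $u\in U_\alpha$. Since $p_\alpha$ is a plot, the characterization provides an open $V\subset U_\alpha$ containing $u$, a finite $I\subset\mathbb N$, and a smooth $\widetilde p:V\to S_I\times G^I$ with $p_\alpha|_V=\psi_I\circ\widetilde p$ (composed with the projection to the quotient). Because $U_\alpha$ is open in $U$, the set $V$ is also an open neighborhood of $u$ in $U$. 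Moreover $p|_V=p_\alpha|_V$, so $p$ satisfies the local factorization condition at $u$. Third, since $u$ was arbitrary, the converse direction of the characterization shows that $p$ is a plot.

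There is no real obstacle here. The only point that needs care is that the characterization of plots is itself local, that is, it encodes the locality axiom of the final and quotient diffeologies. If one prefers not to lean on it, one can instead invoke directly the axiom that a parametrization which is locally a plot is a plot. This axiom holds in any diffeology, and in particular in the final diffeology generated by the $\psi_I$ and in its quotient. I would mention both routes briefly, so the argument does not depend on how the characterization was phrased.
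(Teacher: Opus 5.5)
Your proposal is correct and essentially matches the paper, which simply cites the locality axiom of a diffeology, i.e.\ your second route. Your first route through the characterization of plots is the same axiom unpacked for the final/quotient diffeology generated by the maps $\psi_I$; the extra check that $p$ is well defined from agreement on overlaps is also fine.
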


\begin{proof}
This is one of the defining axioms of a diffeology.
\end{proof}

\subsection{Strata and local models}

\begin{definition}
For each finite \(I\subset \mathbb N\), define
\[
E_I:=\psi_I(S_I\times G^I)\subset E^{\mathrm{sph}}(G).
\]
We endow \(E_I\) with the subset diffeology induced from
\(E^{\mathrm{sph}}(G)\).
\end{definition}

\begin{proposition}
For every finite \(I\subset\mathbb N\), the map
\[
\psi_I:S_I\times G^I\longrightarrow E_I
\]
is a subduction.
\end{proposition}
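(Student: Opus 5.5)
We need to show that $\psi_I:S_I\times G^I\to E_I$ is a subduction: it is surjective, smooth, and every plot of $E_I$ locally lifts along $\psi_I$. We write $\psi_I$ for the composite of the map into $\widetilde E^{\mathrm{sph}}(G)$ with the projection to $E^{\mathrm{sph}}(G)$, as in the definition of $E_I$.

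\emph{Surjectivity and smoothness.} Surjectivity onto $E_I$ holds by definition, since $E_I=\psi_I(S_I\times G^I)$. For smoothness, $\psi_I$ is smooth into $E^{\mathrm{sph}}(G)$ because it is one of the generating maps of the final diffeology, followed by the smooth quotient projection. A map into a subset carrying the subset diffeology is smooth exactly when its composite with the inclusion is smooth, so $\psi_I$ is smooth as a map into $E_I$.

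\emph{Local lifting.} Let $p:U\to E_I$ be a plot for the subset diffeology, so that $j\circ p$ is a plot of $E^{\mathrm{sph}}(G)$, where $j:E_I\hookrightarrow E^{\mathrm{sph}}(G)$ is the inclusion. Fix $u\in U$. By the characterization of plots, there are an open $V\ni u$, a finite $J\subset\mathbb N$, and a smooth map $\widetilde p=(x,g):V\to S_J\times G^J$ with $j\circ p|_V=\psi_J\circ\widetilde p$. We must replace $J$ by $I$.

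Since $p$ takes values in $E_I$, for every $v\in V$ the coordinates $x_j(v)$ vanish for $j\in J\setminus I$. Hence on $V$ the map $x$ takes values in $S_{I\cap J}$, regarded as a subset of $S_J$. We then define $\widehat p:V\to S_I\times G^I$ as follows.
\begin{itemize}
\item For $i\in I\cap J$, set $\widehat x_i=x_i$ and $\widehat g_i=g_i$.
\item For $i\in I\setminus J$, set $\widehat x_i=0$ and $\widehat g_i=e$, the neutral element of $G$.
\end{itemize}
Each component is smooth: the components for $i\in I\cap J$ are components of $\widetilde p$, and the others are constant. Moreover $\sum_{i\in I}\widehat x_i^2=\sum_{j\in J}x_j^2=1$, so $\widehat p$ is a smooth map into $S_I\times G^I$, because $S_I$ carries the subset diffeology of $\mathbb R^I$. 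The equivalence relation ignores group labels at vanishing coordinates, so $\psi_I\circ\widehat p=\psi_J\circ\widetilde p=j\circ p|_V$ pointwise. This gives the required local lift, and $\psi_I$ is a subduction.

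\emph{Main obstacle.} The delicate point is the claim that $x_j\equiv0$ for $j\in J\setminus I$ on $V$. This needs the description of points of $E^{\mathrm{sph}}(G)$ by their support: a point of $E_I$ has its nonzero coordinates indexed inside $I$, and the representative $\psi_J(\widetilde p(v))$ of the same point must have the same support. One has to check that the relation $\sim$ never identifies tuples with different sets of nonzero $x_i$; this holds because $\sim$ only alters $g_i$ where $x_i=0$. A second subtlety arises if the construction also identifies the sign of $x$ or reorders indices, but as stated it does not.
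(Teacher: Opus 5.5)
Your proof is correct and follows the same outline as the paper's: surjectivity by definition, smoothness because $\psi_I$ is one of the generating maps, and a plot of $E_I$ locally lifted first through some $\psi_J$ and then through $\psi_I$. You also make precise the step the paper only sketches as ``replacing representatives by equivalent ones'': since $\sim$ never changes the spherical coordinates, the components $x_j$ with $j\in J\setminus I$ vanish identically on $V$, so the lift can be re-indexed to $I$ by discarding them and padding with $\widehat x_i=0$, $\widehat g_i=e$ for $i\in I\setminus J$.
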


\begin{proof}
By definition, \(E_I\) is the image of \(\psi_I\). Hence \(\psi_I\) is
surjective.

The diffeology on \(E^{\mathrm{sph}}(G)\) is the final diffeology generated by
the maps
\[
\psi_J:S_J\times G^J\to E^{\mathrm{sph}}(G).
\]
The subset diffeology on \(E_I\) consists of those plots of
\(E^{\mathrm{sph}}(G)\) whose image lies in \(E_I\).

Since \(\psi_I\) is one of the generating maps, every plot of
\(S_I\times G^I\) is sent by \(\psi_I\) to a plot of
\(E^{\mathrm{sph}}(G)\), hence to a plot of \(E_I\).

Conversely, let \(p:U\to E_I\) be a plot for the subset diffeology. Then
\(p\) is locally a plot of \(E^{\mathrm{sph}}(G)\), hence locally factors
through some
\[
\psi_J:S_J\times G^J\to E^{\mathrm{sph}}(G).
\]
Since the image of \(p\) is contained in \(E_I\), this local factorization may
be regarded, after replacing representatives by equivalent ones, as a local
factorization through the quotient image \(E_I\). Thus the subset diffeology
on \(E_I\) coincides with the push-forward diffeology induced by
\(\psi_I\). Therefore \(\psi_I\) is a subduction.
\end{proof}

\begin{remark}
The map \(\psi_I\) is generally not injective. Indeed, if \(x_i=0\), then the
corresponding group coordinate \(g_i\) is irrelevant in the quotient relation.
\end{remark}

\subsection{Smoothness criterion}

\begin{proposition}
Let \(Y\) be a diffeological space. A map
\[
f:E^{\mathrm{sph}}(G)\to Y
\]
is smooth if and only if, for every finite \(I\subset\mathbb N\), the
composition
\[
f\circ \psi_I:S_I\times G^I\to Y
\]
is smooth.
\end{proposition}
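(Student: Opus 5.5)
The argument is the universal property of the final diffeology, made concrete through the Characterization of plots proved above. Throughout I read \(\psi_I\) as the composite \(S_I\times G^I\to\widetilde E^{\mathrm{sph}}(G)\to E^{\mathrm{sph}}(G)\), as the paper does in the definition of \(E_I\).

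For the forward direction, assume \(f\) is smooth. Each \(\psi_I\) is one of the generating maps of the diffeology of \(E^{\mathrm{sph}}(G)\), so it is smooth: a plot \(q\) of \(S_I\times G^I\) (with the subset diffeology on \(S_I\) and the product diffeology) is sent to \(\psi_I\circ q\), which is a plot of \(E^{\mathrm{sph}}(G)\) by the Characterization of plots, taking \(V=U\) and \(\widetilde p=q\). Compositions of smooth maps are smooth, so \(f\circ\psi_I\) is smooth for every finite \(I\).

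For the converse, assume every \(f\circ\psi_I\) is smooth, and let \(p:U\to E^{\mathrm{sph}}(G)\) be a plot. I must show that \(f\circ p\) is a plot of \(Y\).
\begin{itemize}
\item Fix \(u\in U\). The Characterization of plots gives an open \(V\ni u\), a finite \(I\), and a smooth \(\widetilde p:V\to S_I\times G^I\) with \(p|_V=\psi_I\circ\widetilde p\).
\item Then \((f\circ p)|_V=(f\circ\psi_I)\circ\widetilde p\).
\item Since \(\widetilde p\) is a plot of \(S_I\times G^I\) and \(f\circ\psi_I\) is smooth, \((f\circ p)|_V\) is a plot of \(Y\).
\item The sets \(V\) obtained this way cover \(U\), and their restrictions agree on overlaps, since they are all restrictions of the single map \(f\circ p\). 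The locality axiom of the diffeology of \(Y\), the analogue for \(Y\) of the Sheaf condition proposition, then shows that \(f\circ p\) is a plot.
\end{itemize}
So \(f\) sends plots to plots, which means \(f\) is smooth.

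The only step needing care is the quotient. A plot of \(E^{\mathrm{sph}}(G)\) is a priori only locally the image of a plot of \(\widetilde E^{\mathrm{sph}}(G)\) that need not be defined globally, and the lift through \(\widetilde E^{\mathrm{sph}}(G)\) is not unique. I would point out that the Characterization of plots already packages both the final and the quotient structure into local factorizations through the composites \(\psi_I\). Non-uniqueness of the lift then causes no trouble: the hypothesis concerns \(f\circ\psi_I\), which is defined on \(S_I\times G^I\) itself, and it is used only on the particular local lift \(\widetilde p\) chosen at each point.
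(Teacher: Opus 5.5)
Your proof is correct and follows the paper's argument. The forward direction uses that each \(\psi_I\) is a smooth generating map. The converse uses the local factorization \(p|_V=\psi_I\circ\widetilde p\) coming from the final (and quotient) diffeology, together with locality of plots in \(Y\). Your remark on the quotient and on non-uniqueness of lifts spells out what the paper compresses into ``since the diffeology is final,'' without changing the route.
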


\begin{proof}
Assume first that \(f\) is smooth. Since each \(\psi_I\) is smooth by
construction of the final diffeology, the composition
\[
f\circ\psi_I
\]
is smooth.

Conversely, assume that every \(f\circ\psi_I\) is smooth. Let
\[
p:U\to E^{\mathrm{sph}}(G)
\]
be a plot. Since the diffeology on \(E^{\mathrm{sph}}(G)\) is final with
respect to the family \((\psi_I)_I\), for every \(u\in U\) there exists an open
neighborhood \(V\subset U\), a finite set \(I\), and a plot
\[
\widetilde p:V\to S_I\times G^I
\]
such that
\[
p|_V=\psi_I\circ\widetilde p.
\]
Then
\[
f\circ p|_V
=
f\circ\psi_I\circ\widetilde p,
\]
which is smooth by assumption. Since smoothness of plots is local on the
domain, \(f\circ p\) is a plot of \(Y\). Hence \(f\) is smooth.
\end{proof}

\subsection{Behavior across strata}

The following point is important. The spherical model does not impose
stationarity at the hypersurfaces \(x_j=0\). Therefore a plot may cross a
boundary stratum transversally.

\begin{remark}
It is not true in general that if a coordinate \(x_j\) vanishes at a point of a
plot, then the plot locally factors through a smaller stratum. For instance,
on a sphere \(S_I\), a smooth curve may satisfy \(x_j(u_0)=0\) while
\(x_j(u)\neq0\) for \(u\neq u_0\). Thus the spherical diffeology allows
genuine transverse crossing of strata.
\end{remark}

The correct statement is the following.

\begin{proposition}
Let
\[
p:U\to E^{\mathrm{sph}}(G)
\]
be a plot. Suppose that locally
\[
p=\psi_I\circ \widetilde p
\]
with
\[
\widetilde p=(x_i,g_i)_{i\in I}:U\to S_I\times G^I.
\]
If, on an open subset \(V\subset U\), one has
\[
x_j|_V\equiv0
\]
for some \(j\in I\), then \(p|_V\) factors through the smaller model
\[
S_{I\setminus\{j\}}\times G^{I\setminus\{j\}}.
\]
\end{proposition}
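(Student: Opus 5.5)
The plan is to write the factorization down explicitly by deleting the \(j\)-th coordinate, and then check that the new map lands in the right sphere, is smooth, and agrees with \(p\) after passing to the quotient. Set \(J=I\setminus\{j\}\). Let \(\pi_J:\mathbb R^I\times G^I\to\mathbb R^J\times G^J\) be the projection forgetting the \(j\)-th real coordinate and the \(j\)-th group coordinate. On \(V\) define
\[
\widetilde q:=\pi_J\circ\widetilde p|_V=(x_i,g_i)_{i\in J}:V\to\mathbb R^J\times G^J .
\]

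First, \(\widetilde q\) takes values in \(S_J\times G^J\). Since \(\sum_{i\in I}x_i^2=1\) on \(V\) and \(x_j|_V\equiv0\), we get \(\sum_{i\in J}x_i^2=1\) on \(V\).

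Second, \(\widetilde q\) is smooth into \(S_J\times G^J\). The components \(x_i\) with \(i\in J\) are smooth real functions. Because \(S_J\) carries the subset diffeology of \(\mathbb R^J\), the map \((x_i)_{i\in J}\) is a plot of \(S_J\). The components \(g_i\) with \(i\in J\) are plots of \(G\), being components of the plot \(\widetilde p\) of the product diffeology. So \(\widetilde q\) is a plot of the product \(S_J\times G^J\). If \(J=\emptyset\) the sphere \(S_J\) is empty; this case cannot occur, since \(\sum_{i\in I}x_i^2=1\) forces some \(x_i\ne0\) with \(i\neq j\).

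Third, I check \(\psi_J\circ\widetilde q=\psi_I\circ\widetilde p=p\) on \(V\). This step needs care about what the family \((\psi_I)\) means as maps into a single space. I will regard the inclusion \(S_J\times G^J\hookrightarrow\widetilde E^{\mathrm{sph}}(G)\) as the tuple supported on \(J\), which is the same as padding with \(x_j=0\) and any value of \(g_j\). For \(u\in V\), the finite-support tuple \(\psi_I(\widetilde p(u))\) has \(j\)-th coordinate \((0,g_j(u))\). By the relation \(\sim\), the group coordinate at an index with \(x_j=0\) is irrelevant, so this tuple is equivalent to the one obtained by dropping index \(j\), which is \(\psi_J(\widetilde q(u))\). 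Their images in \(E^{\mathrm{sph}}(G)\) therefore coincide, which gives the factorization \(p|_V=\psi_J\circ\widetilde q\).

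The main obstacle is this third step. It depends on making precise the identification, implicit in the paper, of tuples with zero coordinates with tuples on a smaller index set. That identification holds either by the convention of finite support, where a vanishing coordinate is simply not in the support, or by the relation \(\sim\). I will state it explicitly as \(\psi_I\circ\iota_{J,I}^{g}=\psi_J\) for the padding map \(\iota_{J,I}^{g}(x,h)=((x,0),(h,g))\) with \(g\in G\) arbitrary. With this compatibility recorded, the rest is routine. It is also worth remarking that the hypothesis that \(x_j\) vanishes on the whole open set \(V\) is exactly what makes \(\widetilde q\) defined on all of \(V\). This contrasts with the preceding Remark, where \(x_j\) vanishes only at an isolated point and no such factorization is available.
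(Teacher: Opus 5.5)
Your proposal is correct and follows the paper's proof essentially step for step. You delete the $j$-th real and group coordinates, use $\sum_{i\in I}x_i^2=1$ together with $x_j|_V\equiv 0$ to land in $S_{I\setminus\{j\}}$, and invoke the relation $\sim$ to discard the inactive label $g_j$, giving $\psi_I\circ\widetilde p|_V=\psi_{I\setminus\{j\}}\circ\widetilde p_j$. Your additions only make explicit what the paper leaves implicit: that $J=\emptyset$ cannot occur for nonempty $V$, and the padding compatibility $\psi_I\circ\iota^{g}_{J,I}=\psi_J$.
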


\begin{proof}
Assume that \(x_j(v)=0\) for every \(v\in V\). Since
\[
\sum_{i\in I}x_i(v)^2=1,
\]
we have
\[
\sum_{i\in I\setminus\{j\}}x_i(v)^2=1
\]
for every \(v\in V\). Hence the remaining coordinates define a smooth map
\[
(x_i)_{i\in I\setminus\{j\}}:V\to S_{I\setminus\{j\}}.
\]

Similarly, by deleting the inactive group coordinate \(g_j\), we obtain a
smooth map
\[
\widetilde p_j:V\to S_{I\setminus\{j\}}\times G^{I\setminus\{j\}}.
\]

Now compare the images of \(\widetilde p|_V\) and \(\widetilde p_j\) in
\(E^{\mathrm{sph}}(G)\). Since the coordinate \(x_j\) is identically zero on
\(V\), the equivalence relation defining \(E^{\mathrm{sph}}(G)\) makes the
corresponding group coordinate \(g_j\) irrelevant. Therefore
\[
\psi_I\circ\widetilde p|_V
=
\psi_{I\setminus\{j\}}\circ\widetilde p_j.
\]
Thus \(p|_V\) factors through the smaller model.
\end{proof}

\begin{proposition}
A plot factors locally through a lower stratum precisely on those open subsets
where the corresponding spherical coordinates vanish identically.
\end{proposition}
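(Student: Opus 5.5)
The statement has two directions. I will fix a plot \(p:U\to E^{\mathrm{sph}}(G)\) and, by the characterization of plots, work on an open set where \(p=\psi_I\circ\widetilde p\) with \(\widetilde p=(x_i,g_i)_{i\in I}\). I read ``factors locally through a lower stratum'' on an open \(V\) as: \(p|_V=\psi_{I\setminus\{j\}}\circ\widetilde q\) for some \(j\in I\) and some smooth \(\widetilde q:V\to S_{I\setminus\{j\}}\times G^{I\setminus\{j\}}\).

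The \emph{if} direction is exactly the preceding proposition. If \(x_j|_V\equiv0\), one deletes the \(j\)-th coordinates and obtains the required factorization. Nothing new is needed here, except to note that it applies to every open subset on which the vanishing holds.

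For the \emph{only if} direction, suppose \(p|_V=\psi_{I\setminus\{j\}}\circ\widetilde q\) on an open \(V\). I will compare the two representatives pointwise in \(\widetilde E^{\mathrm{sph}}(G)/\sim\). A point of \(E^{\mathrm{sph}}(G)\) determines its family of spherical coordinates \((x_i)_{i\in\mathbb N}\), extended by zero outside the finite support, because the relation \(\sim\) only identifies group coordinates whose \(x_i\) vanishes. Hence the coordinate functions \(x_i\circ p\) are well defined on \(E^{\mathrm{sph}}(G)\), independently of the chosen local model. The model through \(\psi_{I\setminus\{j\}}\) gives \(x_j\circ p=0\) on \(V\). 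The model through \(\psi_I\) gives \(x_j\circ p=x_j\). Therefore \(x_j|_V\equiv0\).

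The main obstacle I expect is the \(\mathbb Z_2\) issue. If the statement were read on the projective quotient, or if the paper's ``\(\sim\)'' were implicitly combined with sign changes, the coordinates would only be defined up to sign. In that case I would instead compare the invariants \(x_i^2\), which vanish on exactly the same sets, so the argument still goes through.

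Finally, I will make the word ``precisely'' explicit. Given the local model \(\widetilde p\), the maximal open subset on which \(p\) factors through the stratum indexed by \(I\setminus\{j\}\) is the interior of \(x_j^{-1}(0)\). By the earlier remark, isolated zeros of \(x_j\), as in a transverse crossing, do not give any factorization on a neighbourhood.
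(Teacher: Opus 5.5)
Your proposal is correct and follows the paper's proof: the sufficiency is the preceding proposition, and the converse comes from reading off \(x_j\equiv0\) from the factorization through the smaller model. Your remark that the spherical coordinates are intrinsic functions on \(E^{\mathrm{sph}}(G)\), because \(\sim\) only alters inactive group labels, is what justifies the paper's terse claim: it shows that \(x_j\) vanishes in the original representation \(\widetilde p\), and not merely in the reduced one.
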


\begin{proof}
The preceding proposition proves the sufficient condition. Conversely, if a
plot factors through \(S_{I\setminus\{j\}}\times G^{I\setminus\{j\}}\), then
the coordinate \(x_j\) is identically zero in that local representation.
\end{proof}

\begin{remark}
Thus the strata are not separated by a stationarity condition. The spherical
model allows plots to cross from one support type to another. This is a major
difference from the Milnor--Watts simplicial diffeology, where boundary
stationarity plays a crucial role.
\end{remark}
\subsection{Smooth functions}

\begin{definition}
A function
\[
f:E^{\mathrm{sph}}(G)\to\mathbb R
\]
is called smooth if for every plot
\[
p:U\to E^{\mathrm{sph}}(G),
\]
the composition
\[
f\circ p:U\to\mathbb R
\]
is smooth in the ordinary sense.
\end{definition}

\begin{proposition}
A function
\[
f:E^{\mathrm{sph}}(G)\to\mathbb R
\]
is smooth if and only if, for every finite subset
\[
I\subset\mathbb N,
\]
the composition
\[
f\circ\psi_I:S_I\times G^I\to\mathbb R
\]
is smooth.
\end{proposition}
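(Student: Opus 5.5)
This is the special case \(Y=\mathbb R\) of the general smoothness criterion proved above for maps \(f:E^{\mathrm{sph}}(G)\to Y\). The plan is to check that the definition of a smooth real function just given agrees with smoothness of \(f\) as a map of diffeological spaces when \(\mathbb R\) carries its standard diffeology. Then I invoke the earlier criterion directly. For completeness, I will also write out the two directions explicitly, following the same pattern.

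\emph{Necessity.} Suppose \(f\) is smooth in the sense of the definition. Fix a finite \(I\subset\mathbb N\) and a plot \(q:W\to S_I\times G^I\). The composite \(\psi_I\circ q\) is a plot of \(E^{\mathrm{sph}}(G)\), because \(\psi_I\) is one of the generating maps of the final diffeology composed with the quotient projection. Hence \(f\circ\psi_I\circ q\) is an ordinary smooth function on \(W\). Since \(q\) was an arbitrary plot, \(f\circ\psi_I\) is smooth for the product diffeology on \(S_I\times G^I\).

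\emph{Sufficiency.} Suppose every \(f\circ\psi_I\) is smooth, and let \(p:U\to E^{\mathrm{sph}}(G)\) be a plot. By the characterization of plots, each \(u\in U\) has an open neighborhood \(V\), a finite \(I\), and a smooth lift \(\widetilde p:V\to S_I\times G^I\) with \(p|_V=\psi_I\circ\widetilde p\). Then \(f\circ p|_V=(f\circ\psi_I)\circ\widetilde p\), which is smooth on \(V\). Smoothness of a real function on an open set of a Euclidean space is a local property, so \(f\circ p\) is smooth on all of \(U\).

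I do not expect a genuine obstacle. The only point to watch is that the lift \(\widetilde p\) exists only locally, and the index set \(I\) may change from one neighborhood to another; for example, a plot may cross a stratum transversally, as noted in the remark above. The argument never needs a global lift, so this causes no difficulty. One should also note that no stationarity condition at \(x_i=0\) enters: smoothness on \(S_I\) is meant in the subset diffeology inherited from \(\mathbb R^I\), which is exactly the diffeology used to generate \(E^{\mathrm{sph}}(G)\).
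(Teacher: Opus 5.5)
Your proposal is correct and follows essentially the same argument as the paper. Necessity holds because each \(\psi_I\) is smooth; you simply unpack this by composing with an arbitrary plot of \(S_I\times G^I\). Sufficiency goes through the local factorization \(p|_V=\psi_I\circ\widetilde p\) and the locality of smoothness on the domain, exactly as the paper does, and you are right that the statement is the case \(Y=\mathbb R\) of the preceding general criterion.
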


\begin{proof}
Assume first that \(f\) is smooth. By construction of the diffeology on
\(E^{\mathrm{sph}}(G)\), each map
\[
\psi_I:S_I\times G^I\to E^{\mathrm{sph}}(G)
\]
is smooth. Therefore the composition
\[
f\circ\psi_I
\]
is smooth for every finite \(I\).

Conversely, assume that each
\[
f\circ\psi_I
\]
is smooth. Let
\[
p:U\to E^{\mathrm{sph}}(G)
\]
be a plot. Since the diffeology on \(E^{\mathrm{sph}}(G)\) is the final
diffeology generated by the family \((\psi_I)_I\), for every point
\(u\in U\) there exist:
\begin{itemize}
\item an open neighborhood \(V\subset U\) of \(u\),
\item a finite subset \(I\subset\mathbb N\),
\item and a smooth map
\[
\widetilde p:V\to S_I\times G^I
\]
such that
\[
p|_V=\psi_I\circ\widetilde p.
\]
\end{itemize}

Hence
\[
f\circ p|_V
=
f\circ\psi_I\circ\widetilde p.
\]

By assumption,
\[
f\circ\psi_I
\]
is smooth, and since \(\widetilde p\) is smooth, the composition
\[
f\circ p|_V
\]
is smooth.

Since smoothness is local on the domain, it follows that
\[
f\circ p
\]
is smooth on all of \(U\). Therefore \(f\) is smooth as a diffeological map.
\end{proof}

\subsection{Summary}

The spherical Milnor model
\[
E^{\mathrm{sph}}(G)
\]
is equipped with a diffeological structure characterized by the following
properties:
\begin{itemize}
\item every plot locally factors through a finite-dimensional model
\[
S_I\times G^I;
\]
\item the diffeology is generated by the family of maps
\[
\psi_I:S_I\times G^I\to E^{\mathrm{sph}}(G);
\]
\item smooth maps and smooth functions are completely characterized by their
pull-backs through the generators \(\psi_I\);
\item the space possesses a stratified structure governed by the vanishing of
the spherical coordinates;
\item contrary to the simplicial Milnor model, plots may cross strata
transversally, since no stationarity condition is imposed at the loci
\[
x_i=0.
\]
\end{itemize}

These properties will be used repeatedly in the sequel when constructing
geometric, topological, and analytical structures on the spherical Milnor
space.
\section{Projective quotients and \texorpdfstring{$\mathbb Z_2$}{Z2}-twisted classifying structures}
\label{sec:projective-topology}

\subsection{The antipodal action}

Recall that the spherical Milnor space is
\[
E^{\mathrm{sph}}(G)
=
\widetilde E^{\mathrm{sph}}(G)/\sim,
\]
where
\[
(x_i,g_i)\sim (x_i,g_i')
\quad\Longleftrightarrow\quad
g_i=g_i' \ \text{whenever } x_i\neq0.
\]

\begin{definition}
The antipodal action of \(\mathbb Z_2=\{\pm1\}\) on \(E^{\mathrm{sph}}(G)\) is defined by
\[
\varepsilon\cdot [(x_i,g_i)]
=
[(\varepsilon x_i,g_i)].
\]
\end{definition}

\begin{proposition}
The antipodal action of \(\mathbb Z_2\) on \(E^{\mathrm{sph}}(G)\) is well-defined, smooth and free.
\end{proposition}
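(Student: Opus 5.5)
I will verify the three properties in turn, each time working at the level of the representatives \((x_i,g_i)\in\widetilde E^{\mathrm{sph}}(G)\) and the generating maps \(\psi_I\).

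\textbf{Well-definedness.} First, if \(\sum_i x_i^2=1\) then \(\sum_i(\varepsilon x_i)^2=1\), and the support is unchanged. So \((x_i,g_i)\mapsto(\varepsilon x_i,g_i)\) maps \(\widetilde E^{\mathrm{sph}}(G)\) to itself. Second, it preserves the relation \(\sim\): since \(\varepsilon x_i\neq0\) exactly when \(x_i\neq0\), two representatives that agree on the active group coordinates still agree on them after the sign change. Hence the map descends to \(E^{\mathrm{sph}}(G)\). The group law holds on representatives, because \(\varepsilon(\varepsilon' x_i)=(\varepsilon\varepsilon')x_i\) and \(1\) acts trivially, and it therefore holds on classes.

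\textbf{Smoothness.} I will use the smoothness criterion of Section~\ref{sec:glue}: a map \(f:E^{\mathrm{sph}}(G)\to Y\) is smooth as soon as every \(f\circ\psi_I\) is smooth. For \(\varepsilon=-1\), the composite of the action with \(\psi_I\) equals \(\psi_I\circ(a_I\times\mathrm{id}_{G^I})\), where \(a_I:S_I\to S_I\) is the antipodal map \(x\mapsto -x\). The map \(a_I\) is the restriction of a linear map of \(\mathbb R^I\) preserving \(S_I\), so it is smooth for the subset diffeology. The composite \(\psi_I\circ(a_I\times\mathrm{id})\) is then smooth, being a composition of smooth maps. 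The action map is an involution, so it is in fact a diffeomorphism. If smoothness of the joint map \(\mathbb Z_2\times E^{\mathrm{sph}}(G)\to E^{\mathrm{sph}}(G)\) is wanted, it follows because \(\mathbb Z_2\) is discrete: plots into it are locally constant, so the question reduces to each \(\varepsilon\) separately.

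\textbf{Freeness.} This is the only step needing care, because \(\psi_I\) is not injective and one must rule out a nontrivial coincidence of classes. Suppose \([(-x_i,g_i)]=[(x_i,g_i)]\). Two finitely supported families represent the same class only if their \(x\)-coordinates coincide, since \(\sim\) relates only families with identical \(x_i\). This gives \(-x_i=x_i\), hence \(x_i=0\) for all \(i\). That contradicts \(\sum_i x_i^2=1\). So \(-1\) has no fixed point, and the action is free.

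The main point to make explicit is this observation that the \(x\)-coordinates are a genuine invariant of the class, so that \(E^{\mathrm{sph}}(G)\to\bigcup_I S_I\), \([(x_i,g_i)]\mapsto(x_i)\), is well defined. It is immediate from the definition of \(\sim\). Everything else reduces to smoothness of the antipodal map on each finite sphere \(S_I\).
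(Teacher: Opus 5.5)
Your proof is correct and follows the paper's argument step for step: the sign change is compatible with \(\sim\) because \(\varepsilon x_i\neq0\iff x_i\neq0\), smoothness comes from the antipodal map on each finite chart \(S_I\times G^I\), and freeness holds because \(\sim\) never alters the spherical coordinates, forcing \(x=-x=0\). Your extra checks are minor elaborations of points the paper leaves implicit: preservation of \(\sum x_i^2=1\), the group law, and joint smoothness via discreteness of \(\mathbb Z_2\).
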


\begin{proof}
We first prove that the action is well-defined. Suppose that
\[
(x_i,g_i)\sim (x_i,g_i').
\]
Then \(g_i=g_i'\) for every \(i\) such that \(x_i\neq0\). Since
\[
\varepsilon x_i\neq0
\quad\Longleftrightarrow\quad
x_i\neq0,
\]
we also have \(g_i=g_i'\) for every \(i\) such that \(\varepsilon x_i\neq0\). Hence
\[
(\varepsilon x_i,g_i)\sim(\varepsilon x_i,g_i'),
\]
so the action is well-defined on the quotient.

Smoothness follows from the definition of the final diffeology. On each finite model
\[
S_I\times G^I,
\]
the action is given by
\[
(x_i,g_i)_{i\in I}
\longmapsto
(\varepsilon x_i,g_i)_{i\in I},
\]
which is smooth since \(S_I\subset \mathbb R^I\) carries the subset diffeology.

Finally, suppose that \(-1\) fixes a point \([(x_i,g_i)]\). Then
\[
[(x_i,g_i)]=[(-x_i,g_i)].
\]
The equivalence relation does not change the spherical coordinates, hence this implies
\[
x_i=-x_i
\]
for every \(i\). Therefore \(x_i=0\) for all \(i\), contradicting
\[
\sum_i x_i^2=1.
\]
Thus the action is free.
\end{proof}

\begin{definition}
The projective spherical Milnor space is
\[
E^{\mathrm{proj}}(G)
:=
E^{\mathrm{sph}}(G)/\mathbb Z_2,
\]
equipped with the quotient diffeology.
\end{definition}

\subsection{Contractibility of the spherical total space}

We now prove that \(E^{\mathrm{sph}}(G)\) is contractible. This result is essential: it is the topological reason why projective quotients of \(E^{\mathrm{sph}}(G)\) carry universal \(\mathbb Z_2\)-type classes.

\begin{theorem}
The diffeological space \(E^{\mathrm{sph}}(G)\) is smoothly contractible.
\end{theorem}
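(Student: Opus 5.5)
We construct an explicit smooth homotopy $H:[0,1]\times E^{\mathrm{sph}}(G)\to E^{\mathrm{sph}}(G)$ (or $\mathbb R\times E^{\mathrm{sph}}(G)$ with a smashing function) joining the identity to a constant map. Throughout we use the smoothness criterion of Section~\ref{sec:glue}: it suffices to check smoothness of $H\circ(\mathrm{id}\times\psi_I)$ on $[0,1]\times S_I\times G^I$ for each finite $I$. The point $\ast=[(x_0=1,g_0=e)]$ will be the target. The homotopy runs in two stages, following Milnor's classical argument adapted to quadratic normalization.

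\emph{Stage 1 (shift to odd indices).} For each $i$, interpolate linearly between the coordinate in slot $i$ and slot $2i+1$ and then renormalize. Concretely, send $(x_i,g_i)$ to the point with coordinate $\cos(\tfrac{\pi t}{2})x_i$ in slot $i$ and $\sin(\tfrac{\pi t}{2})x_i$ in slot $2i+1$, with group label $g_i$ in both slots. This is a rotation rather than a linear interpolation, so $\sum x^2=1$ is preserved automatically and no renormalization is needed.

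The slots overlap, since slot $2i+1$ may also be an "old" slot $j=2i+1$. To avoid this, perform instead the standard infinite composition of finitely many moves on each $S_I$. Alternatively, and preferably, first apply the smooth injection $i\mapsto 2i$ as a rotation in each plane spanned by $e_i$ and $e_{2i}$ with $i\neq 2i$, done via Hilbert's-hotel ordering. On each finite $S_I$ only finitely many rotations act, so the formula is locally a finite composition of smooth maps $S_I\times G^I\to S_J\times G^J$. The result of Stage 1 is a homotopy from the identity to a map $\theta$ whose image has support in the even indices.

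\emph{Stage 2 (contract to $\ast$).} Homotope $\theta$ to $\ast$ by a rotation in the plane spanned by "slot $0$ with label $e$" and the even-supported vector: send $\theta(y)$ to $\cos(\tfrac{\pi s}{2})\,\theta(y)+\sin(\tfrac{\pi s}{2})\,(1\cdot e_0, e)$. Because the supports are disjoint (slot $0$ was freed by using slots $2i+2$ instead of $2i$), the squared norms add to $1$.

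\emph{Well-definedness.} Well-definedness on the quotient holds because whenever a coordinate is zero its label is duplicated into slots whose coordinate is also zero, so the labels remain irrelevant. Smoothness on each model is clear, since all coordinate formulas are polynomial in $x$, trigonometric in $t$, and constant in $g$. There is no stationarity requirement, which is exactly where the spherical diffeology is easier than Milnor--Watts.

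\textbf{Main obstacle.} The main obstacle is Stage 1. A single-formula shift homotopy must be checked to be smooth on every $S_I$ uniformly, and the overlapping of slots must be handled so that the norm stays $1$ without division by a possibly vanishing quantity. I would first try to realize it as a smooth path of orthogonal-type maps acting on finite sub-blocks $\mathbb R^{I\cup 2I+2}$, which preserves spheres exactly. Then I would verify that the construction is compatible with the inclusions $S_I\subset S_{I'}$, so that it glues to a map on $E^{\mathrm{sph}}(G)$.
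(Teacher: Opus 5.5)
Your skeleton matches the paper's. You first deform the identity to an injective re-indexing that frees a slot: the paper uses the shift $S$, freeing slot $1$, and you use $i\mapsto 2i+2$, freeing slot $0$. Then you rotate onto the base point. Your Stage~2 is the paper's $H^{(2)}$ with $\cos/\sin$ in place of renormalization, and it is fine.

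Stage~1 is where the routes genuinely differ, and yours is the sound one. The paper interpolates $(1-s)x+sSx$, renormalizes, and asserts $x\perp Sx$. That is false: $\langle x,Sx\rangle=\sum_i x_ix_{i+1}$ equals $1/2$ for $x=(e_1+e_2)/\sqrt2$. The norm still never vanishes, since $S$ has no finitely supported eigenvector. More seriously, slot $i$ then carries $(1-s)x_i+sx_{i-1}$, a superposition of two coordinates labelled $g_i$ and $g_{i-1}$. For $x=(e_1+e_2)/\sqrt2$ with $g_1\neq g_2$, slot $2$ stays active for every $s$ while its label has to pass from $g_2$ to $g_1$. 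So ``transported labels'' does not define a smooth map for general $G$ (e.g.\ discrete $G$); the argument only works when there are no labels, i.e.\ for the finitary sphere $S^\infty$. Moving one coordinate at a time into an \emph{empty} slot, as you propose, is the standard device from the classical contractibility proof for Milnor's join, and it is what is needed here.

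The gap is that your Stage~1 is not yet a construction, and the decisive missing ingredient is the order of the moves. Simultaneous rotations in the planes $(e_k,e_{2k+2})$ collide. So does performing them in increasing $k$: slot $2$ is still occupied when slot $0$ is sent there. What works is top-down. Let stage $k$ rotate the coordinates by $R_k(\phi_k(t))$, the rotation in the plane $(e_k,e_{2k+2})$, and give slot $2k+2$ the label $g_k$. Here $\phi_k$ is smooth, equal to $0$ for $t\le 1/(k+2)$ and to $\pi/2$ for $t\ge 1/(k+1)$. The stages then accumulate at $t=0$ and run in decreasing $k$ as $t$ grows. Downward induction shows that when stage $k$ begins, slot $k$ still holds the original $(x_k,g_k)$ and slot $2k+2$ has coordinate $0$: its content left during stage $2k+2$, and only stage $k$ writes into it. For a point supported in $\{0,\dots,n\}$ all stages $k>n$ act trivially. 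Hence on $\mathbb R\times S_I\times G^I$ the coordinates are given for all $t$ by the finite composite $R_0(\phi_0(t))\circ\cdots\circ R_n(\phi_n(t))$, which is the identity for $t\le 1/(n+2)$. Compatibility with $S_I\subset S_{I'}$ is automatic because nothing depends on $I$.

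Your claim that smoothness is clear because everything is ``constant in $g$'' overlooks that the labels are not constant in $t$: the representative label of slot $2k+2$ switches from $g_{2k+2}$ to $g_k$. This is legitimate only because that slot's coordinate vanishes identically on the open interval $(1/(2k+3),\,1/(k+2))$. Near the switch the homotopy therefore factors through a chart omitting that slot, by the stratum proposition of Section~\ref{sec:glue}; a switch at an isolated zero would not give a plot in general.

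Finally, drop the fallback of an arbitrary path of orthogonal maps on $\mathbb R^{I\cup(2I+2)}$. A general orthogonal map mixes differently labelled coordinates into one slot, which is the paper's defect again, and an $I$-dependent choice would not glue.
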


\begin{proof}
Let
\[
e_1=(1,0,0,\ldots)
\]
be the first coordinate vector, and choose the constant group label \(1_G\) on all inactive coordinates. We construct a smooth homotopy from the identity to the constant point
\[
p_0=[(e_1,1_G)].
\]

Define the shift operator
\[
S(x_1,x_2,x_3,\ldots)=(0,x_1,x_2,\ldots).
\]
On labelled points, set
\[
S[(x_1,g_1),(x_2,g_2),\ldots]
=
[(0,1_G),(x_1,g_1),(x_2,g_2),\ldots].
\]
This is well-defined because the first inserted coordinate has coefficient zero, hence its group label is irrelevant.

We first construct a homotopy from the identity to the shift. For \(s\in[0,1]\), define
\[
H_s^{(1)}([(x_i,g_i)])
=
\left[
\frac{(1-s)x+sSx}{\|(1-s)x+sSx\|},
\ \text{with transported labels}
\right].
\]
The denominator never vanishes. Indeed, \(x\) and \(Sx\) are orthogonal in the Hilbert space of finitely supported real sequences, and \(\|x\|=\|Sx\|=1\). Hence
\[
\|(1-s)x+sSx\|^2
=
(1-s)^2+s^2,
\]
which is strictly positive for all \(s\in[0,1]\).

The expression has finite support whenever \(x\) has finite support. On each finite chart \(S_I\times G^I\), the formula is a smooth map into a larger finite chart. Hence \(H^{(1)}\) is smooth for the final diffeology.

At \(s=0\), \(H_s^{(1)}\) is the identity. At \(s=1\), it is the shift.

We now contract the shift to \(p_0\). Define
\[
H_s^{(2)}([(x_i,g_i)])
=
\left[
\frac{(1-s)Sx+s e_1}{\|(1-s)Sx+s e_1\|},
\ \text{with transported labels}
\right].
\]
Again the denominator never vanishes. Since \(Sx\) has first coordinate zero, it is orthogonal to \(e_1\), and therefore
\[
\|(1-s)Sx+s e_1\|^2
=
(1-s)^2+s^2.
\]
At \(s=0\) we obtain the shift, and at \(s=1\) we obtain the constant point \(p_0\).

As before, the homotopy is smooth on finite charts, hence smooth for the final diffeology. Concatenating \(H^{(1)}\) and \(H^{(2)}\) gives a smooth contraction of \(E^{\mathrm{sph}}(G)\).
\end{proof}

\subsection{The universal projective class}

\begin{proposition}
The projection
\[
q:E^{\mathrm{sph}}(G)\longrightarrow E^{\mathrm{proj}}(G)
\]
is a principal \(\mathbb Z_2\)-bundle in the diffeological sense.
\end{proposition}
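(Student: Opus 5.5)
To prove that $q:E^{\mathrm{sph}}(G)\to E^{\mathrm{proj}}(G)$ is a principal $\mathbb Z_2$-bundle, I will use the diffeological definition: a smooth free action of a diffeological group whose quotient map is a subduction, and whose action map
\[
\Phi:\mathbb Z_2\times E^{\mathrm{sph}}(G)\to E^{\mathrm{sph}}(G)\times_{E^{\mathrm{proj}}(G)}E^{\mathrm{sph}}(G),\qquad (\varepsilon,z)\mapsto(z,\varepsilon z),
\]
is a diffeomorphism onto the fibre product. Equivalently, it suffices to exhibit local smooth sections of $q$ along plots, i.e.\ local triviality along plots. The earlier proposition already gives that the antipodal action is well-defined, smooth and free, and $q$ is a subduction because $E^{\mathrm{proj}}(G)$ carries the quotient diffeology.

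First, $\Phi$ is a bijection. Surjectivity holds because two points in the same fibre differ by some $\varepsilon$. Injectivity is exactly freeness. Smoothness of $\Phi$ follows from smoothness of the action, since $\mathbb Z_2$ is discrete.

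The real content is smoothness of the inverse. Take a plot $u\mapsto(z(u),z'(u))$ of the fibre product with $z'(u)=\varepsilon(u)z(u)$. I must show that $\varepsilon:U\to\{\pm1\}$ is locally constant. Near $u_0$, I use the characterization of plots to write $z=\psi_I\circ(x_i,g_i)$ and $z'=\psi_J\circ(x'_j,g'_j)$ on a neighbourhood $V$, and I may enlarge both to a common finite $I$. The spherical coordinates are well-defined on the quotient, since the relation only affects labels. So $x'_i(u)=\varepsilon(u)x_i(u)$ for all $i$, with $x,x'$ smooth $V\to S_I$.

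Now pick $i$ with $x_i(u_0)\neq0$; this is possible because $\sum x_i^2=1$. On a smaller neighbourhood $x_i\neq0$ holds, and there
\[
\varepsilon=\frac{x'_i}{x_i}
\]
is continuous and $\pm1$-valued, hence locally constant. This gives smoothness of $\Phi^{-1}$.

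Equivalently, the same argument provides local trivializations along plots. Since $q$ is a subduction, a plot $p$ of $E^{\mathrm{proj}}(G)$ lifts locally to $\tilde p$. Then the two sections $\pm\tilde p$ split $p^*E^{\mathrm{sph}}(G)$ as $V\times\mathbb Z_2$, and the sign function above shows that any plot of the pullback takes values in one sheet locally.

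The main obstacle I expect is the identification of the spherical coordinates as smooth functions on $E^{\mathrm{sph}}(G)$ independent of the local representative. A plot may cross strata, and representatives over different index sets $I,J$ must be compared. I will handle this by noting that the coordinate functions $x_i$ compose smoothly with every $\psi_I$; they are zero when $i\notin I$. Hence, by the smooth-function criterion, they are smooth on $E^{\mathrm{sph}}(G)$, which makes the ratio argument legitimate.
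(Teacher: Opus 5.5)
Your proof is correct and follows the same route as the paper: both show that $(p,\varepsilon)\mapsto(p,\varepsilon p)$ is a smooth bijection onto $E^{\mathrm{sph}}(G)\times_{E^{\mathrm{proj}}(G)}E^{\mathrm{sph}}(G)$, using that the antipodal action is smooth and free.

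The one difference favours your version. The paper only asserts that the inverse $(p,p')\mapsto\varepsilon$ is smooth ``since $\mathbb Z_2$ is discrete and the quotient diffeology is final''. That reasoning does not work for an arbitrary free $\mathbb Z_2$-action: it fails, e.g., for the swap action on a two-point set with the coarse diffeology. Your observation is what actually makes this step rigorous: the coordinates $x_i$ are smooth functions on $E^{\mathrm{sph}}(G)$, so $\varepsilon=x_i'/x_i$ is locally constant near any point where $x_i\neq0$.
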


\begin{proof}
The map \(q\) is the quotient map for a smooth free action of the discrete diffeological group \(\mathbb Z_2\). The action is free by the preceding proposition.

It remains to verify the principal bundle condition. Consider the map
\[
E^{\mathrm{sph}}(G)\times \mathbb Z_2
\longrightarrow
E^{\mathrm{sph}}(G)\times_{E^{\mathrm{proj}}(G)}E^{\mathrm{sph}}(G),
\qquad
(p,\varepsilon)\longmapsto (p,\varepsilon p).
\]
It is smooth because the action is smooth. It is bijective because two points lie in the same fiber of \(q\) precisely when they differ by a unique element of \(\mathbb Z_2\), since the action is free. Its inverse sends \((p,p')\) to the unique \(\varepsilon\in\mathbb Z_2\) satisfying \(p'=\varepsilon p\). Since \(\mathbb Z_2\) is discrete and the quotient diffeology is final, this inverse is smooth. Therefore \(q\) is a principal \(\mathbb Z_2\)-bundle.
\end{proof}

\begin{theorem}
The quotient map
\[
\pi:
E^{\mathrm{sph}}(G)
\longrightarrow
E^{\mathrm{proj}}(G)
=
E^{\mathrm{sph}}(G)/\mathbb Z_2
\]
defines a principal \(\mathbb Z_2\)-bundle in the diffeological sense.

Consequently, it determines a canonical characteristic class
\[
\alpha_{\mathrm{sph}}
\in
H^1(E^{\mathrm{proj}}(G);\mathbb Z_2).
\]
\end{theorem}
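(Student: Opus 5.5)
The plan has two parts: first identify $\pi$ with the map $q$ of the preceding proposition, then produce the class $\alpha_{\mathrm{sph}}$ from the classification of principal $\mathbb Z_2$-bundles.

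\textbf{Identifying $\pi$ with $q$.} By definition, $E^{\mathrm{proj}}(G)$ carries the quotient diffeology for the antipodal action. So $\pi$ is literally the map $q$, and the principal bundle assertion is the preceding proposition. That proposition rests on the antipodal action being well-defined, smooth and free, and on the shear map $(p,\varepsilon)\mapsto(p,\varepsilon p)$ being a diffeomorphism onto $E^{\mathrm{sph}}(G)\times_{E^{\mathrm{proj}}(G)}E^{\mathrm{sph}}(G)$.

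The one point I would make more carefully is smoothness of the inverse of the shear map. Take a plot $(p,p')$ of the fibre product. Locally it lifts through some $S_I\times G^I$, and the spherical coordinates are untouched by $\sim$. So the sign $\varepsilon$ with $p'=\varepsilon p$ is recovered locally as the sign of $\langle x,x'\rangle$, which equals $\pm1$ and never vanishes. Hence $\varepsilon$ is a smooth, locally constant function, as required.

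\textbf{Constructing $\alpha_{\mathrm{sph}}$.} I would use the standard dictionary: principal bundles with discrete structure group $\mathbb Z_2$ are classified by $H^1(-;\mathbb Z_2)$. I will fix this as Čech cohomology with respect to $D$-open covers, or equivalently as $H^1$ of the $D$-topology, or as $\mathrm{Hom}(\pi_1^{D},\mathbb Z_2)$.

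Concretely, I will cover $E^{\mathrm{proj}}(G)$ by the $D$-open sets $U_j=\pi(\{x_j\neq0\})$. These sets are $\mathbb Z_2$-invariant, and their preimages are $D$-open because each $x_j\circ\widetilde p$ is continuous on every plot. Over $U_j$, the section $s_j$ chooses the representative with $x_j>0$; this is smooth because $x_j\mapsto \operatorname{sign}(x_j)$ is locally constant on $\{x_j\neq 0\}$. The transition functions are
\[
g_{jk}=\operatorname{sign}(x_jx_k)\in\mathbb Z_2 \quad\text{on } U_j\cap U_k.
\]
They are locally constant and satisfy the cocycle identity. The class $\alpha_{\mathrm{sph}}$ is then the Čech class $[g_{jk}]$. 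It does not depend on the choice of sections, since changing the sections alters $g_{jk}$ by a coboundary. Naturality follows because the whole construction is canonical.

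\textbf{Main obstacle.} The difficulty is foundational: choosing a cohomology theory on diffeological spaces in which $H^1(-;\mathbb Z_2)$ genuinely classifies diffeological $\mathbb Z_2$-bundles. I would first try the monodromy route. A principal bundle with discrete fibre is a diffeological covering, and such coverings correspond to $\mathrm{Hom}(\pi_1^D,\mathbb Z_2)$. That group equals $H^1$ via the Hurewicz map for the smooth singular theory. As a consistency check, the contractibility theorem shows that $\pi$ is a universal covering. Hence $\pi_1^D(E^{\mathrm{proj}}(G))\cong\mathbb Z_2$, and $\alpha_{\mathrm{sph}}$ is the nonzero generator.
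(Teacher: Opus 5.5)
Your proposal is correct and follows the same route as the paper. Principality of $\pi$ comes from the free smooth antipodal action through the shear map $(p,\varepsilon)\mapsto(p,\varepsilon p)$, and $\alpha_{\mathrm{sph}}$ is the \v{C}ech class of the transition functions of local sections over a $D$-open cover.

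Your two refinements justify the two points the paper only asserts. First, recovering $\varepsilon=\langle x,x'\rangle$ locally on plots shows the inverse shear map is smooth, which is what makes ``smooth and free'' suffice here. Second, the explicit cover $U_j=\pi(\{x_j\neq0\})$, with sections normalized by $x_j>0$ and cocycle $\operatorname{sign}(x_jx_k)$, shows that a $D$-open cover with smooth local sections actually exists.
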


\begin{proof}
Recall that \(\mathbb Z_2=\{\pm1\}\) acts on
\(E^{\mathrm{sph}}(G)\) by
\[
\varepsilon\cdot(x_i,g_i)
=
(\varepsilon x_i,g_i).
\]

We first show that this action is smooth, free, and proper in the
diffeological sense.

\medskip

\noindent
\textbf{Smoothness.}
On each generating chart
\[
S_I\times G^I,
\]
the action is induced by
\[
\varepsilon\cdot((x_i)_{i\in I},(g_i)_{i\in I})
=
((\varepsilon x_i)_{i\in I},(g_i)_{i\in I}),
\]
which is smooth since multiplication by \(\pm1\) is smooth on the Euclidean
space \(\mathbb R^I\). By the definition of the final diffeology, the induced
action on \(E^{\mathrm{sph}}(G)\) is smooth.

\medskip

\noindent
\textbf{Freeness.}
Assume that
\[
\varepsilon\cdot(x_i,g_i)=(x_i,g_i)
\]
in \(E^{\mathrm{sph}}(G)\). By definition of the quotient relation, this means
that for every index \(i\) such that \(x_i\neq0\),
\[
\varepsilon x_i=x_i.
\]
Since
\[
\sum_i x_i^2=1,
\]
at least one coordinate satisfies \(x_i\neq0\). Hence
\[
\varepsilon=1.
\]
Therefore the action is free.

\medskip

\noindent
\textbf{Quotient structure.}
By definition,
\[
E^{\mathrm{proj}}(G)
=
E^{\mathrm{sph}}(G)/\mathbb Z_2
\]
is equipped with the quotient diffeology induced by the projection
\[
\pi:E^{\mathrm{sph}}(G)\to E^{\mathrm{proj}}(G).
\]

Since the action is smooth and free, the quotient map defines a principal
\(\mathbb Z_2\)-bundle in the diffeological sense.

\medskip

We now construct the characteristic class.

Choose a \(D\)-open cover
\[
(U_a)_a
\]
of \(E^{\mathrm{proj}}(G)\) over which the principal bundle admits smooth local
sections
\[
s_a:U_a\to E^{\mathrm{sph}}(G).
\]

On overlaps
\[
U_a\cap U_b,
\]
there exists a unique locally constant function
\[
g_{ab}:U_a\cap U_b\to\mathbb Z_2
\]
such that
\[
s_b=g_{ab}\cdot s_a.
\]

Since \(\mathbb Z_2\) is discrete, the family \((g_{ab})\) defines a
Čech \(1\)-cocycle with values in \(\mathbb Z_2\). Its cohomology class
\[
[(g_{ab})]
\in
\check H^1(E^{\mathrm{proj}}(G);\mathbb Z_2)
\]
depends only on the isomorphism class of the principal bundle.

Under the standard identification between principal \(\mathbb Z_2\)-bundles
and degree-one cohomology classes, this cocycle defines the characteristic
class
\[
\alpha_{\mathrm{sph}}
\in
H^1(E^{\mathrm{proj}}(G);\mathbb Z_2).
\]
\end{proof}

\begin{remark}
The class
\[
\alpha_{\mathrm{sph}}
\]
is the analogue of the universal first Stiefel--Whitney class associated with
the universal double covering
\[
S^\infty\to\mathbb RP^\infty.
\]
\end{remark}

\begin{remark}
If \(E^{\mathrm{sph}}(G)\) is diffeologically contractible, then
\(E^{\mathrm{proj}}(G)\) plays the role of a diffeological classifying space
for principal \(\mathbb Z_2\)-bundles.
\end{remark}

\begin{remark}
The corresponding cohomology class may also be interpreted as the obstruction
to lifting maps
\[
X\to E^{\mathrm{proj}}(G)
\]
through the double covering
\[
E^{\mathrm{sph}}(G)\to E^{\mathrm{proj}}(G).
\]
\end{remark}

\subsection{The \(G\)-action and the product quotient}

The group \(G\) acts on \(E^{\mathrm{sph}}(G)\) by
\[
h\cdot [(x_i,g_i)]
=
[(x_i,hg_i)].
\]

\begin{proposition}
The actions of \(G\) and \(\mathbb Z_2\) on \(E^{\mathrm{sph}}(G)\) commute.
\end{proposition}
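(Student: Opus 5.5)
The plan is to verify commutation directly on representatives and then check that everything descends to the quotient $E^{\mathrm{sph}}(G)$. Before doing so, I would confirm that the $G$-action $h\cdot[(x_i,g_i)]=[(x_i,hg_i)]$ is well defined, in the same way the antipodal action was shown to be well defined earlier in this section. If $(x_i,g_i)\sim(x_i,g_i')$, then $g_i=g_i'$ whenever $x_i\neq0$. Hence $hg_i=hg_i'$ for those indices, and so $(x_i,hg_i)\sim(x_i,hg_i')$. Well-definedness of the $\mathbb Z_2$-action is already given by the proposition above.

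The core step is a one-line computation on a representative $(x_i,g_i)\in\widetilde E^{\mathrm{sph}}(G)$. For $h\in G$ and $\varepsilon\in\{\pm1\}$ we have
\[
h\cdot\bigl(\varepsilon\cdot[(x_i,g_i)]\bigr)=h\cdot[(\varepsilon x_i,g_i)]=[(\varepsilon x_i,hg_i)],
\]
and also
\[
\varepsilon\cdot\bigl(h\cdot[(x_i,g_i)]\bigr)=\varepsilon\cdot[(x_i,hg_i)]=[(\varepsilon x_i,hg_i)].
\]
The two results agree. Conceptually, the $\mathbb Z_2$-action touches only the spherical coordinates and the $G$-action touches only the group labels, so they act on different factors of each chart $S_I\times G^I$. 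Since both formulas are computed on arbitrary representatives and each action is well defined, the identity holds on equivalence classes.

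As a complement, I would note that the combined map $(h,\varepsilon)\mapsto$ (action) defines a smooth action of $G\times\mathbb Z_2$. By the smoothness criterion of Section~\ref{sec:glue}, it suffices to check smoothness on each chart $S_I\times G^I$. There the action is $((x_i),(g_i))\mapsto((\varepsilon x_i),(hg_i))$, which is smooth because multiplication in $G$ is smooth. This justifies the later definition $B^{\mathrm{sph}}(G)=E^{\mathrm{sph}}(G)/(G\times\mathbb Z_2)$.

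No real obstacle is expected. The only point needing care is well-definedness of the $G$-action on the quotient, in particular the treatment of inactive labels where $x_i=0$. That is handled by the observation that left multiplication preserves the relation $\sim$.
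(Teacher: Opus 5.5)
Your proposal is correct and follows essentially the same route as the paper, whose proof is exactly the two-line computation showing that both composites equal $[(\varepsilon x_i,hg_i)]$. Your extra checks (that the $G$-action respects $\sim$ at inactive labels, and smoothness on the charts $S_I\times G^I$) are details the paper leaves implicit, but they do not change the argument.
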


\begin{proof}
For \(h\in G\) and \(\varepsilon\in\mathbb Z_2\),
\[
h\cdot(\varepsilon\cdot[(x_i,g_i)])
=
h\cdot[(\varepsilon x_i,g_i)]
=
[(\varepsilon x_i,hg_i)].
\]
On the other hand,
\[
\varepsilon\cdot(h\cdot[(x_i,g_i)])
=
\varepsilon\cdot[(x_i,hg_i)]
=
[(\varepsilon x_i,hg_i)].
\]
The two expressions agree.
\end{proof}

\begin{definition}
The product spherical classifying quotient is
\[
B^{\mathrm{sph}}_{\times}(G)
:=
E^{\mathrm{sph}}(G)/(G\times \mathbb Z_2).
\]
\end{definition}

\begin{proposition}
There are canonical diffeomorphisms
\[
(E^{\mathrm{sph}}(G)/G)/\mathbb Z_2
\cong
(E^{\mathrm{sph}}(G)/\mathbb Z_2)/G
\cong
B^{\mathrm{sph}}_{\times}(G).
\]
\end{proposition}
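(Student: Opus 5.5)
The plan is to derive both diffeomorphisms from one general fact about iterated quotient diffeologies. Let a diffeological group \(K\) act smoothly on a space \(X\), and let \(N\trianglelefteq K\) be a normal subgroup. Then the induced \(K/N\)-action on \(X/N\) is well defined and smooth. Moreover, the canonical bijection \((X/N)/(K/N)\to X/K\) is a diffeomorphism. We apply this with \(X=E^{\mathrm{sph}}(G)\), \(K=G\times\mathbb Z_2\), and \(N=G\times\{1\}\) or \(N=\{1\}\times\mathbb Z_2\). Both are normal, being direct factors. The preceding proposition shows that the two actions commute, so they assemble into a single action of \(G\times\mathbb Z_2\) by \((h,\varepsilon)\cdot[(x_i,g_i)]=[(\varepsilon x_i,hg_i)]\). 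This action is smooth, since on each chart \(S_I\times G^I\) it is given by a smooth formula. We then use the smoothness criterion of Section~\ref{sec:glue}, applied to the composite with \(\psi_I\).

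\textbf{Step 1: well-definedness and bijectivity.} Because the actions commute, \(G\) maps \(\mathbb Z_2\)-orbits to \(\mathbb Z_2\)-orbits, so \(G\) acts on \(E^{\mathrm{sph}}(G)/\mathbb Z_2\). Symmetrically, \(\mathbb Z_2\) acts on \(E^{\mathrm{sph}}(G)/G\). In each case, the orbit of a class \([p]\) in the iterated quotient is the image of the full orbit \((G\times\mathbb Z_2)\cdot p\). Hence the map \([[p]]\mapsto (G\times\mathbb Z_2)\cdot p\) to \(B^{\mathrm{sph}}_\times(G)\) is well defined and bijective.

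\textbf{Step 2: smoothness of the actions on the intermediate quotients.} Consider the action map \(G\times (X/N)\to X/N\). Its composite with \(\mathrm{id}\times\pi_N\) equals \(\pi_N\circ(\text{action on }X)\), which is smooth. The key point is that \(\mathrm{id}_G\times\pi_N\) is a subduction. This holds because the product of a subduction with an identity map is a subduction: a plot \((a,b)\) of \(G\times X/N\) lifts locally, since \(b\) lifts locally through \(\pi_N\). Hence the action on \(X/N\) is smooth. The same argument applies to the \(\mathbb Z_2\)-action, which is even easier because \(\mathbb Z_2\) is discrete.

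\textbf{Step 3: the bijection is a diffeomorphism.} The projections fit into the identity
\[
\pi_{X/K}=\Phi\circ\pi_{(X/N)/(K/N)}\circ\pi_N .
\]
The composite \(\pi_{(X/N)/(K/N)}\circ\pi_N\) of two subductions is again a subduction. The map \(\pi_{X/K}\) is a subduction by definition of the quotient diffeology on \(B^{\mathrm{sph}}_\times(G)\). We then use the standard lemma: if \(\Phi\circ q=q'\) with \(q,q'\) subductions and \(\Phi\) bijective, then \(\Phi\) is a diffeomorphism. Indeed, \(\Phi\) is smooth because \(q\) is a subduction. Its inverse satisfies \(\Phi^{-1}\circ q'=q\), so \(\Phi^{-1}\) is smooth because \(q'\) is a subduction. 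Applying this for each choice of \(N\) gives the two diffeomorphisms to \(B^{\mathrm{sph}}_\times(G)\). Composing them gives the middle one. Canonicity holds because every map involved is induced by the identity of \(E^{\mathrm{sph}}(G)\).

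I expect the main obstacle to be Step 2, namely checking that \(\mathrm{id}_G\times\pi_N\) is a subduction in the diffeological category. Products and quotients need not interact well in general, but here the lifting is local and coordinatewise, so the argument should go through. The remaining steps are formal.
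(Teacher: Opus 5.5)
Your proposal is correct and follows essentially the same route as the paper: the paper identifies the orbits set-theoretically and then invokes the universal property of quotient diffeologies, and your Step~3 (a composite of subductions is a subduction, plus the bijection lemma) is that same argument made explicit. Step~2 is a harmless extra, since the quotient diffeology on the iterated quotient depends only on the orbit relation and not on smoothness of the induced action; moreover a product of subductions is always a subduction in diffeology, so the obstacle you anticipated does not arise.
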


\begin{proof}
Since the two actions commute, they define a smooth action of the product group
\[
G\times\mathbb Z_2.
\]
The orbit of a point under first quotienting by \(G\) and then by \(\mathbb Z_2\) is exactly its orbit under \(G\times\mathbb Z_2\). The same is true if the order of the quotients is reversed.

The set-theoretic identifications are therefore clear. The diffeological identifications follow from the universal property of quotient diffeologies: a map out of any of these quotients is smooth if and only if its pullback to \(E^{\mathrm{sph}}(G)\) is smooth and invariant under both actions. This condition is independent of the order in which the two quotients are taken.
\end{proof}

\subsection{Twisted quotients and semidirect products}

The preceding construction corresponds to the product group \(G\times\mathbb Z_2\). To obtain genuinely twisted \(G\)-bundles, one must introduce an action of \(\mathbb Z_2\) on \(G\).

Let
\[
\sigma:\mathbb Z_2\longrightarrow \operatorname{Aut}(G)
\]
be a smooth action. Write
\[
G\rtimes_\sigma \mathbb Z_2
\]
for the corresponding semidirect product.

\begin{definition}
The \(\sigma\)-twisted action of \(G\rtimes_\sigma\mathbb Z_2\) on \(E^{\mathrm{sph}}(G)\) is defined by
\[
(h,\varepsilon)\cdot[(x_i,g_i)]
=
[(\varepsilon x_i, h\,\sigma^\varepsilon(g_i))].
\]
\end{definition}

\begin{proposition}
This defines a smooth action of \(G\rtimes_\sigma\mathbb Z_2\) on \(E^{\mathrm{sph}}(G)\).
\end{proposition}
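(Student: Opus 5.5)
The plan is to verify three things in turn: that the formula is well defined on equivalence classes, that it satisfies the axioms of a left action for the semidirect product law, and that the action map
\[
(G\rtimes_\sigma\mathbb Z_2)\times E^{\mathrm{sph}}(G)\longrightarrow E^{\mathrm{sph}}(G)
\]
is smooth. Throughout I use the semidirect product multiplication
\[
(h,\varepsilon)(h',\varepsilon')=(h\,\sigma^\varepsilon(h'),\varepsilon\varepsilon'),
\]
with unit \((1_G,1)\).

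\textbf{Well-definedness.} I argue exactly as for the antipodal action. Suppose \((x_i,g_i)\sim(x_i,g_i')\), so that \(g_i=g_i'\) whenever \(x_i\neq0\). Since \(\varepsilon x_i\neq0\) if and only if \(x_i\neq0\), we get \(h\sigma^\varepsilon(g_i)=h\sigma^\varepsilon(g_i')\) at every index where \(\varepsilon x_i\neq 0\). Hence the two image representatives are equivalent. The normalization \(\sum_i(\varepsilon x_i)^2=1\) and finiteness of support are clearly preserved.

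\textbf{Action axioms.} The unit acts trivially because \(\sigma^{1}=\mathrm{id}\). For compatibility with multiplication I compute directly:
\[
(h,\varepsilon)\cdot\bigl((h',\varepsilon')\cdot[(x_i,g_i)]\bigr)
=[(\varepsilon\varepsilon'x_i,\;h\,\sigma^\varepsilon(h')\,\sigma^{\varepsilon}\sigma^{\varepsilon'}(g_i))].
\]
Here I used that \(\sigma^\varepsilon\) is a group automorphism, so \(\sigma^\varepsilon(h'\sigma^{\varepsilon'}(g_i))=\sigma^\varepsilon(h')\,\sigma^\varepsilon\sigma^{\varepsilon'}(g_i)\). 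Since \(\sigma\) is a homomorphism, \(\sigma^\varepsilon\sigma^{\varepsilon'}=\sigma^{\varepsilon\varepsilon'}\), and the right-hand side is \((h\sigma^\varepsilon(h'),\varepsilon\varepsilon')\cdot[(x_i,g_i)]\), as required.

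\textbf{Smoothness.} This is the step needing most care, because the smoothness criterion of Section~\ref{sec:glue} is stated for maps out of \(E^{\mathrm{sph}}(G)\) alone, whereas here the source is a product. I would work directly with plots. Take a plot \(u\mapsto((h(u),\varepsilon(u)),p(u))\) of the product. Since \(\mathbb Z_2\) is discrete, \(\varepsilon\) is locally constant. By the characterization of plots, \(p\) locally has the form \(\psi_I\circ\widetilde p\) with \(\widetilde p=(x_i,g_i)_{i\in I}\) smooth into \(S_I\times G^I\). On a neighbourhood \(V\) where all of this holds, the composite equals \(\psi_I\) applied to
\[
u\longmapsto\bigl((\varepsilon x_i(u))_{i\in I},\,(h(u)\,\sigma^\varepsilon(g_i(u)))_{i\in I}\bigr).
\]
This map is smooth into \(S_I\times G^I\): multiplication by \(\pm1\) preserves \(S_I\) and is smooth, \(\sigma^\varepsilon\) is a smooth map \(G\to G\), and multiplication in \(G\) is smooth. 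Hence the composite is locally a plot of \(E^{\mathrm{sph}}(G)\), and by the locality (sheaf) axiom it is a plot globally. This proves joint smoothness.

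The only genuine obstacle is this reduction of joint smoothness to the local charts. It is handled by the local factorization of plots of \(E^{\mathrm{sph}}(G)\) together with the discreteness of \(\mathbb Z_2\).
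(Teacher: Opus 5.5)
Your proof is correct and follows the paper's three steps: compatibility with \(\sim\) because \(\varepsilon x_i\neq0\iff x_i\neq0\), the action law from \(\sigma^\varepsilon\) being an automorphism with \(\sigma^\varepsilon\sigma^{\varepsilon'}=\sigma^{\varepsilon\varepsilon'}\), and smoothness checked on the finite charts \(S_I\times G^I\). Your extra check of the unit axiom and your plot-level argument for joint smoothness in \(((h,\varepsilon),p)\), using that \(\varepsilon\) is locally constant, spell out what the paper's one-line smoothness argument leaves implicit.
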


\begin{proof}
We first check that the formula is compatible with the quotient relation. If
\[
(x_i,g_i)\sim(x_i,g_i'),
\]
then \(g_i=g_i'\) whenever \(x_i\neq0\). Since \(\varepsilon x_i\neq0\) if and only if \(x_i\neq0\), we get
\[
h\,\sigma^\varepsilon(g_i)
=
h\,\sigma^\varepsilon(g_i')
\]
for all active indices. Hence the formula is well-defined.

We now verify the group law. The multiplication in \(G\rtimes_\sigma\mathbb Z_2\) is
\[
(h,\varepsilon)(k,\eta)
=
(h\,\sigma^\varepsilon(k),\varepsilon\eta).
\]
Then
\[
(k,\eta)\cdot[(x_i,g_i)]
=
[(\eta x_i,k\,\sigma^\eta(g_i))].
\]
Applying \((h,\varepsilon)\) gives
\[
(h,\varepsilon)\cdot[(\eta x_i,k\,\sigma^\eta(g_i))]
=
[(\varepsilon\eta x_i,
h\,\sigma^\varepsilon(k\,\sigma^\eta(g_i)))].
\]
Since \(\sigma^\varepsilon\) is an automorphism,
\[
\sigma^\varepsilon(k\,\sigma^\eta(g_i))
=
\sigma^\varepsilon(k)\,\sigma^{\varepsilon\eta}(g_i).
\]
Thus
\[
(h,\varepsilon)\cdot((k,\eta)\cdot[(x_i,g_i)])
=
[(\varepsilon\eta x_i,
h\,\sigma^\varepsilon(k)\,\sigma^{\varepsilon\eta}(g_i))],
\]
which is precisely
\[
(h\,\sigma^\varepsilon(k),\varepsilon\eta)\cdot[(x_i,g_i)].
\]
Hence this is an action.

Smoothness follows from smoothness of the \(G\)-action, of the finite \(\mathbb Z_2\)-action, and of the automorphism \(\sigma^\varepsilon\) on each finite chart \(S_I\times G^I\).
\end{proof}

\begin{definition}
The \(\sigma\)-twisted spherical classifying quotient is
\[
B^{\mathrm{sph}}_\sigma(G)
:=
E^{\mathrm{sph}}(G)/(G\rtimes_\sigma\mathbb Z_2).
\]
\end{definition}

\begin{remark}
When \(\sigma\) is trivial, \(G\rtimes_\sigma\mathbb Z_2=G\times\mathbb Z_2\), and \(B^{\mathrm{sph}}_\sigma(G)=B^{\mathrm{sph}}_{\times}(G)\).
\end{remark}

\begin{remark}
For nontrivial \(\sigma\), the \(G\)- and \(\mathbb Z_2\)-parts no longer commute as independent actions. Thus the twisted quotient must be understood as a quotient by the semidirect product, not as an iterated quotient by commuting actions.
\end{remark}

\subsection{Cocycle interpretation}

Let \(X\) be a diffeological space and let
\[
P\longrightarrow X
\]
be a principal \(G\rtimes_\sigma\mathbb Z_2\)-bundle. Choose a cover \((U_i)\) and transition functions
\[
h_{ij}:U_{ij}\longrightarrow G\rtimes_\sigma\mathbb Z_2.
\]
Write
\[
h_{ij}=(g_{ij},\varepsilon_{ij}).
\]
The cocycle condition
\[
h_{ij}h_{jk}=h_{ik}
\]
becomes
\[
\varepsilon_{ij}\varepsilon_{jk}=\varepsilon_{ik},
\]
and
\[
g_{ij}\,\sigma^{\varepsilon_{ij}}(g_{jk})=g_{ik}.
\]

\begin{proposition}
The \(\mathbb Z_2\)-components \((\varepsilon_{ij})\) define a class
\[
\alpha(P)\in H^1(X;\mathbb Z_2).
\]
The \(G\)-components \((g_{ij})\) form a \(G\)-valued cocycle twisted by \(\alpha(P)\).
\end{proposition}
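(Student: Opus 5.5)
The plan is to read both assertions off the cocycle identities just derived, using the split short exact sequence
\[
1\to G\to G\rtimes_\sigma\mathbb Z_2\xrightarrow{\ \mathrm{pr}\ }\mathbb Z_2\to1 .
\]

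\textbf{The class \(\alpha(P)\).} The projection \(\mathrm{pr}(h,\varepsilon)=\varepsilon\) is a smooth group homomorphism, since the multiplication rule \((h,\varepsilon)(k,\eta)=(h\,\sigma^\varepsilon(k),\varepsilon\eta)\) multiplies second components. Hence \(\varepsilon_{ij}=\mathrm{pr}\circ h_{ij}\) is smooth with values in the discrete group \(\mathbb Z_2\), so it is locally constant on \(U_{ij}\). The identity \(\varepsilon_{ij}\varepsilon_{jk}=\varepsilon_{ik}\) says that \((\varepsilon_{ij})\) is a Čech \(1\)-cocycle. I would then check independence of choices. Changing the local trivializations by \(k_i=(a_i,\eta_i)\) replaces \(h_{ij}\) by \(k_i^{-1}h_{ij}k_j\). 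Its \(\mathbb Z_2\)-component is \(\eta_i\varepsilon_{ij}\eta_j\), which is a coboundary modification because \(\mathbb Z_2\) is abelian. Refining the cover does not change the class in the direct limit. Equivalently, \(\alpha(P)\) is the class of the associated principal \(\mathbb Z_2\)-bundle \(P\times_{\mathrm{pr}}\mathbb Z_2\cong P/G\), in the same sense as the construction of \(\alpha_{\mathrm{sph}}\) in the preceding theorem.

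\textbf{The twisted \(G\)-cocycle.} The \(\mathbb Z_2\)-cocycle \((\varepsilon_{ij})\) defines a local system of groups \(G_\alpha\): it is obtained by gluing the trivial sheaves \(U_i\times G\) via \(\sigma^{\varepsilon_{ij}}\), and this gluing is consistent precisely because \(\sigma\) is a homomorphism and \(\varepsilon\) is a cocycle. The second identity, \(g_{ik}=g_{ij}\,\sigma^{\varepsilon_{ij}}(g_{jk})\), is exactly the non-abelian \(1\)-cocycle condition for \(G_\alpha\)-valued Čech cochains, where \(g_{jk}\) is transported into the \(i\)-th frame by \(\sigma^{\varepsilon_{ij}}\). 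Specializing gives the consistency checks \(g_{ii}=1\) and \(g_{ji}=\sigma^{\varepsilon_{ji}}(g_{ij}^{-1})\). I would also compute the effect of a gauge change: the new \(G\)-component is \(\sigma^{\eta_i}\bigl(a_i^{-1}g_{ij}\sigma^{\varepsilon_{ij}}(a_j)\bigr)\). This is the standard equivalence of twisted cocycles, combined with the change of \(\alpha\)-representative.

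\textbf{Main obstacle.} The hard step is making \enquote{twisted by \(\alpha(P)\)} well defined at the level of classes. The twisting system depends on the representative \((\varepsilon_{ij})\), not only on the class \(\alpha(P)\). I would handle this by fixing the representative. The precise claim is then that, for fixed \((\varepsilon_{ij})\), the classes of \((g_{ij})\) form the non-abelian set \(H^1(X;G_\alpha)\). A coboundary change \(\eta\) of \(\varepsilon\) induces the canonical isomorphism \(G_\alpha\cong G_{\alpha'}\) given by \(\sigma^{\eta_i}\) on \(U_i\), and the gauge formula above shows that this isomorphism carries the twisted cocycle to the twisted cocycle. The remaining technical point is diffeological: one needs \(D\)-open covers over which \(P\) trivializes. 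This is part of the definition of a principal bundle used here, so I would simply assume it.
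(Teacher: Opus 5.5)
Your proposal is correct and takes the paper's route. The paper likewise reads both assertions off the two component identities $\varepsilon_{ij}\varepsilon_{jk}=\varepsilon_{ik}$ and $g_{ij}\,\sigma^{\varepsilon_{ij}}(g_{jk})=g_{ik}$ of $h_{ij}h_{jk}=h_{ik}$, while your gauge formula $\sigma^{\eta_i}\bigl(a_i^{-1}g_{ij}\sigma^{\varepsilon_{ij}}(a_j)\bigr)$ and your gluing of $G_\alpha$ by $\sigma^{\varepsilon_{ij}}$ make precise what the paper leaves implicit.

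One small caveat, which does not affect the proposition as stated. The isomorphism $G_\alpha\cong G_{\alpha'}$ depends on the choice of $\eta$, so it is not canonical. As a result, the class of $(g_{ij})$ in $H^1(X;G_\alpha)$ is an invariant of $P$ only modulo the residual action of globally constant $\eta\in H^0(X;\mathbb Z_2)$ through $\sigma^\eta$. For example, with $G=\mathbb Z$, $\sigma=-\mathrm{id}$ and $\alpha=0$ over $S^1$, one only gets $n\sim -n$.
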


\begin{proof}
The equation
\[
\varepsilon_{ij}\varepsilon_{jk}=\varepsilon_{ik}
\]
is exactly the Čech \(1\)-cocycle condition for \(\mathbb Z_2\)-valued transition functions. Hence it defines a class in \(H^1(X;\mathbb Z_2)\).

The second equation
\[
g_{ij}\,\sigma^{\varepsilon_{ij}}(g_{jk})=g_{ik}
\]
says precisely that the \(G\)-valued transition functions fail to satisfy the ordinary cocycle condition by the twisting induced by the \(\mathbb Z_2\)-cocycle. This is the usual cocycle description of a \(G\)-bundle twisted by the local system determined by \(\alpha(P)\).
\end{proof}

\begin{remark}
Thus the topological information carried by the projective spherical quotient is not merely the product of a \(G\)-bundle and a double cover. In the presence of a nontrivial action \(\sigma\), it encodes a genuinely twisted principal \(G\)-bundle.
\end{remark}

\section{Classifying properties, characteristic cocycles and canonical forms}
\label{sec:classifying-properties}

\subsection{Classifying property of the spherical quotient}

\begin{definition}
Let
\[
\sigma:\mathbb Z_2\to \mathrm{Aut}(G)
\]
be a smooth action of \(\mathbb Z_2\) on the diffeological group \(G\).

We denote by
\[
H_\sigma
=
G\rtimes_\sigma \mathbb Z_2
\]
the corresponding semidirect product.

The \emph{\(\sigma\)-twisted spherical classifying space} is defined by
\[
B^{\mathrm{sph}}_\sigma(G)
:=
E^{\mathrm{sph}}(G)/H_\sigma,
\]
where \(H_\sigma\) acts on \(E^{\mathrm{sph}}(G)\) through the combined
\(G\)-action and \(\mathbb Z_2\)-action.
\end{definition}

\begin{theorem}
Assume that:
\begin{itemize}
\item \(E^{\mathrm{sph}}(G)\) is smoothly contractible;
\item the action of \(H_\sigma\) on \(E^{\mathrm{sph}}(G)\) is smooth and free.
\end{itemize}

Then the quotient projection
\[
\pi:
E^{\mathrm{sph}}(G)
\longrightarrow
B^{\mathrm{sph}}_\sigma(G)
\]
is a principal \(H_\sigma\)-bundle in the diffeological sense.

Moreover, it is universal for \(D\)-numerable principal
\(H_\sigma\)-bundles: for every diffeological space \(X\), smooth homotopy
classes of maps
\[
[X,B^{\mathrm{sph}}_\sigma(G)]^\infty
\]
are in bijection with isomorphism classes of \(D\)-numerable principal
\(H_\sigma\)-bundles over \(X\).
\end{theorem}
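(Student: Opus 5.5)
The plan has two parts. First, show that $\pi$ is a principal $H_\sigma$-bundle. Second, prove universality by a Milnor-type argument adapted to the quadratic normalization, together with the diffeological analogue of Dold's theorem.

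\textbf{Principal bundle structure.} I would first confirm freeness directly, even though it is assumed. Suppose $(h,\varepsilon)\cdot[(x_i,g_i)]=[(x_i,g_i)]$. Since some $x_i\neq0$, we get $\varepsilon x_i=x_i$, so $\varepsilon=1$, and then $hg_i=g_i$ forces $h=1_G$.

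Next, exactly as in the proof that $q$ is a principal $\mathbb Z_2$-bundle, I would consider
\[
\Phi:E^{\mathrm{sph}}(G)\times H_\sigma\to E^{\mathrm{sph}}(G)\times_{B^{\mathrm{sph}}_\sigma(G)}E^{\mathrm{sph}}(G),\qquad (p,h)\mapsto(p,h\cdot p).
\]
This map is smooth and bijective. The only nontrivial point is smoothness of the inverse $(p,p')\mapsto h$. I would check it on the generating charts $S_I\times G^I$. Locally on a plot, pick an index $i$ with $x_i\neq0$ near the base point. Then $\varepsilon=\operatorname{sign}(x'_i/x_i)$ is locally constant, and $h=g'_i\,\sigma^{\varepsilon}(g_i)^{-1}$ is smooth in the chart. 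These local recipes agree on overlaps because the action is free, so the inverse is smooth.

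For local triviality and $D$-numerability, I would use the functions $x_i^2$, which are $H_\sigma$-invariant and smooth by the smoothness criterion for functions. They descend to a smooth partition of unity $\{x_i^2\}$ on $B^{\mathrm{sph}}_\sigma(G)$. Over $\{x_i\neq0\}$ there is a smooth section: normalize $x_i>0$ using $\mathbb Z_2$ and $g_i=1_G$ using $G$. This shows $\pi$ is $D$-numerable.

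\textbf{Universality.} Given a $D$-numerable principal $H_\sigma$-bundle $P\to X$, I would choose local trivializations $\tau_k:P|_{U_k}\to H_\sigma$, which are $H_\sigma$-equivariant. I would also choose a subordinate smooth partition of unity, and replace it by functions $\rho_k$ with $\sum_k\rho_k^2=1$. This can be done by taking $\rho_k=\phi_k/\|\phi\|$, with $\phi_k$ smooth and compactly supported inside $U_k$.

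This is exactly where the spherical normalization pays off: no square roots of partition functions are needed, unlike in the Milnor model. Writing $\tau_k(p)=(g_k(p),\varepsilon_k(p))$, I would define
\[
F(p)=\big[(\varepsilon_k(p)\rho_k(\pi p),\,g_k(p))_k\big].
\]
I would check three things. It is locally finitely supported, hence smooth by the plot characterization. Next, I would verify $H_\sigma$-equivariance using the semidirect product law: the $\varepsilon$-part multiplies the sign, and the $g$-part transforms through $h\,\sigma^{\varepsilon}(\cdot)$, matching the definition of the twisted action. Finally, $F$ therefore induces a classifying map $f:X\to B^{\mathrm{sph}}_\sigma(G)$ with $P\cong f^*E^{\mathrm{sph}}(G)$. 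The equivariance check is where the precise convention for the twisted action and for $\tau_k$ (left versus right) must be fixed; I would adjust $g_k$ to $\sigma^{\varepsilon_k}(g_k)$ if needed.

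\textbf{Injectivity and main obstacle.} For injectivity of $[X,B^{\mathrm{sph}}_\sigma(G)]^\infty\to\{\text{bundles}\}$ I would use two ingredients. The first is homotopy invariance of pullbacks for $D$-numerable bundles, as in Christensen--Wu and Magnot--Watts: smoothly homotopic maps give isomorphic bundles. The second is the converse: two classifying maps $f_0,f_1$ for isomorphic bundles correspond to equivariant maps $F_0,F_1:P\to E^{\mathrm{sph}}(G)$. These I would join by the shift trick from the contractibility theorem. Deform $F_0$ into the even coordinates and $F_1$ into the odd ones, then join them by the normalized segment $\frac{(1-s)a+sb}{\|(1-s)a+sb\|}$. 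The denominator is nonzero because $a\perp b$. Each step is equivariant, since the shifts and the normalization commute with the sign and label actions.

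I expect the main obstacle to be this equivariant-homotopy step, and in particular making sure the interleaving shifts are smooth on the final diffeology and compatible with the twisted labels. The fact that strata may be crossed transversally should make the normalized segment smooth without any stationarity correction.
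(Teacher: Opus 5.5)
\textbf{Verdict.} Your route differs from the paper's and is largely sound, but there is a genuine gap in the injectivity step.

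\textbf{What works.} Your principal-bundle argument is fine. It is in fact sharper than the paper's, which merely asserts that the division map $(p,p')\mapsto h$ is smooth. On a plot, choosing an index $i$ active near the base point and setting $\varepsilon=\operatorname{sign}(x_i'/x_i)$, $h=g_i'\,\sigma^{\varepsilon}(g_i)^{-1}$ is exactly what is needed. Your normalization $\rho_k=\mu_k/(\sum_j\mu_j^2)^{1/2}$ of a given locally finite partition $(\mu_k)$ is also correct. It avoids the square roots $\sqrt{\lambda_i}$ used in the paper's explicit classifying map. With left conventions, your map $F$ is $H_\sigma$-equivariant.

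\textbf{The gap.} You deform $F_0$ into the even slots and $F_1$ into the odd slots ``by the shift trick from the contractibility theorem'', i.e. along a normalized segment from $x$ to a shifted copy $Sx$. On labelled points this is not defined. At an intermediate time, slot $j$ must carry the weight $(1-s)x_j+s\,x_{S^{-1}(j)}$. These two contributions come with the labels $g_j$ and $g_{S^{-1}(j)}$, but a slot carries only one element of $G$. Any rule selecting one of them jumps when one weight tends to $0$ while the other does not. The paper's contractibility proof uses exactly this homotopy, together with the false claim $x\perp Sx$ (e.g. $x=(e_1+e_2)/\sqrt2$), so citing it does not close the gap. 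Only your final join of an even-supported and an odd-supported map is legitimate, because there the supports are disjoint.

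\textbf{How to repair it.} You need a label-preserving deformation into the even (resp. odd) slots. On the time interval $[2^{-n},2^{-n+1}]$, rotate the content of slot $n$ into slot $2n$ (resp. $2n-1$), $(x_n,0)\mapsto(\cos\theta\,x_n,\sin\theta\,x_n)$, with the label $g_n$ in both slots. Indices are then processed from the top down as $t$ increases. This has the following properties.
\begin{itemize}
\item The target slot is always empty when it is used.
\item Each step preserves $\sum_i x_i^2$ and commutes with $(h,\varepsilon)$.
\item On a plot supported in $\{1,\dots,N\}$, the homotopy is stationary for $t<2^{-N}$ and is a finite concatenation (with flat reparametrizations) afterwards, hence smooth.
\end{itemize}

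\textbf{Two smaller repairs.} First, $\{x_i^2\}$ alone does not prove $D$-numerability of $\pi$. Its supports are not contained in the sets $\{x_i\neq0\}$ where your sections live. The family is also not locally finite in the $D$-topology: the plots $s\mapsto\cos s\,e_1+\sin s\,e_i$ show that every $D$-open neighbourhood of $[e_1]$ meets every $\{x_i\neq0\}$. So a smooth Dold-type shrinking is required. Second, indexing the classifying map by $\mathbb N$ requires first passing to a countable numerable trivializing cover.

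\textbf{Comparison with the paper.} The paper builds no homotopies at all; it invokes the contractibility hypothesis and a cited Dold-type classification theorem. That route needs a theorem turning non-equivariant contractibility into equivariant homotopies (via shrinkability of $P\times_{H_\sigma}E^{\mathrm{sph}}(G)\to X$). It also still needs $\pi$ to be $D$-numerable. Your explicit Milnor-type route, once repaired as above, proves universality directly without using the contractibility hypothesis.
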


\begin{proof}
We first prove that
\[
\pi:
E^{\mathrm{sph}}(G)
\to
B^{\mathrm{sph}}_\sigma(G)
\]
is a principal \(H_\sigma\)-bundle.

\medskip

By assumption, the action
\[
H_\sigma\times E^{\mathrm{sph}}(G)
\to
E^{\mathrm{sph}}(G)
\]
is smooth and free. The quotient space
\[
B^{\mathrm{sph}}_\sigma(G)
=
E^{\mathrm{sph}}(G)/H_\sigma
\]
is equipped with the quotient diffeology induced by the projection \(\pi\).

We now consider the canonical map
\[
\Theta:
E^{\mathrm{sph}}(G)\times H_\sigma
\longrightarrow
E^{\mathrm{sph}}(G)\times_{B^{\mathrm{sph}}_\sigma(G)}
E^{\mathrm{sph}}(G),
\]
defined by
\[
\Theta(p,h)
=
(p,p\cdot h).
\]

We show that \(\Theta\) is a diffeomorphism.

\medskip

\noindent
\textbf{Injectivity.}
Suppose
\[
\Theta(p,h_1)=\Theta(p,h_2).
\]
Then
\[
p\cdot h_1=p\cdot h_2.
\]
Since the action is free,
\[
h_1=h_2.
\]
Hence \(\Theta\) is injective.

\medskip

\noindent
\textbf{Surjectivity.}
Let
\[
(p,q)
\in
E^{\mathrm{sph}}(G)\times_{B^{\mathrm{sph}}_\sigma(G)}
E^{\mathrm{sph}}(G).
\]
By definition of the fiber product,
\[
\pi(p)=\pi(q).
\]
Therefore \(p\) and \(q\) belong to the same \(H_\sigma\)-orbit. Hence there
exists
\[
h\in H_\sigma
\]
such that
\[
q=p\cdot h.
\]
Thus
\[
(p,q)=\Theta(p,h),
\]
and \(\Theta\) is surjective.

\medskip

\noindent
\textbf{Smoothness.}
The map \(\Theta\) is smooth because the action is smooth.

Its inverse is also smooth. Indeed, given
\[
(p,q)
\in
E^{\mathrm{sph}}(G)\times_{B^{\mathrm{sph}}_\sigma(G)}
E^{\mathrm{sph}}(G),
\]
the unique element
\[
h\in H_\sigma
\]
satisfying
\[
q=p\cdot h
\]
depends smoothly on \((p,q)\) by the definition of the quotient diffeology
and the freeness of the action.

Therefore \(\Theta\) is a diffeomorphism, and \(\pi\) is a principal
\(H_\sigma\)-bundle in the diffeological sense.

\medskip

We now prove universality.

Since \(E^{\mathrm{sph}}(G)\) is smoothly contractible, the principal bundle
\[
E^{\mathrm{sph}}(G)
\to
B^{\mathrm{sph}}_\sigma(G)
\]
satisfies the defining property of a universal bundle in the diffeological
Milnor framework developed in \cite{MagnotWatts2017}.

More precisely, the diffeological classification theorem states that if
\(E\to B\) is a smoothly contractible principal \(H_\sigma\)-bundle, then for
every diffeological space \(X\), every \(D\)-numerable principal
\(H_\sigma\)-bundle over \(X\) is isomorphic to the pull-back of
\(E\to B\) along a smooth map
\[
f:X\to B,
\]
and two such pull-backs are isomorphic if and only if the corresponding maps
are smoothly homotopic.

Applying this result to
\[
E^{\mathrm{sph}}(G)
\to
B^{\mathrm{sph}}_\sigma(G)
\]
gives the desired classification statement.
\end{proof}

\begin{remark}
When the action \(\sigma\) is trivial, one recovers the untwisted spherical
classifying space associated with
\[
G\times \mathbb Z_2.
\]
\end{remark}

\begin{remark}
The twisted quotient encodes simultaneously:
\begin{itemize}
\item the topology of the group \(G\),
\item the projective \(\mathbb Z_2\)-symmetry,
\item and the interaction between both structures through the twisting
homomorphism \(\sigma\).
\end{itemize}
\end{remark}

\begin{remark}
The associated characteristic classes naturally live in the cohomology of
\[
B^{\mathrm{sph}}_\sigma(G),
\]
and may be interpreted as twisted analogues of Stiefel--Whitney-type
obstructions.
\end{remark}
\subsection{Explicit construction of classifying maps}

Let
\[
P\longrightarrow X
\]
be a \(D\)-numerable principal \(H_\sigma\)-bundle.

Choose:
\begin{itemize}
\item a locally finite open cover \((U_i)_{i\in I}\) of \(X\),
\item local sections \(s_i:U_i\to P\),
\item a smooth partition of unity \((\lambda_i)_{i\in I}\) subordinate to \((U_i)\).
\end{itemize}

Let the transition functions be
\[
h_{ij}:U_i\cap U_j\to H_\sigma,
\qquad
s_j=s_i h_{ij}.
\]
Write
\[
h_{ij}=(g_{ij},\varepsilon_{ij})
\]
with
\[
g_{ij}:U_{ij}\to G,
\qquad
\varepsilon_{ij}:U_{ij}\to \mathbb Z_2.
\]

\begin{definition}
The classifying map
\[
f_P:X\longrightarrow B^{\mathrm{sph}}_\sigma(G)
\]
is defined locally by
\[
f_P(x)
=
\left[
\left(\sqrt{\lambda_i(x)},h_i(x)\right)_{i\in I}
\right],
\]
where \(h_i(x)\in H_\sigma\) are the local \(H_\sigma\)-coordinates determined by the chosen trivializations.
\end{definition}

Equivalently, in a local trivialization over \(U_k\), one may write
\[
f_P(x)
=
\left[
\left(\sqrt{\lambda_i(x)},h_{ki}(x)\right)_{i\in I}
\right].
\]

\begin{proposition}
The map \(f_P\) is well-defined and smooth.
\end{proposition}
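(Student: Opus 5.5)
The plan is to prove well-definedness and smoothness separately, working throughout in the local charts \(\psi_J:S_J\times G^J\to E^{\mathrm{sph}}(G)\) of Section~\ref{sec:glue}. First I fix how an \(H_\sigma\)-valued label is read as a point of \(E^{\mathrm{sph}}(G)\). A label \(h_{ki}=(g_{ki},\varepsilon_{ki})\) attached to a coefficient \(\sqrt{\lambda_i}\) is taken to mean the entry \((\varepsilon_{ki}\sqrt{\lambda_i},\,g_{ki})\) in the \(i\)-th slot. With this convention, the \(\sigma\)-twisted action
\[
(h,\varepsilon)\cdot[(x_i,g_i)]=[(\varepsilon x_i,h\,\sigma^\varepsilon(g_i))]
\]
sends the entry determined by \(h_{ki}\) to the entry determined by the product \((h,\varepsilon)h_{ki}\). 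This is exactly the group-law computation done in the proof that the twisted formula is an action.

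\textbf{Well-definedness.} Suppose \(x\in U_k\cap U_l\). The cocycle identity \(h_{li}=h_{lk}h_{ki}\) on \(U_{lki}\) shows that the representatives built from \(k\) and from \(l\) differ by the action of the single element \(h_{lk}(x)\in H_\sigma\). They therefore have the same image in \(B^{\mathrm{sph}}_\sigma(G)=E^{\mathrm{sph}}(G)/H_\sigma\).

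Indices with \(x\notin U_i\) cause no difficulty. For those, \(\lambda_i(x)=0\) and \(h_{ki}(x)\) is undefined, but the quotient relation makes the label of a vanishing coordinate irrelevant, so I assign it \(1\). Local finiteness gives only finitely many nonzero coordinates, and \(\sum_i(\sqrt{\lambda_i})^2=\sum_i\lambda_i=1\), so the point lies in some \(S_J\). A different choice of local sections \(s_i\) changes every \(h_{ki}\) by right and left multiplication by local gauge functions. I would note that only the left factor matters for the point in \(B^{\mathrm{sph}}_\sigma(G)\) constructed here; independence of the sections up to homotopy is not needed for the proposition.

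\textbf{Smoothness.} Fix \(x_0\in X\) and choose \(k\) with \(x_0\in U_k\). Using local finiteness and shrinking, I pick a \(D\)-open neighbourhood \(V\subset U_k\) of \(x_0\) and a finite set \(J\) with the following properties:
\begin{itemize}
\item only the \(\lambda_i\) with \(i\in J\) are nonzero on \(V\);
\item for each \(i\in J\), either \(V\subset U_i\), or \(V\) misses \(\operatorname{supp}\lambda_i\).
\end{itemize}
The second property is arranged by using the closed support: if \(x_0\in\operatorname{supp}\lambda_i\subset U_i\), shrink \(V\) into \(U_i\); otherwise shrink \(V\) off the support. On \(V\) the map
\[
x\mapsto\bigl((\varepsilon_{ki}\sqrt{\lambda_i},g_{ki})_{i\in J}\bigr)\in S_J\times G^J
\]
then has locally constant signs \(\varepsilon_{ki}\), smooth \(g_{ki}\) (or the constant \(1\)), and coefficients that vanish identically where \(V\) misses the support. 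Composing with \(\psi_J\) and the quotient projection, which are smooth by construction, gives smoothness of \(f_P|_V\). The smoothness criterion proved for \(E^{\mathrm{sph}}(G)\), together with locality of plots, then concludes.

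\textbf{Main obstacle.} The hard point is that \(\sqrt{\lambda_i}\) need not be smooth where \(\lambda_i\) vanishes. Because the spherical diffeology imposes no stationarity at \(x_i=0\), this cannot be absorbed by the diffeology as it is in the Milnor--Watts model. My fix is to choose the partition so that square roots are smooth. Take smooth functions \(\varphi_i\ge0\) with support in \(U_i\) and \(\sum_i\varphi_i^2>0\) everywhere, which \(D\)-numerability provides, and set
\[
\mu_i=\frac{\varphi_i}{\bigl(\sum_j\varphi_j^2\bigr)^{1/2}},\qquad \lambda_i=\mu_i^2 .
\]
Then \(\sqrt{\lambda_i}=\mu_i\) is smooth, and the argument above goes through with \(\mu_i\) in place of \(\sqrt{\lambda_i}\). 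I would state explicitly that the definition of \(f_P\) is to be read with such a ``quadratic partition of unity''. For an arbitrary partition the formula is only continuous in general.
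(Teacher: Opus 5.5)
Your outline follows the paper's. Local finiteness and $\sum_i\lambda_i=1$ place the local formula in a finite chart. On $U_k\cap U_\ell$ the two representatives differ by $h_{\ell k}$, so they agree in $B^{\mathrm{sph}}_\sigma(G)$. Your sign convention $(\varepsilon_{ki}\sqrt{\lambda_i},g_{ki})$, and the check via $h_{li}=h_{lk}h_{ki}$ and the group law, make explicit what the paper's one-line overlap argument leaves implicit.

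The genuine divergence is the square roots. The paper asserts that $\sqrt{\lambda_i}$ is smooth because $D$-numerable partition functions are ``stationary at the boundary of their supports''. You do not accept this and restrict to quadratic partitions of unity. Your caution is justified, and your closing remark is right: the proposition fails for an arbitrary subordinate partition.

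Take $X=\mathbb R$, the trivial bundle, $U_1=U_2=\mathbb R$, $h_{ki}\equiv 1$, $\lambda_1=t^2/(1+t^2)$ and $\lambda_2=1/(1+t^2)$. A point of $B^{\mathrm{sph}}_\sigma(G)$ determines its spherical coordinates up to a single global sign. If $f_P$ were smooth, it would locally be the quotient projection composed with $\psi_J\circ\widetilde p$ for some smooth $\widetilde p$. The second coordinate never vanishes, so the sign relating $\widetilde p$ to $(\sqrt{\lambda_1},\sqrt{\lambda_2})$ would be locally constant. The first coordinate of $\widetilde p$ would then be $\pm|t|/\sqrt{1+t^2}$, which is not smooth at $0$.

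The zero of $\lambda_1$ here is interior, so no condition at the boundaries of supports can help. Flatness at the zero set would not suffice either: a nonnegative smooth function flat at its zeros can have a square root that is not even $C^2$ (Glaeser). So your reading of the definition is what actually makes the statement true, not a cosmetic choice.

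Two small remarks. First, your $\varphi_i$ can simply be the given $\lambda_i$, i.e.\ $\mu_i=\lambda_i/(\sum_j\lambda_j^2)^{1/2}$. Since $(\mu_i^2)$ is again a locally finite smooth partition of unity subordinate to $(U_i)$, the later pull-back isomorphism and the connection formula go through verbatim with $\lambda_i$ replaced by $\mu_i^2$, so nothing downstream is lost. Second, the criterion you invoke at the end is the one for maps \emph{out of} $E^{\mathrm{sph}}(G)$. What you actually use is that each $D$-open $V$ pulls back to an open subset of every plot domain of $X$, together with the locality axiom.
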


\begin{proof}
On \(U_k\), the formula
\[
x\longmapsto
\left(\sqrt{\lambda_i(x)},h_{ki}(x)\right)_{i\in I}
\]
defines a map into a finite spherical chart because the partition of unity is locally finite and
\[
\sum_i \lambda_i(x)=1.
\]
Thus
\[
\sum_i \bigl(\sqrt{\lambda_i(x)}\bigr)^2=1.
\]

The square roots are smooth because the functions \(\lambda_i\) are chosen in the \(D\)-numerable sense, hence are stationary at the boundary of their supports in the diffeological setting. Therefore the map locally defines a plot of \(E^{\mathrm{sph}}(G)\), and its projection to the quotient gives a smooth map into \(B^{\mathrm{sph}}_\sigma(G)\).

On overlaps \(U_k\cap U_\ell\), the two expressions differ by the transition function \(h_{\ell k}\in H_\sigma\). Since \(B^{\mathrm{sph}}_\sigma(G)\) is the quotient by \(H_\sigma\), they define the same point. Hence the local formulas glue to a globally defined smooth map.
\end{proof}

\begin{proposition}
The pull-back bundle
\[
f_P^*E^{\mathrm{sph}}(G)
\]
is isomorphic to \(P\) as a principal \(H_\sigma\)-bundle.
\end{proposition}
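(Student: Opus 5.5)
The strategy is the classical Milnor argument: build an explicit $H_\sigma$-equivariant smooth map
\[
F:P\longrightarrow E^{\mathrm{sph}}(G)
\]
covering $f_P$, and then invoke the general principle that an equivariant fibrewise map between principal $H_\sigma$-bundles induces an isomorphism with the pull-back.

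\textbf{Step 1: construction of $F$.} Over $U_k$, write a point of $P$ as $s_k(x)\cdot h$ with $h\in H_\sigma$, and set
\[
F(s_k(x)\cdot h)=\Bigl[\bigl(\sqrt{\lambda_i(x)},\,h_{ki}(x)\bigr)_{i\in I}\Bigr]\cdot h .
\]
Here $H_\sigma$ acts on $E^{\mathrm{sph}}(G)$ through the twisted action defined earlier. The right action $p\cdot h$ used in the universality theorem is understood as the left action by $h^{-1}$, or equivalently with the conventions chosen there.

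\textbf{Step 2: independence of the chart.} On $U_k\cap U_\ell$ we have $s_k=s_\ell h_{\ell k}$. Hence the same point of $P$ equals $s_\ell(x)\,h_{\ell k}(x)\,h$. The cocycle relation $h_{\ell i}=h_{\ell k}h_{ki}$, which componentwise is the pair of relations $\varepsilon_{\ell k}\varepsilon_{ki}=\varepsilon_{\ell i}$ and $g_{\ell k}\sigma^{\varepsilon_{\ell k}}(g_{ki})=g_{\ell i}$ from the cocycle proposition, then shows that the two formulas agree. This is the one place where the semidirect-product law enters. One must check that the twisted action formula $(h,\varepsilon)\cdot[(x_i,g_i)]=[(\varepsilon x_i,h\sigma^\varepsilon(g_i))]$ makes the labels $h_{ki}$ transform exactly as the cocycle does. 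If the sign $\varepsilon$ acting on the spherical coordinates spoils the literal equality, the fix is to absorb it: use labels $h_{ki}$ viewed in $H_\sigma$, and note that the two representatives differ by the global element $h_{\ell k}$ acting on the whole point. Only $H_\sigma$-equivalence is needed for $f_P$, but for $F$ genuine equality is required, so equivariance must be arranged carefully.

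\textbf{Step 3: smoothness and equivariance.} Smoothness of $F$ follows chart by chart, exactly as in the proof that $f_P$ is smooth. Over $U_k$ the formula is a plot of a finite model $S_J\times G^J$ composed with the smooth action. Equivariance $F(p\cdot h')=F(p)\cdot h'$ is immediate from the definition, and by construction $\pi\circ F=f_P\circ\mathrm{pr}$.

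\textbf{Step 4: the isomorphism.} Define
\[
\Phi:P\to f_P^*E^{\mathrm{sph}}(G),\qquad \Phi(p)=(\mathrm{pr}(p),F(p)).
\]
This map is smooth and equivariant and covers the identity of $X$. A fibrewise bijection follows from freeness of both actions, which is the assumption in the universality theorem. The inverse is smooth locally: over $U_k$, pull-back plots have the form $(x,F(s_k(x))\cdot h(u))$, where $h(u)$ is smooth by the diffeomorphism $\Theta$ established in the universality theorem. The inverse is then $(x,q)\mapsto s_k(x)\cdot\Theta^{-1}(F(s_k(x)),q)$.

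\textbf{Main obstacle.} I expect the delicate point to be Step 2: verifying that the local formulas for $F$ agree exactly, not merely up to $H_\sigma$, given how $\varepsilon$ acts on the spherical coordinates. The smooth dependence of the translating element, which rests on $\Theta$ being a diffeomorphism, is the other point needing care.
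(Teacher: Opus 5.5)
\textbf{There is a gap at Step 2, which you flag but do not close.} Your architecture is an equivariant $F:P\to E^{\mathrm{sph}}(G)$ over $f_P$, followed by $\Phi=(\mathrm{pr},F)$. This has the same content as the paper's proof. The paper observes that the local sections $x\mapsto[(\sqrt{\lambda_i(x)},h_{ki}(x))_i]$ of $f_P^*E^{\mathrm{sph}}(G)$ differ by $h_{\ell k}$ on $U_k\cap U_\ell$.

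The remedy you suggest does not work. Points of $E^{\mathrm{sph}}(G)$ carry $G$-labels, so ``labels $h_{ki}$ viewed in $H_\sigma$'' has no meaning. The paper's proof tacitly makes the same abuse. The literal reading $e_k(x)=[(\sqrt{\lambda_i(x)},g_{ki}(x))_i]$ fails. By the twisted action and the cocycle relations,
\[
h_{\ell k}\cdot e_k(x)=\bigl[(\varepsilon_{\ell k}\sqrt{\lambda_i},\,g_{\ell k}\,\sigma^{\varepsilon_{\ell k}}(g_{ki}))_i\bigr]=\bigl[(\varepsilon_{\ell k}\sqrt{\lambda_i},\,g_{\ell i})_i\bigr]=A^{\varepsilon_{\ell k}}\bigl(e_\ell(x)\bigr),
\]
where $A:[(x_i,g_i)]\mapsto[(-x_i,g_i)]$ is the plain antipodal map. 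Two cases show this cannot be repaired by conventions:
\begin{itemize}
\item For nontrivial $\sigma$, $A(p)$ is in general not in the $H_\sigma$-orbit of $p$, so not even $f_P$ is well defined.
\item For trivial $\sigma$, $A$ is the action of $(1,-1)$. The local sections then have transition functions $(g_{\ell k},1)$, so the pull-back has $\alpha=0$ and is not isomorphic to $P$ when $\alpha(P)\neq0$.
\end{itemize}

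\textbf{The missing idea} is to store the $\mathbb Z_2$-component of each label in the sign of the spherical coordinate:
\[
e_k(x):=\bigl[(\varepsilon_{ki}(x)\sqrt{\lambda_i(x)},\,g_{ki}(x))_{i\in I}\bigr].
\]
Under the correspondence $(x_i,g_i)\leftrightarrow\bigl(x_i^2,(g_i,\operatorname{sgn}x_i)\bigr)$, the twisted action becomes diagonal left multiplication on $H_\sigma$-labels. The two relations $\varepsilon_{\ell k}\varepsilon_{ki}=\varepsilon_{\ell i}$ and $g_{\ell k}\sigma^{\varepsilon_{\ell k}}(g_{ki})=g_{\ell i}$ then give $h_{\ell k}\cdot e_k=e_\ell$ exactly. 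Labels at indices with $\lambda_i(x)=0$ are irrelevant. Setting $F(s_k(x)h):=h^{-1}\cdot e_k(x)$, your Steps 2--4 go through as written.

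Two smaller repairs are needed in Steps 3--4:
\begin{itemize}
\item Smoothness of $\sqrt{\lambda_i}$ does not follow from $D$-numerability. Replace $\lambda_i$ by $\lambda_i^2/\sum_j\lambda_j^2$, whose square root $\lambda_i/(\sum_j\lambda_j^2)^{1/2}$ is smooth. The factor $\varepsilon_{ki}$ is locally constant, so it does not affect smoothness.
\item Rather than citing the paper's sketch for smoothness of $\Theta^{-1}$, check it directly. Take local lifts $p=[(x_i,g_i)]$ and $q=[(y_i,k_i)]$ with $q=(h,\varepsilon)\cdot p$, and pick $i$ with $x_i(u_0)\neq0$. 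Near $u_0$ one has $\varepsilon=y_i/x_i$ and $h=k_i\,\sigma^{\varepsilon}(g_i)^{-1}$, both smooth.
\end{itemize}
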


\begin{proof}
The pull-back is described locally by the same transition functions as the universal bundle. In the chart \(U_k\), the universal point is represented by
\[
\left(\sqrt{\lambda_i(x)},h_{ki}(x)\right)_{i\in I}.
\]
On \(U_k\cap U_\ell\), the change of representative is precisely multiplication by
\[
h_{\ell k}(x).
\]
Therefore the transition functions of \(f_P^*E^{\mathrm{sph}}(G)\) coincide with the transition functions of \(P\). Hence the two principal bundles are isomorphic.
\end{proof}

\subsection{The associated \texorpdfstring{\(\mathbb Z_2\)}{Z2}-class}

The projection
\[
H_\sigma=G\rtimes_\sigma\mathbb Z_2
\longrightarrow
\mathbb Z_2
\]
induces a map of classifying spaces
\[
B^{\mathrm{sph}}_\sigma(G)
\longrightarrow
B\mathbb Z_2.
\]

\begin{definition}
Let
\[
\alpha_{\mathrm{univ}}\in H^1(B^{\mathrm{sph}}_\sigma(G);\mathbb Z_2)
\]
be the pull-back of the universal class in
\[
H^1(B\mathbb Z_2;\mathbb Z_2).
\]
\end{definition}

\begin{proposition}
For a principal \(H_\sigma\)-bundle \(P\to X\) classified by \(f_P\), the class
\[
f_P^*\alpha_{\mathrm{univ}}\in H^1(X;\mathbb Z_2)
\]
is represented by the Čech cocycle \((\varepsilon_{ij})\).
\end{proposition}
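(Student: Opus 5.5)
The plan is to use naturality of Čech cocycles under pull-back together with the functoriality of the projection homomorphism \(\mathrm{pr}:H_\sigma\to\mathbb Z_2\), \((g,\varepsilon)\mapsto\varepsilon\). The map \(B^{\mathrm{sph}}_\sigma(G)\to B\mathbb Z_2\) is induced by \(\mathrm{pr}\), so \(\alpha_{\mathrm{univ}}\) classifies the associated \(\mathbb Z_2\)-bundle
\[
E^{\mathrm{sph}}(G)\times_{H_\sigma}\mathbb Z_2\;\cong\;E^{\mathrm{sph}}(G)/G\longrightarrow B^{\mathrm{sph}}_\sigma(G).
\]
Here \(G\) is normal in \(H_\sigma\) with quotient \(\mathbb Z_2\). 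This identification, and the fact that the \(\mathbb Z_2\)-action on \(E^{\mathrm{sph}}(G)/G\) is well defined and free, follow from the definition of the twisted action and the freeness of the antipodal action established earlier.

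The first step is to show that the extension of structure group along \(\mathrm{pr}\) commutes with pull-back. Concretely, we check that
\[
f_P^*\bigl(E^{\mathrm{sph}}(G)\times_{H_\sigma}\mathbb Z_2\bigr)\cong (f_P^*E^{\mathrm{sph}}(G))\times_{H_\sigma}\mathbb Z_2 .
\]
By the preceding proposition, \(f_P^*E^{\mathrm{sph}}(G)\cong P\), so the right-hand side is isomorphic to \(P\times_{H_\sigma}\mathbb Z_2=P/G\). Hence \(f_P^*\alpha_{\mathrm{univ}}\) is the class of the double cover \(P/G\to X\). This uses the standard fact that the class in \(H^1(B\mathbb Z_2;\mathbb Z_2)\) is the characteristic class of the universal double cover, and that characteristic classes are natural under pull-back of bundles.

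The second step computes the Čech cocycle of \(P/G\) on the cover \((U_i)\). The local sections \(s_i\) of \(P\) project to local sections \(\bar s_i\) of \(P/G\). From \(s_j=s_ih_{ij}\) with \(h_{ij}=(g_{ij},\varepsilon_{ij})\), we obtain
\[
\bar s_j=\bar s_i\cdot\mathrm{pr}(h_{ij})=\bar s_i\cdot\varepsilon_{ij}.
\]
The transition cocycle of \(P/G\) is therefore exactly \((\varepsilon_{ij})\). The cocycle identity \(\varepsilon_{ij}\varepsilon_{jk}=\varepsilon_{ik}\) was already noted in the cocycle interpretation proposition, and it is consistent with this. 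We also check that \(\mathrm{pr}\) is a smooth homomorphism, which holds because \(\mathbb Z_2\) is discrete and the multiplication law \((h,\varepsilon)(k,\eta)=(h\sigma^\varepsilon(k),\varepsilon\eta)\) projects to \(\varepsilon\eta\).

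The expected main obstacle is the diffeological bookkeeping. We must verify that \(\varepsilon_{ij}\) is locally constant with respect to the \(D\)-topology, so that it is a genuine Čech cocycle. We must also check that the map \(B^{\mathrm{sph}}_\sigma(G)\to B\mathbb Z_2\) is really the classifying map of \(E^{\mathrm{sph}}(G)/G\), and that the target \(B\mathbb Z_2\) may be taken to be \(E^{\mathrm{proj}}\)-type or any universal model. I would address this by working directly with the bundle \(E^{\mathrm{sph}}(G)/G\to B^{\mathrm{sph}}_\sigma(G)\) and defining \(\alpha_{\mathrm{univ}}\) as its class, as in the earlier theorem constructing \(\alpha_{\mathrm{sph}}\), so that no comparison of models is needed. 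Local constancy follows because \(\varepsilon_{ij}=\mathrm{pr}\circ h_{ij}\) is smooth into a discrete space.
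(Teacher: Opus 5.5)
Your proposal is correct and is essentially the paper's argument: the paper likewise passes to the double cover \(P/G\to X\), observes that its transition functions are the \(\mathbb Z_2\)-components \(\varepsilon_{ij}\) of \(h_{ij}\), and concludes from the fact that \(f_P\) classifies \(P\). What you add is bookkeeping that the paper leaves implicit: the natural isomorphism \(f_P^*\bigl(E^{\mathrm{sph}}(G)\times_{H_\sigma}\mathbb Z_2\bigr)\cong P\times_{H_\sigma}\mathbb Z_2=P/G\), and the identification of \(\alpha_{\mathrm{univ}}\) with the class of \(E^{\mathrm{sph}}(G)/G\to B^{\mathrm{sph}}_\sigma(G)\), which holds because the map to \(B\mathbb Z_2\) is induced by \(\mathrm{pr}\) (so taking this identification as the definition of \(\alpha_{\mathrm{univ}}\), as you propose, changes nothing).
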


\begin{proof}
The \(\mathbb Z_2\)-bundle associated with \(P\) is obtained by quotienting out the \(G\)-part:
\[
P/G\longrightarrow X.
\]
Its transition functions are precisely the \(\mathbb Z_2\)-components \(\varepsilon_{ij}\) of the \(H_\sigma\)-transition functions
\[
h_{ij}=(g_{ij},\varepsilon_{ij}).
\]
Thus \((\varepsilon_{ij})\) represents the first cohomology class associated with the induced double cover. Since \(f_P\) classifies \(P\), this class is the pull-back of the universal class.
\end{proof}

\begin{remark}
Thus the class \(f_P^*\alpha_{\mathrm{univ}}\) is the topological obstruction measuring the failure of the \(H_\sigma\)-bundle to reduce to an ordinary \(G\)-bundle.
\end{remark}

\subsection{Twisted cocycles}

The cocycle condition for \(h_{ij}\) reads
\[
h_{ij}h_{jk}=h_{ik}.
\]
Writing
\[
h_{ij}=(g_{ij},\varepsilon_{ij}),
\]
one obtains
\[
\varepsilon_{ij}\varepsilon_{jk}=\varepsilon_{ik},
\]
and
\[
g_{ij}\sigma^{\varepsilon_{ij}}(g_{jk})=g_{ik}.
\]

\begin{proposition}
Let
\[
H_\sigma=G\rtimes_\sigma \mathbb Z_2
\]
with product
\[
(g,\varepsilon)(g',\varepsilon')
=
(g\,\sigma_\varepsilon(g'),\varepsilon\varepsilon').
\]
Let \(P\to X\) be a principal \(H_\sigma\)-bundle, trivialized over an open
cover \((U_i)_i\). Write its transition functions as
\[
h_{ij}=(g_{ij},\varepsilon_{ij})
:
U_{ij}\to G\rtimes_\sigma\mathbb Z_2.
\]
Then the functions \(\varepsilon_{ij}\) define a \(\mathbb Z_2\)-valued
Čech cocycle, hence a class
\[
\alpha(P)=[\varepsilon_{ij}]\in H^1(X;\mathbb Z_2),
\]
and the functions \(g_{ij}\) satisfy the twisted cocycle relation
\[
g_{ij}\,\sigma_{\varepsilon_{ij}}(g_{jk})
=
g_{ik}
\qquad
\text{on }U_{ijk}.
\]
Thus the pair
\[
(\varepsilon_{ij},g_{ij})
\]
defines a non-abelian \(G\)-valued cocycle twisted by the local system
associated with \(\alpha(P)\).
\end{proposition}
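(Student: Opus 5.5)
The plan is to expand the cocycle identity \(h_{ij}h_{jk}=h_{ik}\) componentwise, using the product law of \(H_\sigma\) exactly as displayed in the statement. First, I will recall why the \(h_{ij}\) satisfy the cocycle condition. From \(s_j=s_ih_{ij}\) and \(s_k=s_jh_{jk}\) on \(U_{ijk}\) we get
\[
s_k=s_ih_{ij}h_{jk}.
\]
We also have \(s_k=s_ih_{ik}\). Since the \(H_\sigma\)-action on \(P\) is free, this forces
\[
h_{ij}h_{jk}=h_{ik}.
\]
The same argument gives the normalizations \(h_{ii}=1\) and \(h_{ji}=h_{ij}^{-1}\).

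Next, I will write \(h_{ij}=(g_{ij},\varepsilon_{ij})\) and apply the product formula:
\[
(g_{ij},\varepsilon_{ij})(g_{jk},\varepsilon_{jk})
=
\bigl(g_{ij}\,\sigma_{\varepsilon_{ij}}(g_{jk}),\varepsilon_{ij}\varepsilon_{jk}\bigr).
\]
Comparing components with \((g_{ik},\varepsilon_{ik})\) gives both identities at once. The second component is the Čech cocycle relation \(\varepsilon_{ij}\varepsilon_{jk}=\varepsilon_{ik}\). The first component is the twisted relation \(g_{ij}\,\sigma_{\varepsilon_{ij}}(g_{jk})=g_{ik}\).

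Then I will check that \(\alpha(P)\) is well-defined, using the fact that the projection \(H_\sigma\to\mathbb Z_2\) is a smooth homomorphism. The \(\varepsilon_{ij}\) are smooth maps into the discrete group \(\mathbb Z_2\), hence locally constant. Changing the local sections to \(s_i'=s_ik_i\) with \(k_i=(a_i,\eta_i)\) replaces \(h_{ij}\) by \(k_i^{-1}h_{ij}k_j\). Its \(\mathbb Z_2\)-component is \(\eta_i\varepsilon_{ij}\eta_j\), which is a Čech coboundary modification. Passing to a refinement of the cover only restricts the cocycle, so the class \([\varepsilon_{ij}]\in H^1(X;\mathbb Z_2)\) depends only on \(P\). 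This matches the earlier proposition identifying it with the class of \(P/G\).

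The main obstacle is making precise what "non-abelian cocycle twisted by the local system" means, so that the last sentence of the statement is an actual assertion. I would take the following as the definition. A twisted cocycle is a pair consisting of a \(\mathbb Z_2\)-cocycle \((\varepsilon_{ij})\) and \(G\)-valued functions \((g_{ij})\) satisfying the relation above. Equivalence is given by \((\eta_i,a_i)\) acting through
\[
g_{ij}\mapsto \sigma_{\eta_i}\bigl(a_i^{-1}g_{ij}\,\sigma_{\varepsilon_{ij}}(a_j)\bigr),
\]
which is read off from the first component of \(k_i^{-1}h_{ij}k_j\). With this definition, the local system is the bundle of groups \(G_\alpha=P/G\times_{\mathbb Z_2}G\), and the \(g_{ij}\) become a cocycle with values in \(G_\alpha\) once the relation is read in the frames provided by the \(\varepsilon\)'s. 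The statement then follows directly from the two component identities.
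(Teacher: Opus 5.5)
Your proposal is correct and follows the same route as the paper: expand $h_{ij}h_{jk}=h_{ik}$ using the semidirect product law and compare the $G$- and $\mathbb Z_2$-components. Your additional steps are all correct and make explicit what the paper's proof leaves implicit: deriving the cocycle identity from $s_j=s_ih_{ij}$ and freeness, showing that $\varepsilon_{ij}$ changes only by the coboundary $\eta_i\varepsilon_{ij}\eta_j$ under $s_i\mapsto s_ik_i$, and making the twisted-cocycle notion precise through the gauge action $g_{ij}\mapsto\sigma_{\eta_i}\bigl(a_i^{-1}g_{ij}\,\sigma_{\varepsilon_{ij}}(a_j)\bigr)$ and the bundle of groups $P/G\times_{\mathbb Z_2}G$.
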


\begin{proof}
Since \(P\to X\) is a principal \(H_\sigma\)-bundle, its transition functions
satisfy the usual non-abelian Čech cocycle condition
\[
h_{ij}h_{jk}=h_{ik}
\qquad
\text{on }U_{ijk}.
\]
Writing
\[
h_{ij}=(g_{ij},\varepsilon_{ij}),
\qquad
h_{jk}=(g_{jk},\varepsilon_{jk}),
\]
and using the product in the semidirect product, we obtain
\[
h_{ij}h_{jk}
=
(g_{ij},\varepsilon_{ij})(g_{jk},\varepsilon_{jk})
=
(g_{ij}\sigma_{\varepsilon_{ij}}(g_{jk}),
\varepsilon_{ij}\varepsilon_{jk}).
\]
Therefore the equality
\[
h_{ij}h_{jk}=h_{ik}
=
(g_{ik},\varepsilon_{ik})
\]
is equivalent to the two equations
\[
\varepsilon_{ij}\varepsilon_{jk}
=
\varepsilon_{ik},
\]
and
\[
g_{ij}\sigma_{\varepsilon_{ij}}(g_{jk})
=
g_{ik}.
\]

The first equation is exactly the Čech cocycle condition for the
\(\mathbb Z_2\)-valued transition functions \((\varepsilon_{ij})\). Hence it
defines a class
\[
\alpha(P)\in H^1(X;\mathbb Z_2).
\]

The second equation says that the \(G\)-valued transition functions do not
satisfy the ordinary cocycle condition unless the action of \(\mathbb Z_2\)
is trivial. Instead, the term \(g_{jk}\) is transported by the automorphism
\[
\sigma_{\varepsilon_{ij}}\in \operatorname{Aut}(G).
\]
Thus the \(G\)-valued part is a non-abelian cocycle with coefficients twisted
by the \(\mathbb Z_2\)-local system represented by
\[
(\varepsilon_{ij}).
\]
This proves the claim.
\end{proof}
\begin{remark}
If \(\sigma\) is trivial, the relation reduces to the ordinary non-abelian
cocycle condition
\[
g_{ij}g_{jk}=g_{ik}.
\]
Thus the above construction is exactly the usual Čech description of a
principal \(G\)-bundle, modified by the \(\mathbb Z_2\)-twisting.
\end{remark}
\subsection{Pull-back of the canonical connection}

Let
\[
\Theta
\]
denote the canonical connection form on the universal spherical bundle
\[
E^{\mathrm{sph}}(G)\to B^{\mathrm{sph}}_\sigma(G).
\]

Locally, in the chart indexed by \(I\), this connection is given by the barycentric formula
\[
\Theta
=
\sum_{i\in I} x_i^2\,\theta_i,
\]
where \(\theta_i\) is the Maurer--Cartan form on the \(i\)-th copy of \(G\) or, more generally, on the \(i\)-th copy of \(H_\sigma\).

\begin{proposition}
Let \(P\to X\) be classified by \(f_P:X\to B^{\mathrm{sph}}_\sigma(G)\). Then the pull-back connection
\[
\nabla_P:=f_P^*\Theta
\]
is locally given by
\[
A_k
=
\sum_i \lambda_i\, h_{ki}^{-1}dh_{ki}
\]
on \(U_k\).
\end{proposition}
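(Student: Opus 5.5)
The plan is to compute the pull-back directly in the local chart over \(U_k\) given by the explicit formula for the classifying map. The pull-back is computed along the local lift
\[
\widetilde f_k:U_k\longrightarrow S_I\times H_\sigma^I,
\qquad
x\longmapsto\left(\sqrt{\lambda_i(x)},h_{ki}(x)\right)_{i\in I},
\]
which exists by the proposition asserting that \(f_P\) is well-defined and smooth. By the characterization of plots, \(\widetilde f_k\) is a plot of the finite chart. Differential forms are defined plotwise, so \(f_P^*\Theta\) on \(U_k\) is obtained by evaluating the local expression \(\Theta=\sum_i x_i^2\theta_i\) on this plot. I interpret the statement as giving the connection \(1\)-form on \(P|_{U_k}\) pulled back by the section \(s_k\). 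This section corresponds exactly to the lift \(\widetilde f_k\) under the isomorphism \(f_P^*E^{\mathrm{sph}}(G)\cong P\) established above.

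The computation then proceeds in two steps. First, substitute \(x_i=\sqrt{\lambda_i}\) into the coefficient \(x_i^2\). This gives \(\lambda_i\), and it requires no differentiation of the square roots, since \(\Theta\) involves no \(dx_i\) terms. Second, pull back the Maurer--Cartan form \(\theta_i\) on the \(i\)-th copy of \(H_\sigma\) along the coordinate \(h_{ki}\). This gives \(h_{ki}^{*}\theta_i=h_{ki}^{-1}dh_{ki}\) by the defining property of \(\theta_i\), understood diffeologically as a plotwise left-invariant form. Summing over \(i\) yields \(A_k=\sum_i\lambda_i\,h_{ki}^{-1}dh_{ki}\). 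Terms with \(\lambda_i=0\) contribute nothing, so the ambiguity of the inactive labels \(h_{ki}\) outside \(U_i\) does not affect the formula. This matches the irrelevance of \(g_i\) when \(x_i=0\) in the quotient relation.

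I would then check consistency on overlaps. On \(U_k\cap U_\ell\) we have \(h_{ki}=h_{k\ell}h_{\ell i}\), and the cocycle relation together with \(\sum_i\lambda_i=1\) gives
\[
A_k=h_{\ell k}^{-1}\ldots
\]
Concretely, \(A_\ell=h_{k\ell}^{-1}A_kh_{k\ell}+h_{k\ell}^{-1}dh_{k\ell}\). This confirms that the \(A_k\) are the local forms of a single connection. It also confirms that the answer does not depend on which trivialization \(U_k\) represents \(f_P\).

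The main obstacle is that \(\Theta\) is only described locally as a barycentric formula on \(S_I\times H_\sigma^I\). Its well-definedness on the quotient must be taken as given from the construction of the canonical connection. The left versus right conventions also need care: the identity \(h_{ki}^{-1}dh_{ki}\) and the gauge formula above must use the same side of the action as \(s_j=s_ih_{ij}\). I would handle this by fixing the convention that the universal point in chart \(k\) has labels \(h_{ki}\), and verifying the transformation rule explicitly in that convention.
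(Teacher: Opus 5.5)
Your core computation is the paper's proof. You pull $\Theta=\sum_i x_i^2\theta_i$ back along the local lift $x\mapsto(\sqrt{\lambda_i(x)},h_{ki}(x))_i$, replace $x_i^2$ by $\lambda_i$, and use $h_{ki}^*\theta=h_{ki}^{-1}dh_{ki}$. No derivative of $\sqrt{\lambda_i}$ is needed, since $\Theta$ has no $dx_i$ component. That is all the statement asks for. Your remark that the object computed is really $\widetilde f_k^*\Theta$ (the connection read through the local section over $U_k$) is a precision the paper leaves implicit, since $\Theta$ lives on $E^{\mathrm{sph}}(G)$ and not on the base.

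The overlap paragraph, however, asserts a false identity. The relation $h_{ki}=h_{k\ell}h_{\ell i}$ means $h_{\ell i}=h_{\ell k}h_{ki}$, so changing chart multiplies the labels on the left. The left Maurer--Cartan form then gives
\[
h_{\ell i}^{-1}dh_{\ell i}
=
\operatorname{Ad}_{h_{ki}^{-1}}\bigl(h_{\ell k}^{-1}dh_{\ell k}\bigr)+h_{ki}^{-1}dh_{ki},
\qquad\text{hence}\qquad
A_\ell
=
A_k+\sum_i\lambda_i\operatorname{Ad}_{h_{ki}^{-1}}\bigl(h_{\ell k}^{-1}dh_{\ell k}\bigr).
\]
Here $A_k$ is not conjugated, and the inhomogeneous term depends on $i$ inside the sum. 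So $\sum_i\lambda_i=1$ cannot produce $\operatorname{Ad}_{h_{k\ell}^{-1}}A_k+h_{k\ell}^{-1}dh_{k\ell}$. Already for abelian $G$ one gets $A_\ell=A_k-h_{k\ell}^{-1}dh_{k\ell}$, the opposite sign to your claim.

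The $\sum_i\lambda_i=1$ trick works only when a chart change multiplies the labels on the right by an $i$-independent factor. There are two ways to get that:
\begin{enumerate}
\item Use the labels $h_{ik}$ in chart $k$, so that $h_{i\ell}=h_{ik}h_{k\ell}$. Then the forms $\sum_i\lambda_i h_{ik}^{-1}dh_{ik}$ do satisfy your rule, which is the correct gauge law for $s_\ell=s_kh_{k\ell}$.
\item Keep the labels $h_{ki}$ but reverse the convention to $s_i=s_jh_{ij}$, with cocycle $h_{jk}h_{ij}=h_{ik}$. Then one obtains $A_\ell=\operatorname{Ad}_{h_{\ell k}^{-1}}A_k+h_{\ell k}^{-1}dh_{\ell k}$, which is the rule the paper states in its next proposition.
\end{enumerate}
So the left/right issue you flagged is not cosmetic: it changes the formula itself. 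Either drop the claim that your check ``confirms'' a single connection, or fix the convention before checking. The paper's proof does not attempt this check, and the proposition, read as a formal pull-back computation, does not need it.
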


\begin{proof}
On \(U_k\), the classifying map is represented by
\[
x\longmapsto
\left(\sqrt{\lambda_i(x)},h_{ki}(x)\right)_{i\in I}.
\]
Pulling back the universal connection
\[
\Theta=\sum_i x_i^2\theta_i
\]
gives
\[
f_P^*\Theta
=
\sum_i \lambda_i\, h_{ki}^*\theta,
\]
where \(\theta\) is the Maurer--Cartan form. Since
\[
h_{ki}^*\theta=h_{ki}^{-1}dh_{ki},
\]
we obtain
\[
A_k=\sum_i \lambda_i\,h_{ki}^{-1}dh_{ki}.
\]
\end{proof}

\begin{remark}
This is the spherical analogue of the barycentric connection in the Milnor--Watts construction. The only difference is the replacement
\[
t_i=x_i^2.
\]
\end{remark}

\subsection{Curvature and characteristic forms}

Let
\[
A_k
=
\sum_i \lambda_i\,h_{ki}^{-1}dh_{ki}
\]
be the local connection form. Its curvature is
\[
F_k
=
dA_k+\frac12[A_k,A_k].
\]

\begin{proposition}
The local curvature forms \(F_k\) transform under the usual gauge transformation rule on overlaps:
\[
F_\ell
=
\operatorname{Ad}_{h_{\ell k}^{-1}}F_k.
\]
\end{proposition}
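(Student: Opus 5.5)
The plan is the standard two-step argument: first show that the local connection forms obey the gauge transformation law, then conclude for the curvature by a formal computation that uses nothing about the spherical model. The first step is the only place where the specific form of \(A_k\) and the partition of unity enter.

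\textbf{Step 1: gauge law for \(A_k\).} On \(U_k\cap U_\ell\) the cocycle relation gives \(h_{ki}=h_{k\ell}h_{\ell i}\), with \(h_{k\ell}=h_{\ell k}^{-1}\). Writing \(g:=h_{k\ell}\), the Leibniz rule for the Maurer--Cartan form gives
\[
h_{ki}^{-1}dh_{ki}=\operatorname{Ad}_{h_{\ell i}^{-1}}\bigl(g^{-1}dg\bigr)+h_{\ell i}^{-1}dh_{\ell i}.
\]
Summing against \(\lambda_i\), and using \(\sum_i\lambda_i=1\) together with the identity \(\sum_i\lambda_i\,d\lambda_i\)-free structure of the formula, one wants to reach
\[
A_k=\operatorname{Ad}_{g^{-1}}A_\ell+g^{-1}dg,
\]
or the analogous form dictated by the convention \(s_j=s_ih_{ij}\).

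This is where I expect the main obstacle. The extra adjoint factors \(\operatorname{Ad}_{h_{\ell i}^{-1}}\) do not obviously collapse under the sum. The remedy I would try first is to reinterpret \(A_k\) as the pull-back \(s_k^*\Theta\), where \(s_k\) is the local section of \(f_P^*E^{\mathrm{sph}}(G)\) furnished by the chart on \(U_k\). The two local sections are related by \(s_\ell=s_k\cdot h_{k\ell}\), as established in the proposition identifying \(f_P^*E^{\mathrm{sph}}(G)\) with \(P\). The transformation law then follows from two properties of the barycentric form \(\Theta=\sum x_i^2\theta_i\): it is equivariant, \(R_h^*\Theta=\operatorname{Ad}_{h^{-1}}\Theta\), and it reproduces generators on vertical vectors, which holds because \(\sum x_i^2=1\). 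These two properties are checked chartwise on \(S_I\times H_\sigma^I\), which is legitimate by the smoothness criterion of Section~\ref{sec:glue}. The action of the \(\mathbb Z_2\) factor on the \(x_i\) is harmless here, since \(x_i^2\) is invariant under \(x_i\mapsto -x_i\).

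\textbf{Step 2: curvature.} Once \(A_k=\operatorname{Ad}_{g^{-1}}A_\ell+g^{-1}dg\) is available on overlaps, the remainder is a formal computation valid plotwise on \(U_k\cap U_\ell\). One computes \(dA_k\) using \(d(g^{-1}dg)=-\tfrac12[g^{-1}dg,g^{-1}dg]\) together with the derivative of the adjoint action. The cross terms cancel against \(\tfrac12[A_k,A_k]\), which yields
\[
F_k=\operatorname{Ad}_{g^{-1}}F_\ell .
\]
Renaming the indices, or inverting \(g=h_{k\ell}=h_{\ell k}^{-1}\), gives the stated formula. Because differential forms on diffeological spaces are defined plotwise, it suffices to carry out this computation after composing with an arbitrary plot, where it reduces to the classical finite-dimensional identity.
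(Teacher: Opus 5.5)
Your two-step plan is the paper's own: its proof simply asserts $A_\ell=\operatorname{Ad}_{h_{\ell k}^{-1}}A_k+h_{\ell k}^{-1}dh_{\ell k}$ ``from the Maurer--Cartan law and the cocycle identity'' and then takes curvature. The obstruction you found in Step~1 is genuine, and your remedy does not remove it.

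The paper's $G$-action is on the left, $g_i\mapsto hg_i$. Its chart representatives $(\sqrt{\lambda_i},h_{ki})$ over $U_k$ and $U_\ell$ differ by \emph{left} multiplication, $h_{\ell i}=h_{\ell k}h_{ki}$. That, not $s_\ell=s_k\cdot h_{k\ell}$, is what the pull-back proposition gives. For a left action, the left Maurer--Cartan forms $\theta_i=g_i^{-1}dg_i$ are invariant rather than $\operatorname{Ad}$-equivariant. Moreover $\Theta(\xi^\#)=\sum_i x_i^2\operatorname{Ad}_{g_i^{-1}}\xi\neq\xi$, so $\sum_i x_i^2=1$ does not give the reproducing property. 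Thus $A_k=s_k^*\Theta$ is true, but $\Theta$ is not a connection. Pulling back along $s_\ell=h_{\ell k}\cdot s_k$ returns exactly your non-collapsing sum, $A_\ell=A_k+\sum_i\lambda_i\operatorname{Ad}_{h_{ki}^{-1}}(h_{\ell k}^{-1}dh_{\ell k})$.

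Suppose instead you switch to a right action, as your $R_h^*$ suggests. Then $\Theta$ is a connection, but $s_k=s_ih_{ik}$ forces the labels of $s_k$ to be $h_{ik}$. Hence $s_k^*\Theta=\sum_i\lambda_ih_{ik}^{-1}dh_{ik}\neq A_k$. Either way, you prove the law for a different family of forms than the one in the statement.

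No better argument exists, because for non-abelian $G$ the statement fails for the paper's $A_k$. Take three charts with $h_{12}=a$ constant and $h_{13}=c$ arbitrary, so $h_{23}=a^{-1}c$. The formula gives $A_1=A_2=\lambda_3\,c^{-1}dc$ on $U_{12}$, hence
$F_1=F_2=d\lambda_3\wedge c^{-1}dc+\tfrac12(\lambda_3^2-\lambda_3)[c^{-1}dc,c^{-1}dc]$.
The statement demands $F_2=\operatorname{Ad}_aF_1$, and the reverse index order would demand $\operatorname{Ad}_{a^{-1}}F_1$. Now take $G=SU(2)$, $c=\exp(t\xi)$ and $\operatorname{Ad}_a\xi=-\xi$. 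Then $F_1=(d\lambda_3\wedge dt)\,\xi$ and $\operatorname{Ad}_{a^{\pm1}}F_1=-F_1$, so both versions fail at points of $U_{123}$ where $d\lambda_3\wedge dt\neq0$.

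What is true is the law for $A_k=\sum_i\lambda_ih_{ik}^{-1}dh_{ik}$. This equals $-\sum_i\lambda_i\,dh_{ki}\,h_{ki}^{-1}$, i.e.\ it uses right Maurer--Cartan forms for the left action. For this $A_k$, $h_{i\ell}=h_{ik}h_{k\ell}$ produces the $i$-independent factor $\operatorname{Ad}_{h_{k\ell}^{-1}}$. The sum then collapses by $\sum_i\lambda_i=1$ with no detour through $\Theta$, and your Step~2 gives $F_\ell=\operatorname{Ad}_{h_{k\ell}^{-1}}F_k=\operatorname{Ad}_{h_{\ell k}}F_k$.

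This is still not the stated $\operatorname{Ad}_{h_{\ell k}^{-1}}$. From $s_\ell=s_kg$ one gets $A_\ell=\operatorname{Ad}_{g^{-1}}A_k+g^{-1}dg$, not your $A_k=\operatorname{Ad}_{g^{-1}}A_\ell+g^{-1}dg$. That swap of $k$ and $\ell$ is what let ``renaming the indices'' land on the stated formula.
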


\begin{proof}
On overlaps, the local connection forms satisfy
\[
A_\ell
=
\operatorname{Ad}_{h_{\ell k}^{-1}}A_k
+
h_{\ell k}^{-1}dh_{\ell k}.
\]
This follows from the transformation law of the Maurer--Cartan form and the cocycle identity for \(h_{ij}\). Taking curvature gives the standard transformation law
\[
F_\ell
=
\operatorname{Ad}_{h_{\ell k}^{-1}}F_k.
\]
\end{proof}

Let
\[
P\in \operatorname{Sym}^r(\mathfrak h_\sigma^*)^{H_\sigma}
\]
be an invariant polynomial on
\[
\mathfrak h_\sigma=\operatorname{Lie}(H_\sigma).
\]
Since \(\mathbb Z_2\) is discrete,
\[
\mathfrak h_\sigma\simeq \mathfrak g.
\]

\begin{definition}
The characteristic form associated with \(P\) is
\[
\omega_P(F)
=
P(F_k,\ldots,F_k)
\]
on \(U_k\).
\end{definition}

\begin{theorem}
The local forms
\[
P(F_k,\ldots,F_k)
\]
glue to a global closed differential form on \(X\). Its de Rham cohomology class is independent of the choice of the local trivializations and of the representative connection.
\end{theorem}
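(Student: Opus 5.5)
The plan is to run the standard Chern--Weil argument, organized in three steps: gluing, closedness, and independence of choices. Throughout we use the transformation law $F_\ell=\operatorname{Ad}_{h_{\ell k}^{-1}}F_k$ from the preceding proposition. We also use the identification $\mathfrak h_\sigma\simeq\mathfrak g$, under which the $\mathbb Z_2$-component of $h_{\ell k}=(g_{\ell k},\varepsilon_{\ell k})$ acts on $\mathfrak g$ through $d\sigma^{\varepsilon_{\ell k}}$.

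\textbf{Gluing.} On $U_k\cap U_\ell$ we have
\[
P(F_\ell,\ldots,F_\ell)=P(\operatorname{Ad}_{h_{\ell k}^{-1}}F_k,\ldots,\operatorname{Ad}_{h_{\ell k}^{-1}}F_k)=P(F_k,\ldots,F_k).
\]
The second equality holds because $P$ is $H_\sigma$-invariant, and not merely $\mathfrak g$-invariant. This is exactly where invariance under the $\sigma$-twist is needed: for $\varepsilon_{\ell k}=-1$, the adjoint action includes $d\sigma^{-}$. Since forms on a diffeological space are defined plotwise, it suffices to check the gluing on each plot $p:V\to X$. Such a plot is covered by the opens $p^{-1}(U_k)$, on which the local forms agree, so we obtain a well-defined global form $\omega_P(F)$.

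\textbf{Closedness.} Closedness is local, so we may work on a single $U_k$. There the Bianchi identity $dF_k=[F_k,A_k]$ holds; it follows by differentiating $F_k=dA_k+\tfrac12[A_k,A_k]$ and using the graded Jacobi identity. Write $n$ for the degree of $P$. Then
\[
d\,P(F_k,\ldots,F_k)=n\,P(dF_k,F_k,\ldots,F_k)=n\,P([F_k,A_k],F_k,\ldots,F_k).
\]
This vanishes by the infinitesimal form of $\operatorname{Ad}$-invariance, namely $\sum_j P(\ldots,[X,Y_j],\ldots)=0$, applied with the even-degree forms $F_k$. The $A_k$ are forms on plot domains, so these computations take place on open subsets of Euclidean spaces and are entirely classical.

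\textbf{Independence.} Different trivializations or partitions of unity give, via the construction of $f_P$, connection $1$-forms $\nabla_0,\nabla_1$ on the same bundle. Their difference is a tensorial, $\operatorname{Ad}$-equivariant $1$-form. Consider the path $\nabla_t=\nabla_0+t(\nabla_1-\nabla_0)$. On $U_k$ this is $A_k^t=A_k^0+t\eta_k$, with $\eta_k$ transforming by $\operatorname{Ad}$, and its curvature satisfies $\tfrac{d}{dt}F_k^t=d\eta_k+[A_k^t,\eta_k]$. Define the transgression form
\[
T=n\int_0^1P(\eta,F^t,\ldots,F^t)\,dt .
\]
By invariance, $T$ glues globally, exactly as in the gluing step. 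Using the Bianchi identity and invariance once more, $dT=\omega_P(F^1)-\omega_P(F^0)$. A change of local trivialization $s_i\mapsto s_i\phi_i$ only conjugates the local data, and therefore does not change the glued form at all.

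\textbf{Main obstacle.} I expect the hard part to be making the transgression step rigorous on a diffeological $X$: the interpolation $\nabla_t$ must be a genuine connection in the diffeological sense, and $T$ must be a well-defined global form. I would handle this by noting that all identities are plotwise and that, by the local finiteness proposition, the local data are finite sums of smooth forms on plot domains. This reduces every verification to the classical finite-dimensional computation. A secondary point to verify is that the hypothesis ``invariant polynomial'' is understood as $H_\sigma$-invariant, which includes $\sigma$-invariance; without that, the gluing step fails whenever $\alpha(P)\neq0$.
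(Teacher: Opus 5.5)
Your proposal is correct and follows the same route as the paper's proof. Gluing comes from the curvature transformation law together with $H_\sigma$-invariance of $P$, closedness from the Bianchi identity via the standard Chern--Weil computation, and independence of the connection from a one-parameter family of connections and its transgression form. Your version simply makes explicit what the paper compresses into ``the standard Chern--Weil argument'', including the change of trivialization and the role of $\sigma$-invariance of $P$ when $\alpha(P)\neq 0$, which the paper records only in the subsequent remark.
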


\begin{proof}
By the curvature transformation rule and the invariance of \(P\), one has
\[
P(F_\ell,\ldots,F_\ell)
=
P(F_k,\ldots,F_k)
\]
on overlaps. Thus the local forms glue to a global form.

The Bianchi identity gives
\[
d_A F=0.
\]
By the standard Chern--Weil argument, this implies
\[
d\,P(F,\ldots,F)=0.
\]
If the connection is changed through a smooth one-parameter family, the derivative of the corresponding characteristic form is exact by the transgression formula. Hence the de Rham cohomology class is independent of the connection.
\end{proof}

\begin{remark}
In the twisted case, the invariant polynomial must be invariant not only under the adjoint action of \(G\), but also under the induced action of \(\mathbb Z_2\) on \(\mathfrak g\).
\end{remark}

\subsection{Relation with known topological invariants}

The projection
\[
H_\sigma\to \mathbb Z_2
\]
gives the degree-one invariant
\[
\alpha(P)\in H^1(X;\mathbb Z_2).
\]

The inclusion of \(G\) into \(H_\sigma\) and the invariant polynomials on \(\mathfrak g\) yield the usual characteristic classes of the underlying \(G\)-geometry, whenever they are invariant under the \(\mathbb Z_2\)-action.

\begin{theorem}
Let \(P\to X\) be a principal \(H_\sigma\)-bundle classified by
\[
f_P:X\to B^{\mathrm{sph}}_\sigma(G).
\]
Then:
\begin{enumerate}
\item the class \(f_P^*\alpha_{\mathrm{univ}}\) is the first \(\mathbb Z_2\)-obstruction class of \(P\);
\item the characteristic forms obtained by pulling back the universal barycentric connection coincide with the Chern--Weil forms of the associated \(H_\sigma\)-connection;
\item when \(\sigma\) is trivial, these invariants split into the ordinary characteristic classes of the \(G\)-bundle together with the independent class in \(H^1(X;\mathbb Z_2)\).
\end{enumerate}
\end{theorem}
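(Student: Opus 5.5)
The proof splits into the three items, and each reduces to a result already established in this section.

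For item (1), I will use the proposition on the associated \(\mathbb Z_2\)-class. It says that \(f_P^*\alpha_{\mathrm{univ}}\) is represented by the Čech cocycle \((\varepsilon_{ij})\) of the \(\mathbb Z_2\)-components of the transition functions \(h_{ij}=(g_{ij},\varepsilon_{ij})\). That cocycle is exactly the one defining \(\alpha(P)\) in the twisted-cocycle proposition. It is also the transition cocycle of the double cover \(P/G\to X\), i.e. the bundle obtained by extending structure group along \(H_\sigma\to\mathbb Z_2\). The only point to check is independence of choices. Changing the local sections \(s_i\) by \(k_i=(a_i,\eta_i)\colon U_i\to H_\sigma\) changes \(h_{ij}\) to \(k_i^{-1}h_{ij}k_j\). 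Its \(\mathbb Z_2\)-part is \(\eta_i\varepsilon_{ij}\eta_j\), a Čech coboundary modification. So the class depends only on \(P\), and it is the obstruction to reducing the structure group to \(G=\ker(H_\sigma\to\mathbb Z_2)\): a reduction exists iff \((\varepsilon_{ij})\) is a coboundary.

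For item (2), I combine the proposition on the pull-back of the canonical connection with the curvature transformation law. On \(U_k\), \(f_P^*\Theta\) is the local form \(A_k=\sum_i\lambda_i h_{ki}^{-1}dh_{ki}\). These forms satisfy the gauge rule, so they define an \(H_\sigma\)-connection \(\nabla_P\) on \(P\) via the isomorphism \(f_P^*E^{\mathrm{sph}}(G)\cong P\) from the explicit classifying map. Curvature commutes with pull-back along \(f_P\) because exterior derivative and bracket are natural for diffeological forms. Hence \(f_P^*\omega_P(F_\Theta)=P(F_{\nabla_P},\dots,F_{\nabla_P})\) locally, and the gluing theorem makes this a global identity. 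By the Chern--Weil theorem above, the de Rham class is the same for any connection on \(P\).

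For item (3), take \(\sigma\) trivial, so \(H_\sigma=G\times\mathbb Z_2\) with commuting factors. The transition functions split as \(g_{ij}\) and \(\varepsilon_{ij}\), each satisfying an ordinary cocycle condition. Thus \(P\cong P_G\times_X P_{\mathbb Z_2}\), a fibre product of a principal \(G\)-bundle and a double cover. Since \(\mathfrak h_\sigma=\mathfrak g\) and the connection \(A_k\) only involves the \(G\)-components (the \(\mathbb Z_2\)-part is locally constant, so its Maurer--Cartan form vanishes), the characteristic forms are the ordinary Chern--Weil forms of \(P_G\). The class \(\alpha(P)\) is the class of \(P_{\mathbb Z_2}\), and the two can be varied independently. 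Equivalently, on classifying spaces one uses \(B^{\mathrm{sph}}_\times(G)\) together with the product structure of the quotient by commuting actions.

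The main obstacle is item (2), specifically the transport of the connection from \(f_P^*E^{\mathrm{sph}}(G)\) to \(P\). One must check that the isomorphism built from matching transition functions intertwines \(f_P^*\Theta\) with the local forms \(A_k\) in the chosen trivializations, including at points where some \(\lambda_i\) vanishes. I would handle this chartwise through the plot characterization, using that the \(\sqrt{\lambda_i}\) are smooth by \(D\)-numerability.
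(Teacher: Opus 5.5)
Your proposal is correct and follows the paper's proof item by item: (1) comes from the proposition identifying \(f_P^*\alpha_{\mathrm{univ}}\) with the class of \((\varepsilon_{ij})\), (2) from the pull-back formula for the canonical connection together with the Chern--Weil gluing theorem, and (3) from the splitting \(H_\sigma=G\times\mathbb Z_2\); your extra checks (the change \(\varepsilon_{ij}\mapsto\eta_i\varepsilon_{ij}\eta_j\) under re-trivialization, the reduction criterion, naturality of curvature, the vanishing of the \(\mathbb Z_2\) Maurer--Cartan part) fill in details the paper leaves implicit. One convention slip you inherit from the paper: with \(s_j=s_ih_{ij}\) and \(h_{ij}h_{jk}=h_{ik}\), the gauge rule \(A_\ell=\operatorname{Ad}_{h_{k\ell}^{-1}}A_k+h_{k\ell}^{-1}dh_{k\ell}\) holds for \(A_k=\sum_i\lambda_i h_{ik}^{-1}dh_{ik}\), not for the printed \(h_{ki}\)-version, so swap the indices when you run item (2).
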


\begin{proof}
The first point follows from the preceding identification of \(f_P^*\alpha_{\mathrm{univ}}\) with the Čech class of the \(\mathbb Z_2\)-components \((\varepsilon_{ij})\).

The second point follows from the explicit pull-back formula
\[
A_k=\sum_i\lambda_i h_{ki}^{-1}dh_{ki},
\]
which is precisely the local expression for the induced connection on \(P\). The characteristic forms are therefore the usual Chern--Weil forms of this connection.

If \(\sigma\) is trivial, then
\[
H_\sigma=G\times \mathbb Z_2.
\]
Thus every principal \(H_\sigma\)-bundle is equivalent to a pair consisting of a principal \(G\)-bundle and a principal \(\mathbb Z_2\)-bundle. The corresponding invariants split accordingly.
\end{proof}

\section{Gerbes and secondary obstruction classes}
\label{sec:gerbes}

\subsection{Central extensions compatible with the twist}

Let
\[
1\longrightarrow A \longrightarrow \widehat G
\stackrel{q}{\longrightarrow} G \longrightarrow 1
\]
be a central extension of diffeological groups, with \(A\) abelian.

Assume that the action
\[
\sigma:\mathbb Z_2\longrightarrow \operatorname{Aut}(G)
\]
lifts to an action
\[
\widehat\sigma:\mathbb Z_2\longrightarrow \operatorname{Aut}(\widehat G)
\]
such that
\[
q\circ \widehat\sigma^\varepsilon
=
\sigma^\varepsilon\circ q
\]
for every \(\varepsilon\in\mathbb Z_2\).

Then we have a central extension
\[
1\longrightarrow A
\longrightarrow
\widehat G\rtimes_{\widehat\sigma}\mathbb Z_2
\longrightarrow
G\rtimes_\sigma \mathbb Z_2
\longrightarrow 1.
\]

Set
\[
H_\sigma:=G\rtimes_\sigma\mathbb Z_2,
\qquad
\widehat H_{\widehat\sigma}:=\widehat G\rtimes_{\widehat\sigma}\mathbb Z_2.
\]

\subsection{The lifting problem}

Let \(P\to X\) be a principal \(H_\sigma\)-bundle over a diffeological space \(X\).

Choose a good cover \((U_i)\) and transition functions
\[
h_{ij}:U_{ij}\to H_\sigma.
\]
Write
\[
h_{ij}=(g_{ij},\varepsilon_{ij}),
\]
where
\[
g_{ij}:U_{ij}\to G,
\qquad
\varepsilon_{ij}:U_{ij}\to\mathbb Z_2.
\]

The cocycle condition is
\[
h_{ij}h_{jk}=h_{ik}.
\]
Equivalently,
\[
\varepsilon_{ij}\varepsilon_{jk}=\varepsilon_{ik},
\]
and
\[
g_{ij}\,\sigma^{\varepsilon_{ij}}(g_{jk})=g_{ik}.
\]

\begin{definition}
A lift of \(P\) to \(\widehat H_{\widehat\sigma}\) is a principal
\(\widehat H_{\widehat\sigma}\)-bundle \(\widehat P\to X\) together with an isomorphism
\[
\widehat P/A \simeq P.
\]
\end{definition}

The obstruction to such a lift is measured by a gerbe.

\subsection{The obstruction cocycle}

Choose local lifts
\[
\widehat g_{ij}:U_{ij}\to \widehat G
\]
of \(g_{ij}\), i.e.
\[
q(\widehat g_{ij})=g_{ij}.
\]

Define
\[
a_{ijk}
:=
\widehat g_{ij}\,
\widehat\sigma^{\varepsilon_{ij}}(\widehat g_{jk})\,
\widehat g_{ik}^{-1}.
\]

\begin{proposition}
For every triple intersection \(U_{ijk}\), the element \(a_{ijk}\) takes values in \(A\).
\end{proposition}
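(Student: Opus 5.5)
The claim is that \(a_{ijk}\) lies in \(\ker q\), since \(A\) is exactly \(\ker q\) by exactness of the central extension. So the plan is to apply \(q\) to the defining expression
\[
a_{ijk}=\widehat g_{ij}\,\widehat\sigma^{\varepsilon_{ij}}(\widehat g_{jk})\,\widehat g_{ik}^{-1}
\]
and show that the result is the identity of \(G\) pointwise on \(U_{ijk}\).

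\textbf{Step 1.} Use that \(q\) is a group homomorphism to split the image into three factors:
\[
q(a_{ijk})=q(\widehat g_{ij})\;q\bigl(\widehat\sigma^{\varepsilon_{ij}}(\widehat g_{jk})\bigr)\;q(\widehat g_{ik})^{-1}.
\]

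\textbf{Step 2.} Rewrite each factor. The compatibility assumption \(q\circ\widehat\sigma^\varepsilon=\sigma^\varepsilon\circ q\) turns the middle factor into \(\sigma^{\varepsilon_{ij}}(q(\widehat g_{jk}))\). The lifting property \(q(\widehat g_{ij})=g_{ij}\) then gives
\[
q(a_{ijk})=g_{ij}\,\sigma^{\varepsilon_{ij}}(g_{jk})\,g_{ik}^{-1}.
\]

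\textbf{Step 3.} Invoke the twisted cocycle relation \(g_{ij}\,\sigma^{\varepsilon_{ij}}(g_{jk})=g_{ik}\), established in the proposition on twisted cocycles. This yields \(q(a_{ijk})=1\), hence \(a_{ijk}(x)\in A\) for every \(x\in U_{ijk}\).

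The only point needing care is that \(\varepsilon_{ij}\) is a function on \(U_{ij}\). Since \(\varepsilon_{ij}\) is locally constant, the compatibility of \(q\) with \(\widehat\sigma\) is applied pointwise with \(\varepsilon=\varepsilon_{ij}(x)\). On each connected piece this is a single automorphism, so the computation above holds at every point.

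I expect no genuine obstacle. If smoothness of \(a_{ijk}\) as an \(A\)-valued map is also wanted, I would argue it as follows. The map \(a_{ijk}\) is a composite of smooth maps into \(\widehat G\), because multiplication, inversion and \(\widehat\sigma^\varepsilon\) are smooth and \(\varepsilon_{ij}\) is locally constant. Its image lies in \(A\), and \(A\) carries the subset diffeology. Hence \(a_{ijk}\) is smooth into \(A\).
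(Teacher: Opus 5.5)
Your proposal is correct and follows the paper's proof: you apply \(q\), use the compatibility \(q\circ\widehat\sigma^\varepsilon=\sigma^\varepsilon\circ q\) and the lifting property to get \(g_{ij}\,\sigma^{\varepsilon_{ij}}(g_{jk})\,g_{ik}^{-1}\), and then the twisted cocycle relation shows this is trivial, so \(a_{ijk}\in\ker q=A\). Your pointwise treatment of the locally constant \(\varepsilon_{ij}\) and your smoothness remark go beyond the statement. The smoothness remark assumes \(A\) carries the subset diffeology of \(\widehat G\).
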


\begin{proof}
Apply \(q:\widehat G\to G\). We get
\[
q(a_{ijk})
=
q(\widehat g_{ij})\,
q\bigl(\widehat\sigma^{\varepsilon_{ij}}(\widehat g_{jk})\bigr)\,
q(\widehat g_{ik}^{-1}).
\]
Using compatibility of \(q\) with the lifted action,
\[
q\bigl(\widehat\sigma^{\varepsilon_{ij}}(\widehat g_{jk})\bigr)
=
\sigma^{\varepsilon_{ij}}(q(\widehat g_{jk}))
=
\sigma^{\varepsilon_{ij}}(g_{jk}).
\]
Thus
\[
q(a_{ijk})
=
g_{ij}\,\sigma^{\varepsilon_{ij}}(g_{jk})\,g_{ik}^{-1}.
\]
By the cocycle condition for \(H_\sigma\),
\[
g_{ij}\,\sigma^{\varepsilon_{ij}}(g_{jk})=g_{ik}.
\]
Therefore
\[
q(a_{ijk})=e.
\]
Hence
\[
a_{ijk}\in \ker(q)=A.
\]
\end{proof}

\subsection{The twisted cocycle identity}

The \(\mathbb Z_2\)-cocycle \((\varepsilon_{ij})\) acts on \(A\) through the lifted action \(\widehat\sigma\). Thus \(A\) becomes a local system, denoted
\[
A_\alpha,
\]
where
\[
\alpha=[\varepsilon_{ij}]\in H^1(X;\mathbb Z_2).
\]

\begin{theorem}
The family \((a_{ijk})\) defines a Čech \(2\)-cocycle with coefficients in the local system \(A_\alpha\).
\end{theorem}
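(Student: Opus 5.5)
We have to show that $(a_{ijk})$ satisfies the twisted Čech cocycle identity
\[
\widehat\sigma^{\varepsilon_{ij}}(a_{jkl})\,a_{ijl}=a_{ijk}\,a_{ikl}
\qquad\text{on }U_{ijkl},
\]
where $A$ is written multiplicatively. We also check that its class does not depend on the choices made. The plan is a direct computation in $\widehat G$, using two facts. First, $A$ is central in $\widehat G$ and stable under $\widehat\sigma$, since $\widehat\sigma$ covers $\sigma$ and so preserves $\ker q$. Second, the $\mathbb Z_2$-cocycle condition gives $\varepsilon_{ij}\varepsilon_{jk}=\varepsilon_{ik}$, so that $\widehat\sigma^{\varepsilon_{ij}}\circ\widehat\sigma^{\varepsilon_{jk}}=\widehat\sigma^{\varepsilon_{ik}}$.

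\textbf{Step 1: the associativity computation.} We expand the element
\[
\widehat g_{ij}\,\widehat\sigma^{\varepsilon_{ij}}(\widehat g_{jk})\,\widehat\sigma^{\varepsilon_{ik}}(\widehat g_{kl})
\]
in two ways. Grouping the first two factors, we write $\widehat g_{ij}\widehat\sigma^{\varepsilon_{ij}}(\widehat g_{jk})=a_{ijk}\widehat g_{ik}$. Then we apply $\widehat g_{ik}\widehat\sigma^{\varepsilon_{ik}}(\widehat g_{kl})=a_{ikl}\widehat g_{il}$. This gives $a_{ijk}a_{ikl}\widehat g_{il}$.

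Grouping the last two factors instead, we rewrite them as
\[
\widehat\sigma^{\varepsilon_{ij}}\bigl(\widehat g_{jk}\widehat\sigma^{\varepsilon_{jk}}(\widehat g_{kl})\bigr)
=\widehat\sigma^{\varepsilon_{ij}}(a_{jkl}\widehat g_{jl}).
\]
Since $\widehat\sigma^{\varepsilon_{ij}}(a_{jkl})$ is central, we pull it to the front and use $\widehat g_{ij}\widehat\sigma^{\varepsilon_{ij}}(\widehat g_{jl})=a_{ijl}\widehat g_{il}$. This gives $\widehat\sigma^{\varepsilon_{ij}}(a_{jkl})\,a_{ijl}\,\widehat g_{il}$.

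Cancelling $\widehat g_{il}$ yields the identity. This is exactly the Čech coboundary of $a$ with coefficients in $A_\alpha$, where the restriction map from the $j$-chart to the $i$-chart acts by $\widehat\sigma^{\varepsilon_{ij}}$.

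\textbf{Step 2: the local system and locality.} We check that $A_\alpha$ is a genuine local system, with transition automorphisms $\widehat\sigma^{\varepsilon_{ij}}|_A$. These satisfy the cocycle relation by the $\mathbb Z_2$-cocycle identity. Because the $\varepsilon_{ij}$ are locally constant, the action is smooth and constant on connected pieces. We also note that the $a_{ijk}$ are smooth $A$-valued maps, being composites of smooth group operations.

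\textbf{Step 3: independence of choices.} Two lifts $\widehat g'_{ij}=b_{ij}\widehat g_{ij}$ differ by $b_{ij}:U_{ij}\to A$. Substituting and using centrality, we expect
\[
a'_{ijk}=b_{ij}\,\widehat\sigma^{\varepsilon_{ij}}(b_{jk})\,b_{ik}^{-1}\,a_{ijk},
\]
so $a'=(\delta_\alpha b)\,a$. For this we may need to choose lifts with $\widehat g_{ji}$ compatible with $\widehat g_{ij}$, or else work with ordered Čech cochains. Changing the trivialization by $k_i:U_i\to H_\sigma$ changes $h_{ij}$ to $k_i^{-1}h_{ij}k_j$. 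Lifting the $k_i$ to $\widehat H_{\widehat\sigma}$ again changes $a$ by a twisted coboundary, together with the standard isomorphism of local systems induced by the change of $\varepsilon$.

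I expect the main obstacle to be the bookkeeping of the $\mathbb Z_2$-action in Step 3 and in the convention for Step 1. Specifically, one has to decide on which index the twist $\widehat\sigma^{\varepsilon_{ij}}$ acts, and make sure that a change of the $\varepsilon$-representative produces an isomorphic local system rather than the same one. I would first fix the convention that cochains take values in the fibre over the first index, and verify everything in that convention.
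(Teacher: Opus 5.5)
Your Step 1 is the paper's own argument. You expand $\widehat g_{ij}\,\widehat\sigma^{\varepsilon_{ij}}(\widehat g_{jk})\,\widehat\sigma^{\varepsilon_{ij}\varepsilon_{jk}}(\widehat g_{kl})$ by grouping either the first two or the last two factors, then use the centrality of $A$ and its $\widehat\sigma$-stability (which you justify via $q\circ\widehat\sigma^\varepsilon=\sigma^\varepsilon\circ q$, whereas the paper leaves it implicit), and cancel $\widehat g_{il}$; so the proposal is correct and follows essentially the same route, while your Step 3 is not needed here and reproduces the paper's separate theorem on independence of the choice of lifts.
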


\begin{proof}
We must prove the twisted cocycle identity on quadruple intersections.

By definition,
\[
a_{ijk}
=
\widehat g_{ij}\,
\widehat\sigma^{\varepsilon_{ij}}(\widehat g_{jk})\,
\widehat g_{ik}^{-1}.
\]

Equivalently,
\[
\widehat g_{ij}\,
\widehat\sigma^{\varepsilon_{ij}}(\widehat g_{jk})
=
a_{ijk}\,\widehat g_{ik}.
\]

Now consider the product
\[
\widehat g_{ij}\,
\widehat\sigma^{\varepsilon_{ij}}(\widehat g_{jk})\,
\widehat\sigma^{\varepsilon_{ij}\varepsilon_{jk}}(\widehat g_{kl}).
\]

We compute it in two ways.

First, group the first two factors:
\[
\widehat g_{ij}\,
\widehat\sigma^{\varepsilon_{ij}}(\widehat g_{jk})
=
a_{ijk}\widehat g_{ik}.
\]
Hence
\[
\widehat g_{ij}\,
\widehat\sigma^{\varepsilon_{ij}}(\widehat g_{jk})\,
\widehat\sigma^{\varepsilon_{ij}\varepsilon_{jk}}(\widehat g_{kl})
=
a_{ijk}\,
\widehat g_{ik}\,
\widehat\sigma^{\varepsilon_{ik}}(\widehat g_{kl}).
\]
Using the definition of \(a_{ikl}\),
\[
\widehat g_{ik}\,
\widehat\sigma^{\varepsilon_{ik}}(\widehat g_{kl})
=
a_{ikl}\widehat g_{il}.
\]
Therefore the product equals
\[
a_{ijk}a_{ikl}\widehat g_{il}.
\]

Second, group the last two factors:
\[
\widehat g_{jk}\,
\widehat\sigma^{\varepsilon_{jk}}(\widehat g_{kl})
=
a_{jkl}\widehat g_{jl}.
\]
Applying \(\widehat\sigma^{\varepsilon_{ij}}\), we obtain
\[
\widehat\sigma^{\varepsilon_{ij}}(\widehat g_{jk})\,
\widehat\sigma^{\varepsilon_{ij}\varepsilon_{jk}}(\widehat g_{kl})
=
\widehat\sigma^{\varepsilon_{ij}}(a_{jkl})\,
\widehat\sigma^{\varepsilon_{ij}}(\widehat g_{jl}).
\]
Thus the same product equals
\[
\widehat g_{ij}\,
\widehat\sigma^{\varepsilon_{ij}}(a_{jkl})\,
\widehat\sigma^{\varepsilon_{ij}}(\widehat g_{jl}).
\]
Since \(A\) is central in \(\widehat G\), the term
\(\widehat\sigma^{\varepsilon_{ij}}(a_{jkl})\in A\) can be moved to the front:
\[
=
\widehat\sigma^{\varepsilon_{ij}}(a_{jkl})\,
\widehat g_{ij}\,
\widehat\sigma^{\varepsilon_{ij}}(\widehat g_{jl}).
\]
Using the definition of \(a_{ijl}\),
\[
\widehat g_{ij}\,
\widehat\sigma^{\varepsilon_{ij}}(\widehat g_{jl})
=
a_{ijl}\widehat g_{il}.
\]
Therefore the product equals
\[
\widehat\sigma^{\varepsilon_{ij}}(a_{jkl})\,a_{ijl}\widehat g_{il}.
\]

Comparing the two expressions, we obtain
\[
a_{ijk}a_{ikl}
=
\widehat\sigma^{\varepsilon_{ij}}(a_{jkl})\,a_{ijl}.
\]
This is exactly the Čech \(2\)-cocycle condition with coefficients twisted by the local system \(A_\alpha\).
\end{proof}

\subsection{Independence of choices}

\begin{theorem}
The cohomology class
\[
[a]\in H^2(X;A_\alpha)
\]
defined by \((a_{ijk})\) is independent of the choice of local lifts \(\widehat g_{ij}\).
\end{theorem}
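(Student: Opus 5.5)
Since any two lifts of the same \(g_{ij}\) differ by an element of the kernel \(A\), I will write the second choice as \(\widehat g'_{ij}=b_{ij}\,\widehat g_{ij}\) with \(b_{ij}:U_{ij}\to A\). The aim is to show that the new obstruction cocycle \(a'_{ijk}\) differs from \(a_{ijk}\) by the twisted Čech coboundary of the \(1\)-cochain \((b_{ij})\) with values in \(A_\alpha\). That coboundary is
\[
(\delta_\alpha b)_{ijk}=b_{ij}\,\widehat\sigma^{\varepsilon_{ij}}(b_{jk})\,b_{ik}^{-1},
\]
which is the twisted differential matching the cocycle identity \(a_{ijk}a_{ikl}=\widehat\sigma^{\varepsilon_{ij}}(a_{jkl})a_{ijl}\) proved in the preceding theorem.

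First I record the needed properties of \(A\). It is abelian and central in \(\widehat G\). Each \(\widehat\sigma^\varepsilon\) is an automorphism of \(\widehat G\) with \(q\circ\widehat\sigma^\varepsilon=\sigma^\varepsilon\circ q\), so \(\widehat\sigma^\varepsilon\) preserves \(A=\ker q\). In particular \(\widehat\sigma^\varepsilon(b_{jk})\in A\) and is again central. This is what makes the \(\mathbb Z_2\)-action on \(A\) through \((\varepsilon_{ij})\) define the local system \(A_\alpha\).

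Next I expand directly:
\[
a'_{ijk}=b_{ij}\widehat g_{ij}\,\widehat\sigma^{\varepsilon_{ij}}(b_{jk})\,\widehat\sigma^{\varepsilon_{ij}}(\widehat g_{jk})\,\widehat g_{ik}^{-1}b_{ik}^{-1}.
\]
Centrality lets me move \(b_{ij}\), \(\widehat\sigma^{\varepsilon_{ij}}(b_{jk})\) and \(b_{ik}^{-1}\) to the front, which gives \(a'_{ijk}=(\delta_\alpha b)_{ijk}\,a_{ijk}\). So \([a']=[a]\) in \(H^2(X;A_\alpha)\).

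I expect the main obstacle to be bookkeeping rather than algebra. One must check that the differential \(\delta_\alpha\) used here is the one whose square vanishes and whose \(2\)-cocycles are exactly those satisfying the identity of the preceding theorem. This requires the relation \(\varepsilon_{ij}\varepsilon_{jk}=\varepsilon_{ik}\), so that the coefficient twists compose consistently. I will verify directly that \(\delta_\alpha\delta_\alpha b=1\) using this relation and the fact that \(\widehat\sigma\) is a homomorphism on \(\mathbb Z_2\).

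For completeness I will also note, in remarks, two further independence statements. Changing the local trivializations, \(s_i\mapsto s_i k_i\) with \(k_i:U_i\to H_\sigma\), changes \(\varepsilon_{ij}\) by a \(\mathbb Z_2\)-coboundary and changes \(a\) by a coboundary together with the canonical isomorphism of local systems. Refining the cover is compatible with the refinement maps on Čech cohomology. The statement itself only concerns the choice of lifts, so the core of the proof is the computation in the previous paragraph.
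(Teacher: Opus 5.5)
Your proposal is correct and follows the paper's proof essentially line by line: you write \(\widehat g'_{ij}=b_{ij}\widehat g_{ij}\) with \(b_{ij}\) valued in \(A\), expand \(a'_{ijk}\) using that \(\widehat\sigma^{\varepsilon_{ij}}\) is an automorphism and that \(A\) is central, and obtain \(a'_{ijk}=b_{ij}\,\widehat\sigma^{\varepsilon_{ij}}(b_{jk})\,b_{ik}^{-1}\,a_{ijk}\), which is a twisted coboundary. Your additional checks, namely that \(\widehat\sigma^\varepsilon\) preserves \(A=\ker q\) and that \(\delta_\alpha^2=1\) via \(\varepsilon_{ij}\varepsilon_{jk}=\varepsilon_{ik}\), make explicit points the paper leaves implicit but are not needed for the argument.
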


\begin{proof}
Let \(\widehat g_{ij}'\) be another choice of lifts. Since both \(\widehat g_{ij}\) and \(\widehat g_{ij}'\) lift \(g_{ij}\), there exists a map
\[
b_{ij}:U_{ij}\to A
\]
such that
\[
\widehat g_{ij}'=b_{ij}\widehat g_{ij}.
\]

Compute the new cocycle:
\[
a'_{ijk}
=
\widehat g'_{ij}\,
\widehat\sigma^{\varepsilon_{ij}}(\widehat g'_{jk})\,
(\widehat g'_{ik})^{-1}.
\]
Substituting,
\[
a'_{ijk}
=
b_{ij}\widehat g_{ij}\,
\widehat\sigma^{\varepsilon_{ij}}(b_{jk}\widehat g_{jk})\,
(b_{ik}\widehat g_{ik})^{-1}.
\]
Since \(\widehat\sigma^{\varepsilon_{ij}}\) is an automorphism,
\[
\widehat\sigma^{\varepsilon_{ij}}(b_{jk}\widehat g_{jk})
=
\widehat\sigma^{\varepsilon_{ij}}(b_{jk})\,
\widehat\sigma^{\varepsilon_{ij}}(\widehat g_{jk}).
\]
Since \(A\) is central,
\[
a'_{ijk}
=
b_{ij}\,
\widehat\sigma^{\varepsilon_{ij}}(b_{jk})\,
b_{ik}^{-1}
\,
\widehat g_{ij}\widehat\sigma^{\varepsilon_{ij}}(\widehat g_{jk})\widehat g_{ik}^{-1}.
\]
Thus
\[
a'_{ijk}
=
b_{ij}\,
\widehat\sigma^{\varepsilon_{ij}}(b_{jk})\,
b_{ik}^{-1}
\,a_{ijk}.
\]
This is precisely multiplication by a Čech coboundary in the twisted local system \(A_\alpha\). Hence \(a\) and \(a'\) define the same class in \(H^2(X;A_\alpha)\).
\end{proof}

\subsection{Vanishing criterion}

\begin{theorem}
The obstruction class
\[
[a]\in H^2(X;A_\alpha)
\]
vanishes if and only if the principal \(H_\sigma\)-bundle \(P\to X\) lifts to a principal \(\widehat H_{\widehat\sigma}\)-bundle.
\end{theorem}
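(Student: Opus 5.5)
The proof splits into two implications, both carried out in Čech terms on the fixed good cover \((U_i)\) with transition functions \(h_{ij}=(g_{ij},\varepsilon_{ij})\). In each direction, the obstruction cocycle \((a_{ijk})\) is compared with lifted transition data. Throughout I use the two preceding theorems: \((a_{ijk})\) is a twisted \(2\)-cocycle with values in \(A_\alpha\), and its class does not depend on the choice of lifts \(\widehat g_{ij}\).

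\emph{Lift implies vanishing.} Suppose \(\widehat P\to X\) is a principal \(\widehat H_{\widehat\sigma}\)-bundle with \(\widehat P/A\simeq P\). Refining the cover if necessary, I choose local sections of \(\widehat P\) whose images in \(P\) are the sections defining \(h_{ij}\); this is possible because \(\widehat P\to P\) is a principal \(A\)-bundle and hence locally trivial. The transition functions of \(\widehat P\) then have the form \(\widehat h_{ij}=(\widehat g_{ij}',\varepsilon_{ij})\), where each \(\widehat g_{ij}'\) lifts \(g_{ij}\). Expanding the cocycle identity \(\widehat h_{ij}\widehat h_{jk}=\widehat h_{ik}\) in the semidirect product \(\widehat H_{\widehat\sigma}\) gives \(\widehat g'_{ij}\widehat\sigma^{\varepsilon_{ij}}(\widehat g'_{jk})=\widehat g'_{ik}\). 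So with these particular lifts \(a'_{ijk}=1\), and independence of choices then gives \([a]=0\). Passing to a refinement is harmless, since Čech classes are compatible with refinement maps.

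\emph{Vanishing implies lift.} If \([a]=0\) in \(H^2(X;A_\alpha)\), then after refinement there are \(b_{ij}:U_{ij}\to A\) with
\[
a_{ijk}=b_{ij}\,\widehat\sigma^{\varepsilon_{ij}}(b_{jk})\,b_{ik}^{-1},
\]
where the sign and ordering convention is the one appearing in the independence computation. Put \(\widehat g'_{ij}:=b_{ij}^{-1}\widehat g_{ij}\). The computation in the proof of independence of choices shows that \(a'_{ijk}=1\), that is, \(\widehat g'_{ij}\widehat\sigma^{\varepsilon_{ij}}(\widehat g'_{jk})=\widehat g'_{ik}\). Hence \(\widehat h_{ij}:=(\widehat g'_{ij},\varepsilon_{ij})\) is an \(\widehat H_{\widehat\sigma}\)-valued Čech cocycle. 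I glue \(\widehat P:=\bigsqcup_i U_i\times\widehat H_{\widehat\sigma}/\!\sim\) along \(\widehat h_{ij}\), using the quotient diffeology. Since \(\widehat h_{ij}\) projects to \(h_{ij}\), the map \(\widehat P/A\to P\), defined chartwise by \((x,\widehat h)\mapsto(x,\bar{\widehat h})\), is well defined, \(H_\sigma\)-equivariant and a local diffeomorphism over each \(U_i\). It is therefore an isomorphism of principal \(H_\sigma\)-bundles.

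\emph{Main obstacle.} The algebra above is routine. The difficulty is making the correspondence between principal bundles and Čech cocycles rigorous in the diffeological setting, which has two parts. The first is smoothness: the lifts \(\widehat g_{ij}\) and the trivializing data must be smooth. This requires \(q:\widehat G\to G\) to admit smooth local sections, i.e.\ to be a locally trivial principal \(A\)-bundle, and I will state this as a standing hypothesis on the central extension. The second is refinement: \(H^2(X;A_\alpha)\) must be read as the colimit of Čech cohomology over D-open covers, including \(D\)-numerable refinements, so that coboundary representatives exist on some refinement of the cover. Given these two points, the gluing construction is the standard one, and its output is a diffeological principal bundle because local triviality over a D-open cover characterizes such bundles.
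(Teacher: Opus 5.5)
Your proposal is correct and follows the same route as the paper's proof. In one direction, the transition functions of the lift supply lifts \(\widehat g'_{ij}\) with \(a'_{ijk}=1\). In the other, you rescale the lifts by the coboundary data to get a genuine \(\widehat H_{\widehat\sigma}\)-valued cocycle; your \(b_{ij}^{-1}\) versus the paper's \(b_{ij}\) is only a sign convention. Your additional points make explicit what the paper leaves implicit, without changing the argument: choosing sections of \(\widehat P\) lying over those of \(P\), passing to refinements, the explicit gluing, and the standing hypothesis that \(q\) admits smooth local sections.
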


\begin{proof}
Assume first that \(P\) lifts to a principal \(\widehat H_{\widehat\sigma}\)-bundle. Then there exist transition functions
\[
\widehat h_{ij}=(\widehat g_{ij},\varepsilon_{ij})
\]
satisfying the strict cocycle condition
\[
\widehat h_{ij}\widehat h_{jk}=\widehat h_{ik}.
\]
In terms of the \(\widehat G\)-components, this gives
\[
\widehat g_{ij}\,
\widehat\sigma^{\varepsilon_{ij}}(\widehat g_{jk})
=
\widehat g_{ik}.
\]
Hence
\[
a_{ijk}=1
\]
for all \(i,j,k\), so \([a]=0\).

Conversely, assume \([a]=0\). Then there exist maps
\[
b_{ij}:U_{ij}\to A
\]
such that
\[
a_{ijk}
=
b_{ij}^{-1}\,
\widehat\sigma^{\varepsilon_{ij}}(b_{jk})^{-1}\,
b_{ik}.
\]
Define new lifts
\[
\widehat g'_{ij}:=b_{ij}\widehat g_{ij}.
\]
By the computation in the proof of independence of choices, the new obstruction cocycle is
\[
a'_{ijk}
=
b_{ij}\,
\widehat\sigma^{\varepsilon_{ij}}(b_{jk})\,
b_{ik}^{-1}
\,a_{ijk}.
\]
Using the chosen expression for \(a_{ijk}\), we get
\[
a'_{ijk}=1.
\]
Therefore
\[
\widehat g'_{ij}\,
\widehat\sigma^{\varepsilon_{ij}}(\widehat g'_{jk})
=
\widehat g'_{ik}.
\]
Thus
\[
\widehat h'_{ij}:=(\widehat g'_{ij},\varepsilon_{ij})
\]
defines a genuine \(\widehat H_{\widehat\sigma}\)-valued Čech cocycle. This cocycle defines a principal \(\widehat H_{\widehat\sigma}\)-bundle lifting \(P\).
\end{proof}

\subsection{The lifting gerbe}

Let
\[
1\longrightarrow A
\longrightarrow
\widehat H_{\widehat\sigma}
\stackrel{\pi}{\longrightarrow}
H_\sigma
\longrightarrow 1
\]
be an extension of diffeological groups, where \(A\) is abelian and central in
\(\widehat H_{\widehat\sigma}\).

Let
\[
P\to X
\]
be a principal \(H_\sigma\)-bundle.

\begin{definition}
The \emph{lifting gerbe}
\[
\mathcal G(P,\widehat H_{\widehat\sigma})
\]
is the stack over \(X\) defined as follows.

For every open subset \(U\subset X\), the groupoid
\[
\mathcal G(P,\widehat H_{\widehat\sigma})(U)
\]
has:
\begin{itemize}
\item as objects, principal
\(\widehat H_{\widehat\sigma}\)-bundles
\[
\widehat P_U\to U
\]
equipped with an isomorphism
\[
\widehat P_U/A \simeq P|_U;
\]

\item as morphisms, isomorphisms of
\(\widehat H_{\widehat\sigma}\)-bundles compatible with the projection to
\(P|_U\).
\end{itemize}
\end{definition}

\begin{remark}
The lifting gerbe is empty over \(U\) precisely when the restriction
\[
P|_U
\]
does not admit a lift to a principal
\(\widehat H_{\widehat\sigma}\)-bundle.
\end{remark}

\subsection{Band of the gerbe}

Let
\[
\alpha(P)\in H^1(X;\mathbb Z_2)
\]
denote the twisting class determined by the \(\mathbb Z_2\)-component of the
transition functions of \(P\).

The action of \(\mathbb Z_2\) on
\(\widehat H_{\widehat\sigma}\) induces an action on the central subgroup
\(A\). Hence the cocycle defining \(\alpha(P)\) determines a local system,
denoted
\[
A_\alpha.
\]

\begin{proposition}
The gerbe
\[
\mathcal G(P,\widehat H_{\widehat\sigma})
\]
is banded by the local system \(A_\alpha\).
\end{proposition}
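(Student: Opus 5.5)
To show that \(\mathcal G(P,\widehat H_{\widehat\sigma})\) is banded by \(A_\alpha\), I will verify the two gerbe axioms (local non-emptiness and local connectedness) and then identify the automorphism sheaves of objects, canonically and compatibly, with the sheaf of sections of \(A_\alpha\).

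\textbf{Local non-emptiness.} Over each member \(U_i\) of a good cover trivializing \(P\), take the trivial lift \(U_i\times\widehat H_{\widehat\sigma}\). Its quotient by \(A\) is identified with \(P|_{U_i}\) through the chosen section \(s_i\).

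\textbf{Local connectedness.} Take two objects \(\widehat P_U\) and \(\widehat P'_U\). After refining the cover, both are trivial and their transition data are lifts \(\widehat h_{ij}\) and \(\widehat h'_{ij}\) of the same \(h_{ij}\). So \(\widehat h'_{ij}=c_{ij}\widehat h_{ij}\) with \(c_{ij}\) valued in \(A\). Locally on each \(U_i\) the identity of \(P|_{U_i}\) then lifts to an isomorphism of the two trivial bundles. Hence any two objects are locally isomorphic.

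\textbf{Automorphisms of an object.} Let \(\phi\) be an automorphism of \(\widehat P_U\) covering the identity of \(P|_U\). Since \(\phi\) is \(\widehat H_{\widehat\sigma}\)-equivariant and induces the identity on \(\widehat P_U/A\), we can write \(\phi(p)=p\cdot a(p)\) with \(a(p)\in A\). Smoothness of \(a\) should follow from smoothness of the division map \(\Theta^{-1}\) established for principal bundles earlier.

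\textbf{Equivariance of \(a\).} Because \(A\) is central in \(\widehat H_{\widehat\sigma}\) and \(\phi\) commutes with right multiplication, \(a(p\cdot h)=h^{-1}a(p)h\). This equals \(a(p)\) for \(h\in\widehat G\). For \(h=(1,\varepsilon)\) it equals \(\widehat\sigma^{\varepsilon}(a(p))\). I expect this to be the main point to get right: conjugation in the semidirect product by the \(\mathbb Z_2\)-part acts on \(A\) through \(\widehat\sigma\). So "\(A\) central" must be read as: \(A\) is central in \(\widehat G\) and stable under \(\widehat\sigma\). I will state this interpretation explicitly, since a literally central \(A\) would give an untwisted band.

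\textbf{Identification with \(A_\alpha\).} The function \(a\) is therefore a section of the associated bundle \(\widehat P_U\times_{\widehat H_{\widehat\sigma}}A\). The action on \(A\) factors through \(\mathbb Z_2\), so this associated bundle equals \(P/G\times_{\mathbb Z_2}A\). Its transition functions are \(\widehat\sigma^{\varepsilon_{ij}}\), which is precisely the local system \(A_\alpha\) introduced with the twisted cocycle. This gives an isomorphism of sheaves of groups \(\underline{\operatorname{Aut}}(\widehat P_U)\cong A_\alpha|_U\).

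\textbf{Compatibility.} I will check that for any isomorphism \(\psi:\widehat P_U\to\widehat P'_U\), conjugation by \(\psi\) intertwines the two identifications with \(A_\alpha\). This holds because \(\psi\) is equivariant and covers the identity of \(P\), so it induces the identity on \(P/G\) and hence on \(A_\alpha\). Consistency with the cocycle description of \(H^2(X;A_\alpha)\) given by the twisted cocycle theorem and the vanishing criterion then confirms that the band is \(A_\alpha\).
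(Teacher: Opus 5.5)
Your proof is correct and follows the same idea as the paper's proof. Automorphisms of a local lift over \(P|_U\) are multiplication by \(A\)-valued data, and the identifications attached to different trivializations are twisted through \(\widehat\sigma^{\varepsilon_{ij}}\), which glues them into \(A_\alpha\). You carry this out intrinsically, via \(a(p\cdot h)=h^{-1}a(p)h\) and \(\widehat P_U\times_{\widehat H_{\widehat\sigma}}A\cong (P/G)\times_{\mathbb Z_2}A\), and you also check local non-emptiness, local connectedness and compatibility with morphisms, all of which the paper's proof omits.

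Your caveat about centrality is well taken. If \(A\) were literally central in \(\widehat H_{\widehat\sigma}\), as the paper assumes, then \(\widehat\sigma^\varepsilon|_A=\mathrm{id}\) and the band would be untwisted. Your computation also shows that the automorphism group over \(U\) is \(\Gamma(U;A_\alpha)\); this coincides with the paper's \(A(U)\) only after trivializing \(P/G\) over \(U\). Note, too, that the \(\widehat\sigma\)-stability of \(A=\ker q\) which you impose is automatic from \(q\circ\widehat\sigma^\varepsilon=\sigma^\varepsilon\circ q\).
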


\begin{proof}
Let
\[
\widehat P_U
\]
be a local lift over an open subset \(U\subset X\).

An automorphism of \(\widehat P_U\) inducing the identity on
\[
P|_U
\]
is given by fiberwise multiplication by an element of the kernel of
\[
\pi:
\widehat H_{\widehat\sigma}\to H_\sigma.
\]
Since the extension is central, this kernel is canonically identified with
\(A\). Therefore:
\[
\operatorname{Aut}_{P|_U}(\widehat P_U)
\simeq
A(U).
\]

Now consider two local trivializations over \(U_i\) and \(U_j\). Their
transition functions contain a \(\mathbb Z_2\)-component
\[
\varepsilon_{ij}.
\]
The action of \(\varepsilon_{ij}\) on
\(\widehat H_{\widehat\sigma}\) restricts to an automorphism of the subgroup
\(A\). Hence the identifications of the local automorphism sheaves are twisted
by the cocycle
\[
(\varepsilon_{ij}).
\]

Consequently, the automorphism sheaves glue into the local system
\[
A_\alpha.
\]
Thus the gerbe is banded by \(A_\alpha\).
\end{proof}

\subsection{Čech description of the obstruction class}

Choose a good open cover
\[
(U_i)_i
\]
of \(X\), together with transition functions
\[
(g_{ij},\varepsilon_{ij})
:
U_{ij}\to H_\sigma.
\]

Choose local lifts
\[
\widehat g_{ij}:U_{ij}\to \widehat H_{\widehat\sigma}
\]
of these transition functions.

\begin{proposition}
The functions
\[
a_{ijk}
=
\widehat g_{ij}
\widehat\sigma^{\varepsilon_{ij}}(\widehat g_{jk})
\widehat g_{ik}^{-1}
\]
take values in \(A\) and define a twisted Čech \(2\)-cocycle with coefficients
in the local system \(A_\alpha\).
\end{proposition}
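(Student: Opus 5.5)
This statement essentially repeats, in the language of the lifting gerbe, the results proved earlier in this section for the central extension \(1\to A\to\widehat G\to G\to 1\). The plan is to reduce to those computations. The only new feature is that the lifts \(\widehat g_{ij}\) are now described as taking values in \(\widehat H_{\widehat\sigma}\), not in \(\widehat G\). So the first step is to normalize them. The extension \(\widehat H_{\widehat\sigma}\to H_\sigma\) is compatible with the projections to \(\mathbb Z_2\), so every lift of \((g_{ij},\varepsilon_{ij})\) has the form \((\widehat g_{ij},\varepsilon_{ij})\) with \(\widehat g_{ij}\in\widehat G\) lifting \(g_{ij}\). We read the formula for \(a_{ijk}\) through these \(\widehat G\)-components, which is exactly the setting of the earlier subsections.

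With this identification, the claim that \(a_{ijk}\) takes values in \(A\) is the earlier proposition. One applies \(q\), uses the intertwining \(q\circ\widehat\sigma^\varepsilon=\sigma^\varepsilon\circ q\), and invokes the twisted cocycle relation \(g_{ij}\sigma^{\varepsilon_{ij}}(g_{jk})=g_{ik}\). Equivalently, in the semidirect product one computes
\[
(\widehat g_{ij},\varepsilon_{ij})(\widehat g_{jk},\varepsilon_{jk})(\widehat g_{ik},\varepsilon_{ik})^{-1}
=
\bigl(\widehat g_{ij}\widehat\sigma^{\varepsilon_{ij}}(\widehat g_{jk})\widehat g_{ik}^{-1},1\bigr),
\]
using \(\varepsilon_{ij}\varepsilon_{jk}=\varepsilon_{ik}\). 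This shows that \(a_{ijk}\) is the genuine \(\widehat H_{\widehat\sigma}\)-defect of the lifted transition functions, and so lies in \(\ker\pi=A\).

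The twisted cocycle identity
\[
a_{ijk}a_{ikl}=\widehat\sigma^{\varepsilon_{ij}}(a_{jkl})\,a_{ijl}
\]
follows from the associativity computation in the earlier theorem: expand the triple product \(\widehat g_{ij}\widehat\sigma^{\varepsilon_{ij}}(\widehat g_{jk})\widehat\sigma^{\varepsilon_{ij}\varepsilon_{jk}}(\widehat g_{kl})\) in the two possible ways, using centrality of \(A\).

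The step I expect to need the most care is matching this identity with the band identified in the preceding proposition. The local system \(A_\alpha\) is defined by letting \(\varepsilon_{ij}\) act on \(A\) through the restriction of \(\widehat\sigma\). That is precisely the twisting \(\widehat\sigma^{\varepsilon_{ij}}\) appearing on the \(a_{jkl}\) term, which is the usual twisted Čech differential transporting the \(jkl\)-component from the \(j\)-chart to the \(i\)-chart. I would state this convention explicitly. I would also note that the relation \(\varepsilon_{ij}\varepsilon_{jk}=\varepsilon_{ik}\) makes the transports consistent on quadruple overlaps, so the twisted differential squares to zero. Finally, the independence of the class from the choice of lifts is the earlier theorem, since any two \(\widehat H_{\widehat\sigma}\)-lifts differ by \(A\)-valued \(b_{ij}\).
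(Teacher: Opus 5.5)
Your proposal is correct and follows essentially the paper's route. You show \(a_{ijk}\in\ker\pi=A\) by projecting to \(H_\sigma\) and using the cocycle condition there, and you obtain the twisted identity from associativity of the triple product of lifted transition functions; your normalization of the lifts as \((\widehat g_{ij},\varepsilon_{ij})\), with the explicit semidirect-product check, makes precise what the paper's proof leaves implicit, since its formula only makes sense for \(\widehat G\)-components. Keep your form of the identity, \(a_{ijk}a_{ikl}=\widehat\sigma^{\varepsilon_{ij}}(a_{jkl})\,a_{ijl}\), which matches the paper's earlier theorem and the standard twisted \v{C}ech differential, rather than the version displayed in the paper's proof of this proposition: that version puts the twist on \(a_{ikl}\) and does not reduce to the usual \v{C}ech cocycle condition when \(\sigma\) is trivial.
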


\begin{proof}
Applying the projection
\[
\pi:\widehat H_{\widehat\sigma}\to H_\sigma
\]
to \(a_{ijk}\) gives
\[
(g_{ij},\varepsilon_{ij})
(g_{jk},\varepsilon_{jk})
(g_{ik},\varepsilon_{ik})^{-1}.
\]
Since the transition functions satisfy the cocycle condition in
\(H_\sigma\), this product is equal to the identity. Therefore
\[
a_{ijk}\in \ker(\pi)=A.
\]

The cocycle identity on quadruple intersections follows from associativity in
\(\widehat H_{\widehat\sigma}\). More precisely, comparing the two possible
ways of multiplying four lifted transition functions gives:
\[
a_{jkl}\,
a_{ijl}^{-1}\,
a_{ijk}\,
{}^{\varepsilon_{ij}}a_{ikl}^{-1}
=
1,
\]
which is exactly the twisted Čech cocycle condition for coefficients in the
local system \(A_\alpha\).
\end{proof}

\begin{theorem}
The cohomology class
\[
[a]\in H^2(X;A_\alpha)
\]
represented by the cocycle \((a_{ijk})\) is the class of the lifting gerbe
\[
\mathcal G(P,\widehat H_{\widehat\sigma}).
\]
\end{theorem}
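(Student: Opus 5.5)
The plan is to use the standard definition of the class of a gerbe banded by a local system. One computes it from a cover on which the gerbe has local objects, together with local isomorphisms between these objects on double overlaps. The discrepancy of these isomorphisms on triple overlaps is a section of the band, and it defines a twisted Čech $2$-cocycle. The whole proof then reduces to showing that, for a suitable choice of local objects, this discrepancy is exactly $(a_{ijk})$.

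\textbf{Step 1: local objects.} Work over the good cover $(U_i)$ on which $P$ is trivialized by sections $s_i$, with $s_j=s_i h_{ij}$. On each $U_i$ take the trivial $\widehat H_{\widehat\sigma}$-bundle $\widehat P_i=U_i\times\widehat H_{\widehat\sigma}$. Identify $\widehat P_i/A$ with $P|_{U_i}$ through $s_i$. This gives an object of $\mathcal G(P,\widehat H_{\widehat\sigma})(U_i)$, so the gerbe is locally nonempty.

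\textbf{Step 2: gluing isomorphisms.} On $U_{ij}$, define $\phi_{ij}:\widehat P_j\to\widehat P_i$ as left multiplication by the lift $\widehat h_{ij}=(\widehat g_{ij},\varepsilon_{ij})$. Because $\pi(\widehat h_{ij})=h_{ij}$, this map is compatible with the identifications with $P|_{U_{ij}}$. It is therefore a morphism in the gerbe.

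\textbf{Step 3: triple-overlap discrepancy.} On $U_{ijk}$, the automorphism $\phi_{ij}\phi_{jk}\phi_{ik}^{-1}$ of $\widehat P_i$ covers the identity of $P|_{U_i}$. By the band proposition, it is therefore multiplication by a section of $A$. This section is $\widehat h_{ij}\widehat h_{jk}\widehat h_{ik}^{-1}$. Expanding with the semidirect product law, the $\mathbb Z_2$-components cancel because $\varepsilon_{ij}\varepsilon_{jk}=\varepsilon_{ik}$. The $\widehat G$-component is $\widehat g_{ij}\widehat\sigma^{\varepsilon_{ij}}(\widehat g_{jk})\widehat g_{ik}^{-1}=a_{ijk}$. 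Hence the gerbe's Čech cocycle is literally $(a_{ijk})$.

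\textbf{Step 4: independence of choices.} Changing the gluing isomorphisms $\phi_{ij}$ by sections $b_{ij}$ of $A$ reproduces the coboundary formula from the independence theorem. Changing the local objects $\widehat P_i$ by isomorphisms likewise changes the cocycle by a coboundary. Refining the cover does not change the class. So the gerbe class is well defined and equals $[a]$. As a consistency check, the vanishing criterion shows that $[a]=0$ exactly when the gerbe has a global object, as expected.

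\textbf{Main obstacle.} I expect the hardest point to be the twisting conventions. The automorphism sheaf of $\widehat P_i$ is identified with $A$ using the $i$-th trivialization, so comparing sections between charts introduces the factor $\widehat\sigma^{\varepsilon_{ij}}$. This is where the coefficients become $A_\alpha$ rather than the constant sheaf $A$. I would fix the convention that every discrepancy is read in the $i$-th chart. I would then check that this reproduces exactly the twisted cocycle identity $a_{ijk}a_{ikl}=\widehat\sigma^{\varepsilon_{ij}}(a_{jkl})a_{ijl}$ proved earlier, rather than a conjugated variant. A second, smaller issue is diffeological: the local lifts $\widehat g_{ij}$ must exist smoothly. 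I would assume that $\widehat G\to G$ admits local smooth sections, so that smooth lifts exist on the cover, possibly after refining it.
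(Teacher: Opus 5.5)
Your proposal is correct and follows the same route as the paper's proof. Local lifts give local objects, the lifted transition functions give the comparison isomorphisms on $U_{ij}$, the triple-overlap discrepancy is exactly $a_{ijk}$, and changes of choices alter it by a twisted coboundary. Your version is more explicit than the paper's: writing the comparison maps as left multiplication by $\widehat h_{ij}=(\widehat g_{ij},\varepsilon_{ij})$, and transporting $a_{jkl}$ into chart $i$ by conjugation with $\widehat h_{ij}$, indeed gives $\widehat\sigma^{\varepsilon_{ij}}(a_{jkl})$ by centrality of $A$ in $\widehat G$, so the convention you flag resolves to $a_{ijk}a_{ikl}=\widehat\sigma^{\varepsilon_{ij}}(a_{jkl})a_{ijl}$.
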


\begin{proof}
The gerbe is locally non-empty because local lifts have been chosen over each
\(U_i\).

Over double intersections \(U_{ij}\), the local lifts are compared by the
transition functions
\[
\widehat g_{ij}.
\]

Over triple intersections \(U_{ijk}\), the failure of these comparison maps to
satisfy the cocycle condition is measured exactly by
\[
a_{ijk}.
\]

Changing the local lifts modifies \((a_{ijk})\) by a twisted Čech
coboundary. Hence the associated cohomology class depends only on the gerbe.

This is precisely the standard Čech construction of the class of an
\(A_\alpha\)-banded gerbe.
\end{proof}

\subsection{Twisted Dixmier--Douady class}

Assume now
\[
A=U(1).
\]

The local system \(U(1)_\alpha\) fits into the twisted exponential sequence
\[
0
\longrightarrow
\mathbb Z_\alpha
\longrightarrow
\mathbb R_\alpha
\longrightarrow
U(1)_\alpha
\longrightarrow 0.
\]

Since \(\mathbb R_\alpha\) is fine, the associated connecting morphism yields
an isomorphism
\[
H^2(X;U(1)_\alpha)
\simeq
H^3(X;\mathbb Z_\alpha).
\]

\begin{definition}
The image of
\[
[a]\in H^2(X;U(1)_\alpha)
\]
under this connecting morphism is called the \emph{twisted
Dixmier--Douady class}:
\[
\operatorname{DD}_\alpha(P)
\in
H^3(X;\mathbb Z_\alpha).
\]
\end{definition}

\begin{proposition}
The following are equivalent:
\begin{enumerate}
\item
\[
\operatorname{DD}_\alpha(P)=0;
\]

\item
\[
[a]=0
\quad\text{in }H^2(X;U(1)_\alpha);
\]

\item the lifting gerbe is trivial;

\item the principal \(H_\sigma\)-bundle \(P\) admits a lift to a principal
\(\widehat H_{\widehat\sigma}\)-bundle.
\end{enumerate}
\end{proposition}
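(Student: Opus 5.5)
The plan is to prove the equivalences as a chain (1)\(\Leftrightarrow\)(2)\(\Leftrightarrow\)(3)\(\Leftrightarrow\)(4). Almost every link is an earlier result of this section; only the first needs new work.

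\emph{Step (1)\(\Leftrightarrow\)(2).} By definition, \(\operatorname{DD}_\alpha(P)\) is the image of \([a]\) under the connecting morphism of the twisted exponential sequence
\[
0\to\mathbb Z_\alpha\to\mathbb R_\alpha\to U(1)_\alpha\to0 .
\]
So it suffices that this connecting morphism is injective, and indeed an isomorphism. Part of the long exact sequence reads
\[
H^2(X;\mathbb R_\alpha)\to H^2(X;U(1)_\alpha)\to H^3(X;\mathbb Z_\alpha)\to H^3(X;\mathbb R_\alpha).
\]
The outer groups vanish once \(\mathbb R_\alpha\) is fine. To show fineness, I would work with the sheaf of smooth sections of the flat line bundle \(\mathbb R_\alpha\). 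Over a \(D\)-numerable cover it admits a smooth partition of unity \((\lambda_i)\), and multiplication by \(\lambda_i\) commutes with the sign twist \(\varepsilon_{ij}\), because the \(\lambda_i\) are scalar. The standard Čech homotopy operator \((Kc)_{i_0\dots i_{n-1}}=\sum_j\lambda_j c_{j i_0\dots i_{n-1}}\) then carries over to the twisted setting. One has to transport \(c_{j\ldots}\) into the \(i_0\)-trivialization by \(\varepsilon_{i_0 j}\), and this is compatible with the twisted differential. In the same step I must fix the convention that \(U(1)_\alpha\) denotes smooth \(U(1)\)-valued functions, with \(\mathbb Z_2\) acting by inversion via \(\widehat\sigma\) restricted to \(A\). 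The exponential map is equivariant for this action, so the twisted sequence is exact as a sequence of sheaves.

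\emph{Step (2)\(\Leftrightarrow\)(4).} This is exactly the earlier vanishing criterion, specialized to \(A=U(1)\): \([a]=0\) in \(H^2(X;A_\alpha)\) if and only if \(P\) lifts to \(\widehat H_{\widehat\sigma}\). No new argument is needed.

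\emph{Step (3)\(\Leftrightarrow\)(4).} Here ``trivial'' means the gerbe admits a global object. A global object of \(\mathcal G(P,\widehat H_{\widehat\sigma})(X)\) is, by the definition of the lifting gerbe, precisely a principal \(\widehat H_{\widehat\sigma}\)-bundle \(\widehat P\) with \(\widehat P/A\simeq P\), that is, a lift. Conversely, a lift is a global section, and an \(A_\alpha\)-banded gerbe with a global section is equivalent to the trivial gerbe of \(A_\alpha\)-torsors. I would also point out that this matches the preceding theorem identifying \([a]\) with the class of the gerbe: a global object lets one choose the local lifts as restrictions of a single bundle, which forces \(a_{ijk}=1\).

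\emph{Main obstacle.} I expect the fineness and acyclicity of \(\mathbb R_\alpha\) in the diffeological setting to be the delicate point, because \(X\) is only a diffeological space. I would restrict to \(D\)-numerable covers, as in the classification theorem, and define \(H^\ast\) as Čech cohomology over such covers. On such a cover the partition-of-unity argument above works verbatim, and passing to the colimit over refinements gives the vanishing of \(H^{2}(X;\mathbb R_\alpha)\) and \(H^{3}(X;\mathbb R_\alpha)\).
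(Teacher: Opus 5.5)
Your proposal is correct and follows essentially the paper's argument: (1)$\Leftrightarrow$(2) comes from the twisted exponential sequence with $\mathbb R_\alpha$ fine, where you supply the partition-of-unity argument that the paper only asserts, and (3)$\Leftrightarrow$(4) holds because a global object of the lifting gerbe is by definition a lift of $P$. The only difference is how you close the loop: you link (2) to (4) through the section's explicit \v{C}ech vanishing criterion, whereas the paper cites the general classification of $U(1)_\alpha$-banded gerbes for (2)$\Leftrightarrow$(3), so your chain uses only results already proved in the paper.
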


\begin{proof}
The equivalence between (1) and (2) follows from the twisted exponential
sequence.

The equivalence between (2) and (3) is the standard classification theorem for
\(U(1)_\alpha\)-banded gerbes.

Finally, the gerbe is trivial precisely when its local objects glue to a
global lifting bundle, which is statement (4).
\end{proof}

\section{Twisted \(K\)-theory associated with the spherical construction}
\label{sec:twisted-k-theory}

\subsection{Twisting data arising from the spherical construction}

Let
\[
P\to X
\]
be a principal \(H_\sigma\)-bundle, where
\[
H_\sigma = G\rtimes_\sigma \mathbb Z_2.
\]

Associated with \(P\), the preceding sections constructed:

\begin{itemize}
\item a class
\[
\alpha(P)\in H^1(X;\mathbb Z_2),
\]
describing the induced \(\mathbb Z_2\)-local system;

\item a secondary obstruction class
\[
\beta(P)\in H^2(X;A_{\alpha(P)}),
\]
associated with the lifting gerbe.
\end{itemize}

Assume now
\[
A=U(1).
\]

Then the twisted exponential sequence
\[
0
\longrightarrow
\mathbb Z_{\alpha(P)}
\longrightarrow
\mathbb R_{\alpha(P)}
\longrightarrow
U(1)_{\alpha(P)}
\longrightarrow 0
\]
induces an isomorphism
\[
H^2(X;U(1)_{\alpha(P)})
\simeq
H^3(X;\mathbb Z_{\alpha(P)}).
\]

Hence the gerbe class determines a twisted Dixmier--Douady class
\[
\operatorname{DD}_{\alpha(P)}(P)
\in
H^3(X;\mathbb Z_{\alpha(P)}).
\]

\begin{definition}
The pair
\[
\tau(P)
:=
(\alpha(P),\operatorname{DD}_{\alpha(P)}(P))
\]
is called the \emph{spherical twisting datum} associated with the bundle
\[
P\to X.
\]
\end{definition}

\begin{remark}
The class \(\alpha(P)\) determines a local coefficient system, while
\(\operatorname{DD}_{\alpha(P)}(P)\) measures the obstruction to lifting the
bundle through the central extension
\[
1\to U(1)\to \widehat H_{\widehat\sigma}\to H_\sigma\to 1.
\]
\end{remark}

\subsection{Twisted \(K\)-theory}

Let \(X\) be a diffeological space, endowed with its \(D\)-topology.

We denote by
\[
K^\bullet(X)
\]
the complex topological \(K\)-theory of the associated topological space.

\begin{definition}
The \emph{\(\tau(P)\)-twisted \(K\)-theory groups} are the twisted
\(K\)-groups associated with the twist
\[
\tau(P)
=
(\alpha(P),\operatorname{DD}_{\alpha(P)}(P)).
\]

They are denoted by
\[
K^\bullet_{\tau(P)}(X)
\quad\text{or equivalently}\quad
K^\bullet_{\alpha(P),\beta(P)}(X).
\]
\end{definition}

\begin{remark}
One may formulate these twisted groups using any of the standard equivalent
models of twisted \(K\)-theory:
\begin{itemize}
\item bundles of Fredholm operators,
\item bundles of \(C^*\)-algebras,
\item modules over a bundle gerbe,
\item or parametrized \(K\)-theory spectra.
\end{itemize}

In the present work, only the existence and functoriality of the twisted theory
are needed.
\end{remark}

\begin{remark}
If
\[
\alpha(P)=0,
\]
then the local coefficient system becomes trivial and one recovers ordinary
gerbe-twisted \(K\)-theory classified by a degree-three integral class.
\end{remark}

\begin{remark}
If
\[
\beta(P)=0,
\]
then the twisting reduces to the \(\mathbb Z_2\)-local system determined by
\(\alpha(P)\).
\end{remark}

\subsection{Functoriality}

Let
\[
f:Y\longrightarrow X
\]
be a smooth map of diffeological spaces.

\begin{proposition}
The spherical twisting datum is functorial:
\[
f^*\tau(P)
=
(f^*\alpha(P),f^*\operatorname{DD}_{\alpha(P)}(P)).
\]

Consequently, there is a natural pull-back morphism
\[
f^*:
K^\bullet_{\tau(P)}(X)
\longrightarrow
K^\bullet_{f^*\tau(P)}(Y).
\]
\end{proposition}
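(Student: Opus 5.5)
The plan is to work at the level of cocycles, since every invariant in \(\tau(P)\) was defined by an explicit Čech construction. Choose a good cover \((U_i)\) of \(X\) trivializing \(P\), with transition functions \(h_{ij}=(g_{ij},\varepsilon_{ij})\) and chosen lifts \(\widehat g_{ij}\). The pull-back bundle \(f^*P\to Y\) is then trivialized over the cover \((f^{-1}(U_i))\), which is \(D\)-open because \(f\) is smooth and hence \(D\)-continuous. Its transition functions are \(h_{ij}\circ f=(g_{ij}\circ f,\varepsilon_{ij}\circ f)\). The composites \(\widehat g_{ij}\circ f\) are admissible lifts, so the obstruction cocycle of \(f^*P\) built from them is exactly \(a_{ijk}\circ f\). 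If \((f^{-1}(U_i))\) is not good, I pass to a good refinement and use that the classes do not depend on the cover, by the standard refinement maps in Čech cohomology.

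The key steps, in order, are as follows.

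First, \(\alpha(f^*P)=[\varepsilon_{ij}\circ f]=f^*\alpha(P)\) in \(H^1(Y;\mathbb Z_2)\). This is immediate from the cocycle proposition for \(\alpha\).

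Second, the local system \(A_{\alpha(f^*P)}\) is canonically \(f^{-1}A_{\alpha(P)}\). Both are glued from constant sheaves \(A\) by the automorphisms \(\widehat\sigma^{\varepsilon_{ij}\circ f}\).

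Third, the twisted obstruction class satisfies \(\beta(f^*P)=[a_{ijk}\circ f]=f^*\beta(P)\) in \(H^2(Y;f^{-1}A_\alpha)\). It is well defined by the independence-of-choices theorem.

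Fourth, for \(A=U(1)\), the twisted exponential sequence on \(Y\) is the pull-back of the one on \(X\). Naturality of connecting morphisms for a morphism of short exact sequences of sheaves then gives \(\operatorname{DD}_{f^*\alpha}(f^*P)=f^*\operatorname{DD}_\alpha(P)\).

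Together these give \(\tau(f^*P)=f^*\tau(P)\), which is how I read the first claim.

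For the morphism on \(K\)-theory, I use the earlier remark that only existence and functoriality of the twisted theory are needed. Pick a concrete model, say a bundle of projective Fredholm operators, or a parametrized \(K\)-theory spectrum over the \(D\)-topological space \(X\), realizing the twist \(\tau(P)\). Pulling this bundle back along the continuous map \(f\) realizes \(f^*\tau(P)\), by the identification just proved. Pulling back sections then defines \(f^*:K^\bullet_{\tau(P)}(X)\to K^\bullet_{f^*\tau(P)}(Y)\). Homotopies of sections pull back to homotopies, so the map is well defined, and it is visibly natural under composition.

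The main obstacle I expect is the fourth step and the passage to \(K\)-theory. Twisted \(K\)-groups depend on the twist as an object, not only on its class, so one has to fix a model of the twist and check that the pulled-back model represents \(f^*\tau(P)\) up to a specified isomorphism. A secondary issue is that sheaf cohomology of the \(D\)-topology might not agree with Čech cohomology on covers. I would handle this by assuming, as the paper implicitly does, that \(X\) and \(Y\) admit good covers or are paracompact in the \(D\)-topology. I would also note that \(\mathbb R_\alpha\) being fine requires smooth partitions of unity, which is exactly the \(D\)-numerability hypothesis.
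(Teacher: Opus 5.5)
Your proposal is correct and takes the same approach as the paper. The paper's proof observes that the Čech cocycles representing \(\alpha(P)\) and the gerbe class pull back along \(f\) to representatives of the pulled-back classes, and then invokes functoriality of twisted \(K\)-theory with respect to twists. Your version spells this out more carefully (the pulled-back lifts \(\widehat g_{ij}\circ f\), naturality of the twisted exponential connecting map for \(\operatorname{DD}_\alpha\), and the point that the \(K\)-theory map needs the twist as an object rather than just its class), all of which the paper leaves implicit.
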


\begin{proof}
The class \(\alpha(P)\) is represented by a Čech \(1\)-cocycle with values in
\(\mathbb Z_2\), while the gerbe class is represented by a twisted Čech
\(2\)-cocycle with coefficients in \(U(1)_{\alpha(P)}\).

Pulling back the cocycles along \(f\) produces representatives of the pull-back
classes. Functoriality of twisted \(K\)-theory with respect to twists then
yields the induced morphism.
\end{proof}

\subsection{Universal twisting classes}

Consider the universal principal bundle
\[
E^{\mathrm{sph}}(G)
\longrightarrow
B^{\mathrm{sph}}_\sigma(G).
\]

The associated universal twisting classes are
\[
\alpha_{\mathrm{univ}}
\in
H^1(B^{\mathrm{sph}}_\sigma(G);\mathbb Z_2)
\]
and
\[
\beta_{\mathrm{univ}}
\in
H^2(B^{\mathrm{sph}}_\sigma(G);
U(1)_{\alpha_{\mathrm{univ}}}).
\]

Their image under the twisted exponential sequence defines
\[
\operatorname{DD}_{\mathrm{univ}}
\in
H^3(B^{\mathrm{sph}}_\sigma(G);
\mathbb Z_{\alpha_{\mathrm{univ}}}).
\]

\begin{definition}
The pair
\[
\tau_{\mathrm{univ}}
=
(\alpha_{\mathrm{univ}},\operatorname{DD}_{\mathrm{univ}})
\]
is called the \emph{universal spherical \(K\)-theory twist}.
\end{definition}

\begin{theorem}
Let
\[
P\to X
\]
be a principal \(H_\sigma\)-bundle classified by
\[
f_P:X\to B^{\mathrm{sph}}_\sigma(G).
\]

Then
\[
\tau(P)
=
f_P^*(\tau_{\mathrm{univ}}).
\]
\end{theorem}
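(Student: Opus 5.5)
The plan is to prove the equality $\tau(P)=f_P^*(\tau_{\mathrm{univ}})$ componentwise: first for $\alpha$, then for the gerbe class $\beta$, and finally for $\operatorname{DD}_\alpha$ by naturality of the twisted exponential sequence. The tool throughout is that every class in $\tau(P)$ is defined by an explicit Čech cocycle built from transition functions. These cocycles pull back along smooth maps, as in the proof of the functoriality proposition above.

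For the first component I will invoke the proposition identifying $f_P^*\alpha_{\mathrm{univ}}$ with the Čech class of $(\varepsilon_{ij})$. This gives $f_P^*\alpha_{\mathrm{univ}}=\alpha(P)$ directly. As a consequence, the pulled-back local system $f_P^*U(1)_{\alpha_{\mathrm{univ}}}$ is canonically isomorphic to $U(1)_{\alpha(P)}$, and likewise $f_P^*\mathbb Z_{\alpha_{\mathrm{univ}}}\cong\mathbb Z_{\alpha(P)}$. Pull-back therefore lands in the correct twisted cohomology groups.

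For the second component I will use the proposition that $f_P^*E^{\mathrm{sph}}(G)\cong P$ as principal $H_\sigma$-bundles. I will choose trivializing opens $V_a$ of the universal bundle, with transition functions $h^{\mathrm{univ}}_{ab}=(g^{\mathrm{univ}}_{ab},\varepsilon^{\mathrm{univ}}_{ab})$, and local lifts $\widehat g^{\mathrm{univ}}_{ab}$ defining the universal cocycle $a^{\mathrm{univ}}_{abc}$. On the cover $f_P^{-1}(V_a)$ the pulled-back transition functions $h^{\mathrm{univ}}_{ab}\circ f_P$ are transition functions of $f_P^*E^{\mathrm{sph}}(G)$. The compositions $\widehat g^{\mathrm{univ}}_{ab}\circ f_P$ are lifts of them, so the resulting obstruction cocycle is literally $a^{\mathrm{univ}}\circ f_P$. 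The independence-of-lifts theorem shows this cocycle represents $\beta(f_P^*E^{\mathrm{sph}}(G))$, and isomorphism invariance then gives $\beta(P)=f_P^*\beta_{\mathrm{univ}}$.

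Isomorphism invariance needs a short check, which I will do with a common refinement of the two covers. If two systems of transition functions are related by $h'_{ij}=k_i^{-1}h_{ij}k_j$, I choose lifts $\widehat k_i$ of the $G$-parts. Using the computation from the independence-of-choices proof together with centrality of $A$, the two obstruction cocycles differ by a twisted coboundary, after transporting the band along the $\mathbb Z_2$-components of $k_i$. Finally, the connecting morphism of the twisted exponential sequence is natural with respect to pull-back of local systems. Hence $f_P^*\operatorname{DD}_{\mathrm{univ}}=\operatorname{DD}_{\alpha(P)}(P)$, which completes the equality.

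The main obstacle is the universal side. One must make sense of $\beta_{\mathrm{univ}}$ on $B^{\mathrm{sph}}_\sigma(G)$, which requires local sections over $D$-open sets, local lifts $\widehat g^{\mathrm{univ}}_{ab}$ (smooth lifts through $\widehat G\to G$ may exist only locally), and a Čech theory on the diffeological space that is compatible with the one used on $X$. I would handle this by working with the classifying-map formula directly. Over $U_k$, the representative $(\sqrt{\lambda_i},h_{ki})$ shows that $f_P^{-1}$ of the universal chart neighborhoods refines $(U_i)$, so I can pass to a common refinement where the lifts exist. If necessary, I would instead define $\beta_{\mathrm{univ}}$ through this compatible system of pull-backs, in which case the statement becomes a naturality check.
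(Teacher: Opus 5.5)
Your proposal is correct and takes the same componentwise route as the paper's proof. You obtain \(\alpha(P)=f_P^*\alpha_{\mathrm{univ}}\) from the induced double cover, you get the gerbe class by pulling back the twisted \v{C}ech cocycle along \(f_P\) and using \(f_P^*E^{\mathrm{sph}}(G)\cong P\), and you conclude for \(\operatorname{DD}_\alpha\) by naturality of the connecting morphism, exactly the three steps of the paper's sketch, and you are more careful than the paper in checking invariance under a change of trivialization \(h'_{ij}=k_i^{-1}h_{ij}k_j\) (with the band transported by the \(\mathbb Z_2\)-components of \(k_i\)) and in flagging the existence of local lifts on \(B^{\mathrm{sph}}_\sigma(G)\), both of which the paper leaves implicit.
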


\begin{proof}
The equality for the class \(\alpha(P)\) follows from the functoriality of the
principal \(\mathbb Z_2\)-bundle defining the local system.

The equality for the gerbe class follows from functoriality of the lifting
gerbe construction and of the associated twisted Čech cocycles.

Applying the twisted exponential sequence yields the corresponding identity for
the twisted Dixmier--Douady classes.
\end{proof}

\subsection{Geometric representatives}

We briefly describe geometric representatives of twisted \(K\)-classes.

Let
\[
(U_i)_i
\]
be an open cover of \(X\), and let
\[
(a_{ijk})
\]
be a twisted gerbe cocycle representing \(\beta(P)\).

\begin{definition}
A \emph{\(\tau(P)\)-twisted vector bundle} consists of:
\begin{itemize}
\item local vector bundles
\[
E_i\to U_i,
\]

\item local isomorphisms
\[
\varphi_{ij}:
E_j|_{U_{ij}}
\longrightarrow
E_i|_{U_{ij}},
\]
\end{itemize}
such that on triple intersections
\[
\varphi_{ij}\circ\varphi_{jk}
=
a_{ijk}\,\varphi_{ik},
\]
where the coefficient \(a_{ijk}\) acts through the
\(U(1)_{\alpha(P)}\)-band of the lifting gerbe.
\end{definition}

\begin{remark}
These objects are more properly viewed as modules over the lifting gerbe
associated with the bundle \(P\).
\end{remark}

\begin{proposition}
Stable isomorphism classes of \(\tau(P)\)-twisted vector bundles determine
elements of
\[
K^0_{\tau(P)}(X).
\]
\end{proposition}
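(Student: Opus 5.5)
The plan is to realize \(K^0_{\tau(P)}(X)\) as the Grothendieck group of the monoid of stable isomorphism classes of \(\tau(P)\)-twisted vector bundles, when \(X\) is compact (or by restricting to finite-rank data on compact pieces). The map from twisted bundles to \(K^0_{\tau(P)}(X)\) is then tautological, and the real content is checking that the relevant operations are well defined. I will use the bundle-gerbe-module model listed in the remark on equivalent models.

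First I fix the cocycle \((a_{ijk})\) representing \(\beta(P)\) on a good cover. For \(\tau(P)\)-twisted bundles \((E_i,\varphi_{ij})\) and \((E'_i,\varphi'_{ij})\), I take the direct sum \((E_i\oplus E'_i,\varphi_{ij}\oplus\varphi'_{ij})\). This is again \(\tau(P)\)-twisted, because \(a_{ijk}\) acts by scalars through the \(U(1)_{\alpha(P)}\)-band and so acts diagonally on the sum. Isomorphisms of twisted bundles are families \(\psi_i:E_i\to E'_i\) intertwining the \(\varphi_{ij}\). The resulting isomorphism classes form an abelian monoid under \(\oplus\), and its group completion \(\mathcal K_{a}(X)\) is the candidate for \(K^0_{\tau(P)}(X)\). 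Stable isomorphism classes map injectively into this group completion by definition.

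Next I check that the construction is independent of the cocycle representative. If the lifts \(\widehat g_{ij}\) are changed, then by the theorem on independence of choices \(a'_{ijk}=b_{ij}\,\widehat\sigma^{\varepsilon_{ij}}(b_{jk})\,b_{ik}^{-1}\,a_{ijk}\). Replacing \(\varphi_{ij}\) by \(b_{ij}\varphi_{ij}\) then gives an additive bijection between \(a\)-twisted and \(a'\)-twisted bundles. This needs care with the twisted action: the scalar \(\widehat\sigma^{\varepsilon_{ij}}(b_{jk})\) has to be matched with how \(a_{ijk}\) acts through the local system, which I expect to be complex conjugation when \(\varepsilon=-1\), so the \(E_i\) for opposite signs of \(\varepsilon\) are glued antilinearly. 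The same reasoning handles refinement of covers, and pullback along smooth maps \(f:Y\to X\) is compatible with the functoriality proposition.

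Finally I identify \(\mathcal K_a(X)\) with the twisted \(K\)-group in a standard model. The route is to send a twisted bundle of rank \(n\) to a module over the lifting gerbe, or equivalently to a finite-rank projective module over the continuous-trace/graded Azumaya algebra determined by \((\alpha,\operatorname{DD}_\alpha)\) in the Donovan--Karoubi sense. Compact-support finite-rank modules then give classes in \(K^0\) of that algebra bundle, which is \(K^0_{\tau(P)}(X)\). This is the main obstacle. When \(\operatorname{DD}_\alpha\) is non-torsion there are no finite-rank twisted bundles at all, so the statement only says that twisted bundles \emph{determine} elements and makes no claim of surjectivity. I will therefore only construct the homomorphism. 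I will verify additivity and invariance under stable isomorphism directly: adding a twisted bundle \(F\) on both sides and using cancellation in the group completion. I will not attempt to prove bijectivity, which would need the torsion hypothesis and the compactness of \(X\) in the \(D\)-topology.
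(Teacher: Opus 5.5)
Your argument shares the paper's core: direct sum makes \(\tau(P)\)-twisted bundles a commutative monoid, and you pass to its Grothendieck group. The difference is the last step. The paper asserts that this completion \emph{is} \(K^0_{\tau(P)}(X)\). That holds, by definition in the bundle-gerbe-module model, only for torsion twists over compact \(X\). You instead construct only a homomorphism into an independently defined twisted group, and yours is the version that is true in the standard models. The top exterior powers \(\Lambda^nE_i\) of a rank-\(n\) twisted bundle form a twisted line bundle for the cocycle \(a^n\), so \(n\beta(P)=0\). Hence for non-torsion \(\operatorname{DD}_{\alpha(P)}(P)\) the monoid is trivial while the twisted \(K\)-group need not be, and ``determine elements'' is the most one can claim. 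You also supply something the paper's definition of twisted bundle leaves implicit. When \(\widehat\sigma\) acts on \(U(1)\) by conjugation, the \(\varphi_{ij}\) must be antilinear wherever \(\varepsilon_{ij}=-1\). Only then is \(\varphi_{ij}\varphi_{jk}=a_{ijk}\varphi_{ik}\) consistent on quadruple overlaps with the twisted identity \(a_{ijk}a_{ikl}=\widehat\sigma^{\varepsilon_{ij}}(a_{jkl})a_{ijl}\). If \(\widehat\sigma\) is trivial on \(A\), which centrality of \(A\) in \(\widehat H_{\widehat\sigma}\) literally demands, all gluings are linear and the issue disappears. The same antilinearity is what makes your rescaling \(\varphi_{ij}\mapsto b_{ij}\varphi_{ij}\) match the paper's coboundary formula. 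In short, the paper's route buys a tautological map at the price of a false identification; yours buys a correct statement with a well-posed definition. One thing to fix: drop ``in the Donovan--Karoubi sense''. The graded Brauer group treats an \(H^1(X;\mathbb Z_2)\)-class as a grading, and its degree-three component lies in untwisted \(H^3(X;\mathbb Z)\). Your antilinear gluing instead makes the natural algebra bundle \(\operatorname{End}(E_0)\) (for a twisted bundle \(E_0\)) glue by antilinear algebra isomorphisms, which is a Real-type, Freed--Moore \(\phi\)-twisted object. Either use that model, or stay in the gerbe-module model, where your map is tautological.
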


\begin{proof}
The twisted cocycle relation shows that the local bundles do not glue to an
ordinary vector bundle, but rather to a module over the gerbe determined by
\(\beta(P)\).

Direct sum defines a commutative monoid, and Grothendieck completion yields the
degree-zero twisted \(K\)-group.
\end{proof}

\subsection{Untwisted case}

\begin{theorem}
If
\[
\alpha(P)=0
\qquad\text{and}\qquad
\beta(P)=0,
\]
then there is a canonical isomorphism
\[
K^\bullet_{\tau(P)}(X)
\simeq
K^\bullet(X).
\]
\end{theorem}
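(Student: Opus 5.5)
The plan is to reduce the statement to the general principle that twisted \(K\)-theory depends on the twist only through its isomorphism class, and that the trivial twist gives ordinary \(K\)-theory. The twisting datum is \(\tau(P)=(\alpha(P),\operatorname{DD}_{\alpha(P)}(P))\). The first step uses \(\alpha(P)=0\): the \(\mathbb Z_2\)-cocycle \((\varepsilon_{ij})\) is then a Čech coboundary. After refining the cover and multiplying the local trivializations of the associated double cover \(P/G\) by a \(0\)-cochain \((\eta_i)\), I would arrange \(\varepsilon_{ij}\equiv 1\). Under this change the local system \(A_\alpha=U(1)_\alpha\) becomes canonically trivialized, i.e.\ identified with the constant sheaf \(U(1)\). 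The twisted exponential sequence then becomes the ordinary one, and \(\mathbb Z_\alpha\cong\mathbb Z\). The same applies to the coefficient systems used to define \(K^\bullet_{\alpha,\beta}(X)\).

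The second step uses \(\beta(P)=0\). With the trivialization above, the gerbe cocycle \((a_{ijk})\) is an ordinary \(U(1)\)-valued Čech \(2\)-cocycle. By the vanishing criterion proved earlier (the theorem identifying \([a]=0\) with the existence of a lift), there is a cochain \((b_{ij})\) with \(a_{ijk}=b_{ij}^{-1}\,b_{jk}^{-1}\,b_{ik}\); the twist \(\widehat\sigma\) is now trivial since \(\varepsilon\equiv1\). Equivalently, \(\operatorname{DD}_{\alpha}(P)=0\) by the proposition on the twisted Dixmier--Douady class. Given a \(\tau(P)\)-twisted vector bundle \((E_i,\varphi_{ij})\) with \(\varphi_{ij}\varphi_{jk}=a_{ijk}\varphi_{ik}\), set \(\varphi'_{ij}:=b_{ij}\varphi_{ij}\). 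By centrality of scalars, \(\varphi'_{ij}\varphi'_{jk}=\varphi'_{ik}\), so the \(E_i\) glue to an ordinary vector bundle. The construction has an inverse (multiply by \(b_{ij}^{-1}\)), and it is compatible with direct sums and with isomorphisms. Hence it induces an isomorphism of Grothendieck groups in degree \(0\), using the earlier proposition that such objects represent \(K^0_{\tau(P)}(X)\). For all degrees, I would phrase the same step in the Fredholm or parametrized-spectrum model: a trivialization of the twist induces an equivalence between the bundle of twisted Fredholm operators and the trivial bundle with fibre \(\mathrm{Fred}(\mathcal H)\). Taking sections then gives \(K^\bullet(X)\), using the independence of the model noted in the remark after the definition of twisted \(K\)-groups.

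The main obstacle is the word \emph{canonical}. The trivializing data \((\eta_i)\) and \((b_{ij})\) are not unique. Changing \((b_{ij})\) by a \(1\)-cocycle, i.e.\ a line bundle \(L\), changes the isomorphism by tensoring with \(L\), and changing \(\eta\) acts through \(H^0(X;\mathbb Z_2)\). To handle this, I would fix the trivialization canonically as the one furnished by a chosen global lift \(\widehat P\) of \(P\) together with a chosen section of \(P/G\), which exist by the two vanishing hypotheses. I would state that the isomorphism is canonical relative to this choice, and that different choices differ by the automorphism of \(K^\bullet(X)\) given by multiplication by \([L]\). Finally, I would check naturality under pull-backs \(f:Y\to X\) using the functoriality proposition for \(\tau(P)\); this is a routine verification on cocycles.
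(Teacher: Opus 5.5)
Your proposal is correct and follows the same route as the paper's proof: use \(\alpha(P)=0\) to trivialize the local system, use \(\beta(P)=0\) to trivialize the lifting gerbe, and observe that twisted vector bundles then glue to ordinary ones, so the twisted theory reduces to \(K^\bullet(X)\). The paper's proof only asserts this reduction, whereas you supply the details: the explicit rescaling \(\varphi'_{ij}=b_{ij}\varphi_{ij}\), the Fredholm-model argument for all degrees, and the caveat that the isomorphism is canonical only relative to the chosen trivialization (different choices differing by tensoring with a line bundle), which the paper leaves implicit when it says ``after choosing a trivialization of the gerbe''.
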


\begin{proof}
If \(\alpha(P)=0\), the local coefficient system is trivial.

If \(\beta(P)=0\), the lifting gerbe is trivializable. Hence twisted vector
bundles become ordinary vector bundles after choosing a trivialization of the
gerbe.

Therefore the twisted theory reduces to ordinary complex topological
\(K\)-theory.
\end{proof}

\subsection{Summary}

The spherical Milnor construction naturally produces:
\[
\alpha(P)\in H^1(X;\mathbb Z_2),
\qquad
\beta(P)\in H^2(X;U(1)_{\alpha(P)}),
\]
and therefore a twisted Dixmier--Douady class
\[
\operatorname{DD}_{\alpha(P)}(P)
\in
H^3(X;\mathbb Z_{\alpha(P)}).
\]

These classes define canonical twists of topological \(K\)-theory associated
with spherical classifying spaces and their lifting gerbes.

\section{Descent of \(G\)-invariant geometric structures}
\label{sec:descent}

\subsection{General principle}

Let
\[
\pi:
E^{\mathrm{sph}}(G)
\longrightarrow
B^{\mathrm{sph}}(G)
=
E^{\mathrm{sph}}(G)/G
\]
be the quotient projection.

Since the action of \(G\) on \(E^{\mathrm{sph}}(G)\) is smooth, the quotient
carries the quotient diffeology associated with the subduction \(\pi\).

A geometric object defined on
\[
E^{\mathrm{sph}}(G)
\]
descends to the quotient only if it is compatible with the fibers of \(\pi\).

\subsection{Vertical tangent directions}

Let
\[
T E^{\mathrm{sph}}(G)
\]
denote the chosen tangent pseudo-bundle.

\begin{definition}
The \emph{vertical pseudo-bundle} associated with the quotient map \(\pi\) is
\[
\mathcal V
:=
\ker(d\pi)
\subset
T E^{\mathrm{sph}}(G).
\]
\end{definition}

\begin{remark}
The fibers of \(\mathcal V\) consist of tangent directions generated by the
\(G\)-orbits of the action on
\[
E^{\mathrm{sph}}(G).
\]
\end{remark}

\begin{remark}
When \(G\) is a finite-dimensional Lie group, the vertical directions are
generated by the usual fundamental vector fields. In the present diffeological
framework, especially for infinite-dimensional groups, it is more robust to use
the intrinsic definition
\[
\mathcal V=\ker(d\pi).
\]
\end{remark}

\subsection{Basic differential forms}

Let
\[
\Omega^k(E^{\mathrm{sph}}(G))
\]
denote the space of diffeological differential \(k\)-forms.

\begin{definition}
A differential form
\[
\omega\in \Omega^k(E^{\mathrm{sph}}(G))
\]
is called \emph{\(G\)-basic} if the following two conditions hold:

\begin{enumerate}
\item (\emph{\(G\)-invariance})
for every \(g\in G\),
\[
R_g^*\omega=\omega,
\]
where
\[
R_g:E^{\mathrm{sph}}(G)\to E^{\mathrm{sph}}(G)
\]
denotes the action of \(g\);

\item (\emph{horizontality})
for every point \(p\in E^{\mathrm{sph}}(G)\),
\[
\omega_p(v_1,\dots,v_k)=0
\]
whenever at least one vector \(v_i\) belongs to
\[
\mathcal V_p.
\]
\end{enumerate}
\end{definition}

\begin{remark}
The horizontality condition means precisely that \(\omega\) vanishes along the
fibers of the quotient projection.
\end{remark}

\subsection{Descent theorem}

\begin{proposition}
Let
\[
\omega\in \Omega^k(E^{\mathrm{sph}}(G))
\]
be a \(G\)-basic differential form.

Then there exists a unique differential form
\[
\bar\omega
\in
\Omega^k(B^{\mathrm{sph}}(G))
\]
such that
\[
\pi^*\bar\omega=\omega.
\]
\end{proposition}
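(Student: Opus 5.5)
Since \(\pi\) is a subduction, I will define \(\bar\omega\) plotwise and check the axioms of a diffeological form directly. Uniqueness is the easy half. If \(\pi^*\bar\omega=\pi^*\bar\omega'\), take any plot \(q:U\to B^{\mathrm{sph}}(G)\). Because the quotient diffeology is the pushforward along \(\pi\), every point of \(U\) has a neighborhood \(V\) and a plot \(p:V\to E^{\mathrm{sph}}(G)\) with \(q|_V=\pi\circ p\). Then \(\bar\omega(q)|_V=\omega(p)=\bar\omega'(q)|_V\), and locality of forms gives \(\bar\omega=\bar\omega'\). The same local lifting suggests the definition for existence:
\[
\bar\omega(q)|_V:=\omega(p)\qquad\text{whenever } q|_V=\pi\circ p .
\]

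The whole argument then reduces to one \emph{well-definedness} claim. Suppose \(p_1,p_2:V\to E^{\mathrm{sph}}(G)\) are two plots with \(\pi\circ p_1=\pi\circ p_2\). I must show \(\omega(p_1)=\omega(p_2)\). This is the standard criterion for a form to descend through a subduction (Iglesias-Zemmour, \emph{Diffeology}, \S6.38): a form \(\omega\) on the source is a pullback exactly when any two plots with equal projection carry the same value of \(\omega\).

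Granting the claim, the remaining checks are routine. Compatibility with smooth reparametrizations \(F:W\to V\) holds because \(p\circ F\) lifts \(q\circ F\), so
\[
\bar\omega(q\circ F)=\omega(p\circ F)=F^*\omega(p).
\]
Locality holds by construction, so the locally defined pieces glue into a diffeological form. Finally \(\pi^*\bar\omega(p)=\bar\omega(\pi\circ p)=\omega(p)\), since \(p\) is itself a lift of \(\pi\circ p\).

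To prove the claim I will use the two conditions of \(G\)-basicness in turn. First, pointwise in \(V\) there is \(g(v)\in G\) with \(p_2(v)=g(v)\cdot p_1(v)\), because the fibers of \(\pi\) are \(G\)-orbits. The map \(v\mapsto g(v)\) is well defined when the action is free, which holds because on active coordinates \(hg_i=g_i\) forces \(h=e\).

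\emph{Invariance step.} If \(g\) is locally constant, \(G\)-invariance \(R_g^*\omega=\omega\) gives \(\omega(p_2)=\omega(p_1)\) at once. In general I first show that \(g:V\to G\) is smooth. On local charts \(S_I\times G^I\), the function \(g\) is recovered from an active coordinate as \(g=g^{(2)}_i(g^{(1)}_i)^{-1}\) on the open set where \(x_i\neq 0\), and this is smooth there.

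\emph{Horizontality step.} I then consider the map \((v,w)\mapsto g(w)\cdot p_1(v)\) on \(V\times V\) and compare its values on the diagonal. The difference \(\omega(p_2)-\omega(g\cdot p_1)\), where \(g\) is frozen at a point, involves only derivatives in the \(g\)-direction. Those derivatives lie in \(\mathcal V=\ker d\pi\), so horizontality kills them. Together with invariance, this yields \(\omega(p_2)=\omega(p_1)\).

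The main obstacle is this last step. Horizontality is stated pointwise via the tangent pseudo-bundle, while forms are defined plotwise, so turning ``vanishes on \(\mathcal V\)'' into equality of plot values needs care. A second difficulty is points where the active index set of \(p_1\) changes, since plots may cross strata transversally; there the expression for \(g\) must be patched over different active indices \(i\), whose loci \(x_i\neq0\) cover \(V\).
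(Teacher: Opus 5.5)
Your proposal is correct and follows the paper's own argument: uniqueness because $\pi$ is a subduction, and existence because invariance and horizontality make $\omega$ constant along $G$-orbits. You make it more precise than the paper's pointwise sketch through the plotwise descent criterion and the smooth gauge map $g$; patching $g$ over the open sets $\{x_i\neq 0\}$ via freeness already settles the strata-crossing issue. The remaining obstacle you flag disappears once horizontality is read plotwise. For $\Phi(v,w)=g(w)\cdot p_1(v)$, the composite $\pi\circ\Phi$ is independent of $w$, so every $d\Phi(0,u)$ lies in $\ker d\pi$ and horizontality kills the $dw$-components of $\omega(\Phi)$. Invariance then identifies the remainder with $\mathrm{pr}_1^*\omega(p_1)$, and with $\Delta$ the diagonal map you get $\omega(p_2)=\Delta^*\omega(\Phi)=\omega(p_1)$.
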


\begin{proof}
Since
\[
\pi:
E^{\mathrm{sph}}(G)\to B^{\mathrm{sph}}(G)
\]
is a subduction, a differential form on the quotient is uniquely determined by
its pull-back to the total space.

We must therefore prove that the family of local forms defining \(\omega\) is
constant along the equivalence relation generated by the \(G\)-action.

Let
\[
p,q\in E^{\mathrm{sph}}(G)
\]
such that
\[
\pi(p)=\pi(q).
\]
Then there exists
\[
g\in G
\]
such that
\[
q=R_g(p).
\]

By \(G\)-invariance,
\[
R_g^*\omega=\omega.
\]
Hence the values of \(\omega\) at \(p\) and \(q\) correspond under the action.

Moreover, if one tangent vector belongs to
\[
\ker(d\pi),
\]
the horizontality condition implies that the value of \(\omega\) vanishes.
Therefore \(\omega\) depends only on tangent directions projected to the
quotient.

Consequently, \(\omega\) defines uniquely a differential form
\[
\bar\omega
\]
on
\[
B^{\mathrm{sph}}(G)
\]
whose pull-back is \(\omega\).

Uniqueness follows from the fact that \(\pi\) is a subduction.
\end{proof}

\subsection{Basic de Rham complex}

\begin{definition}
We denote by
\[
\Omega^\bullet_{\mathrm{bas}}(E^{\mathrm{sph}}(G))
\]
the graded vector space of \(G\)-basic differential forms.
\end{definition}

\begin{proposition}
The exterior differential preserves basic forms:
\[
d:
\Omega^k_{\mathrm{bas}}(E^{\mathrm{sph}}(G))
\longrightarrow
\Omega^{k+1}_{\mathrm{bas}}(E^{\mathrm{sph}}(G)).
\]
\end{proposition}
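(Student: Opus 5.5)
The cleanest route is to use the descent theorem just proved, rather than checking invariance and horizontality of \(d\omega\) separately. Let \(\omega\in\Omega^k_{\mathrm{bas}}(E^{\mathrm{sph}}(G))\). By the preceding proposition there is a unique \(\bar\omega\in\Omega^k(B^{\mathrm{sph}}(G))\) with \(\pi^*\bar\omega=\omega\). Since exterior derivatives of diffeological forms are defined plotwise, and pullback of forms along any smooth map is computed plotwise by composition, we have the naturality identity
\[
d(\pi^*\bar\omega)=\pi^*(d\bar\omega).
\]
Hence \(d\omega=\pi^*(d\bar\omega)\) is a pull-back from the quotient. It therefore remains to show that every form of the type \(\pi^*\eta\) is \(G\)-basic.

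\emph{Invariance.} Since \(\pi\circ R_g=\pi\) for every \(g\in G\), we get
\[
R_g^*\pi^*\eta=(\pi\circ R_g)^*\eta=\pi^*\eta.
\]

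\emph{Horizontality.} This is the step I expect to be the main obstacle, because tangent vectors on diffeological spaces are subtle. Suppose some \(v_i\in\mathcal V_p=\ker(d\pi)_p\). I would argue that \((\pi^*\eta)_p(v_1,\dots,v_k)=\eta_{\pi(p)}(d\pi v_1,\dots,d\pi v_k)\), which vanishes because \(d\pi v_i=0\). This pointwise evaluation formula relies on the compatibility of the chosen tangent pseudo-bundle with pullback of forms, which the paper implicitly uses in defining horizontality. If that compatibility is unavailable, I would instead prove the two conditions directly on \(d\omega\).

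\emph{Fallback: direct verification.} Invariance of \(d\omega\) follows from \(R_g^*d\omega=dR_g^*\omega=d\omega\). For horizontality, I would use a Cartan-type identity \(\iota_\xi d\omega=\mathcal L_\xi\omega-d\iota_\xi\omega\), with \(\xi\) a vertical direction realized by a one-parameter family of group translations \(R_{c(t)}\) along a smooth path \(c\) in \(G\). The term \(\iota_\xi\omega\) vanishes by horizontality of \(\omega\). The term \(\mathcal L_\xi\omega=\frac{d}{dt}R_{c(t)}^*\omega\) vanishes by invariance. Making this rigorous requires the vertical vectors to be generated by such orbit curves, which is the remark stated after the definition of \(\mathcal V\). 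This is why I prefer the descent route above as the primary argument.
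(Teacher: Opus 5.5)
Your proposal is correct, but your primary argument is not the paper's. The paper checks the two conditions directly on \(d\omega\): invariance because \(d\) commutes with \(R_g^*\), and horizontality by an appeal to Cartan's formula. That is essentially your fallback. You are also right there that \(\xi\) must be a vertical vector \emph{field}, so that \(\iota_\xi\omega\equiv0\) and hence \(d\iota_\xi\omega=0\); the paper passes over this.

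Your main route instead writes \(\omega=\pi^*\bar\omega\) via the preceding descent proposition and needs only the plotwise identity \(d\pi^*=\pi^*d\). Invariance (from \(\pi\circ R_g=\pi\)) and horizontality (since \(d\pi\) annihilates \(\mathcal V=\ker(d\pi)\)) of a pull-back are then tautological.

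What this buys is independence from any Cartan calculus. The paper's argument tacitly requires fundamental vector fields, Lie derivatives and a Cartan identity on diffeological forms, which are hypotheses the paper only imposes in later sections. Even then it only controls vertical vectors that are values of fundamental fields, whereas \(\mathcal V\) was defined intrinsically as \(\ker(d\pi)\) precisely because such vectors need not exhaust it for general \(G\). Your argument covers all of \(\ker(d\pi)\) for an arbitrary diffeological group. It uses only the naturality \((\pi^*\eta)_p(v_1,\dots,v_k)=\eta_{\pi(p)}(d\pi\,v_1,\dots,d\pi\,v_k)\), which any plot-based tangent theory provides.

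The price is that the whole weight now rests on the existence half of the descent proposition, which the paper justifies only informally. In diffeology, existence amounts to checking \(P_1^*\omega=P_2^*\omega\) whenever \(\pi\circ P_1=\pi\circ P_2\), and deriving that from invariance and horizontality is where the real content lies. The paper's direct argument does not depend on \(\bar\omega\) existing, though it needs the Lie-type hypotheses above.

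A side benefit of your route is that the later identity \(d\bar\omega=\overline{d\omega}\) becomes immediate. The paper's second direct verification of horizontality of \(d\omega\) in that later proposition is then redundant.
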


\begin{proof}
The exterior differential commutes with pull-backs, hence preserves
\(G\)-invariance.

Moreover, if one argument is vertical, Cartan's formula for the exterior
differential shows that \(d\omega\) still vanishes whenever one entry belongs to
\(\mathcal V\). Therefore horizontality is preserved.
\end{proof}

\begin{proposition}
There is a canonical identification
\[
\Omega^\bullet(B^{\mathrm{sph}}(G))
\simeq
\Omega^\bullet_{\mathrm{bas}}(E^{\mathrm{sph}}(G)).
\]
\end{proposition}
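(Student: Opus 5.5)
The plan is to show that the pull-back map
\[
\pi^*:\Omega^\bullet(B^{\mathrm{sph}}(G))\longrightarrow \Omega^\bullet(E^{\mathrm{sph}}(G))
\]
is injective, that its image lies in \(\Omega^\bullet_{\mathrm{bas}}(E^{\mathrm{sph}}(G))\), and that every basic form lies in the image. The inverse will then be the descent map \(\omega\mapsto\bar\omega\) of the preceding descent proposition. Since \(\pi^*\) commutes with \(d\), this will also make the identification one of complexes, using the previous proposition that \(d\) preserves basic forms.

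\textbf{Injectivity.} Because \(\pi\) is a subduction, every plot of \(B^{\mathrm{sph}}(G)\) locally lifts to a plot of \(E^{\mathrm{sph}}(G)\). The value of a form \(\beta\) on the quotient is therefore determined plotwise by \(\pi^*\beta\). In particular, \(\pi^*\beta=0\) forces \(\beta=0\).

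\textbf{The image consists of basic forms.} For \(G\)-invariance, use \(\pi\circ R_g=\pi\), which gives \(R_g^*\pi^*\beta=\pi^*\beta\). For horizontality, if \(v\in\mathcal V_p=\ker(d\pi)_p\), then
\[
(\pi^*\beta)_p(v,\dots)=\beta_{\pi(p)}(d\pi\, v,\dots)=0 .
\]
This identity requires that the chosen tangent pseudo-bundle is functorial and that forms evaluate on it compatibly with pull-back. I would state this as the standing convention from Part~I.

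\textbf{Surjectivity onto basic forms.} This is exactly the existence part of the descent proposition: each basic \(\omega\) equals \(\pi^*\bar\omega\). Uniqueness there is the injectivity established above, so the two maps \(\beta\mapsto\pi^*\beta\) and \(\omega\mapsto\bar\omega\) are mutually inverse. Both maps are linear and natural, so the identification is canonical.

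\textbf{Main obstacle.} The hard point is the compatibility between the plotwise definition of forms and the pointwise horizontality condition phrased via \(\ker(d\pi)\). The descent proposition is only as strong as this link. The issue is to show that if \(\omega\) is invariant and horizontal, then for two plots \(P_1,P_2\) with \(\pi\circ P_1=\pi\circ P_2\) one has \(P_1^*\omega=P_2^*\omega\).

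To handle this, I would first reduce locally to finite charts \(S_I\times G^I\) via the characterization of plots. There, \(P_2=R_{\gamma}\circ P_1\) for a smooth map \(\gamma\) into \(G\), at least when such a \(\gamma\) exists locally, for instance when \(G\)-orbits are locally trivialized on charts. Next, I would expand
\[
(R_\gamma\circ P_1)^*\omega
\]
by the chain rule into a term of the form \(P_1^*R_{\gamma(u)}^*\omega\), which equals \(P_1^*\omega\) by invariance, plus terms containing the infinitesimal action \(\gamma^{-1}d\gamma\). Those extra terms are vertical and vanish by horizontality.

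Where such a smooth \(\gamma\) may not exist, I would instead take descent to mean that \(\omega\) is compatible with the subduction in the above sense, as the descent proposition implicitly does, and cite that proposition.
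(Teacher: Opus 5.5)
Your proposal is correct and follows the same route as the paper. Pull-backs along the subduction \(\pi\) are \(G\)-invariant and horizontal, and conversely basic forms descend uniquely by the descent proposition, with uniqueness being the injectivity of \(\pi^*\). Your reduction to the plotwise criterion \(P_1^*\omega=P_2^*\omega\) is more careful than the paper's argument, and the hedged fallback is unnecessary. If locally \(P_1=\psi_I\circ(x,g)\) and \(P_2=\psi_J\circ(x,k)\) share spherical coordinates with \(x_i(u_0)\neq0\), then \(\gamma=k_i\,g_i^{-1}\) is smooth near \(u_0\) and \(P_2=\gamma\cdot P_1\) there. The \(\gamma^{-1}d\gamma\) terms, which presuppose a Lie algebra for \(G\), can also be avoided: pull back along \((u,v)\mapsto\gamma(v)\cdot P_1(u)\) and restrict to the diagonal.
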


\subsection{Descent of the barycentric metric}

Recall that the barycentric metric on \(E^{\mathrm{sph}}(G)\) is locally given by
\[
g
=
\sum_i dx_i^2
+
\sum_i x_i^2 \langle \cdot,\cdot\rangle_G^{(i)}.
\]

\begin{proposition}
Assume that the metric \(\langle\cdot,\cdot\rangle_G\) on \(G\) is left-invariant. Then the barycentric metric \(g\) is \(G\)-invariant.
\end{proposition}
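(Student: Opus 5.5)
The plan is to verify invariance chart by chart, using the smoothness criterion and the final diffeology of Section~\ref{sec:glue}: a metric on \(E^{\mathrm{sph}}(G)\) is specified by its local expressions on the generating models \(S_I\times G^I\), and these must be compatible with the maps \(\psi_I\). So it suffices to show two things: the action map \(R_h\), for \(h\in G\), is induced on each model \(S_I\times G^I\) by a smooth map of that same model, and the local expression of \(g\) is preserved under pull-back by that map.

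\textbf{Step 1: the action on a chart.} On \(S_I\times G^I\), the action
\[
h\cdot[(x_i,g_i)]=[(x_i,hg_i)]
\]
is induced by
\[
\widetilde R_h(x,(g_i)_{i\in I})=(x,(L_hg_i)_{i\in I}),
\]
where \(L_h\) is left translation on \(G\). This map preserves the spherical coordinates \(x_i\), so it maps each chart \(S_I\times G^I\) to itself. It also commutes with \(\psi_I\), which is exactly the well-definedness of the \(G\)-action established earlier.

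\textbf{Step 2: pull-back of the metric.} Compute \(\widetilde R_h^*g\) term by term. Since \(x_i\circ\widetilde R_h=x_i\), both \(\widetilde R_h^*(dx_i^2)=dx_i^2\) and \(\widetilde R_h^*(x_i^2)=x_i^2\). On the \(i\)-th factor, \(\widetilde R_h\) acts by \(L_h\), so
\[
\widetilde R_h^*\langle\cdot,\cdot\rangle_G^{(i)}=L_h^*\langle\cdot,\cdot\rangle_G=\langle\cdot,\cdot\rangle_G
\]
by left-invariance. There are no cross terms, because \(\widetilde R_h\) is a product map that leaves the sphere factor untouched. Hence \(\widetilde R_h^*g=g\) on every chart.

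\textbf{Step 3: passing to the quotient.} Any plot locally factors as \(\psi_I\circ\widetilde p\), so \(R_h\circ p\) locally factors as \(\psi_I\circ\widetilde R_h\circ\widetilde p\). The plotwise value of \(R_h^*g\) therefore equals \(\widetilde p^*\widetilde R_h^*g=\widetilde p^*g\). This gives \(R_h^*g=g\).

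\textbf{Main obstacle.} The delicate point is well-definedness of \(g\) itself across non-injective points of \(\psi_I\), namely the loci \(x_i=0\), where \(g_i\) is irrelevant. There the term \(x_i^2\langle\cdot,\cdot\rangle_G^{(i)}\) degenerates, and one must confirm that invariance holds in whatever sense the metric is defined on the quotient pseudo-bundle. I would take \(g\) as given by Part~I, meaning already well-defined. It then suffices to note that \(\widetilde R_h\) preserves both the equivalence relation and the vanishing loci of the \(x_i\), so the chart-level identity descends. If \(g\) is only defined on the horizontal pseudo-bundle \(\mathcal H\), I would additionally check that \(dR_h\) preserves the kernel \(\mathcal K\). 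This follows because \(\widetilde R_h\) maps degenerate directions, those in \(G\)-factors with \(x_i=0\), to degenerate directions.
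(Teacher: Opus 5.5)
Your proposal is correct and takes the same route as the paper. The $G$-action fixes the spherical coordinates, so $\sum_i dx_i^2$ and the coefficients $x_i^2$ are unchanged, and left-invariance of $\langle\cdot,\cdot\rangle_G$ takes care of each group term. Your Step~3, which transfers the chart-level identity through the factorizations $\psi_I\circ\widetilde p$, and your remark on well-definedness at the loci $x_i=0$ just spell out what the paper's short proof leaves implicit.
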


\begin{proof}
The \(G\)-action on \(E^{\mathrm{sph}}(G)\) is
\[
h\cdot (x_i,g_i)=(x_i,hg_i).
\]
The coordinates \(x_i\) are unchanged by this action, hence the term
\[
\sum_i dx_i^2
\]
is invariant.

For the group part, left multiplication by \(h\) sends \(g_i\) to \(hg_i\). Since the metric on \(G\) is left-invariant,
\[
\langle d(hg_i)v,d(hg_i)w\rangle_G
=
\langle v,w\rangle_G.
\]
Thus each term
\[
x_i^2\langle\cdot,\cdot\rangle_G^{(i)}
\]
is invariant. Hence \(g\) is \(G\)-invariant.
\end{proof}

\begin{remark}
The metric is not automatically basic as a tensor on the full tangent pseudo-bundle, because it may have nonzero components along the \(G\)-orbits. What descends canonically is the induced metric on the quotient horizontal directions, once the vertical directions are quotiented out.
\end{remark}

\begin{definition}
Let
\[
\mathcal V:=\ker(d\pi)
\]
be the vertical pseudo-bundle of the quotient map. The quotient tangent pseudo-bundle is
\[
TB^{\mathrm{sph}}(G)
\simeq
TE^{\mathrm{sph}}(G)/\mathcal V.
\]
\end{definition}

\begin{proposition}
If the barycentric metric is \(G\)-invariant and its vertical kernel is compatible with the quotient, it induces a metric-type structure
\[
\bar g
\]
on \(B^{\mathrm{sph}}(G)\).
\end{proposition}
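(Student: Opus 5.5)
The plan is to define \(\bar g\) pointwise on the quotient tangent pseudo-bundle \(TB^{\mathrm{sph}}(G)\simeq TE^{\mathrm{sph}}(G)/\mathcal V\). I read the hypothesis ``vertical kernel compatible with the quotient'' as giving a \(G\)-invariant horizontal complement \(\mathcal H_p\subset T_pE^{\mathrm{sph}}(G)\) to \(\mathcal V_p\). This means that \(d\pi_p\) restricts to an isomorphism \(\mathcal H_p\to T_{\pi(p)}B^{\mathrm{sph}}(G)\), and that \(dR_h(\mathcal H_p)=\mathcal H_{hp}\). The natural choice is the \(g\)-orthogonal complement of \(\mathcal V_p\); the left-invariant metric on \(G\) and the barycentric formula \(\sum_i dx_i^2+\sum_i x_i^2\langle\cdot,\cdot\rangle_G^{(i)}\) make it \(G\)-invariant, because \(R_h\) is an isometry preserving \(\mathcal V\). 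For \(\xi,\eta\in T_bB^{\mathrm{sph}}(G)\) and any \(p\) with \(\pi(p)=b\), I set
\[
\bar g_b(\xi,\eta):=g_p(\tilde\xi,\tilde\eta),
\]
where \(\tilde\xi,\tilde\eta\in\mathcal H_p\) are the unique horizontal lifts.

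\textbf{Step 1: independence of the representative.} Let \(q=R_h(p)\) be another representative. Since \(\pi\circ R_h=\pi\), the vectors \(dR_h\tilde\xi\) and \(dR_h\tilde\eta\) are horizontal lifts at \(q\). By the preceding proposition, \(g\) is \(G\)-invariant, so \(g_q(dR_h\tilde\xi,dR_h\tilde\eta)=g_p(\tilde\xi,\tilde\eta)\). Symmetry and bilinearity are inherited from \(g\). Positivity on \(T_bB^{\mathrm{sph}}(G)\) follows because \(d\pi_p\) is injective on \(\mathcal H_p\) and \(g\) is non-degenerate there; on strata where some \(x_i=0\) I would use the effective horizontal metric of Part~I rather than the raw formula.

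\textbf{Step 2: smoothness in the diffeological sense.} Here I would use the plot characterization of the final diffeology established earlier. A plot of \(TB^{\mathrm{sph}}(G)\) lifts locally through the subduction \(\pi\) to a plot of \(TE^{\mathrm{sph}}(G)\), and that plot factors through some chart \(S_I\times G^I\). Horizontal projection is then a smooth fiberwise linear operation on that chart, so \(\bar g\) pulled back along the plot equals \(g\) evaluated on the horizontal projection of the lift, which is smooth. The result does not depend on the choice of lift, by Step 1 and by the fact that \(g(\cdot,\cdot)\) restricted to \(\mathcal H\) annihilates changes by vertical vectors.

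\textbf{Main obstacle.} The delicate step is Step 2 across strata where some \(x_i\) vanishes. There the term \(x_i^2\langle\cdot,\cdot\rangle_G^{(i)}\) degenerates, so the orthogonal complement of \(\mathcal V\) may jump in dimension. This is exactly the point the compatibility hypothesis is meant to cover. I would first try to show that horizontal projection extends smoothly through \(x_i=0\), using the fact that the \(g_i\)-directions become null and are quotiented out, consistently with the effective pseudo-bundle \(\mathcal H=TE^{\mathrm{sph}}(G)/\mathcal K\). Failing that, I would take smoothness of the projection as part of the hypothesis, which is why the statement only asserts a ``metric-type structure''.
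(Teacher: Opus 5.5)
Your proof is correct under your reading of the hypothesis, but it is organized differently from the paper's.

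\textbf{The paper's route.} It takes \emph{arbitrary} lifts and sets \(\bar g(d\pi(v),d\pi(w)):=g(v,w)\). \(G\)-invariance handles the choice of point in the orbit. The compatibility hypothesis is read as exactly the statement that \(g(v,w)\) is unchanged when \(v,w\) are modified by vectors of \(\mathcal V\). Smoothness is then simply asserted from the quotient diffeology.

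\textbf{Your route.} You fix canonical lifts through a \(G\)-invariant complement \(\mathcal H_p\). Independence of the lift within \(T_pE^{\mathrm{sph}}(G)\) is then automatic, and invariance is used only along the orbit, via \(dR_h(\mathcal H_p)=\mathcal H_{hp}\).

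\textbf{What each buys.} The paper's route is shorter and involves no choice. Taken literally, however, its hypothesis forces \(\mathcal V\) into the radical of \(g\). The barycentric metric itself does not satisfy this, since \(g(\xi^\#,\xi^\#)=\sum_i x_i^2\,|\xi g_i|^2>0\) for \(\xi\neq0\). So it really concerns an already degenerate horizontal version of \(g\), as the remark preceding the statement concedes. Your route is the usual Riemannian-submersion construction. It applies to the genuine barycentric metric and yields a positive \(\bar g\). When \(\mathcal V\) does lie in the radical, any invariant complement reproduces the paper's \(\bar g\), so your construction contains theirs.

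\textbf{Two small points.} Your Step~2 is more careful than the paper's one-line smoothness claim. One wording slip there: independence of the lifted plot comes from the horizontal projection killing vertical differences, not from \(g|_{\mathcal H}\) doing so.

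Your ``main obstacle'' is also milder than you fear, at least for finite-dimensional \(G\). On a chart \(S_I\times G^I\), the vertical part \(\xi^\#\) of \((\dot x,v)\) is determined by \(\sum_i x_i^2\langle v_i-\xi g_i,\eta g_i\rangle=0\) for all \(\eta\in\mathfrak g\). The operator \(\sum_i x_i^2\langle\cdot\, g_i,\cdot\, g_i\rangle\) is positive definite because \(\sum_i x_i^2=1\). Moreover, \(g_i\) and \(v_i\) enter only with weight \(x_i^2\). Hence the horizontal projection, and \(\bar g\) pulled back to the chart, are smooth across \(x_i=0\) and compatible with the quotient relation. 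The drop of \(\dim\mathcal H_p\) there is harmless, since smoothness is tested on plots. Your fallback of assuming smoothness is genuinely needed only for infinite-dimensional \(G\) with weak metrics, where orthogonal complements may fail to exist.
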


\begin{proof}
Since \(g\) is \(G\)-invariant, its value is constant along \(G\)-orbits. To define a tensor on the quotient, one must verify that changing representatives by vertical vectors does not change the value. This is equivalent to the vertical directions being killed or consistently quotiented in the induced horizontal pseudo-bundle.

Under the stated compatibility condition, the formula
\[
\bar g(d\pi(v),d\pi(w)):=g(v,w)
\]
is independent of the choice of lifts \(v,w\). Smoothness follows from the quotient diffeology.
\end{proof}

\subsection{Descent of differential forms}

\begin{proposition}
Let
\[
\omega\in\Omega^k_{\mathrm{bas}}(E^{\mathrm{sph}}(G))
\]
be a \(G\)-basic differential form.

Then
\[
d\omega
\]
is again \(G\)-basic.

Moreover, if
\[
\bar\omega\in\Omega^k(B^{\mathrm{sph}}(G))
\]
denotes the descended form satisfying
\[
\pi^*\bar\omega=\omega,
\]
then
\[
d\bar\omega
=
\overline{d\omega}.
\]
\end{proposition}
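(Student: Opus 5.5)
The plan is to reduce both assertions to two facts: pull-back along the subduction \(\pi\) is injective on forms, and exterior differentiation commutes with pull-backs of diffeological forms.

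\textbf{First assertion.} That \(d\omega\) is \(G\)-basic is exactly the content of the earlier proposition stating that \(d\) preserves \(\Omega^\bullet_{\mathrm{bas}}(E^{\mathrm{sph}}(G))\), so it can be quoted directly. For completeness, the two parts of that argument are as follows. \(G\)-invariance of \(d\omega\) follows from \(R_g^*d\omega=dR_g^*\omega=d\omega\). For horizontality, I would avoid the Cartan-formula argument and use a cleaner route. By the descent proposition there is a form \(\bar\omega\) on \(B^{\mathrm{sph}}(G)\) with \(\pi^*\bar\omega=\omega\). Then \(d\omega=d\pi^*\bar\omega=\pi^*d\bar\omega\). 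Any pull-back \(\pi^*\eta\) vanishes as soon as one argument lies in \(\mathcal V=\ker(d\pi)\), because \((\pi^*\eta)_p(v_1,\dots)=\eta_{\pi(p)}(d\pi\,v_1,\dots)\). Hence \(d\omega\) is horizontal, and the same identity shows \(G\)-invariance again, since \(\pi\circ R_g=\pi\).

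\textbf{Second assertion.} The key step is the identity \(\pi^*d\bar\omega=d\pi^*\bar\omega\) for diffeological forms. Plotwise, \((\pi^*\eta)_p=\eta_{\pi\circ p}\) for every plot \(p\), and the exterior derivative of a diffeological form is defined by \((d\eta)_P=d(\eta_P)\) on each plot domain. So
\[
(\pi^*d\bar\omega)_p=d(\bar\omega_{\pi\circ p})=d\bigl((\pi^*\bar\omega)_p\bigr)=(d\pi^*\bar\omega)_p .
\]
This gives \(\pi^*(d\bar\omega)=d\omega\). By definition, \(\overline{d\omega}\) is the unique form with \(\pi^*\overline{d\omega}=d\omega\), and uniqueness comes from the descent proposition because \(\pi\) is a subduction. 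Since \(d\bar\omega\) satisfies the same equation, we get \(d\bar\omega=\overline{d\omega}\).

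\textbf{Main obstacle.} The delicate point is injectivity of \(\pi^*\) on forms. If \(\pi^*\eta=0\), take any plot \(Q\) of \(B^{\mathrm{sph}}(G)\). Because \(\pi\) is a subduction, \(Q\) locally lifts as \(Q|_V=\pi\circ p\) for some plot \(p\), so \(\eta_Q|_V=(\pi^*\eta)_p=0\). The locality axiom for diffeological forms then gives \(\eta_Q=0\). I would spell this out briefly so that the argument does not rely on the pointwise tangent-vector description of horizontality. That description is the less robust part of the paper's framework, and the plotwise argument sidesteps it entirely.
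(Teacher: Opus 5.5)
Your proof is correct. For the second assertion it follows the paper's argument: both use $\pi^*(d\bar\omega)=d(\pi^*\bar\omega)=d\omega$ together with uniqueness of descent along the subduction $\pi$. You also justify the injectivity of $\pi^*$ plotwise, which the paper only asserts.

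The real difference is in the first assertion. The paper checks horizontality of $d\omega$ directly on $E^{\mathrm{sph}}(G)$, using Cartan's formula and a term-by-term inspection of $(d\omega)_p(v_0,\dots,v_k)$ with $v_0\in\mathcal V_p$. You instead descend $\omega$ first, then read off basicness of $d\omega=\pi^*d\bar\omega$ from $\pi\circ R_g=\pi$ and $d\pi(\mathcal V)=0$. This reverses the logical order. It also shows that the paper's separate horizontality check is redundant once the identity $\pi^*d\bar\omega=d\omega$ is available.

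What each route buys:
\begin{itemize}
\item Your route works for an arbitrary diffeological group. The paper's argument tacitly extends vertical vectors to (fundamental) vector fields and uses brackets and Lie derivatives. That is exactly the regularity the paper elsewhere says requires extra assumptions on $G$.
\item The price is that your proof that $d$ preserves $\Omega^\bullet_{\mathrm{bas}}(E^{\mathrm{sph}}(G))$ depends on the descent proposition, specifically its hard direction ``basic implies pull-back''. The paper's argument, if made rigorous, would be an intrinsic statement about the complex on the total space.
\item Since the statement already presupposes the existence of $\bar\omega$, this dependence costs nothing here.
\end{itemize}

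One small correction to your closing remark. The horizontality step still uses the pointwise identity $(\pi^*\eta)_p(v_1,\dots)=\eta_{\pi(p)}(d\pi\,v_1,\dots)$, so the tangent-vector description is not sidestepped entirely. This is unavoidable, because horizontality is defined pointwise. Only the injectivity and commutation steps are purely plotwise.
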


\begin{proof}
We first prove \(G\)-invariance.

Let
\[
g\in G.
\]
Since pull-back commutes with the exterior differential,
\[
R_g^*(d\omega)
=
d(R_g^*\omega).
\]
Because \(\omega\) is \(G\)-invariant,
\[
R_g^*\omega=\omega,
\]
hence
\[
R_g^*(d\omega)=d\omega.
\]

Thus \(d\omega\) is \(G\)-invariant.

We now prove horizontality.

Let
\[
v_0,\dots,v_k
\in
T_pE^{\mathrm{sph}}(G)
\]
with one vector, say \(v_0\), belonging to the vertical subspace
\[
\mathcal V_p=\ker(d\pi_p).
\]

Since \(\omega\) is horizontal, it vanishes whenever one argument is vertical.
Using the local expression of the exterior differential on plots, every term
appearing in
\[
(d\omega)_p(v_0,\dots,v_k)
\]
either contains evaluation of \(\omega\) on a family including a vertical
vector, or derivatives along directions tangent to \(G\)-orbits of functions
already vanishing identically by horizontality and invariance.

Therefore
\[
(d\omega)_p(v_0,\dots,v_k)=0.
\]

Hence \(d\omega\) is horizontal.

Consequently,
\[
d\omega
\]
is \(G\)-basic.

Finally,
\[
\pi^*(d\bar\omega)
=
d(\pi^*\bar\omega)
=
d\omega.
\]
Since \(d\omega\) is basic, it descends uniquely to a form on
\[
B^{\mathrm{sph}}(G).
\]
By uniqueness of descent,
\[
d\bar\omega
=
\overline{d\omega}.
\]
\end{proof}

\subsection{Descent of characteristic forms}

We now discuss characteristic forms associated with principal connections.

\begin{remark}
In the fully general diffeological setting, the existence of a Lie algebra,
connection \(1\)-forms, and curvature forms requires additional assumptions on
the diffeological group \(G\).

Accordingly, in this subsection we assume that \(G\) is a regular
diffeological Lie group admitting:
\begin{itemize}
\item a well-defined Lie algebra \(\mathfrak g\),
\item a smooth adjoint action,
\item and a theory of principal connections with curvature.
\end{itemize}
Typical examples include finite-dimensional Lie groups, Fréchet--Lie groups,
loop groups, and current groups.
\end{remark}

Let
\[
\pi:
E^{\mathrm{sph}}(G)\to B^{\mathrm{sph}}(G)
\]
be the universal principal \(G\)-bundle.

Let
\[
\Theta
\in
\Omega^1(E^{\mathrm{sph}}(G),\mathfrak g)
\]
be a principal connection form.

\begin{remark}
The connection form itself does not descend to the quotient, since it is not
horizontal with respect to the principal action.
\end{remark}

Its curvature is defined by
\[
F_\Theta
=
d\Theta
+
\frac12[\Theta,\Theta].
\]

\begin{proposition}
The curvature form
\[
F_\Theta
\]
is horizontal and \(G\)-equivariant:
\[
R_g^*F_\Theta
=
\operatorname{Ad}_{g^{-1}}F_\Theta.
\]
\end{proposition}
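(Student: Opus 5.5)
The plan is to follow the standard Chern--Weil argument, using only two defining properties of a principal connection form \(\Theta\): equivariance,
\[
R_g^*\Theta=\operatorname{Ad}_{g^{-1}}\Theta ,
\]
and reproduction of fundamental fields, \(\Theta(\xi^\#)=\xi\) for \(\xi\in\mathfrak g\). These are the standing assumptions of the preceding remark on regular diffeological Lie groups. I will check both identities plotwise, so that the argument never leaves the diffeological framework.

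\textbf{Equivariance.} Since \(R_g\) is smooth and pull-back commutes with \(d\) (plotwise, exactly as in the proof that \(d\) preserves basic forms),
\[
R_g^*F_\Theta = d(R_g^*\Theta)+\tfrac12[R_g^*\Theta,R_g^*\Theta]
= d(\operatorname{Ad}_{g^{-1}}\Theta)+\tfrac12[\operatorname{Ad}_{g^{-1}}\Theta,\operatorname{Ad}_{g^{-1}}\Theta].
\]
Two facts finish the computation. First, \(\operatorname{Ad}_{g^{-1}}\) is a fixed linear map for fixed \(g\), so it commutes with \(d\). Second, it is a Lie algebra automorphism, so it commutes with the bracket of \(\mathfrak g\)-valued forms. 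Together these give \(R_g^*F_\Theta=\operatorname{Ad}_{g^{-1}}F_\Theta\). This part is routine.

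\textbf{Horizontality.} I must show that \(\iota_v F_\Theta=0\) for \(v\in\mathcal V=\ker(d\pi)\). I will first reduce to fundamental vectors. The vertical pseudo-bundle is generated by the \(G\)-orbit directions (the remark following its definition), and \(F_\Theta\) is linear in each slot, so it suffices to take \(v=\xi^\#_p\).

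For such \(v\), I will apply Cartan's formula. Differentiate the equivariance of \(\Theta\) along the one-parameter family \(R_{\exp(t\xi)}\), or along any smooth path in \(G\) through \(e\) with velocity \(\xi\) if exponentials are unavailable; this gives
\[
L_{\xi^\#}\Theta=-\operatorname{ad}_\xi\Theta .
\]
On the other hand,
\[
L_{\xi^\#}\Theta=d\iota_{\xi^\#}\Theta+\iota_{\xi^\#}d\Theta=\iota_{\xi^\#}d\Theta ,
\]
because \(\iota_{\xi^\#}\Theta=\xi\) is constant. Finally,
\[
\iota_{\xi^\#}\tfrac12[\Theta,\Theta]=[\xi,\Theta]=\operatorname{ad}_\xi\Theta .
\]
Adding the two contributions yields \(\iota_{\xi^\#}F_\Theta=0\).

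\textbf{Main obstacle.} The difficulty is justifying the Cartan/Lie-derivative calculus on the diffeological space \(E^{\mathrm{sph}}(G)\), and in particular the claim that \(\ker(d\pi)\) is spanned by fundamental fields. My first approach is to work on each generating chart \(S_I\times G^I\), using the local finiteness proposition and the factorization of plots through \(\psi_I\). On a chart the \(G\)-action is diagonal left multiplication on \(G^I\) and fixes \(S_I\), so vertical vectors are \((0,(\xi g_i)_{i\in I})\). The computation then takes place on the finite product \(S_I\times G^I\), where calculus for regular Lie groups is available.

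Two issues remain on a chart. Degenerate coordinates with \(x_i=0\) could create extra vertical directions. They do not matter, because the corresponding \(g_i\)-directions are killed by the quotient relation and hence are zero in \(TE^{\mathrm{sph}}(G)\). If needed, I would also check the horizontality identity directly on the local formula \(\Theta=\sum_i x_i^2\theta_i\). There, \(\iota_{\xi^\#}\Theta=\sum_i x_i^2\,\operatorname{Ad}_{g_i^{-1}}\xi\), which is not literally \(\xi\) under left translation. So I would be careful to use the convention in which the action is on the right, matching \(R_g\), so that \(\Theta(\xi^\#)=\xi\) holds with \(\sum_i x_i^2=1\).
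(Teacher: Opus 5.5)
Your proposal is correct and is essentially the paper's argument: the paper simply cites the standard structural identity for principal connections, which gives equivariance from \(R_g^*\Theta=\operatorname{Ad}_{g^{-1}}\Theta\) together with the compatibility of \(d\) and the bracket with pull-backs, exactly as in your first step. Your Cartan-formula computation of \(\iota_{\xi^\#}F_\Theta=0\) makes explicit the horizontality half that the paper leaves under ``standard.'' Your caveat that the local formula \(\sum_i x_i^2\theta_i\) reproduces \(\xi\) only under a right-action convention is accurate, but it does not affect your argument, since the statement assumes that \(\Theta\) is a principal connection form.
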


\begin{proof}
This is the standard structural identity for principal connections and follows
from the equivariance of \(\Theta\) together with the compatibility of the
exterior differential and the Lie bracket with pull-backs.
\end{proof}

Let
\[
P:\mathfrak g^{\otimes r}\to\mathbb R
\]
be an Ad-invariant symmetric polynomial.

\begin{proposition}
The characteristic form
\[
P(F_\Theta,\dots,F_\Theta)
\]
is \(G\)-basic.

Consequently, it descends uniquely to a differential form on
\[
B^{\mathrm{sph}}(G).
\]
\end{proposition}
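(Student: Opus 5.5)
The plan is to verify the two conditions in the definition of a \(G\)-basic form for \(\omega:=P(F_\Theta,\dots,F_\Theta)\), and then apply the descent proposition for basic forms proved earlier in this section. Existence and uniqueness of the descended form on \(B^{\mathrm{sph}}(G)\) then follow because \(\pi\) is a subduction. All computations will be done plotwise: by the characterization of plots, every plot locally factors through some \(\psi_I:S_I\times G^I\to E^{\mathrm{sph}}(G)\). So it suffices to verify pointwise, multilinear identities on the pulled-back \(\mathfrak g\)-valued forms, which live on open subsets of Euclidean space.

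\textbf{Invariance.} I will use the preceding proposition, \(R_g^*F_\Theta=\operatorname{Ad}_{g^{-1}}F_\Theta\). Pullback commutes with the wedge-and-evaluate operation that defines \(P(F,\dots,F)\), so
\[
R_g^*P(F_\Theta,\dots,F_\Theta)=P(\operatorname{Ad}_{g^{-1}}F_\Theta,\dots,\operatorname{Ad}_{g^{-1}}F_\Theta)=P(F_\Theta,\dots,F_\Theta).
\]
The last equality is exactly the \(\operatorname{Ad}\)-invariance of \(P\). This step is routine once it is checked on plots. Since \(P\) is symmetric and the \(F_\Theta\) are even-degree forms, no sign issues arise.

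\textbf{Horizontality.} The expanded value \(P(F_\Theta,\dots,F_\Theta)_p(v_1,\dots,v_{2r})\) is a signed sum over shuffles of terms of the form \(P(F_\Theta(v_{\tau(1)},v_{\tau(2)}),\dots)\). If some \(v_i\in\mathcal V_p\), then every term contains a factor \(F_\Theta(v_i,\cdot)\), which vanishes by horizontality of \(F_\Theta\) from the preceding proposition. Hence the whole expression vanishes.

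\textbf{Main obstacle.} The main obstacle is matching the notion of ``horizontal'' used for \(F_\Theta\) with the intrinsic vertical pseudo-bundle \(\mathcal V=\ker(d\pi)\) used in the definition of basic forms. In the diffeological setting, \(\mathcal V_p\) need not be spanned by fundamental vector fields. I would first work under the standing assumption of this subsection that \(G\) is a regular diffeological Lie group, where \(\ker(d\pi)\) is generated by the infinitesimal action \(\xi\mapsto\xi^\sharp\). Horizontality of \(F_\Theta\) then means \(\iota_{\xi^\sharp}F_\Theta=0\). This identity follows from \(\Theta(\xi^\sharp)=\xi\), from \(\mathcal L_{\xi^\sharp}\Theta=-[\xi,\Theta]\) (the infinitesimal form of equivariance), and from Cartan's formula:
\[
\iota_{\xi^\sharp}d\Theta=\mathcal L_{\xi^\sharp}\Theta-d(\Theta(\xi^\sharp))=-[\xi,\Theta],\qquad \iota_{\xi^\sharp}\tfrac12[\Theta,\Theta]=[\xi,\Theta].
\]
The two contributions cancel. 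If \(\mathcal V\) turns out to be larger than the span of the \(\xi^\sharp\), I would instead argue on each chart \(S_I\times G^I\), where the \(G\)-action is the diagonal left multiplication and the vertical directions are explicitly the diagonal orbit directions. Finally, I would invoke the descent proposition to obtain the unique \(\bar\omega\) with \(\pi^*\bar\omega=P(F_\Theta,\dots,F_\Theta)\).
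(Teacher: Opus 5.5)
Your proposal is correct and follows essentially the same route as the paper: invariance comes from $R_g^*F_\Theta=\operatorname{Ad}_{g^{-1}}F_\Theta$ together with $\operatorname{Ad}$-invariance of $P$, horizontality comes from that of $F_\Theta$ via multilinearity, and the conclusion then follows from the descent theorem for basic forms. The paper takes horizontality of $F_\Theta$ with respect to $\mathcal V=\ker(d\pi)$ directly from the preceding proposition, so your Cartan-formula re-derivation is an optional extra check rather than a needed step; note also that it rests on identifying $\ker(d\pi)$ with the span of the fundamental vector fields, which the paper does not establish.
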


\begin{proof}
Since \(F_\Theta\) is horizontal, the multilinear form
\[
P(F_\Theta,\dots,F_\Theta)
\]
vanishes whenever one argument is vertical. Hence it is horizontal.

Moreover,
\[
R_g^*F_\Theta
=
\operatorname{Ad}_{g^{-1}}F_\Theta.
\]
Therefore,
\[
R_g^*P(F_\Theta,\dots,F_\Theta)
=
P(
\operatorname{Ad}_{g^{-1}}F_\Theta,
\dots,
\operatorname{Ad}_{g^{-1}}F_\Theta
).
\]

By Ad-invariance of \(P\),
\[
P(
\operatorname{Ad}_{g^{-1}}X_1,
\dots,
\operatorname{Ad}_{g^{-1}}X_r
)
=
P(X_1,\dots,X_r),
\]
hence
\[
R_g^*P(F_\Theta,\dots,F_\Theta)
=
P(F_\Theta,\dots,F_\Theta).
\]

Thus the characteristic form is \(G\)-invariant and horizontal, hence
\(G\)-basic.

The descent theorem for basic forms therefore produces a unique form on
\[
B^{\mathrm{sph}}(G)
\]
whose pull-back is
\[
P(F_\Theta,\dots,F_\Theta).
\]
\end{proof}
\subsection{Descent of Hodge-type operators}

In this subsection, we assume that we are working in a framework where the
Hodge-type structures constructed on
\[
E^{\mathrm{sph}}(G)
\]
are well-defined. More precisely, we assume that:
\begin{itemize}
\item the horizontal differential complex is defined;
\item a metric pairing on horizontal forms is available;
\item a formal adjoint
\[
\delta
\]
of the horizontal exterior differential exists;
\item the quotient
\[
B^{\mathrm{sph}}(G)
\]
carries the descended horizontal differential calculus.
\end{itemize}

Under these assumptions, the descent question is purely equivariant.

\begin{definition}
The basic horizontal complex is
\[
\Omega^\bullet_{\mathrm{bas}}(E^{\mathrm{sph}}(G)).
\]
By the descent theorem for basic forms, it identifies with the de Rham complex
of the quotient:
\[
\Omega^\bullet_{\mathrm{bas}}(E^{\mathrm{sph}}(G))
\simeq
\Omega^\bullet(B^{\mathrm{sph}}(G)).
\]
\end{definition}

\begin{definition}
We say that the Hodge-type structure is \(G\)-stable if
\[
\delta\bigl(
\Omega^{k+1}_{\mathrm{bas}}(E^{\mathrm{sph}}(G))
\bigr)
\subset
\Omega^k_{\mathrm{bas}}(E^{\mathrm{sph}}(G))
\]
for every \(k\).
\end{definition}

\begin{proposition}
Assume that the Hodge-type structure is \(G\)-stable. Then the formal adjoint
\[
\delta
\]
descends to an operator
\[
\bar\delta:
\Omega^{k+1}(B^{\mathrm{sph}}(G))
\longrightarrow
\Omega^k(B^{\mathrm{sph}}(G)).
\]

Consequently, the operator
\[
\bar\Delta
=
d\bar\delta+\bar\delta d
\]
defines a Hodge-type Laplacian on the quotient.
\end{proposition}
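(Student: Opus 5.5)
The plan is to define \(\bar\delta\) by transport through the identification
\[
\Omega^\bullet(B^{\mathrm{sph}}(G))\simeq\Omega^\bullet_{\mathrm{bas}}(E^{\mathrm{sph}}(G))
\]
given by the descent theorem for basic forms. Concretely, for \(\bar\omega\in\Omega^{k+1}(B^{\mathrm{sph}}(G))\), the pull-back \(\pi^*\bar\omega\) is \(G\)-basic. By the \(G\)-stability assumption, \(\delta(\pi^*\bar\omega)\) lies in \(\Omega^k_{\mathrm{bas}}(E^{\mathrm{sph}}(G))\). The descent proposition then provides a unique form, which I will call \(\bar\delta\bar\omega\in\Omega^k(B^{\mathrm{sph}}(G))\), satisfying
\[
\pi^*(\bar\delta\bar\omega)=\delta(\pi^*\bar\omega).
\]
Uniqueness comes from \(\pi\) being a subduction, so \(\pi^*\) is injective on forms. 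This makes \(\bar\delta\) well defined, and it is linear because \(\pi^*\) and \(\delta\) are linear.

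The second step is to assemble the Laplacian. By the proposition on descent of differential forms, \(d\) preserves basic forms and \(\pi^*d\bar\omega=d\pi^*\bar\omega\). Combining this with the defining relation for \(\bar\delta\) gives
\[
\pi^*(\bar\Delta\bar\omega)=(d\delta+\delta d)\pi^*\bar\omega=\Delta\,\pi^*\bar\omega.
\]
Hence \(\bar\Delta=d\bar\delta+\bar\delta d\) is the descended Laplacian, i.e. the restriction of \(\Delta\) to basic forms transported to the quotient. Formal properties then transfer directly. For example, \(\bar\delta^2=0\) follows from \(\delta^2=0\) and injectivity of \(\pi^*\).

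The main obstacle is justifying that \(\bar\Delta\) deserves the name Hodge-type, namely that \(\bar\delta\) is a formal adjoint of \(d\) on the quotient. I will handle this by endowing \(\Omega^\bullet(B^{\mathrm{sph}}(G))\) with the pairing \(\langle\bar\omega,\bar\eta\rangle:=\langle\pi^*\bar\omega,\pi^*\bar\eta\rangle\), that is, the horizontal metric pairing restricted to basic forms, which is well defined by the assumed descended horizontal calculus. With this pairing, adjointness on the quotient is exactly adjointness of \(\delta\) restricted to basic forms, and that holds upstairs by assumption. Symmetry and nonnegativity of \(\bar\Delta\) then follow from the same identities for \(\Delta\) on basic forms. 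I expect the delicate point to be whether this restricted pairing is the intended metric on \(B^{\mathrm{sph}}(G)\). I would state the result with respect to this induced pairing, which keeps the proof within the stated hypotheses.
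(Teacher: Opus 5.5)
Your proposal is correct and follows the paper's proof. As in the paper, \(\bar\delta\bar\omega\) is the unique form with \(\pi^*(\bar\delta\bar\omega)=\delta(\pi^*\bar\omega)\): it exists by \(G\)-stability and descent of basic forms, it is well defined because \(\pi^*\) is injective along the subduction \(\pi\), and \(\bar\Delta\) is then well defined because \(d\) preserves basic forms and commutes with \(\pi^*\).

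Your extra check that \(\bar\delta\) is formally adjoint to \(d\) for the pairing \(\langle\pi^*\bar\omega,\pi^*\bar\eta\rangle\) goes beyond the paper, which only shows that \(\bar\Delta\) is well defined. That check is valid provided the pairing is actually defined on basic forms, which is exactly the point you flag.
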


\begin{proof}
Let
\[
\bar\omega\in\Omega^{k+1}(B^{\mathrm{sph}}(G)).
\]
By the basic-form identification, its pull-back
\[
\omega:=\pi^*\bar\omega
\]
is a basic \((k+1)\)-form on
\[
E^{\mathrm{sph}}(G).
\]

Since the Hodge-type structure is \(G\)-stable,
\[
\delta\omega
\]
is again basic. Hence there exists a unique form
\[
\bar\eta\in\Omega^k(B^{\mathrm{sph}}(G))
\]
such that
\[
\pi^*\bar\eta=\delta\omega.
\]

Define
\[
\bar\delta\bar\omega:=\bar\eta.
\]

This definition is independent of choices because the pull-back map
\[
\pi^*:\Omega^\bullet(B^{\mathrm{sph}}(G))
\to
\Omega^\bullet_{\mathrm{bas}}(E^{\mathrm{sph}}(G))
\]
is injective for the quotient subduction.

Thus
\[
\bar\delta
\]
is well-defined.

Since the exterior differential preserves basic forms and commutes with
descent, the expression
\[
\bar\Delta=d\bar\delta+\bar\delta d
\]
is a well-defined operator on the quotient complex.
\end{proof}

\begin{remark}
The proposition does not assert that a formal adjoint exists in complete
generality. It only states that, once such an operator has been constructed on
the total space, its descent to the quotient is equivalent to preservation of
basic forms.
\end{remark}

\subsection{Descent of Clifford and Dirac structures}

We now assume that we are in a framework where the Clifford and Dirac
structures on
\[
E^{\mathrm{sph}}(G)
\]
are well-defined. Thus we assume that:
\begin{itemize}
\item an effective horizontal pseudo-bundle
\[
\mathcal H_E
\]
has been constructed;
\item \(\mathcal H_E\) carries a metric;
\item a Clifford algebra pseudo-bundle
\[
\mathrm{Cl}(\mathcal H_E)
\]
is defined;
\item a Clifford module
\[
\mathcal E_E
\]
has been chosen;
\item a Clifford-compatible connection
\[
\nabla^{\mathcal E}
\]
exists;
\item the associated Dirac-type operator
\[
D_E
\]
is well-defined.
\end{itemize}

\subsubsection{Equivariant Clifford data}

\begin{definition}
The Clifford data are called \(G\)-equivariant if:
\begin{enumerate}
\item the \(G\)-action lifts to \(\mathcal H_E\);
\item the metric on \(\mathcal H_E\) is \(G\)-invariant;
\item the Clifford action satisfies
\[
g\cdot(c(v)s)
=
c(g\cdot v)(g\cdot s)
\]
for all admissible \(g\in G\), \(v\in\mathcal H_E\), and
\(s\in\mathcal E_E\);
\item the connection preserves the equivariant structure.
\end{enumerate}
\end{definition}

\begin{proposition}
If the Clifford data are \(G\)-equivariant, then the horizontal pseudo-bundle,
the Clifford algebra, and the Clifford module descend to the quotient
\[
B^{\mathrm{sph}}(G).
\]
\end{proposition}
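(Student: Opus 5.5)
The plan is to construct each descended object as a quotient of the corresponding object on \(E^{\mathrm{sph}}(G)\) by the lifted \(G\)-action. Then I will check that the quotient diffeology makes it a pseudo-bundle over \(B^{\mathrm{sph}}(G)\). The model throughout is the descent theorem for basic forms proved above. There, \(G\)-invariance gave independence of the point in an orbit, and the subduction property of \(\pi\) gave uniqueness and smoothness. Here invariance is replaced by equivariance of the lifted action.

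\emph{Horizontal pseudo-bundle.} By condition (1) of the definition of equivariant Clifford data, \(G\) acts on \(\mathcal H_E\) covering the action on \(E^{\mathrm{sph}}(G)\). I define
\[
\mathcal H_B:=\mathcal H_E/G,
\]
with the quotient diffeology, together with the induced projection \(\mathcal H_B\to B^{\mathrm{sph}}(G)\). Two checks are needed.

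First, the fibre of \(\mathcal H_B\) over \(\pi(p)\) must be identified with \((\mathcal H_E)_p\). For this I use freeness of the \(G\)-action on \(E^{\mathrm{sph}}(G)\): the only element of \(G\) fixing \(p\) is the identity, so the orbit of a vector in \((\mathcal H_E)_p\) meets \((\mathcal H_E)_p\) exactly once.

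Second, the fibrewise vector space operations must descend. This holds because the lifted action is fibrewise linear and addition and scalar multiplication are smooth on \(\mathcal H_E\times_{E}\mathcal H_E\). The descended operations are then smooth, by the universal property of quotient diffeologies already used for the iterated quotients \(E^{\mathrm{sph}}(G)/(G\times\mathbb Z_2)\).

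The \(G\)-invariance of the metric in condition (2) gives a well-defined pairing on \(\mathcal H_B\). Its smoothness is argued exactly as in the proof for \(\bar g\).

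\emph{Clifford algebra and module.} The lifted action on \(\mathcal H_E\) is isometric, so it extends functorially to an action on \(\mathrm{Cl}(\mathcal H_E)\) by algebra automorphisms. This uses the universal property of the Clifford algebra, applied fibrewise. I then set
\[
\mathrm{Cl}(\mathcal H_B)\simeq \mathrm{Cl}(\mathcal H_E)/G .
\]
Next I show that this agrees with the Clifford algebra built directly from \((\mathcal H_B,\bar g)\). Both are generated fibrewise by the same space with the same quadratic form, so the identification is fibrewise canonical. Its smoothness follows because both sides carry diffeologies that are final with respect to the same generating plots.

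For the module, I take \(\mathcal E_B:=\mathcal E_E/G\). Condition (3), namely \(g\cdot(c(v)s)=c(g\cdot v)(g\cdot s)\), says exactly that the Clifford multiplication \(\mathcal H_E\times_E\mathcal E_E\to\mathcal E_E\) is \(G\)-equivariant. Hence it factors through the quotients and gives a smooth map \(\mathcal H_B\times_B\mathcal E_B\to\mathcal E_B\). The Clifford relation \(c(v)^2=-\bar g(v,v)\) is inherited pointwise.

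\emph{Expected main obstacle.} The hard step will be the diffeological one, not the algebraic one. I need that the quotient of a diffeological vector pseudo-bundle by a free fibrewise-linear action is again a pseudo-bundle. In particular, the fibred product \(\mathcal H_B\times_B\mathcal H_B\) must carry the quotient diffeology of \(\mathcal H_E\times_E\mathcal H_E\), so that addition is smooth.

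To handle this, I would lift plots of \(\mathcal H_B\times_B\mathcal H_B\) locally, in two steps.
\begin{enumerate}
\item Lift each component locally to \(\mathcal H_E\), using that \(\pi\) is a subduction.
\item Correct the second lift by a plot of \(G\), so that both lifts lie over the same plot of \(E^{\mathrm{sph}}(G)\).
\end{enumerate}
The correcting plot of \(G\) exists and is smooth because \(E^{\mathrm{sph}}(G)\to B^{\mathrm{sph}}(G)\) is a principal bundle: the map \(\Theta\), which I take over here from its role in the universal principal-bundle theorem, is a diffeomorphism, so the element relating two points of a fibre depends smoothly on them. With this lifting lemma in hand, all the remaining smoothness claims reduce to the equivariance conditions (1)–(3).
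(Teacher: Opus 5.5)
Your proposal is correct and takes the same route as the paper: you form $\mathcal H_E/G$, $\mathrm{Cl}(\mathcal H_E)/G$ and $\mathcal E_E/G$ from the lifted action, descend the metric by $G$-invariance, and descend Clifford multiplication by equivariance. What you add is the diffeological detail the paper's short proof leaves implicit: freeness for the fibre identification, and the smooth division map of the principal $G$-bundle to show that the fibred products carry the quotient diffeology, so that addition and Clifford multiplication descend smoothly. One small correction is that the first lifting step uses the fact that the quotient map $\mathcal H_E\to\mathcal H_B$ is a subduction, not the fact that $\pi$ is.
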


\begin{proof}
Since the \(G\)-action lifts to
\[
\mathcal H_E,
\]
one can form the quotient pseudo-bundle
\[
\mathcal H_B:=\mathcal H_E/G
\]
over
\[
B^{\mathrm{sph}}(G).
\]

The \(G\)-invariance of the metric implies that the metric descends to
\[
\mathcal H_B.
\]

The Clifford relations are preserved by the \(G\)-action because the Clifford
action is equivariant. Therefore the quotient of
\[
\mathrm{Cl}(\mathcal H_E)
\]
is naturally identified with
\[
\mathrm{Cl}(\mathcal H_B).
\]

Similarly, the equivariance of the Clifford module implies that the quotient
\[
\mathcal E_B:=\mathcal E_E/G
\]
is a Clifford module over
\[
\mathrm{Cl}(\mathcal H_B).
\]
\end{proof}

\subsubsection{Descent of the Dirac operator}

\begin{definition}
We say that the Dirac operator
\[
D_E
\]
is \(G\)-stable if it maps \(G\)-basic sections of
\[
\mathcal E_E
\]
to \(G\)-basic sections.
\end{definition}

\begin{proposition}
Assume that:
\begin{enumerate}
\item the Clifford data are \(G\)-equivariant;
\item the connection is compatible with the \(G\)-action;
\item the Dirac operator \(D_E\) is \(G\)-stable.
\end{enumerate}

Then \(D_E\) descends to a Dirac-type operator
\[
\bar D:
\Gamma(\mathcal E_B)
\longrightarrow
\Gamma(\mathcal E_B)
\]
on
\[
B^{\mathrm{sph}}(G).
\]
\end{proposition}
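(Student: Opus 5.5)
The argument mirrors the proof given earlier for the descent of the formal adjoint \(\delta\) under \(G\)-stability. First I use the preceding proposition: since the Clifford data are \(G\)-equivariant, \(\mathcal H_E\), \(\mathrm{Cl}(\mathcal H_E)\) and \(\mathcal E_E\) descend to \(\mathcal H_B\), \(\mathrm{Cl}(\mathcal H_B)\) and \(\mathcal E_B=\mathcal E_E/G\), and the Clifford action descends to \(c_B\). Then I set up the section analogue of the identification \(\Omega^\bullet(B^{\mathrm{sph}}(G))\simeq\Omega^\bullet_{\mathrm{bas}}(E^{\mathrm{sph}}(G))\). Concretely, I will show that pull-back along the quotient map \(\mathcal E_E\to\mathcal E_B\) gives a bijection
\[
\pi^*:\Gamma(\mathcal E_B)\longrightarrow \Gamma_{\mathrm{bas}}(\mathcal E_E),
\]
where basic sections are the \(G\)-equivariant sections, \(s(g\cdot p)=g\cdot s(p)\).

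\emph{Injectivity.} Because \(\pi\) is surjective, a section of \(\mathcal E_B\) is determined by its pull-back.

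\emph{Surjectivity.} Let \(s\) be equivariant. Set \(\bar s(\pi(p)):=[s(p)]\). This value depends only on the orbit, by equivariance. To see that \(\bar s\) is smooth, I use the smoothness criterion of Section~\ref{sec:glue}, transported to the quotient by the subduction \(\pi\). A plot of \(B^{\mathrm{sph}}(G)\) locally lifts to a plot of \(E^{\mathrm{sph}}(G)\), and the projection \(\mathcal E_E\to\mathcal E_B\) is smooth for the quotient diffeology. Hence \(\bar s\) composed with a plot is locally the projection of \(s\) composed with a plot, which is smooth.

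Next I define \(\bar D\bar s\) as the unique section with \(\pi^*(\bar D\bar s)=D_E(\pi^*\bar s)\). Such a section exists because \(D_E\) is \(G\)-stable, so \(D_E(\pi^*\bar s)\) is again basic, and the bijection above applies. It is unique by injectivity. Linearity of \(\bar D\) then follows directly from linearity of \(D_E\) and of \(\pi^*\).

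It remains to show that \(\bar D\) is of Dirac type. I take as definition that \(\bar D\) is the composition of \(c_B\) with a Clifford-compatible connection on \(\mathcal E_B\).
\begin{enumerate}
\item Hypothesis (2) says the connection commutes with the \(G\)-action. Hence \(\nabla^{\mathcal E}\) sends basic sections to basic \(\mathcal H_E^*\otimes\mathcal E_E\)-valued objects, and so descends to a connection \(\bar\nabla\) on \(\mathcal E_B\), by the same uniqueness argument.
\item Clifford compatibility of \(\bar\nabla\) is inherited from that of \(\nabla^{\mathcal E}\) by pulling back.
\item Writing \(D_E=c\circ\nabla^{\mathcal E}\) and using equivariance of \(c\) together with \(c_B\circ\pi=\pi\circ c\), I get \(\pi^*(c_B\bar\nabla\bar s)=D_E\pi^*\bar s\). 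Uniqueness then gives \(\bar D=c_B\circ\bar\nabla\).
\end{enumerate}

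The main obstacle is the identification of basic sections with sections on the quotient, specifically the smoothness of \(\bar s\). This depends on \(\mathcal E_E\to\mathcal E_B\) being a subduction compatible with the plots of \(B^{\mathrm{sph}}(G)\). I would derive this from the construction of \(\mathcal E_B\) as a quotient pseudo-bundle in the preceding proposition, and if that is insufficient, state it as part of the equivariance hypothesis. Degeneracy along the strata \(x_i=0\) is handled automatically by working with \(\mathcal H_E\) rather than the full tangent pseudo-bundle.
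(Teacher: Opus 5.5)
Your proposal is correct and follows the paper's argument: you pull \(\bar s\) back to a basic section, apply \(D_E\), use \(G\)-stability to stay basic, and descend uniquely; beyond that, you prove the correspondence \(\Gamma(\mathcal E_B)\simeq\Gamma_{\mathrm{bas}}(\mathcal E_E)\) and the identification \(\bar D=c_B\circ\bar\nabla\), both of which the paper leaves implicit (its proof only concludes that \(\bar D\) is a well-defined operator). One small correction to your ``main obstacle'': smoothness of \(\bar s\) is automatic, since \(\bar s\circ\pi\) is the quotient projection composed with \(s\) and \(\pi\) is a subduction; the direction that actually needs input is smoothness of the lift \(\pi^*\bar s\), which uses that \(E^{\mathrm{sph}}(G)\to B^{\mathrm{sph}}(G)\) is a principal \(G\)-bundle with smooth division map.
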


\begin{proof}
Let
\[
\bar s\in\Gamma(\mathcal E_B)
\]
be a section of the descended Clifford module.

By descent of the module, \(\bar s\) corresponds uniquely to a \(G\)-basic
section
\[
s=\pi^*\bar s
\]
of
\[
\mathcal E_E.
\]

Since \(D_E\) is \(G\)-stable,
\[
D_Es
\]
is again a \(G\)-basic section. Hence it descends uniquely to a section
\[
\bar t\in\Gamma(\mathcal E_B)
\]
such that
\[
\pi^*\bar t=D_Es.
\]

Define
\[
\bar D\bar s:=\bar t.
\]

This is independent of choices because the pull-back correspondence between
sections of \(\mathcal E_B\) and \(G\)-basic sections of \(\mathcal E_E\) is
injective.

Thus
\[
\bar D
\]
is a well-defined operator on the quotient.
\end{proof}

\begin{remark}
The statement is purely a descent result. It does not assert that the Dirac
operator exists automatically in arbitrary diffeological or
infinite-dimensional settings. It only asserts that, once a well-defined
Dirac-type operator exists on the total space, it descends precisely under
the stated equivariance and stability assumptions.
\end{remark}
\subsection{Pullback by classifying maps}

Let
\[
f:X\longrightarrow B^{\mathrm{sph}}(G)
\]
be a smooth classifying map. Once a geometric object has descended to \(B^{\mathrm{sph}}(G)\), it can be pulled back to \(X\).

\begin{proposition}
Every descended object on \(B^{\mathrm{sph}}(G)\), such as:
\[
\bar g,\quad
\bar\omega,\quad
\bar\Delta,\quad
\bar D,
\]
induces by pullback a corresponding object on \(X\):
\[
f^*\bar g,\quad
f^*\bar\omega,\quad
f^*\bar\Delta,\quad
f^*\bar D,
\]
whenever the corresponding pullback operation is defined in the diffeological category.
\end{proposition}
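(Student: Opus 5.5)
The plan is to treat each of the four objects separately, using only the functoriality of pullback in the diffeological category. In every case we reduce to the fact that a smooth map \(f:X\to B^{\mathrm{sph}}(G)\) pulls plots of \(X\) to plots of \(B^{\mathrm{sph}}(G)\). If \(p:U\to X\) is a plot, then \(f\circ p\) is a plot of the quotient. This makes every plotwise-defined structure on \(B^{\mathrm{sph}}(G)\) transferable to \(X\).

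\emph{Forms.} For \(\bar\omega\in\Omega^k(B^{\mathrm{sph}}(G))\) we set \((f^*\bar\omega)_p:=\bar\omega_{f\circ p}\) for every plot \(p\) of \(X\). Compatibility under smooth reparametrizations \(h:V\to U\) follows from \(f\circ(p\circ h)=(f\circ p)\circ h\) and the compatibility already satisfied by \(\bar\omega\). The identity \(d f^*=f^*d\) holds plotwise, so \(f^*\) is a morphism of complexes.

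\emph{Metric.} For \(\bar g\) we use the differential \(df:TX\to TB^{\mathrm{sph}}(G)\) of the chosen tangent pseudo-bundle and define \((f^*\bar g)(v,w):=\bar g(df\,v,df\,w)\). Symmetry and smoothness follow from smoothness of \(df\). Non-degeneracy is not claimed; this is exactly where the clause ``whenever defined'' is used.

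\emph{Operators.} For \(\bar\Delta\) and \(\bar D\) the honest route is not to pull back the operator directly. We first pull back the underlying data. For \(\bar D\), these are the horizontal pseudo-bundle \(\mathcal H_B\), its metric, \(\mathrm{Cl}(\mathcal H_B)\), the module \(\mathcal E_B\) and its connection, each via the standard pullback pseudo-bundle \(f^*\mathcal E_B=X\times_{B}\mathcal E_B\) with the fibre-product diffeology. We then rebuild the operator on \(X\) by the same recipe: \(f^*\bar D:=\sum c(e_a)\nabla^{f^*\mathcal E}_{e_a}\) locally, and \(f^*\bar\Delta:=d\,\delta_{f^*\bar g}+\delta_{f^*\bar g}\,d\). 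The defining property we aim to check is naturality on pulled-back sections: \((f^*\bar D)(f^*\bar s)=f^*(\bar D\bar s)\). It follows because the connection and the Clifford action are pulled back compatibly.

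\textbf{Main obstacle.} The hard step is the operator case. A formal adjoint \(\delta\) depends on the metric, and it commutes with \(f^*\) only when \(f\) is a local isometry-type map, for example a submersion onto horizontal directions. I would therefore first try to define \(f^*\bar\Delta\) and \(f^*\bar D\) only for the pulled-back geometric data, as described above. I would then record the precise hypothesis under which the result is meaningful: \(f^*\bar g\) is non-degenerate on the relevant pseudo-bundle, and a formal adjoint exists on \(X\). This is the content of ``whenever the pullback is defined''. Under it, the naturality identity is the proposition, and the proof is a plotwise verification.
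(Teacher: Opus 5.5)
Your treatment of forms (plotwise pullback), of \(\bar g\) via \(df\), and of the pseudo-bundle data via fibre products is essentially the paper's argument. The paper only says that forms and tensors pull back in the ordinary way, and that for pseudo-bundles and operators one pulls back the underlying pseudo-bundle and then the connection or operator. That existence statement, hedged by ``whenever the pullback is defined'', is all the proposition asserts. You are also more careful than the paper in not pulling back \(\bar\Delta\) and \(\bar D\) literally, and instead rebuilding them from pulled-back data.

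The genuine problem is the step you make the centrepiece. The identity \((f^*\bar D)(f^*\bar s)=f^*(\bar D\bar s)\) does not follow from compatibility of the pulled-back connection and Clifford action, and it fails under the hypotheses you propose. The pulled-back connection on \(f^*\mathcal E_B\) only differentiates along \(TX\). Your local formula therefore has to read \(\sum_a c(df\,e_a)\nabla^{f^*\mathcal E}_{e_a}\), with \((e_a)\) orthonormal for \(f^*\bar g\) on \(TX\); this is where non-degeneracy is needed. By contrast, \(\bar D\) involves derivatives in all the directions of \(\mathcal H_B\).

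A simple example already breaks the identity. Take the isometric embedding \(f(t)=(t,0)\) of \(\mathbb R\) in \(\mathbb R^2\), with \(\bar D=\sigma_1\partial_1+\sigma_2\partial_2\) on \(\mathbb R^2\times\mathbb C^2\). Your recipe gives \(f^*\bar D=\sigma_1\partial_t\), hence \((f^*\bar D)(f^*\bar s)-f^*(\bar D\bar s)=-\sigma_2(\partial_2\bar s)(t,0)\), which is nonzero in general. The same failure occurs for \(\bar\Delta\) rebuilt from \(\delta_{f^*\bar g}\).

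Naturality requires \(df\) to be, fibrewise, an isometric isomorphism onto the directions entering \(\bar D\), i.e.\ essentially that \(f\) be a local isometry. Your example of a ``submersion onto horizontal directions'' is not of this type. It even contradicts your non-degeneracy hypothesis, since \(f^*\bar g\) vanishes on \(\ker df\). Moreover, even with an auxiliary vertical metric, \(\delta\) commutes with pullback along a Riemannian submersion only when the fibres are minimal.

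So either drop the naturality identity, which the statement does not need, or restrict it to local isometries. As written, the sentence ``under it, the naturality identity is the proposition'' is false.
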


\begin{proof}
For tensors and forms, this is the ordinary pullback operation. For pseudo-bundles and operators, one pulls back the underlying pseudo-bundle and then the associated connection or operator. The construction is functorial because all objects were first defined on the quotient base \(B^{\mathrm{sph}}(G)\).
\end{proof}

\begin{remark}
This section explains the universal nature of the constructions above. The objects that are \(G\)-basic descend to the classifying base; once descended, they generate geometric structures on any diffeological space equipped with a classifying map.
\end{remark}

\section{\(G\)-stability of Hodge and Dirac structures}
\label{sec:G-stability-Hodge-Dirac}

We now analyze the behavior of Hodge-type and Dirac-type structures under the
action of \(G\). Throughout this section, we assume that we are working in a
framework where the relevant Hodge and Dirac structures are well-defined.

The guiding principle is the following: a structure on
\[
E^{\mathrm{sph}}(G)
\]
descends to
\[
B^{\mathrm{sph}}(G)=E^{\mathrm{sph}}(G)/G
\]
only if it is compatible with the quotient projection. When this compatibility
fails, the structure remains defined on the total space, and the failure of
descent is measured by a defect.

\subsection{Basic forms and the quotient complex}

Let
\[
\pi:E^{\mathrm{sph}}(G)\to B^{\mathrm{sph}}(G)
\]
be the quotient projection, and let
\[
\mathcal V:=\ker(d\pi)
\]
be the vertical pseudo-bundle.

\begin{definition}
A differential form
\[
\omega\in\Omega^k(E^{\mathrm{sph}}(G))
\]
is called \(G\)-basic if:
\begin{enumerate}
\item it is \(G\)-invariant:
\[
R_g^*\omega=\omega
\qquad
\text{for all }g\in G;
\]
\item it is horizontal:
\[
\omega(v_1,\ldots,v_k)=0
\]
whenever one of the vectors \(v_i\) belongs to \(\mathcal V\).
\end{enumerate}
\end{definition}

\begin{proposition}
There is a natural identification
\[
\Omega^\bullet(B^{\mathrm{sph}}(G))
\simeq
\Omega^\bullet_{\mathrm{bas}}(E^{\mathrm{sph}}(G)).
\]
\end{proposition}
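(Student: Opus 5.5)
The identification will be the pull-back map
\[
\pi^*:\Omega^\bullet(B^{\mathrm{sph}}(G))\longrightarrow\Omega^\bullet(E^{\mathrm{sph}}(G)).
\]
I will show that it lands in \(\Omega^\bullet_{\mathrm{bas}}(E^{\mathrm{sph}}(G))\) and is a bijection onto it. It is also a morphism of complexes, by the earlier proposition that \(d\) preserves basic forms and commutes with descent. The argument has three steps: (i) \(\pi^*\bar\omega\) is basic; (ii) \(\pi^*\) is injective; (iii) every basic form is a pull-back. Steps (ii) and (iii) are essentially the descent theorem for basic forms proved in Section~\ref{sec:descent}, which I will invoke directly.

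For (i), let \(\bar\omega\) be a form on the quotient. Since \(\pi\circ R_g=\pi\) for all \(g\in G\), functoriality of pull-back gives
\[
R_g^*\pi^*\bar\omega=(\pi\circ R_g)^*\bar\omega=\pi^*\bar\omega,
\]
which is \(G\)-invariance. For horizontality, plotwise \((\pi^*\bar\omega)_p(v_1,\dots,v_k)=\bar\omega_{\pi(p)}(d\pi\, v_1,\dots,d\pi\, v_k)\). If some \(v_i\in\mathcal V_p=\ker(d\pi_p)\), the corresponding argument is zero and the value vanishes.

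For (ii), \(\pi\) is a subduction, since the quotient carries the quotient diffeology. A form on a diffeological space is a compatible family of forms on its plots, and every plot of \(B^{\mathrm{sph}}(G)\) locally lifts through \(\pi\). Hence \(\bar\omega\) is determined by the values \(\pi^*\bar\omega\) on lifted plots, and \(\pi^*\) is injective. For (iii), surjectivity onto basic forms is exactly the descent proposition already established.

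The main obstacle is (iii), in the form that proposition left implicit. The standard criterion for descent along a subduction is this: \(\omega\) descends iff \(p_1^*\omega=p_2^*\omega\) for every pair of plots \(p_1,p_2\) of \(E^{\mathrm{sph}}(G)\) with \(\pi\circ p_1=\pi\circ p_2\). This is not pointwise orbit-by-orbit, because locally \(p_2=R_{\gamma}\circ p_1\) for a smooth plot \(\gamma:U\to G\), not for a constant \(g\). I plan to handle it in the following order.

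1. Use the local charts \(S_I\times G^I\) of Section~\ref{sec:glue} to reduce to this situation, shrinking \(U\) so that \(\gamma\) exists smoothly. This uses the principal-bundle property: the division map \((p,q)\mapsto h\) is smooth.
2. Differentiate the composite map \(u\mapsto R_{\gamma(u)}(p_1(u))\). Its differential splits into a term \(dR_{\gamma(u)}\) applied to \(dp_1\), plus a vertical term coming from \(d\gamma\), which is tangent to the \(G\)-orbits.
3. Kill the vertical term by horizontality, and handle the remaining term by invariance under constant \(g\). This gives \(p_2^*\omega=p_1^*\omega\).

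This argument requires the tangent pseudo-bundle used to define \(\mathcal V\) to contain those orbit directions. I would state that as the standing compatibility assumption, consistent with the earlier remark identifying \(\mathcal V\) with directions generated by the \(G\)-orbits.
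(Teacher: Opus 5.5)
Your proposal is correct and follows the same route as the paper, which also identifies the two sides through \(\pi^*\): pull-backs are \(G\)-invariant and horizontal, and basic forms descend uniquely along the subduction \(\pi\) by the descent theorem. Your descent step is more careful than the paper's pointwise argument with a constant \(g\). You use the plotwise criterion \(p_1^*\omega=p_2^*\omega\) with a locally smooth \(\gamma=g'_i g_i^{-1}\), taken at an index with \(x_i\neq0\); evaluating on the two-variable plot \((u,u')\mapsto\gamma(u)\cdot p_1(u')\) and restricting to the diagonal is what makes your splitting into a vertical part and an \(R_{\gamma(u)}\)-part rigorous.
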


\begin{proof}
This is the descent theorem for differential forms along the quotient
subduction
\[
\pi:E^{\mathrm{sph}}(G)\to B^{\mathrm{sph}}(G).
\]
A form on the quotient pulls back to a \(G\)-invariant horizontal form on the
total space. Conversely, a \(G\)-basic form depends only on tangent directions
modulo \(\ker(d\pi)\) and is constant along \(G\)-orbits, hence descends
uniquely to the quotient.
\end{proof}

\subsection{Stability of the Hodge complex}

Assume that the horizontal de Rham complex and the codifferential
\[
\delta:
\Omega^{k+1}(E^{\mathrm{sph}}(G))
\longrightarrow
\Omega^k(E^{\mathrm{sph}}(G))
\]
are defined.

\begin{definition}
The Hodge-type structure is called \(G\)-stable if
\[
\delta\bigl(
\Omega^{k+1}_{\mathrm{bas}}(E^{\mathrm{sph}}(G))
\bigr)
\subset
\Omega^k_{\mathrm{bas}}(E^{\mathrm{sph}}(G)).
\]
\end{definition}

\begin{proposition}
If the Hodge-type structure is \(G\)-stable, then \(\delta\) descends to an
operator
\[
\bar\delta:
\Omega^{k+1}(B^{\mathrm{sph}}(G))
\to
\Omega^k(B^{\mathrm{sph}}(G)).
\]
Consequently,
\[
\bar\Delta=d\bar\delta+\bar\delta d
\]
defines a Hodge-type Laplacian on the quotient.
\end{proposition}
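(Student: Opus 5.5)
The plan is to reduce the statement to the basic-form identification
\[
\pi^*:\Omega^\bullet(B^{\mathrm{sph}}(G))\xrightarrow{\ \sim\ }\Omega^\bullet_{\mathrm{bas}}(E^{\mathrm{sph}}(G)),
\]
which was proved just above as a consequence of the descent theorem along the subduction \(\pi\). Given \(\bar\omega\in\Omega^{k+1}(B^{\mathrm{sph}}(G))\), set \(\omega:=\pi^*\bar\omega\). This form is \(G\)-basic. By the \(G\)-stability hypothesis, \(\delta\omega\in\Omega^k_{\mathrm{bas}}(E^{\mathrm{sph}}(G))\). The descent theorem then gives a unique \(\bar\eta\in\Omega^k(B^{\mathrm{sph}}(G))\) with \(\pi^*\bar\eta=\delta\omega\), and we define \(\bar\delta\bar\omega:=\bar\eta\). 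In other words,
\[
\bar\delta=(\pi^*)^{-1}\circ\delta\circ\pi^* .
\]
Well-definedness follows from injectivity of \(\pi^*\) on forms, since \(\pi\) is a subduction and a form on the quotient is determined by its pull-back. Linearity of \(\bar\delta\) is inherited from linearity of \(\delta\) and of \(\pi^*\).

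For the Laplacian, we use the earlier proposition on descent of differential forms: \(d\) preserves basic forms and satisfies
\[
d\bar\omega=\overline{d\omega}, \qquad\text{that is,}\qquad \pi^*\circ d=d\circ\pi^*.
\]
Hence both \(d\bar\delta\) and \(\bar\delta d\) are well-defined maps \(\Omega^k(B^{\mathrm{sph}}(G))\to\Omega^k(B^{\mathrm{sph}}(G))\). Moreover, \(\bar\Delta=d\bar\delta+\bar\delta d\) satisfies
\[
\pi^*\bar\Delta=\Delta\,\pi^*\quad\text{on basic forms},
\]
where \(\Delta=d\delta+\delta d\) is the total-space Hodge-type operator. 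This intertwining relation is what justifies calling \(\bar\Delta\) a Hodge-type Laplacian on the quotient. If an adjointness statement is wanted, I would check that the metric pairing on basic horizontal forms is \(G\)-invariant and induces the descended pairing. Then \(\bar\delta\) is formally adjoint to \(d\) for that pairing, by restricting the adjunction identity on \(E^{\mathrm{sph}}(G)\) to basic forms.

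The main obstacle is not the algebra. It is making sure that the descended \(\bar\delta\) is compatible with the horizontal calculus assumed on the quotient in the standing hypotheses, and in particular that the adjunction survives restriction to basic forms. I would handle this by stating explicitly that the pairing on \(B^{\mathrm{sph}}(G)\) is \emph{defined} as the pull-back pairing. With that definition, formal adjointness on the quotient is a tautology under \(G\)-stability. Everything else is the mechanical transport of the operator identity through the isomorphism \(\pi^*\).
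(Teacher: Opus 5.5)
Your proposal is correct and is essentially the paper's own argument: you pull \(\bar\omega\) back to a basic form and use \(G\)-stability to keep \(\delta\pi^*\bar\omega\) basic. You then descend it through \(\Omega^\bullet(B^{\mathrm{sph}}(G))\simeq\Omega^\bullet_{\mathrm{bas}}(E^{\mathrm{sph}}(G))\), get well-definedness from injectivity of \(\pi^*\) for the subduction \(\pi\), and handle \(d\) via its compatibility with pull-back. The intertwining identity \(\pi^*\bar\Delta=\Delta\,\pi^*\) and the adjointness discussion are extras the paper does not state; the adjointness is only tautological once the quotient pairing is defined as the pulled-back one, and the proposition does not need it.
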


\begin{proof}
Let
\[
\bar\omega\in \Omega^{k+1}(B^{\mathrm{sph}}(G)).
\]
Then
\[
\omega:=\pi^*\bar\omega
\]
is \(G\)-basic. Since the Hodge structure is \(G\)-stable,
\[
\delta\omega
\]
is again \(G\)-basic. Therefore there exists a unique form
\[
\bar\eta\in\Omega^k(B^{\mathrm{sph}}(G))
\]
such that
\[
\pi^*\bar\eta=\delta\omega.
\]
Define
\[
\bar\delta\bar\omega:=\bar\eta.
\]
This is well-defined because pull-back by a quotient subduction is injective
on descended forms. Since \(d\) preserves basic forms, the operator
\[
\bar\Delta=d\bar\delta+\bar\delta d
\]
is well-defined on the quotient.
\end{proof}

\begin{remark}
The assertion is conditional. It does not claim that a codifferential exists in
full generality; it only describes its descent once it has been constructed.
\end{remark}

\subsection{The Hodge descent defect}

If \(\delta\) does not preserve basic forms, one obtains a natural obstruction.

\begin{definition}
The \emph{Hodge descent defect} is the map
\[
\mathcal D_{\mathrm{Hodge}}
:
\Omega^{k+1}_{\mathrm{bas}}(E^{\mathrm{sph}}(G))
\longrightarrow
\Omega^k(E^{\mathrm{sph}}(G))/
\Omega^k_{\mathrm{bas}}(E^{\mathrm{sph}}(G))
\]
defined by
\[
\mathcal D_{\mathrm{Hodge}}(\omega)
=
[\delta\omega].
\]
\end{definition}

\begin{proposition}
The Hodge-type structure descends to the quotient if and only if
\[
\mathcal D_{\mathrm{Hodge}}=0.
\]
\end{proposition}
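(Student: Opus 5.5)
The plan is to read ``the Hodge-type structure descends to the quotient'' in the sense fixed by the preceding proposition. There must exist an operator \(\bar\delta:\Omega^{k+1}(B^{\mathrm{sph}}(G))\to\Omega^k(B^{\mathrm{sph}}(G))\), for every \(k\), such that
\[
\pi^*\circ\bar\delta=\delta\circ\pi^*
\qquad\text{on }\Omega^{k+1}(B^{\mathrm{sph}}(G)).
\]
In other words, \(\delta\) restricted to basic forms must be intertwined with an operator on the quotient through the identification \(\Omega^\bullet(B^{\mathrm{sph}}(G))\simeq\Omega^\bullet_{\mathrm{bas}}(E^{\mathrm{sph}}(G))\) from the descent theorem for basic forms. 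With this reading, the statement reduces to showing that descent in this sense is equivalent to \(G\)-stability. The latter is literally the vanishing of \(\mathcal D_{\mathrm{Hodge}}\), since \([\delta\omega]=0\) in \(\Omega^k/\Omega^k_{\mathrm{bas}}\) means exactly \(\delta\omega\in\Omega^k_{\mathrm{bas}}\).

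For the direction ``\(\mathcal D_{\mathrm{Hodge}}=0\Rightarrow\) descent'', I will note that the vanishing of the defect for every basic \(\omega\) is the inclusion \(\delta(\Omega^{k+1}_{\mathrm{bas}})\subset\Omega^k_{\mathrm{bas}}\), i.e.\ \(G\)-stability. I then invoke the proposition just proved in this section. It yields \(\bar\delta\) with \(\pi^*\bar\delta\bar\omega=\delta\pi^*\bar\omega\), together with the Laplacian \(\bar\Delta=d\bar\delta+\bar\delta d\).

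For the converse, I assume \(\bar\delta\) exists with \(\pi^*\bar\delta=\delta\pi^*\) and take any \(\omega\in\Omega^{k+1}_{\mathrm{bas}}\). By the identification above, \(\omega=\pi^*\bar\omega\) for a unique \(\bar\omega\). Then \(\delta\omega=\pi^*(\bar\delta\bar\omega)\) is a pull-back from the quotient. It is therefore \(G\)-invariant, since \(\pi\circ R_g=\pi\). It is also horizontal, since \(d\pi\) kills \(\mathcal V\) and the pulled-back form is evaluated on \(d\pi(v_i)\). Hence \(\delta\omega\) is basic, and \(\mathcal D_{\mathrm{Hodge}}(\omega)=0\).

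The main obstacle is not computational but definitional. The phrase ``descends'' has to be pinned down so that the equivalence is not vacuous. In particular, I must make sure that the weaker-looking requirement, namely that some quotient operator exists compatible with \(\delta\) on basic forms, already forces the intertwining identity above. Injectivity of \(\pi^*\) on forms, because \(\pi\) is a subduction, is what makes \(\bar\delta\) unique and the two notions coincide. I will state this explicitly rather than leave it implicit, and remark that the argument is degree by degree, so the defect vanishes in degree \(k\) iff \(\delta\) descends in that degree.
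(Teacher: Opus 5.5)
Your proposal is correct and follows the paper's route: descent of \(\delta\) is identified with \(G\)-stability, and the failure of \(G\)-stability is by definition the non-vanishing of \(\mathcal D_{\mathrm{Hodge}}\). The only addition is that you prove the converse direction, which the paper simply asserts: a pull-back \(\pi^*(\bar\delta\bar\omega)\) is basic, and every basic form is a pull-back via the identification \(\Omega^\bullet(B^{\mathrm{sph}}(G))\simeq\Omega^\bullet_{\mathrm{bas}}(E^{\mathrm{sph}}(G))\).
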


\begin{proof}
The operator \(\delta\) descends if and only if it sends every basic form to a
basic form. This condition is equivalent to saying that, for every basic form
\(\omega\), the class of \(\delta\omega\) in
\[
\Omega^k(E^{\mathrm{sph}}(G))/
\Omega^k_{\mathrm{bas}}(E^{\mathrm{sph}}(G))
\]
vanishes. This is exactly
\[
\mathcal D_{\mathrm{Hodge}}=0.
\]
\end{proof}

\subsection{\(G\)-equivariant Clifford structures}

Let
\[
\mathcal H_E\to E^{\mathrm{sph}}(G)
\]
be the effective horizontal pseudo-bundle, and assume that it carries a metric,
a Clifford algebra pseudo-bundle
\[
\mathrm{Cl}(\mathcal H_E),
\]
and a Clifford module
\[
\mathcal E_E.
\]

\begin{definition}
The Clifford data are called \(G\)-equivariant if:
\begin{enumerate}
\item the \(G\)-action lifts to \(\mathcal H_E\);
\item the metric on \(\mathcal H_E\) is \(G\)-invariant;
\item the \(G\)-action lifts to \(\mathcal E_E\);
\item Clifford multiplication satisfies
\[
g\cdot(c(v)s)
=
c(g\cdot v)(g\cdot s)
\]
for all admissible \(g\in G\), \(v\in\mathcal H_E\), and
\(s\in\mathcal E_E\).
\end{enumerate}
\end{definition}

\begin{proposition}
If the Clifford data are \(G\)-equivariant, then the horizontal pseudo-bundle,
the Clifford algebra, and the Clifford module descend to the quotient.
\end{proposition}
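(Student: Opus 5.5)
The plan is to follow the earlier proof of the same statement for the descent of Clifford data. That statement is the proposition in the section on descent of \(G\)-invariant geometric structures; the present one differs only in that the lift of the \(G\)-action to \(\mathcal E_E\) is now an explicit hypothesis. The whole argument rests on the fact that \(\pi:E^{\mathrm{sph}}(G)\to B^{\mathrm{sph}}(G)\) is a subduction. We form quotient pseudo-bundles by the lifted actions and show that the fibrewise structures are constant along \(G\)-orbits, hence factor through the quotient. We carry this out in three steps: first the horizontal pseudo-bundle with its metric, then the Clifford algebra, then the module.

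\textbf{Step 1.} Using hypothesis (1), set \(\mathcal H_B:=\mathcal H_E/G\), with the quotient diffeology. The projection \(\mathcal H_E\to E^{\mathrm{sph}}(G)\) is equivariant, so it induces a smooth map \(\mathcal H_B\to B^{\mathrm{sph}}(G)\). The action of each \(g\) is fibrewise linear, sending \((\mathcal H_E)_p\) to \((\mathcal H_E)_{g\cdot p}\). Freeness of the \(G\)-action on \(E^{\mathrm{sph}}(G)\) then identifies each fibre of \(\mathcal H_B\) over \(\pi(p)\) with \((\mathcal H_E)_p\), and different choices of \(p\) in the orbit are related by these linear isomorphisms. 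Consequently the vector-space operations descend. Their smoothness follows because plots of \(\mathcal H_B\times_{B}\mathcal H_B\) lift locally to plots of \(\mathcal H_E\times_E\mathcal H_E\), after translating one lift by a smooth \(G\)-valued correction supplied by the principal bundle structure. Hypothesis (2) says the metric is an orbit-invariant function on \(\mathcal H_E\times_E\mathcal H_E\). We define \(\bar\mu([v],[w]):=\mu(v,w)\); it is well defined by invariance and smooth by the smoothness criterion for maps out of quotients.

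\textbf{Step 2.} The action on \(\mathcal H_E\) is isometric, so it extends functorially to algebra automorphisms \(\mathrm{Cl}((\mathcal H_E)_p)\to\mathrm{Cl}((\mathcal H_E)_{g\cdot p})\). This gives a lifted action on \(\mathrm{Cl}(\mathcal H_E)\). We then argue that \(\mathrm{Cl}(\mathcal H_E)/G\cong\mathrm{Cl}(\mathcal H_B)\): fibrewise this is the identification from Step 1, and both pseudo-bundles carry the diffeology generated by the tensor and quotient constructions from \(\mathcal H\), which commute with taking the orbit quotient.

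\textbf{Step 3.} Using hypothesis (3), set \(\mathcal E_B:=\mathcal E_E/G\), whose fibres are identified exactly as in Step 1. We define \(\bar c([v])[s]:=[c(v)s]\) for \(v\) and \(s\) over the same point. Hypothesis (4) is exactly what makes this independent of the simultaneous choice of representatives: \([c(g v)(g s)]=[g(c(v)s)]=[c(v)s]\). The Clifford relation \(\bar c([v])^2=-\bar\mu([v],[v])\) holds because it holds upstairs and both sides are orbit-invariant.

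The main obstacle is the smoothness of the descended operations, as opposed to their mere well-definedness. This requires that a plot of \(\mathcal H_B\times_B\mathcal E_B\) lift locally to a plot of \(\mathcal H_E\times_E\mathcal E_E\) with both components over the same moving point. I would handle this with the principal-bundle property from the classifying property theorem: the division map \((p,q)\mapsto h\) is smooth. It converts a pair of independent local lifts into a coherent pair by acting with a smooth \(G\)-valued plot.
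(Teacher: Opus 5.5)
Your proposal is correct and follows the same route as the paper's proof. You form \(\mathcal H_B=\mathcal H_E/G\) and \(\mathcal E_B=\mathcal E_E/G\), descend the metric by \(G\)-invariance, and descend the Clifford relation and Clifford multiplication by equivariance, while supplying the fibre identifications and smoothness checks that the paper leaves implicit.

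One ingredient is worth tightening: the smoothness of the division map. The classifying theorem is stated for \(H_\sigma\) and only asserts this smoothness. For the \(G\)-action it follows directly: take a plot \((p,q)\) of \(E^{\mathrm{sph}}(G)\times_{B^{\mathrm{sph}}(G)}E^{\mathrm{sph}}(G)\) with local lifts \((x,g)\) and \((x,g')\), and an index \(i_0\) with \(x_{i_0}(u_0)\neq0\). Then \(h(u)=g'_{i_0}(u)\,g_{i_0}(u)^{-1}\) near \(u_0\).
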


\begin{proof}
Since the action of \(G\) lifts to \(\mathcal H_E\), the quotient
\[
\mathcal H_B:=\mathcal H_E/G
\]
is a pseudo-bundle over
\[
B^{\mathrm{sph}}(G).
\]
The metric descends because it is \(G\)-invariant.

The Clifford relation
\[
c(v)c(w)+c(w)c(v)=-2\bar g(v,w)
\]
is preserved by the \(G\)-action. Therefore the quotient Clifford algebra is
naturally identified with
\[
\mathrm{Cl}(\mathcal H_B).
\]
Finally, the equivariant Clifford module descends to a Clifford module
\[
\mathcal E_B:=\mathcal E_E/G
\]
over the quotient.
\end{proof}

\subsection{\(G\)-stability of the Dirac operator}

Assume that a Dirac-type operator
\[
D_E:\Gamma(\mathcal E_E)\to \Gamma(\mathcal E_E)
\]
has been constructed from the Clifford data and a compatible connection.

\begin{definition}
The operator \(D_E\) is called \(G\)-equivariant if
\[
g^*(D_Es)=D_E(g^*s)
\]
for every admissible \(g\in G\) and every section \(s\).
\end{definition}

\begin{definition}
The operator \(D_E\) is called \(G\)-stable if it maps \(G\)-basic sections to
\(G\)-basic sections.
\end{definition}

\begin{proposition}
If \(D_E\) is \(G\)-stable, then it descends to an operator
\[
\bar D:\Gamma(\mathcal E_B)\to\Gamma(\mathcal E_B).
\]
\end{proposition}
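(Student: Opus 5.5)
The plan is to mirror the earlier descent arguments for \(\bar\delta\) and for the Dirac operator in the previous section, using a correspondence between sections of \(\mathcal E_B\) and \(G\)-basic sections of \(\mathcal E_E\) in place of the basic-form identification \(\Omega^\bullet(B^{\mathrm{sph}}(G))\simeq\Omega^\bullet_{\mathrm{bas}}(E^{\mathrm{sph}}(G))\). Concretely, I first fix what a \(G\)-basic section of \(\mathcal E_E\) means. It is a section \(s\) that is \(G\)-invariant for the lifted action, \(g\cdot s(p)=s(g\cdot p)\). Here \(\mathcal E_B=\mathcal E_E/G\) is the descended Clifford module given by the preceding proposition, which applies because \(G\)-stability is only meaningful once the Clifford data are \(G\)-equivariant.

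The first step is the correspondence itself. Given a smooth section \(\bar s\) of \(\mathcal E_B\), I define its pull-back \(\pi^*\bar s\) by choosing, at each \(p\), the unique element of the fibre \((\mathcal E_E)_p\) lying in the orbit \(\bar s(\pi(p))\). Uniqueness uses freeness of the \(G\)-action on \(E^{\mathrm{sph}}(G)\), so the stabiliser of \(p\) is trivial and the orbit meets \((\mathcal E_E)_p\) exactly once. Invariance of \(\pi^*\bar s\) is then immediate from the definition. Conversely, a basic section \(s\) induces \(\bar s(\pi(p)):=[s(p)]\), which is well defined by invariance. Both assignments are smooth because \(\pi\) and \(\mathcal E_E\to\mathcal E_B\) are subductions: plots of \(B^{\mathrm{sph}}(G)\) lift locally to plots of \(E^{\mathrm{sph}}(G)\), and the quotient diffeology on \(\mathcal E_B\) is final. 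The two assignments are mutually inverse, and in particular \(\bar s\mapsto\pi^*\bar s\) is injective.

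The second step defines the operator. For \(\bar s\in\Gamma(\mathcal E_B)\), \(G\)-stability makes \(D_E(\pi^*\bar s)\) basic, so it equals \(\pi^*\bar t\) for a unique \(\bar t\), and I set \(\bar D\bar s:=\bar t\). Linearity follows from linearity of \(D_E\) and of \(\pi^*\), and \(\pi^*(\bar D\bar s)=D_E(\pi^*\bar s)\) holds by construction. Finally, I would remark that \(\bar D\) is of Dirac type in the sense that it is built from the descended Clifford multiplication on \(\mathcal H_B\) composed with the descended connection, whenever the latter is available. This part is not needed for well-definedness.

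I expect the main obstacle to be the smoothness half of the correspondence, specifically showing that \(\pi^*\bar s\) is smooth. A plot of \(E^{\mathrm{sph}}(G)\) projects to a plot of \(B^{\mathrm{sph}}(G)\), and \(\bar s\) composed with it lifts locally to a plot of \(\mathcal E_E\). However, that lift lies over \(g(u)\cdot p(u)\) for some a priori non-smooth function \(g\). The fix I would try first is the principal-bundle property established earlier for \(E^{\mathrm{sph}}(G)\to E^{\mathrm{sph}}(G)/G\): the division map \((p,q)\mapsto h\) with \(q=p\cdot h\) is smooth. This makes \(g(u)\) smooth, and translating the lift back by \(g(u)^{-1}\) then uses smoothness of the lifted action on \(\mathcal E_E\).
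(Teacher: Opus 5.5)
Your proposal is correct and follows the same route as the paper. Both pull \(\bar s\) back to a basic section, apply \(D_E\), use \(G\)-stability to stay basic, and descend uniquely to define \(\bar D\bar s\). The paper simply asserts the correspondence between \(\Gamma(\mathcal E_B)\) and basic sections of \(\mathcal E_E\); your freeness and division-map argument supplies the justification, including smoothness, that the paper takes for granted. The paper proves the principal-bundle property only for the \(H_\sigma\)- and \(\mathbb Z_2\)-quotients, not for \(G\). Smoothness of \((p,q)\mapsto h\) for \(G\) is nonetheless immediate: read \(h\) off a single active label of local lifts, \(h=k_ig_i^{-1}\), on the open set where \(x_i\neq0\).
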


\begin{proof}
A section
\[
\bar s\in\Gamma(\mathcal E_B)
\]
corresponds to a \(G\)-basic section
\[
s=\pi^*\bar s
\]
of \(\mathcal E_E\). Since \(D_E\) is \(G\)-stable,
\[
D_Es
\]
is again basic. Therefore it descends uniquely to a section
\[
\bar t\in\Gamma(\mathcal E_B).
\]
Define
\[
\bar D\bar s:=\bar t.
\]
This gives a well-defined operator on the quotient.
\end{proof}

\subsection{The Dirac equivariance defect}

If \(D_E\) is not \(G\)-equivariant, the obstruction is measured by its failure
to commute with the \(G\)-action.

\begin{definition}
For \(g\in G\), define the \emph{Dirac equivariance defect}
\[
\mathcal D_D(g)
:=
g^*D_E-D_Eg^*.
\]
Thus \(D_E\) is \(G\)-equivariant if and only if
\[
\mathcal D_D(g)=0
\]
for every \(g\in G\).
\end{definition}

\begin{remark}
If \(G\) is a regular diffeological Lie group and the action is generated by
its Lie algebra, one may also define an infinitesimal defect
\[
\mathcal D_D(\xi)
=
[\mathcal L_{\xi^\#},D_E],
\qquad
\xi\in\mathfrak g.
\]
This infinitesimal expression is used only under the additional assumption
that fundamental vector fields and their Lie derivatives are well-defined.
\end{remark}

\subsection{Pullback to a classified principal bundle}

Let
\[
f:X\to B^{\mathrm{sph}}(G)
\]
be a classifying map, and let
\[
P=f^*E^{\mathrm{sph}}(G)
\]
be the associated principal \(G\)-bundle.

Even when a geometric object on \(E^{\mathrm{sph}}(G)\) does not descend to
\(B^{\mathrm{sph}}(G)\), it pulls back to the total space \(P\).

\begin{proposition}
Let \(\mathcal O\) be a geometric object on \(E^{\mathrm{sph}}(G)\) for which
pullback is defined. Then
\[
\widetilde{\mathcal O}:=\tilde f^*\mathcal O
\]
is a well-defined object on \(P\), where
\[
\tilde f:P\to E^{\mathrm{sph}}(G)
\]
is the canonical projection to the second factor.
\end{proposition}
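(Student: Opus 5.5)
The proof rests on two facts: \(\tilde f\) is a smooth map of diffeological spaces, and every geometric object for which "pullback is defined" is by definition contravariantly functorial along smooth maps. I would first make \(P\) concrete as the fibre product
\[
P=f^*E^{\mathrm{sph}}(G)=X\times_{B^{\mathrm{sph}}(G)}E^{\mathrm{sph}}(G)
=\{(x,e)\mid f(x)=\pi(e)\}\subset X\times E^{\mathrm{sph}}(G),
\]
endowed with the subset diffeology of the product diffeology. The map \(\tilde f\) is then the restriction to \(P\) of the second projection \(X\times E^{\mathrm{sph}}(G)\to E^{\mathrm{sph}}(G)\). Product projections are smooth, and restrictions of smooth maps to subsets carrying the subset diffeology are smooth. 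Hence \(\tilde f\) is smooth. It is also \(G\)-equivariant, since \(G\) acts on \(P\) through the second factor, and \(\pi\circ\tilde f=f\circ\mathrm{pr}_X\).

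Next I would check well-definedness separately for each class of objects, following the list used in the descent sections.

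\textbf{Differential forms.} For \(\omega\in\Omega^k(E^{\mathrm{sph}}(G))\), set \((\tilde f^*\omega)_p:=\omega_{\tilde f\circ p}\) for each plot \(p:U\to P\). This is legitimate because \(\tilde f\circ p\) is a plot of \(E^{\mathrm{sph}}(G)\). Compatibility with smooth reparametrizations \(h:V\to U\) follows from \(\tilde f\circ(p\circ h)=(\tilde f\circ p)\circ h\) together with the compatibility condition satisfied by \(\omega\). Via the plotwise formula, this extends to \(\mathfrak g\)-valued forms, hence to \(\Theta\) and \(F_\Theta\).

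\textbf{Tensors such as the barycentric metric, and pseudo-bundles such as \(\mathcal H_E\), \(\mathrm{Cl}(\mathcal H_E)\), \(\mathcal E_E\).} The pullback of a pseudo-bundle is again the fibre product along \(\tilde f\), with the subset diffeology. The fibrewise linear structure and the fibrewise operations (metric, Clifford multiplication) transfer because they are defined pointwise. Smoothness of these operations follows from the smoothness of the structure maps on \(E^{\mathrm{sph}}(G)\), composed with \(\tilde f\).

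\textbf{Operators such as \(\delta\) and \(D_E\).} I would not claim that arbitrary operators pull back. The hypothesis "for which pullback is defined" is exactly the place where one assumes a pulled-back connection \(\tilde f^*\nabla^{\mathcal E}\), built as in the previous step, from which the pulled-back operator is constructed.

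The last step is independence of choices: \(\widetilde{\mathcal O}\) must depend only on \(\mathcal O\) and \(\tilde f\). If \(f\) is replaced by a map giving an isomorphic bundle \(\phi:P'\to P\), then \(\tilde f'=\tilde f\circ\phi\), and functoriality gives \((\tilde f\circ\phi)^*=\phi^*\tilde f^*\). Thus the objects correspond under the bundle isomorphism. Equivariance of \(\tilde f\) shows that \(G\)-invariant or equivariant objects, such as \(F_\Theta\) with \(R_g^*F_\Theta=\operatorname{Ad}_{g^{-1}}F_\Theta\), keep the same equivariance properties on \(P\).

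The main obstacle is the operator case. Pulling back Hodge- or Dirac-type operators is not a purely formal operation in the diffeological setting, because it requires the horizontal pseudo-bundle and the metric to remain non-degenerate after pullback. My approach is to make this an explicit part of the phrase "for which pullback is defined", and to prove only the formal statement for forms, tensors and pseudo-bundles in detail.
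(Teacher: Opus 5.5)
Your proposal is correct and follows the paper's proof: you realize \(P\) as the fibre product \(\{(x,e)\mid f(x)=\pi(e)\}\subset X\times E^{\mathrm{sph}}(G)\), show that \(\tilde f\) is smooth as the restriction of the second projection (the paper only asserts this), and conclude by functoriality of pullback, with your case-by-case treatment of forms, tensors and pseudo-bundles spelling out what the paper leaves to the phrase ``pullback-compatible''. One caution concerns your extra ``independence of choices'' paragraph, which the statement does not need and which is not correct as phrased: for a different classifying map \(f'\neq f\) one has \(\pi\circ\tilde f'=f'\circ\mathrm{pr}_X\neq f\circ\mathrm{pr}_X=\pi\circ\tilde f\circ\phi\), so \(\tilde f'=\tilde f\circ\phi\) fails, and non-invariant pulled-back objects depend on \(f\) itself rather than only on the isomorphism class of \(P\).
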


\begin{proof}
The pullback bundle is the fiber product
\[
P
=
\{(x,e)\in X\times E^{\mathrm{sph}}(G)\mid f(x)=\pi(e)\}.
\]
The map
\[
\tilde f:P\to E^{\mathrm{sph}}(G),
\qquad
(x,e)\mapsto e,
\]
is smooth. Therefore every pullback-compatible object on
\(E^{\mathrm{sph}}(G)\) pulls back to \(P\).
\end{proof}

\subsection{Defect on the pulled-back bundle}

Let
\[
D_P:=\tilde f^*D_E
\]
be the pulled-back Dirac-type operator on \(P\).

\begin{definition}
For \(g\in G\), the \emph{pulled-back Dirac defect} is
\[
\mathcal D_{D,P}(g)
:=
g^*D_P-D_Pg^*.
\]
\end{definition}

\begin{proposition}
The operator \(D_P\) descends from \(P\) to \(X=P/G\) if and only if it maps
basic sections to basic sections. Equivalently, its descent obstruction is the
failure of
\[
D_P
\]
to preserve the basic subspace.
\end{proposition}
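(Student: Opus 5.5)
The statement has the same shape as the descent results for $\delta$ on $B^{\mathrm{sph}}(G)$ and for $D_E$, now along the principal $G$-bundle $P\to X$. The plan is to transport that argument to $P$. First I will establish the analogue of the basic-form identification: the projection $\pi_P:P\to X$ is a subduction, being the pull-back of the subduction $\pi:E^{\mathrm{sph}}(G)\to B^{\mathrm{sph}}(G)$ along $f$. I will also use that the pulled-back Clifford module $\tilde f^*\mathcal E_E$ carries the lifted $G$-action, because $\tilde f$ is $G$-equivariant. Together these give a bijection between sections of the descended module $\mathcal E_X:=(\tilde f^*\mathcal E_E)/G$ over $X$ and $G$-basic sections over $P$.

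For the "if" direction I will argue exactly as in the proposition on descent of $D_E$. Given $\bar s\in\Gamma(\mathcal E_X)$, set $s=\pi_P^*\bar s$, which is basic. By hypothesis $D_Ps$ is basic, so it equals $\pi_P^*\bar t$ for a unique $\bar t$, and I define $\bar D_X\bar s:=\bar t$. This is well defined by injectivity of $\pi_P^*$ on basic sections, and linear because $D_P$ and $\pi_P^*$ are. The identity $\pi_P^*\circ\bar D_X=D_P\circ\pi_P^*$ holds by construction, and that identity is what "descends" means.

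For the "only if" direction I will take as definition that $D_P$ descends when some operator $\bar D_X$ satisfies $\pi_P^*\circ\bar D_X=D_P\circ\pi_P^*$ on pulled-back sections. If such an operator exists, then for a basic $s=\pi_P^*\bar s$ we get $D_Ps=\pi_P^*(\bar D_X\bar s)$. That is a pull-back, hence $G$-invariant and basic. So $D_P$ preserves the basic subspace.

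The equivalent formulation will mirror the Hodge descent defect. I will define the map $s\mapsto[D_Ps]$ from basic sections to all sections modulo basic ones, and show this class vanishes identically if and only if $D_P$ preserves basic sections. I will also remark that if $\mathcal D_{D,P}(g)=0$ for all $g$ then $D_P$ preserves $G$-invariance, so that in that case only horizontality remains to be checked.

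The main obstacle is the first step: proving that basic sections on $P$ correspond exactly to sections on $X$. This needs $\pi_P$ to be a subduction and the quotient pseudo-bundle to be well behaved. I would try first to prove the subduction property using the local triviality of $P$ given by the $D$-numerable classifying construction, since local sections of $\pi_P$ give local lifts of plots. If the general pseudo-bundle statement proves too delicate, I will state it as an assumption parallel to the standing assumptions of the Dirac descent subsection.
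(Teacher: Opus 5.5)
Your proposal is correct and follows the paper's own argument: you identify sections over $X=P/G$ with $G$-basic sections over $P$, then define the quotient operator by lifting a section, applying $D_P$, and descending the result. You are more careful than the paper in two places. The paper simply asserts the section correspondence, and it handles the converse only by saying a quotient operator ``cannot be defined''; your intertwining definition $\pi_P^*\circ\bar D_X=D_P\circ\pi_P^*$ turns the ``only if'' direction into an actual argument. Also, the subduction property of $\pi_P$ follows directly from stability of subductions under pull-back, so local triviality is not needed. The only genuinely delicate input is therefore the smooth correspondence between sections of $\mathcal E_X$ and $G$-invariant sections over $P$.
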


\begin{proof}
Sections over \(X=P/G\) correspond to \(G\)-basic sections over \(P\). Hence an
operator on \(P\) descends to an operator on \(X\) exactly when it sends basic
sections to basic sections.

If this condition holds, define the descended operator by applying \(D_P\) to
the basic lift of a quotient section and descending the result. If the
condition fails, such a quotient operator cannot be defined.
\end{proof}

\begin{remark}
If one works in a Lie-regular framework, preservation of basic sections may be
tested infinitesimally by commutators with vertical Lie derivatives. Outside
that framework, the basic-section criterion is the safer formulation.
\end{remark}

\subsection{Summary}

There are three regimes.

\begin{enumerate}
\item If a structure is basic, it descends directly to
\[
B^{\mathrm{sph}}(G).
\]

\item If a structure is equivariant but not horizontal, it may still induce
associated quotient data, for example through curvature, associated bundles,
or basic components.

\item If a structure is not equivariant or does not preserve basic sections, it
does not descend. Nevertheless, it pulls back to the total space of any
classified principal bundle
\[
P=f^*E^{\mathrm{sph}}(G),
\]
and the failure of descent is measured by defects such as
\[
\mathcal D_{\mathrm{Hodge}},
\qquad
\mathcal D_D.
\]
\end{enumerate}

\section{Characteristic classes associated with equivariance defects}
\label{sec:defect-classes}

In this section, we associate secondary characteristic classes with the
failure of equivariance of geometric operators on
\[
E^{\mathrm{sph}}(G).
\]
The philosophy is analogous to Chern--Weil theory: instead of using the
curvature of a connection, we use the failure of a geometric operator to
commute with the \(G\)-action.

Throughout this section, we assume that:
\begin{enumerate}
\item \(G\) is a regular diffeological Lie group admitting a Lie algebra
\(\mathfrak g\);
\item the action of \(G\) on \(E^{\mathrm{sph}}(G)\) admits fundamental vector
fields;
\item Lie derivatives of the geometric objects under consideration are
well-defined.
\end{enumerate}

\subsection{Infinitesimal equivariance defect}

Let
\[
\pi:E^{\mathrm{sph}}(G)\to B^{\mathrm{sph}}(G)
\]
be the universal principal \(G\)-bundle.

Let
\[
D:\Gamma(\mathcal E)\to\Gamma(\mathcal E)
\]
be a Dirac-type operator acting on a Clifford module
\[
\mathcal E\to E^{\mathrm{sph}}(G).
\]

For each
\[
\xi\in\mathfrak g,
\]
let
\[
\xi^\#
\]
denote the corresponding fundamental vector field on
\[
E^{\mathrm{sph}}(G).
\]

\begin{definition}
The \emph{infinitesimal equivariance defect} of \(D\) is the map
\[
\mathcal D_D:\mathfrak g
\longrightarrow
\operatorname{End}(\Gamma(\mathcal E))
\]
defined by
\[
\mathcal D_D(\xi)
:=
[\mathcal L_{\xi^\#},D].
\]
\end{definition}

\begin{remark}
The defect vanishes identically if and only if \(D\) commutes with all
infinitesimal generators of the \(G\)-action.
\end{remark}

\begin{proposition}
The map
\[
\mathcal D_D
\]
is linear in \(\xi\).
\end{proposition}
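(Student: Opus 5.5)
The linearity of \(\mathcal D_D\) reduces to the linearity of the assignment \(\xi\mapsto\mathcal L_{\xi^\#}\), because the commutator \([\,\cdot\,,D]\) is a linear operation on \(\operatorname{End}(\Gamma(\mathcal E))\). The argument has two steps.

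\textbf{Step 1: \(\xi\mapsto\xi^\#\) is linear.} By the standing assumptions of this section, the fundamental vector field is defined at a point \(p\in E^{\mathrm{sph}}(G)\) by
\[
\xi^\#(p)=\frac{d}{dt}\Big|_{t=0}\exp(t\xi)\cdot p .
\]
Equivalently, \(\xi^\#(p)\) is the image of \(\xi\) under the differential at the identity of the orbit map \(G\to E^{\mathrm{sph}}(G)\), \(g\mapsto g\cdot p\). A differential at a point is linear on the tangent space \(\mathfrak g=T_eG\). Hence
\[
(a\xi+b\eta)^\#=a\,\xi^\#+b\,\eta^\#
\]
for all \(a,b\in\mathbb R\) and \(\xi,\eta\in\mathfrak g\).

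This can be checked on the finite charts \(S_I\times G^I\), where the action reads \((x_i,g_i)\mapsto(x_i,hg_i)\). There \(\xi^\#\) is the family of right-invariant vector fields generated by \(\xi\) on each factor \(G\), with zero component in the \(x\)-directions. This expression is visibly linear in \(\xi\).

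\textbf{Step 2: \(\xi\mapsto\mathcal L_{\xi^\#}\) is linear.} The Lie derivative on sections of \(\mathcal E\) is defined through the lifted \(G\)-action. Under assumption (3), \(\mathcal L_{\xi^\#}\) is the derivative at \(t=0\) of the pulled-back action of \(\exp(t\xi)\) on sections, that is, the differential at \(e\) of the action map \(G\to\operatorname{End}(\Gamma(\mathcal E))\) in the direction \(\xi\). It is therefore linear in \(\xi\), for the same reason as in Step 1.

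\textbf{Conclusion.} Combining the two steps,
\[
\mathcal D_D(a\xi+b\eta)=[a\mathcal L_{\xi^\#}+b\mathcal L_{\eta^\#},D]=a[\mathcal L_{\xi^\#},D]+b[\mathcal L_{\eta^\#},D].
\]
The second equality uses only bilinearity of the commutator, since \(D\) is fixed.

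The only substantive point is the middle step: in the diffeological setting, one has to know that the infinitesimal action really is a differential and not merely a map defined curve by curve. I would handle this by invoking the regularity assumption (1) together with assumptions (2) and (3) as the hypothesis guaranteeing it, rather than proving it independently.
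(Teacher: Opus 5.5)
Your proposal is correct and follows the same route as the paper: linearity of \(\xi\mapsto\xi^\#\), linearity of the Lie derivative in the generating direction, and bilinearity of the commutator with the fixed operator \(D\). Your justification of the middle step, via the derivative of the lifted action on sections with linearity taken from the standing regularity assumptions, is somewhat more careful than the paper's one-line appeal to linearity of the Lie derivative, but the argument is the same.
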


\begin{proof}
The assignment
\[
\xi\mapsto \xi^\#
\]
is linear, and the Lie derivative depends linearly on the vector field.
Since the commutator is bilinear,
\[
\mathcal D_D(a\xi+b\eta)
=
a\mathcal D_D(\xi)
+
b\mathcal D_D(\eta).
\]
\end{proof}

\subsection{Curvature of the defect}

The defect behaves formally like a connection \(1\)-form on the trivial bundle
with fiber \(\operatorname{End}(\Gamma(\mathcal E))\).

\begin{definition}
The \emph{defect curvature} is the alternating bilinear map
\[
\mathcal F_D:
\mathfrak g\times\mathfrak g
\longrightarrow
\operatorname{End}(\Gamma(\mathcal E))
\]
defined by
\[
\mathcal F_D(\xi,\eta)
:=
[\mathcal D_D(\xi),\mathcal D_D(\eta)]
-
\mathcal D_D([\xi,\eta]).
\]
\end{definition}

\begin{remark}
This is formally the curvature associated with the ``connection''
\[
\mathcal D_D.
\]
It measures the failure of \(\mathcal D_D\) to define a representation of the
Lie algebra \(\mathfrak g\).
\end{remark}

\begin{proposition}
The map
\[
\mathcal F_D
\]
is bilinear and alternating.
\end{proposition}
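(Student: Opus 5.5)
The plan is to verify the two properties directly from the defining formula
\[
\mathcal F_D(\xi,\eta)=[\mathcal D_D(\xi),\mathcal D_D(\eta)]-\mathcal D_D([\xi,\eta]),
\]
using only the linearity of \(\mathcal D_D\) established in the preceding proposition, the bilinearity and antisymmetry of the Lie bracket on \(\mathfrak g\), and the bilinearity and antisymmetry of the commutator in the associative algebra \(\operatorname{End}(\Gamma(\mathcal E))\). No analytic input about \(D\) or the diffeology of \(E^{\mathrm{sph}}(G)\) should be needed beyond the standing assumptions that make \(\mathcal D_D\) well defined.

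For bilinearity, I will fix \(\eta\) and consider \(\xi\mapsto \mathcal F_D(\xi,\eta)\). The first term is the composite of the linear map \(\xi\mapsto\mathcal D_D(\xi)\) with the linear map \(T\mapsto [T,\mathcal D_D(\eta)]\). The second term is the composite of the linear map \(\xi\mapsto[\xi,\eta]\) with the linear map \(\mathcal D_D\). So \(\xi\mapsto\mathcal F_D(\xi,\eta)\) is linear, and it follows that
\[
\mathcal F_D(a\xi+b\xi',\eta)=a\mathcal F_D(\xi,\eta)+b\mathcal F_D(\xi',\eta).
\]
Linearity in \(\eta\) is then either the symmetric argument or a consequence of alternation together with linearity in the first slot.

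For alternation, I will compute \(\mathcal F_D(\xi,\xi)\). Here \([\mathcal D_D(\xi),\mathcal D_D(\xi)]=0\) because a commutator of an operator with itself vanishes, and \([\xi,\xi]=0\) in \(\mathfrak g\), so \(\mathcal D_D([\xi,\xi])=\mathcal D_D(0)=0\) by linearity. Hence \(\mathcal F_D(\xi,\xi)=0\). Polarization with bilinearity then gives the antisymmetry
\[
\mathcal F_D(\eta,\xi)=-\mathcal F_D(\xi,\eta).
\]

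The only point requiring care, which I expect to be the main obstacle, is that \(\operatorname{End}(\Gamma(\mathcal E))\) may consist of unbounded or only formally defined operators. The commutator \([\mathcal D_D(\xi),\mathcal D_D(\eta)]\) must then make sense on a common domain. I would handle this by working on the space of smooth sections \(\Gamma(\mathcal E)\) itself, which is preserved by \(\mathcal L_{\xi^\#}\) and \(D\) under the standing assumptions. Compositions are then defined there, and all identities above are purely algebraic identities in the associative algebra of linear endomorphisms of that space.
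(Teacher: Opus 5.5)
Your proof is correct and essentially matches the paper's: bilinearity comes from linearity of $\mathcal D_D$ together with bilinearity of the commutator and of the Lie bracket, and the sign property comes from antisymmetry of both brackets. The only difference is that the paper checks $\mathcal F_D(\eta,\xi)=-\mathcal F_D(\xi,\eta)$ directly, whereas you show $\mathcal F_D(\xi,\xi)=0$ and polarize; this is equivalent over $\mathbb R$, but take linearity in the second slot from the symmetric argument, not from alternation, because your polarization step already uses it.
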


\begin{proof}
Bilinearity follows from bilinearity of the commutator and linearity of
\(\mathcal D_D\).

For antisymmetry,
\[
\mathcal F_D(\eta,\xi)
=
[\mathcal D_D(\eta),\mathcal D_D(\xi)]
-
\mathcal D_D([\eta,\xi]).
\]
Since
\[
[A,B]=-[B,A]
\]
and
\[
[\eta,\xi]=-[\xi,\eta],
\]
we obtain
\[
\mathcal F_D(\eta,\xi)
=
-\mathcal F_D(\xi,\eta).
\]
\end{proof}

\subsection{Invariant polynomials}

Let
\[
\mathcal A
\subset
\operatorname{End}(\Gamma(\mathcal E))
\]
be an algebra containing the image of
\[
\mathcal F_D.
\]

\begin{definition}
An \emph{invariant polynomial} on \(\mathcal A\) is a symmetric multilinear map
\[
P:\mathcal A^k\to\mathbb R
\]
such that
\[
P(gAg^{-1},\ldots,gAg^{-1})
=
P(A,\ldots,A)
\]
whenever the conjugation action is defined.
\end{definition}

Typical examples are traces of products whenever a trace functional exists.

\subsection{Characteristic cochains}

\begin{definition}
Let \(P\) be an invariant polynomial of degree \(k\). Define the alternating
\(2k\)-cochain
\[
\omega_P
\in
\Lambda^{2k}\mathfrak g^*
\]
by
\[
\omega_P(\xi_1,\ldots,\xi_{2k})
=
\sum_{\sigma}
\operatorname{sgn}(\sigma)\,
P\bigl(
\mathcal F_D(\xi_{\sigma(1)},\xi_{\sigma(2)}),
\ldots,
\mathcal F_D(\xi_{\sigma(2k-1)},\xi_{\sigma(2k)})
\bigr),
\]
where the sum runs over pairings of
\[
\{1,\ldots,2k\}.
\]
\end{definition}

\begin{remark}
This construction is the direct analogue of the Chern--Weil construction,
with the defect curvature replacing the curvature of a connection.
\end{remark}

\subsection{Closedness}

\begin{proposition}
Assume that:
\begin{enumerate}
\item the defect satisfies the Bianchi-type identity
\[
d_{\mathrm{CE}}\mathcal F_D=0
\]
in Chevalley--Eilenberg cohomology;
\item \(P\) is invariant.
\end{enumerate}
Then
\[
d_{\mathrm{CE}}\omega_P=0.
\]
\end{proposition}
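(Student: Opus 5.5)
The plan is to mimic the Chern--Weil argument, working purely algebraically in the Chevalley--Eilenberg complex of \(\mathfrak g\) with coefficients in \(\mathcal A\) (the conjugation action being given by \(\operatorname{ad}\mathcal D_D\)). First view \(\mathcal F_D\) as an element of \(\Lambda^2\mathfrak g^*\otimes\mathcal A\). Then read \(\omega_P\) as the composite of the wedge product \(\mathcal F_D\wedge\cdots\wedge\mathcal F_D\in\Lambda^{2k}\mathfrak g^*\otimes\mathcal A^{\otimes k}\) with the symmetric multilinear map \(P\). The sum over pairings in the definition is exactly this wedge product, up to the usual combinatorial normalization, which plays no role in closedness.

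Next, interpret hypothesis (1) as saying that the covariant Chevalley--Eilenberg differential \(d_{\mathrm{CE}}^{\mathcal D}\) annihilates \(\mathcal F_D\). This differential is the one twisted by \(\operatorname{ad}\mathcal D_D\), which is the natural meaning of a Bianchi identity for the ``connection'' \(\mathcal D_D\). Its Leibniz rule over wedge products gives
\[
d_{\mathrm{CE}}^{\mathcal D}(\mathcal F_D^{\wedge k})=\sum_{j}\mathcal F_D\wedge\cdots\wedge d_{\mathrm{CE}}^{\mathcal D}\mathcal F_D\wedge\cdots\wedge\mathcal F_D=0,
\]
with no signs, since \(\mathcal F_D\) has even degree.

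Then apply \(P\). Infinitesimal invariance of \(P\) is obtained by differentiating the conjugation invariance \(P(gAg^{-1},\ldots)=P(A,\ldots)\):
\[
\sum_j P(A_1,\ldots,[B,A_j],\ldots,A_k)=0.
\]
This gives \(d_{\mathrm{CE}}\circ P=P\circ d_{\mathrm{CE}}^{\mathcal D}\). The reason is that the extra terms by which the covariant differential differs from the plain Chevalley--Eilenberg differential are commutators with \(\mathcal D_D(\xi_i)\), and these sum to zero inside \(P\). Hence \(d_{\mathrm{CE}}\omega_P=P(d_{\mathrm{CE}}^{\mathcal D}\mathcal F_D^{\wedge k})=0\).

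The main obstacle is the passage from group conjugation invariance to the infinitesimal derivation identity. The commutators \([\mathcal D_D(\xi),\cdot\,]\) need not come from conjugation by actual invertible elements of \(\mathcal A\), and differentiability of \(P\) along such curves is not automatic in this infinite-dimensional setting. I would handle this by adopting infinitesimal ad-invariance of \(P\) under \(\operatorname{ad}\mathcal D_D(\mathfrak g)\) as the working meaning of hypothesis (2). This is consistent with ``whenever the conjugation action is defined'', and it holds for trace-type \(P\) by cyclicity. The second point needing care is fixing the precise form of \(d_{\mathrm{CE}}\) in (1) so that the twisted Leibniz rule holds. Once both conventions are fixed, the remaining computation is a routine sign check.
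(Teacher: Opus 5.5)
Your proposal is correct and is the same Chern--Weil argument the paper's proof invokes, in which the Leibniz rule, the Bianchi identity and the invariance of \(P\) make all terms cancel; you simply write it out explicitly through the identity \(d_{\mathrm{CE}}\circ P=P\circ d^{\mathcal D}_{\mathrm{CE}}\). As a side remark, under your covariant reading hypothesis (1) holds automatically, because \(\mathcal F_D=d_{\mathrm{CE}}\mathcal D_D+\tfrac12[\mathcal D_D,\mathcal D_D]\) in \(C^\bullet(\mathfrak g;\operatorname{End}(\Gamma(\mathcal E)))\), so the real content of the argument is the infinitesimal \(\operatorname{ad}\mathcal D_D(\mathfrak g)\)-invariance of \(P\).
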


\begin{proof}
The proof is formally identical to the classical Chern--Weil argument.
The Chevalley--Eilenberg differential of
\[
\omega_P
\]
is obtained by differentiating the entries of
\[
P(\mathcal F_D,\ldots,\mathcal F_D).
\]
Using the Bianchi identity and invariance of \(P\), all terms cancel.
Hence
\[
d_{\mathrm{CE}}\omega_P=0.
\]
\end{proof}

\subsection{Defect characteristic classes}

\begin{definition}
The cohomology class
\[
[\omega_P]
\in
H^{2k}_{\mathrm{CE}}(\mathfrak g)
\]
is called the \emph{characteristic class associated with the equivariance
defect of \(D\)}.
\end{definition}

\begin{remark}
This class measures the obstruction to infinitesimal equivariance of the
operator \(D\).
\end{remark}

\subsection{Naturality}

Let
\[
f:X\to B^{\mathrm{sph}}(G)
\]
be a classifying map, and let
\[
P_f=f^*E^{\mathrm{sph}}(G)
\]
be the associated principal bundle.

\begin{proposition}
The defect construction pulls back naturally to
\[
P_f.
\]
\end{proposition}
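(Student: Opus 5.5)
The plan is to realize every ingredient of the defect construction on \(P_f\) by pulling back along the canonical map
\[
\tilde f:P_f\to E^{\mathrm{sph}}(G),\qquad (x,e)\mapsto e,
\]
and then to check that the pulled-back objects satisfy the same identities. This uses the earlier proposition that pullback-compatible objects on \(E^{\mathrm{sph}}(G)\) pull back to \(P_f\). Concretely, I set \(\mathcal E_P:=\tilde f^*\mathcal E\) and \(D_P:=\tilde f^*D\), as in the subsection on the pulled-back Dirac defect. The key structural fact is that \(\tilde f\) is \(G\)-equivariant. The \(G\)-action on \(P_f\subset X\times E^{\mathrm{sph}}(G)\) is \(h\cdot(x,e)=(x,h\cdot e)\), which preserves the fiber product because \(\pi(h\cdot e)=\pi(e)\), and \(\tilde f\) intertwines the two actions.

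The first step is to show that fundamental vector fields are \(\tilde f\)-related: for \(\xi\in\mathfrak g\), the curve \(t\mapsto \exp(t\xi)\cdot(x,e)=(x,\exp(t\xi)\cdot e)\) projects under \(\tilde f\) to the orbit curve in \(E^{\mathrm{sph}}(G)\). So \(d\tilde f(\xi^\#_P)=\xi^\#\), using the standing assumption that fundamental vector fields are defined through the regular Lie group structure. Next, Lie derivatives commute with pullback along \(\tilde f\) for \(\tilde f\)-related fields. This gives
\[
\mathcal L_{\xi^\#_P}(\tilde f^*s)=\tilde f^*(\mathcal L_{\xi^\#}s),
\]
hence on pulled-back sections
\[
\mathcal D_{D_P}(\xi)\circ\tilde f^*=\tilde f^*\circ\mathcal D_D(\xi).
\]
Since \(\tilde f^*\) is a morphism of associative algebras on endomorphisms, the defect curvature is also natural: \(\mathcal F_{D_P}=\tilde f^*\mathcal F_D\). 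Choosing \(\tilde f^*\mathcal A\) as the algebra and \(\tilde f^*P\) as the invariant polynomial then shows that the characteristic cochains agree, \(\omega_P^{(P_f)}=\omega_P\) in \(\Lambda^{2k}\mathfrak g^*\). As a result, the defect class in \(H^{2k}_{\mathrm{CE}}(\mathfrak g)\) is unchanged. The Bianchi-type hypothesis transfers directly, because \(d_{\mathrm{CE}}\) commutes with the algebra map \(\tilde f^*\).

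The main obstacle is that \(\tilde f^*\) does not act on all of \(\Gamma(\mathcal E_P)\). Sections over \(P_f\) are not all pullbacks, so the pulled-back operator \(D_P\) and its commutators have to be defined on general sections, not only on \(\tilde f^*\Gamma(\mathcal E)\). I would address this locally. Over a trivializing open set \(U\subset X\), the bundle \(P_f|_U\cong U\times G\), and \(\tilde f\) factors through a local model \(S_I\times G^I\) via the explicit classifying formula of Section~\ref{sec:classifying-properties}. There I would define \(D_P\) as the operator acting along \(E^{\mathrm{sph}}\)-directions with \(X\)-coordinates as parameters. The identity for \(\mathcal D_{D_P}\) is then checked on generators of the form \(\phi\cdot\tilde f^*s\), with \(\phi\) a function on \(U\) that is invariant along fibers. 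Since \(\xi^\#_P\) annihilates such \(\phi\), the Leibniz rule reduces the general case to the pulled-back case. If the Clifford multiplication is only partially defined on the pullback, I would state naturality at the level of the subalgebra generated by \(\tilde f^*\mathcal A\). That is the form in which the statement is meaningful anyway.
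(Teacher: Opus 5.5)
Your first two paragraphs follow the paper's proof. The paper only asserts three things: $P_f$ inherits the pulled-back module and operator $D_f$; the induced $\mathfrak g$-action on $P_f$ gives $\mathcal D_{D_f}$; and the constructions of $\mathcal F_D$ and $\omega_P$ commute with pullback. Your $G$-equivariance of $\tilde f$, and the resulting identity $\mathcal L_{\xi^\#_P}\circ\tilde f^*=\tilde f^*\circ\mathcal L_{\xi^\#}$, supply the content of the middle assertion.

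Two small corrections apply there. First, what you actually obtain are intertwining relations such as $\mathcal F_{D_P}(\xi,\eta)\circ\tilde f^*=\tilde f^*\circ\mathcal F_D(\xi,\eta)$, not an algebra morphism $\tilde f^*$ on all of $\operatorname{End}(\Gamma(\mathcal E))$. Second, $\tilde f^*P$ is well defined only if $P$ vanishes whenever an argument pulls back to zero. So equality of the cochains is a transport-of-structure statement, which is all the paper claims anyway.

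The step that fails is your third paragraph, and the obstacle is more serious than the one you name. The issue is not extending $D_P$ from pulled-back sections to all sections. It is that an operator with $D_P\circ\tilde f^*=\tilde f^*\circ D$ need not exist at all, since $\tilde f$ is far from a submersion.

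To see this, take $f\equiv b_0$ on $U$. Then $P_f|_U=U\times\pi^{-1}(b_0)$ and $\tilde f$ is the projection onto a single $G$-orbit. So $\tilde f^*s$ only remembers $s$ on that orbit, whereas $Ds$ there involves derivatives of $s$ transverse to it. For $s=\psi s_0$ with $\psi$ vanishing on the orbit, $\tilde f^*s=0$, but $\tilde f^*(Ds)=\tilde f^*(c(d\psi)s_0)$ need not vanish for a first-order $D$ with Clifford symbol.

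Your local recipe cannot repair this, for three reasons:
\begin{enumerate}
\item A plot of $P_f$ with $x$ held fixed satisfies $\pi\circ p_E\equiv f(x)$. So the only directions in $P_f$ ``along $E^{\mathrm{sph}}(G)$ with $X$-coordinates as parameters'' are vertical, and such an operator sees only the vertical part of $D$.
\item The rule $\phi\,\tilde f^*s\mapsto\phi\,\tilde f^*(Ds)$ is ill-defined, by the example above.
\item Sections $\phi\,\tilde f^*s$ with $\phi$ basic do not generate $\Gamma(\mathcal E_P)$; here they are only finite sums $\sum_j\phi_j(x)\,s_j|_{\pi^{-1}(b_0)}$. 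Admitting non-basic $\phi$ instead brings in terms from $\xi^\#_P\phi\neq0$ and $c(d\phi)$ that your Leibniz argument does not control.
\end{enumerate}

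The right move is the paper's. Assume, as in its earlier proposition on objects ``for which pullback is defined'', that $D_P$ exists and intertwines $\tilde f^*$. Under that hypothesis your second paragraph is the whole proof, and the third should be dropped.
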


\begin{proof}
The pulled-back bundle inherits the pulled-back Clifford module and the
pulled-back operator
\[
D_f.
\]
The infinitesimal action of \(\mathfrak g\) on
\[
P_f
\]
induces the corresponding defect
\[
\mathcal D_{D_f}.
\]
All constructions defining
\[
\mathcal F_D
\]
and
\[
\omega_P
\]
commute with pullback.
\end{proof}

\begin{proposition}
The defect characteristic classes are functorial under pullback of principal
bundles.
\end{proposition}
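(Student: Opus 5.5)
The plan is to reduce functoriality to the naturality statement just proved and then check compatibility with composition. Let \(\phi:Y\to X\) be a smooth map and \(f:X\to B^{\mathrm{sph}}(G)\) a classifying map, so that \(f\circ\phi\) classifies \(\phi^*P_f\). First we fix the canonical isomorphism of principal \(G\)-bundles
\[
\phi^*P_f\simeq P_{f\circ\phi},
\]
given on fiber products by \((y,(x,e))\mapsto(y,e)\) whenever \(\phi(y)=x\). This map is \(G\)-equivariant, and it commutes with the two canonical maps \(\tilde f\circ\mathrm{pr}\) and \(\widetilde{f\circ\phi}\) to \(E^{\mathrm{sph}}(G)\). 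Consequently the pulled-back Clifford module and the pulled-back operator \(D_{f\circ\phi}=\widetilde{f\circ\phi}^*D\) correspond under this isomorphism to \(\bar\phi^*D_f\), where \(\bar\phi:\phi^*P_f\to P_f\) is the canonical bundle map.

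Second, we verify that the fundamental vector fields are compatible. Since \(\bar\phi\) is \(G\)-equivariant, it intertwines \(\xi^\#\) on \(\phi^*P_f\) with \(\xi^\#\) on \(P_f\). Lie derivatives along \(\xi^\#\) therefore commute with \(\bar\phi^*\), so that
\[
\mathcal D_{\bar\phi^*D_f}(\xi)=\bar\phi^*\circ\mathcal D_{D_f}(\xi)
\]
in the appropriate sense. The defect curvature \(\mathcal F_D\) is built from \(\mathcal D_D\) only through commutators and the bracket of \(\mathfrak g\), so it is likewise transported, and so is \(\omega_P\). We then pass to classes in \(H^{2k}_{\mathrm{CE}}(\mathfrak g)\). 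Well-definedness follows from the closedness proposition, and a bundle isomorphism induces the identity on \(\mathfrak g\), so isomorphic bundles give equal classes. Functoriality then reads
\[
[\omega_P(D_{f\circ\phi})]=[\omega_P(D_f)],
\]
together with the identities \(\mathrm{id}^*=\mathrm{id}\) and \((\phi\circ\psi)^*=\psi^*\phi^*\). These last two follow from the strict associativity of the canonical fiber-product isomorphisms.

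The main obstacle is the scalar step. The cochain \(\omega_P\) takes values in \(\mathbb R\) only after applying \(P\) on an algebra \(\mathcal A\) of operators, and \(\mathcal A\) changes under pullback. For the classes to be literally equal, we need the invariant polynomial on the pulled-back algebra to be the transport of \(P\), for instance a trace that is compatible with \(\bar\phi^*\). I would build this into the statement as a standing assumption, namely that \(P\) is chosen naturally, i.e. \(P\circ\bar\phi^*=P\) on the relevant operators. Under that assumption the remaining argument is formal. If the assumption is not available, I would instead state functoriality as the identity
\[
\omega_{\phi^*P}(\phi^*D)=\phi^*\omega_P(D),
\]
where \(\phi^*P\) denotes the transported polynomial.
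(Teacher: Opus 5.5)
Your argument is correct once the naturality hypothesis on the invariant polynomial \(P\) that you make explicit is granted. It follows the paper's route: the paper gives no separate proof and treats the statement as immediate from the preceding naturality proposition, whose proof merely asserts that the constructions of \(\mathcal D_D\), \(\mathcal F_D\) and \(\omega_P\) commute with pullback---which is what your first two steps spell out, while the transport of \(P\) is a hypothesis the paper leaves implicit.

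The one overstatement is the claim that isomorphic bundles give equal classes because the isomorphism is the identity on \(\mathfrak g\). This holds only for isomorphisms intertwining the pulled-back operators, such as the canonical \(\phi^*P_f\simeq P_{f\circ\phi}\) of your first step, which is all you actually use.
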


\subsection{Homotopy invariance}

\begin{proposition}
The defect characteristic classes depend only on the homotopy class of the
classifying map.
\end{proposition}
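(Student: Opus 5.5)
The plan is to reduce homotopy invariance to the naturality statements just proved, together with the homotopy invariance of pull-back bundles. Let
\[
f_0,f_1:X\to B^{\mathrm{sph}}(G)
\]
be smoothly homotopic classifying maps, and let \(F:X\times[0,1]\to B^{\mathrm{sph}}(G)\) be a smooth homotopy with \(F(\cdot,0)=f_0\) and \(F(\cdot,1)=f_1\). Write \(\iota_t:X\to X\times[0,1]\) for the inclusion \(x\mapsto(x,t)\). The first step is to form the principal bundle \(P_F=F^*E^{\mathrm{sph}}(G)\) over \(X\times[0,1]\). Since \(F\circ\iota_t=f_t\), we have canonical identifications
\[
\iota_t^*P_F\simeq P_{f_t},
\]
and these are compatible with the pulled-back Clifford module and operator, because every pullback factors through \(\tilde F:P_F\to E^{\mathrm{sph}}(G)\). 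By the naturality proposition, the defect \(\mathcal D_{D_F}\), the defect curvature \(\mathcal F_{D_F}\), and the cochain \(\omega_P\) built on \(P_F\) restrict along \(\iota_t\) to the corresponding objects for \(f_t\).

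The second step is to show that the class \([\omega_P]\in H^{2k}_{\mathrm{CE}}(\mathfrak g)\) is the same for \(t=0\) and \(t=1\). The argument is a transgression in the style of the Chern--Weil independence theorem proved earlier. Consider the one-parameter family of defects \(\mathcal D_t:=\iota_t^*\mathcal D_{D_F}\); for each \(\xi\) this depends smoothly on \(t\) through the smooth homotopy. Define the Chern--Simons-type cochain
\[
T\omega_P(\xi_1,\ldots,\xi_{2k-1})
=k\int_0^1 \sum\pm P\bigl(\dot{\mathcal D}_t(\xi_{\cdot}),\mathcal F_t(\xi_\cdot,\xi_\cdot),\ldots,\mathcal F_t(\xi_\cdot,\xi_\cdot)\bigr)\,dt .
\]
Then show that
\[
\omega_P(f_1)-\omega_P(f_0)=d_{\mathrm{CE}}\,T\omega_P .
\]
This uses two facts: the variation formula \(\dot{\mathcal F}_t=d_{\mathcal D_t}\dot{\mathcal D}_t\), which follows directly from the definition of \(\mathcal F_D\) as a commutator minus a bracket term, and the assumed Bianchi identity together with the invariance of \(P\). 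The cancellation is the same formal computation as in the closedness proposition.

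A subtle point is that the classes are valued in \(H^{2k}_{\mathrm{CE}}(\mathfrak g)\) rather than in cohomology of \(X\). One could try to define them on \(X\) and use that homotopic maps induce equal maps in cohomology, but I would instead make the transgression explicit, since it needs only the standing hypotheses. Alternatively, if the paper's construction \(\omega_P\) depends only on the isomorphism class of \((P_f,D_f)\), then the classification theorem gives \(P_{f_0}\simeq P_{f_1}\) directly. In that case it suffices to check that the defect classes are invariant under bundle isomorphism, i.e.\ under conjugation by a gauge transformation, which follows from the invariance of \(P\).

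The main obstacle is the differentiability in \(t\) of \(\mathcal D_t\) as an \(\operatorname{End}(\Gamma(\mathcal E))\)-valued map. We also need \(P\) to be invariant under the relevant conjugations and the Bianchi identity to hold uniformly along the family. I would first try to obtain these by working plotwise on the finite charts \(S_I\times G^I\), where the homotopy is smooth and the operators are locally finite-dimensional. If this fails, I would state the result under the explicit additional assumption that the family \(t\mapsto\mathcal D_t\) is smooth in the sense of the chosen operator algebra \(\mathcal A\).
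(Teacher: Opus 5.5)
The paper proves this in two lines, by the route you list as your alternative: Chern--Weil-type classes are natural under pullback, and homotopic classifying maps give isomorphic pullback bundles. Your main route, the transgression in the second step, has a genuine gap.

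First, $\mathcal D_t=\iota_t^*\mathcal D_{D_F}$ acts on $\Gamma(\iota_t^*\mathcal E_F)$, which is a different space for each $t$. So $\dot{\mathcal D}_t$, and a single polynomial $P$ along the family, only make sense after you identify these spaces smoothly in $t$, i.e.\ after trivializing $P_F$ and $\mathcal E_F$ along $[0,1]$. That identification is exactly the homotopy invariance of pullbacks that the paper uses. Working plotwise on $S_I\times G^I$ gives you smoothness, not a common space.

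Second, and more seriously, the transgression does not follow from the standing hypotheses. The integrand $P(\dot{\mathcal D}_t,\mathcal F_t,\ldots,\mathcal F_t)$ and the cancellation need two things. $P$ must accept defect-type operators as arguments, whereas the paper's $\mathcal A$ is only assumed to contain the image of $\mathcal F_D$. $P$ must also be infinitesimally invariant under $\operatorname{ad}_{\mathcal D_t(\xi)}$, whereas only invariance under conjugation ``whenever defined'' is assumed.

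Suppose you grant that $P$ is $\operatorname{ad}$-invariant on an algebra containing the defect operators. Then your computation uses nothing about $\mathcal D_t$ beyond its being a smooth family of linear maps $\mathfrak g\to\mathcal A$. The covariant Bianchi identity $d_{\mathcal D}\mathcal F_D=0$ that it needs holds identically by the Jacobi identity; it is not the paper's hypothesis $d_{\mathrm{CE}}\mathcal F_D=0$. Now apply the same computation to $s\mapsto sD$: its defect is $s\,\mathcal D_D$ and its defect curvature vanishes at $s=0$. You get $[\omega_P]=0$ for every $D$, which is the acyclicity of the Weil algebra. So your second step either cannot be run, or it proves a vanishing theorem. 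Whatever analytic failure allows nonzero classes (e.g.\ failure of trace cyclicity against defect operators, the Schwinger mechanism of the later sections) is exactly what blocks it.

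Your alternative does not close as stated either. The classification theorem only gives an isomorphism $\phi:P_{f_0}\to P_{f_1}$ of principal $G$-bundles. In general $\phi^*D_{f_1}=(\tilde f_1\circ\phi)^*D$ differs from $D_{f_0}=\tilde f_0^*D$, because $\tilde f_1\circ\phi$ covers $f_1$ while $\tilde f_0$ covers $f_0$. So gauge invariance of $\omega_P$ compares the wrong pairs: the classification theorem does not supply an isomorphism of the pairs $(P_f,D_f)$. The paper's sentence about isomorphic pullbacks passes over the same point.

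What carries the argument in the paper's framework is naturality itself. The cochain $\omega_P$ lies in $\Lambda^{2k}\mathfrak g^*$ and has no dependence on the base. The naturality proposition asserts that every ingredient of its construction commutes with pullback along the equivariant map $\tilde f$. Hence $\omega_P(f)$ equals the universal cochain for every $f$. Your first step states precisely this for $P_F$ and the inclusions $\iota_t$. Read literally, it already gives $\omega_P(f_0)=\omega_P(F)=\omega_P(f_1)$, so the transgression is superfluous.
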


\begin{proof}
Characteristic classes obtained through Chern--Weil-type constructions are
natural under pullback. Since homotopic maps induce isomorphic pullback
principal bundles, the corresponding defect classes coincide.
\end{proof}

\subsection{Vanishing criterion}

\begin{proposition}
If
\[
\mathcal D_D=0,
\]
then all defect characteristic classes vanish.
\end{proposition}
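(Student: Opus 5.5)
The plan is to unwind the definitions in the order they were introduced in this section. The statement should then reduce to the observation that every object in the chain
\[
\mathcal D_D \;\longrightarrow\; \mathcal F_D \;\longrightarrow\; \omega_P \;\longrightarrow\; [\omega_P]
\]
is built polynomially from \(\mathcal D_D\), with no constant term.

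First I would show that \(\mathcal F_D\equiv 0\) under the hypothesis \(\mathcal D_D=0\). By definition,
\[
\mathcal F_D(\xi,\eta)=[\mathcal D_D(\xi),\mathcal D_D(\eta)]-\mathcal D_D([\xi,\eta]).
\]
If \(\mathcal D_D(\zeta)=0\) for every \(\zeta\in\mathfrak g\), the commutator term is \([0,0]=0\). The second term is \(\mathcal D_D\) evaluated at the element \([\xi,\eta]\in\mathfrak g\), so it also vanishes. Hence \(\mathcal F_D(\xi,\eta)=0\) for all \(\xi,\eta\).

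Next I would use multilinearity of \(P\). An invariant polynomial of degree \(k\ge1\) is a symmetric multilinear map \(P:\mathcal A^k\to\mathbb R\), so \(P(0,\ldots,0)=0\). Each summand in the definition of \(\omega_P(\xi_1,\ldots,\xi_{2k})\) has the form \(\pm P\) evaluated on \(k\) values of \(\mathcal F_D\). All of these arguments are zero, so each summand vanishes and \(\omega_P=0\) as a cochain in \(\Lambda^{2k}\mathfrak g^*\). Its class \([\omega_P]\in H^{2k}_{\mathrm{CE}}(\mathfrak g)\) is then the zero class. This holds independently of the closedness proposition: the zero cochain is trivially a cocycle, so the class is defined even if the Bianchi hypothesis is not verified.

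The only points needing care are bookkeeping ones, and I expect no genuine obstacle. The first is the degenerate case \(k=0\), where a "polynomial of degree zero" would be a constant. I would exclude it explicitly, since the definition of \(\omega_P\) only makes sense for \(k\ge1\) and the defect classes are meant to live in positive degree. The second is the pulled-back classes on \(P_f=f^*E^{\mathrm{sph}}(G)\). By the naturality proposition, the defect \(\mathcal D_{D_f}\) is the pullback of \(\mathcal D_D\), so it vanishes as well, and the same argument applied on \(P_f\) shows that all pulled-back defect classes are zero.
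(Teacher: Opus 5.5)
Your proposal is correct and follows the paper's route. In both, $\mathcal D_D=0$ forces $\mathcal F_D=0$, and multilinearity of $P$ then makes $\omega_P$ the zero cochain, so $[\omega_P]=0$. Your extra remarks (excluding the degree $k=0$ case, and noting that the zero cochain is a cocycle without the Bianchi hypothesis) are sound refinements of the paper's one-line argument.
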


\begin{proof}
If
\[
\mathcal D_D=0,
\]
then
\[
\mathcal F_D=0.
\]
Therefore every invariant polynomial evaluated on
\[
\mathcal F_D
\]
vanishes identically, and so do the associated cohomology classes.
\end{proof}

\begin{remark}
The converse is generally false: the characteristic classes detect only the
cohomological part of the defect.
\end{remark}

\subsection{Interpretation}

\begin{remark}
The equivariance defect behaves as a secondary curvature measuring the failure
of a geometric operator to descend from
\[
E^{\mathrm{sph}}(G)
\]
to
\[
B^{\mathrm{sph}}(G).
\]
The associated characteristic classes are therefore secondary obstruction
classes attached not to a connection, but to the failure of equivariance of an
operator.
\end{remark}

\begin{remark}
The construction is formally analogous to Chern--Weil theory, but it naturally
belongs to Lie algebra cohomology rather than ordinary de Rham cohomology on
the quotient space.
\end{remark}
\section{Defect classes, Schwinger cocycles and lifting gerbes}
\label{sec:defect-schwinger-gerbe}

We now relate equivariance defects to central extensions and lifting gerbes.
The construction requires additional analytic hypotheses. In particular, one
must work in a restricted Hilbert setting where the Schwinger cocycle is
well-defined.

\subsection{Restricted Hilbert setting}

Let
\[
H=H_+\oplus H_-
\]
be a polarized Hilbert space, and let
\[
\mathrm{GL}_{\mathrm{res}}(H)
\]
be the restricted general linear group, i.e. the group of invertible operators
whose off-diagonal blocks are Hilbert--Schmidt with respect to the
polarization \cite{PressleySegal1986}.

Its Lie algebra is
\[
\mathfrak{gl}_{\mathrm{res}}(H)
=
\left\{
A\in\mathcal B(H)
\;\middle|\;
A_{+-},A_{-+}\text{ are Hilbert--Schmidt}
\right\}.
\]

Let
\[
\epsilon
=
\begin{pmatrix}
1 & 0\\
0 & -1
\end{pmatrix}
\]
be the grading operator associated with the polarization.

\subsection{The Schwinger cocycle}

\begin{definition}
The Schwinger cocycle on
\[
\mathfrak{gl}_{\mathrm{res}}(H)
\]
is the bilinear map
\[
c_{\mathrm{Sch}}:
\mathfrak{gl}_{\mathrm{res}}(H)\times
\mathfrak{gl}_{\mathrm{res}}(H)
\longrightarrow
\mathbb C
\]
defined by
\[
c_{\mathrm{Sch}}(A,B)
=
\operatorname{Tr}
\left(
A_{-+}B_{+-}
-
B_{-+}A_{+-}
\right).
\]
Equivalently,
\[
c_{\mathrm{Sch}}(A,B)
=
\frac14\operatorname{Tr}
\left(
\epsilon[\epsilon,A][\epsilon,B]
\right),
\]
whenever the latter expression is interpreted in the standard restricted
sense.
\end{definition}

\begin{proposition}
The map
\[
c_{\mathrm{Sch}}
\]
is a Lie algebra \(2\)-cocycle on
\[
\mathfrak{gl}_{\mathrm{res}}(H).
\]
\end{proposition}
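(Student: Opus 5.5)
We must show three things: the trace in the definition of \(c_{\mathrm{Sch}}\) is finite, \(c_{\mathrm{Sch}}\) is alternating, and the Chevalley--Eilenberg identity
\[
c_{\mathrm{Sch}}([A,B],C)+c_{\mathrm{Sch}}([B,C],A)+c_{\mathrm{Sch}}([C,A],B)=0
\]
holds for all \(A,B,C\in\mathfrak{gl}_{\mathrm{res}}(H)\). We will write every operator in block form \(A=\begin{pmatrix}A_{++}&A_{+-}\\A_{-+}&A_{--}\end{pmatrix}\). By definition the diagonal blocks are bounded and the off-diagonal blocks are Hilbert--Schmidt. In particular \(A_{-+}B_{+-}\) and \(B_{-+}A_{+-}\) are products of two Hilbert--Schmidt operators, hence trace class. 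So \(c_{\mathrm{Sch}}\) is well defined, and it is bilinear. It is alternating directly from the formula, since exchanging \(A\) and \(B\) changes the sign. We would also remark briefly that \([\epsilon,A]=2\begin{pmatrix}0&A_{+-}\\-A_{-+}&0\end{pmatrix}\). This gives the equivalent expression with \(\epsilon\), but we will not need it.

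For the cocycle identity we first compute the off-diagonal blocks of a commutator:
\[
[A,B]_{+-}=A_{++}B_{+-}+A_{+-}B_{--}-B_{++}A_{+-}-B_{+-}A_{--},
\]
and similarly for \([A,B]_{-+}\). We substitute these into \(c_{\mathrm{Sch}}([A,B],C)=\operatorname{Tr}([A,B]_{-+}C_{+-}-C_{-+}[A,B]_{+-})\) and sum cyclically over \((A,B,C)\).

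The resulting terms come in two types.
\begin{itemize}
\item Terms of type \(\operatorname{Tr}(X_{-+}Y_{+-}Z_{-+}\cdots)\) are built only from off-diagonal blocks. These arise from the products \(A_{-+}B_{+-}\) inside \([A,B]_{--}\)-like pieces.
\item Terms of type \(\operatorname{Tr}(X_{--}Y_{-+}Z_{+-})\) or \(\operatorname{Tr}(X_{++}Y_{+-}Z_{-+})\) contain exactly one diagonal block.
\end{itemize}

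Each of these traces is the trace of a product containing at least two Hilbert--Schmidt factors. Therefore cyclic permutation inside the trace is legitimate: \(\operatorname{Tr}(ST)=\operatorname{Tr}(TS)\) when \(S,T\) are both Hilbert--Schmidt, or when one factor is trace class and the other is bounded. With this, each term in the cyclic sum is paired with an identical term of opposite sign. For example, \(\operatorname{Tr}(A_{-+}B_{++}C_{+-})\) coming from \(c([A,B],C)\) cancels against \(\operatorname{Tr}(B_{++}C_{+-}A_{-+})\) coming from \(c([B,C],A)\).

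The main obstacle is justifying these cyclic moves rigorously. In infinite dimensions \(\operatorname{Tr}(ST)=\operatorname{Tr}(TS)\) can fail when neither product is trace class. For this reason we must not split \(c_{\mathrm{Sch}}\) into \(\operatorname{Tr}(A_{-+}B_{+-})\) and \(\operatorname{Tr}(B_{-+}A_{+-})\) carelessly and then try to reorganize via \(\operatorname{Tr}\epsilon[A,B]\), which is ill-defined. Instead we would group the expanded terms so that each individual trace has at least two Hilbert--Schmidt factors, and check each pairing against the two admissible rules above. As an alternative route we could verify the identity on finite-rank truncations: there it follows from \(c_{\mathrm{Sch}}(A,B)=\tfrac12\operatorname{Tr}(\epsilon[\epsilon,A]\,\cdot)\)-type manipulations and the vanishing of traces of commutators. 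We would then pass to the limit, using continuity of \(c_{\mathrm{Sch}}\) in the topology given by the operator norm on diagonal blocks and the Hilbert--Schmidt norm on off-diagonal blocks, together with density of the truncations in that topology.
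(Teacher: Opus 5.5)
Your proposal is correct and follows the same route as the paper. The paper gets well-definedness from products of Hilbert--Schmidt blocks, reads skew-symmetry off the formula, and only asserts the cocycle identity with a reference to Pressley--Segal; you actually carry out that block computation, and your cyclic moves are legitimate because every expanded term has exactly two Hilbert--Schmidt factors and one bounded diagonal factor.

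Two small slips do not affect the main argument. First, the ``purely off-diagonal'' terms you list never occur, since each product in an off-diagonal block of a commutator contains exactly one diagonal factor. Second, in your optional truncation route, finite-rank truncations are not dense for the operator norm on diagonal blocks. You would instead need strong convergence $P_nXP_n\to X$, together with Hilbert--Schmidt-norm convergence of products of these truncations with a fixed Hilbert--Schmidt operator.
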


\begin{proof}
The off-diagonal blocks of \(A\) and \(B\) are Hilbert--Schmidt, hence the
products
\[
A_{-+}B_{+-},
\qquad
B_{-+}A_{+-}
\]
are trace-class. Thus \(c_{\mathrm{Sch}}(A,B)\) is well-defined.

Skew-symmetry is immediate from the formula.

The cocycle identity follows from the standard restricted-trace computation:
for
\[
A,B,C\in\mathfrak{gl}_{\mathrm{res}}(H),
\]
one checks that
\[
c_{\mathrm{Sch}}([A,B],C)
+
c_{\mathrm{Sch}}([B,C],A)
+
c_{\mathrm{Sch}}([C,A],B)
=
0.
\]
This is the usual Schwinger cocycle on the restricted Lie algebra; see
\cite{PressleySegal1986}.
\end{proof}

\subsection{Pullback by a restricted infinitesimal action}

Assume now that the \(G\)-action on the relevant Hilbert module is represented
infinitesimally by a Lie algebra morphism
\[
\rho_*:\mathfrak g\longrightarrow \mathfrak{gl}_{\mathrm{res}}(H).
\]

\begin{definition}
The Schwinger cocycle associated with the restricted action is
\[
c_\rho(\xi,\eta)
:=
c_{\mathrm{Sch}}(\rho_*(\xi),\rho_*(\eta)).
\]
\end{definition}

\begin{proposition}
The bilinear map
\[
c_\rho:\mathfrak g\times\mathfrak g\to\mathbb C
\]
is a Lie algebra \(2\)-cocycle.
\end{proposition}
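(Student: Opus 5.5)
The claim is that $c_\rho$ is a Lie algebra $2$-cocycle on $\mathfrak g$. The natural route is to show that Lie algebra $2$-cocycles pull back along Lie algebra morphisms, and then apply this to the Schwinger cocycle $c_{\mathrm{Sch}}$, which is a $2$-cocycle on $\mathfrak{gl}_{\mathrm{res}}(H)$ by the preceding proposition. This is the Chevalley--Eilenberg statement that $\rho_*^*$ commutes with $d_{\mathrm{CE}}$ for trivial coefficients $\mathbb C$. I would prove it directly rather than quote it.

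The first step is well-definedness and bilinearity. Since $\rho_*$ takes values in $\mathfrak{gl}_{\mathrm{res}}(H)$, the off-diagonal blocks of $\rho_*(\xi)$ and $\rho_*(\eta)$ are Hilbert--Schmidt. Hence $c_\rho(\xi,\eta)=c_{\mathrm{Sch}}(\rho_*(\xi),\rho_*(\eta))$ is a finite complex number. Bilinearity follows from linearity of $\rho_*$ and bilinearity of $c_{\mathrm{Sch}}$.

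The second step is skew-symmetry, which is immediate: $c_\rho(\eta,\xi)=c_{\mathrm{Sch}}(\rho_*(\eta),\rho_*(\xi))=-c_{\mathrm{Sch}}(\rho_*(\xi),\rho_*(\eta))$.

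The third step is the cocycle identity. Let $\xi,\eta,\zeta\in\mathfrak g$ and put $A=\rho_*(\xi)$, $B=\rho_*(\eta)$, $C=\rho_*(\zeta)$. Because $\rho_*$ is a Lie algebra morphism, $\rho_*([\xi,\eta])=[A,B]$, and cyclically for the other pairs. Therefore
\[
c_\rho([\xi,\eta],\zeta)+c_\rho([\eta,\zeta],\xi)+c_\rho([\zeta,\xi],\eta)
=
c_{\mathrm{Sch}}([A,B],C)+c_{\mathrm{Sch}}([B,C],A)+c_{\mathrm{Sch}}([C,A],B),
\]
and the right-hand side vanishes by the cocycle identity for $c_{\mathrm{Sch}}$ established in the previous proposition.

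The only point needing care is the hypothesis that $\rho_*$ is a genuine Lie algebra morphism into $\mathfrak{gl}_{\mathrm{res}}(H)$. This means two things: the bracket on $\mathfrak g$, coming from the regular diffeological Lie group structure, must be sent to the operator commutator; and the commutators $[A,B]$ must remain in $\mathfrak{gl}_{\mathrm{res}}(H)$, which holds because $\mathfrak{gl}_{\mathrm{res}}(H)$ is closed under commutators. Both are built into the assumption, so I expect no real obstacle beyond stating this explicitly.
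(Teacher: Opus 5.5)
Your proposal is correct and follows the same route as the paper: the cocycle identity for $c_\rho$ is obtained by pulling back the cocycle identity for $c_{\mathrm{Sch}}$ through the morphism property $\rho_*([\xi,\eta])=[\rho_*(\xi),\rho_*(\eta)]$. Your separate checks of well-definedness, bilinearity and skew-symmetry spell out what the paper's one-line argument leaves implicit.
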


\begin{proof}
Since \(\rho_*\) is a Lie algebra morphism,
\[
\rho_*([\xi,\eta])
=
[\rho_*(\xi),\rho_*(\eta)].
\]
The cocycle identity for \(c_\rho\) is therefore the pullback of the cocycle
identity for \(c_{\mathrm{Sch}}\).
\end{proof}

\subsection{Defect-induced Schwinger cocycle}

We now incorporate the equivariance defect.

Let
\[
D
\]
be a Dirac-type operator, and suppose that its infinitesimal equivariance
defect is defined by
\[
\mathcal D_D(\xi)
=
[\mathcal L_{\xi^\#},D].
\]

\textbf{Assumption.}
We assume that the defect operators define a map
\[
\delta_D:\mathfrak g\longrightarrow \mathfrak{gl}_{\mathrm{res}}(H),
\qquad
\xi\mapsto \mathcal D_D(\xi),
\]
and that \(\delta_D\) is a Lie algebra morphism, or more generally that its
failure to be a morphism is controlled by trace-class terms for which the
Schwinger cocycle remains defined.

\begin{definition}
Under this assumption, the \emph{defect Schwinger cocycle} is
\[
c_D(\xi,\eta)
=
c_{\mathrm{Sch}}(\delta_D(\xi),\delta_D(\eta)).
\]
Explicitly,
\[
c_D(\xi,\eta)
=
\operatorname{Tr}
\left(
\delta_D(\xi)_{-+}\delta_D(\eta)_{+-}
-
\delta_D(\eta)_{-+}\delta_D(\xi)_{+-}
\right).
\]
\end{definition}

\begin{proposition}
If \(\delta_D\) is a Lie algebra morphism into
\(\mathfrak{gl}_{\mathrm{res}}(H)\), then
\[
c_D
\]
is a Lie algebra \(2\)-cocycle on \(\mathfrak g\).
\end{proposition}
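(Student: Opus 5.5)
The plan is to reduce the statement to the proposition proved just above for \(c_\rho\), where the Schwinger cocycle is pulled back along a Lie algebra morphism \(\rho_*\). The same argument applies verbatim with \(\rho_*\) replaced by \(\delta_D\). The only data used there are that \(c_{\mathrm{Sch}}\) is a \(2\)-cocycle on \(\mathfrak{gl}_{\mathrm{res}}(H)\), which is the earlier proposition on the Schwinger cocycle, and that the pulling-back map intertwines brackets.

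First I will check that \(c_D\) is well defined and bilinear. By assumption each \(\delta_D(\xi)\) lies in \(\mathfrak{gl}_{\mathrm{res}}(H)\), so its off-diagonal blocks are Hilbert--Schmidt and the products \(\delta_D(\xi)_{-+}\delta_D(\eta)_{+-}\) are trace class. Bilinearity of \(c_D\) follows from the linearity of \(\xi\mapsto\mathcal D_D(\xi)\), established earlier, together with the bilinearity of \(c_{\mathrm{Sch}}\). Skew-symmetry is visible directly from the explicit formula, since exchanging \(\xi\) and \(\eta\) swaps the two trace terms.

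Next comes the cocycle identity. For \(\xi,\eta,\zeta\in\mathfrak g\) I will use the morphism property \(\delta_D([\xi,\eta])=[\delta_D(\xi),\delta_D(\eta)]\) to rewrite
\[
c_D([\xi,\eta],\zeta)+c_D([\eta,\zeta],\xi)+c_D([\zeta,\xi],\eta)
\]
as
\[
c_{\mathrm{Sch}}([A,B],C)+c_{\mathrm{Sch}}([B,C],A)+c_{\mathrm{Sch}}([C,A],B),
\]
where \(A=\delta_D(\xi)\), \(B=\delta_D(\eta)\) and \(C=\delta_D(\zeta)\). This sum vanishes by the cocycle property of \(c_{\mathrm{Sch}}\) on \(\mathfrak{gl}_{\mathrm{res}}(H)\).

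There is essentially no obstacle under the stated hypothesis. The one subtle point is that the brackets \([A,B]\) must remain in \(\mathfrak{gl}_{\mathrm{res}}(H)\). This holds because \(\mathfrak{gl}_{\mathrm{res}}(H)\) is a Lie algebra: the off-diagonal blocks of a commutator are sums of products of a bounded operator with a Hilbert--Schmidt one, hence are again Hilbert--Schmidt.
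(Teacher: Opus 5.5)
Your proposal is correct and follows the paper's proof, which simply notes that \(c_D=\delta_D^*c_{\mathrm{Sch}}\) is the pullback of the closed Schwinger cocycle along a Lie algebra morphism, exactly as for \(c_\rho\). Your explicit checks of well-definedness, skew-symmetry and the cyclic cocycle identity spell out that one-line argument in more detail.
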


\begin{proof}
This is again the pullback of the Schwinger cocycle:
\[
c_D=\delta_D^*c_{\mathrm{Sch}}.
\]
Since \(c_{\mathrm{Sch}}\) is closed in Lie algebra cohomology and
\(\delta_D\) is a Lie algebra morphism, \(c_D\) is closed.
\end{proof}

\begin{remark}
The assumption that \(\delta_D\) be a Lie algebra morphism is restrictive. If
it is not satisfied, then \(c_D\) should be regarded first as a bilinear
Schwinger-type functional. Its failure to be closed is measured by the
Chevalley--Eilenberg differential
\[
d_{\mathrm{CE}}c_D.
\]
Thus the cocycle property is an additional analytic and algebraic condition,
not an automatic consequence of the definition of the defect.
\end{remark}

\subsection{Central extension}

Assume that
\[
c_D
\]
is a Lie algebra \(2\)-cocycle with values in \(i\mathbb R\) or \(\mathbb R\),
depending on the chosen convention.

\begin{proposition}
The cocycle \(c_D\) defines a central extension of Lie algebras
\[
0
\longrightarrow
\mathbb R
\longrightarrow
\widehat{\mathfrak g}_D
\longrightarrow
\mathfrak g
\longrightarrow
0.
\]
\end{proposition}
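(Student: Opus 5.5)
The plan is the standard construction of a central extension from a Lie algebra $2$-cocycle. If the convention gives $c_D$ values in $i\mathbb R$, I first replace it by the real cocycle $-i\,c_D$; this is legitimate because the cocycle identity is linear. I then set
\[
\widehat{\mathfrak g}_D:=\mathfrak g\oplus\mathbb R
\]
as a vector space, with bracket
\[
[(\xi,s),(\eta,t)]:=\bigl([\xi,\eta],\,c_D(\xi,\eta)\bigr).
\]
The maps $\mathbb R\to\widehat{\mathfrak g}_D$, $t\mapsto(0,t)$, and $\widehat{\mathfrak g}_D\to\mathfrak g$, $(\xi,s)\mapsto\xi$, are linear. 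The first is injective, the second is surjective, and the image of the first is exactly the kernel of the second, so the sequence is exact at the level of vector spaces.

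Next I check that $\widehat{\mathfrak g}_D$ is a Lie algebra. Bilinearity is immediate. Antisymmetry follows from antisymmetry of $[\cdot,\cdot]$ in $\mathfrak g$ together with skew-symmetry of $c_D$, which is inherited from the explicit Schwinger formula. For the Jacobi identity, compute
\[
\bigl[[(\xi,s),(\eta,t)],(\zeta,u)\bigr]=\bigl([[\xi,\eta],\zeta],\,c_D([\xi,\eta],\zeta)\bigr)
\]
and sum cyclically. The $\mathfrak g$-component vanishes by Jacobi in $\mathfrak g$. The $\mathbb R$-component is
\[
c_D([\xi,\eta],\zeta)+c_D([\eta,\zeta],\xi)+c_D([\zeta,\xi],\eta),
\]
which vanishes precisely because $c_D$ is a $2$-cocycle. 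This is the cocycle identity established for $c_{\mathrm{Sch}}$ and pulled back along $\delta_D$ in the preceding proposition.

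Finally, the two structure maps are Lie algebra morphisms. The projection is one because its $\mathfrak g$-component is $[\xi,\eta]$. The inclusion is one because $[(0,s),(\eta,t)]=(0,c_D(0,\eta))=0$. That same computation shows that $\mathbb R$ is central in $\widehat{\mathfrak g}_D$, so the extension is central.

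The only real point is the Jacobi identity, and it reduces entirely to the hypothesis that $c_D$ is closed. No extra obstacle appears at the Lie algebra level. The analytic difficulties, namely integrability to a group extension, are not needed for this statement. I would also remark that changing $c_D$ by a coboundary $\lambda([\xi,\eta])$ yields an isomorphic extension via $(\xi,s)\mapsto(\xi,s+\lambda(\xi))$. Hence the extension depends only on the class $[c_D]\in H^2_{\mathrm{CE}}(\mathfrak g;\mathbb R)$.
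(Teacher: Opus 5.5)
Your proof is correct and follows the same route as the paper: define $\mathfrak g\oplus\mathbb R$ with the bracket twisted by $c_D$, reduce the Jacobi identity to the cocycle identity, and read off centrality of the $\mathbb R$ factor. You give more detail than the paper: the $-i$ normalization when $c_D$ is $i\mathbb R$-valued, exactness, and the morphism checks. One small correction: the cocycle property of $c_D$ is a standing hypothesis of this proposition, not a consequence of the preceding one, which applies only when $\delta_D$ is a Lie algebra morphism.
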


\begin{proof}
Define
\[
\widehat{\mathfrak g}_D
=
\mathfrak g\oplus\mathbb R
\]
with bracket
\[
[(\xi,a),(\eta,b)]
=
([\xi,\eta],c_D(\xi,\eta)).
\]
The Jacobi identity for this bracket is equivalent to the cocycle identity for
\(c_D\). The second factor is central, so the sequence is a central extension.
\end{proof}

\subsection{Integration to a group extension}

\textbf{assumption}
We assume that the Lie algebra extension defined by \(c_D\) integrates to a
central extension of diffeological Lie groups
\[
1
\longrightarrow
U(1)
\longrightarrow
\widehat G_D
\longrightarrow
G
\longrightarrow
1.
\]

\begin{remark}
This integration is not automatic. It requires an integrality condition on the
periods of the Lie algebra cocycle and suitable regularity assumptions on
\(G\).
\end{remark}

\subsection{The lifting gerbe}

Let
\[
E^{\mathrm{sph}}(G)\to B^{\mathrm{sph}}(G)
\]
be the universal principal \(G\)-bundle.

\begin{definition}
The \emph{Schwinger lifting gerbe} associated with \(D\) is the lifting gerbe
for the central extension
\[
1
\to
U(1)
\to
\widehat G_D
\to
G
\to
1
\]
applied to the universal principal \(G\)-bundle
\[
E^{\mathrm{sph}}(G)\to B^{\mathrm{sph}}(G).
\]
\end{definition}

\begin{proposition}
The Schwinger lifting gerbe defines a class
\[
\mathrm{DD}(D)
\in
H^3(B^{\mathrm{sph}}(G);\mathbb Z).
\]
\end{proposition}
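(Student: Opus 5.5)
The plan is to specialize the Čech construction of Section~\ref{sec:gerbes} to the untwisted case and then apply the exponential sequence. The extension $1\to U(1)\to\widehat G_D\to G\to 1$ is an instance of the setting of the lifting gerbe with trivial $\mathbb Z_2$-part. Concretely, we take $\sigma$ and $\widehat\sigma$ trivial and regard the universal $G$-bundle as an $H_\sigma$-bundle whose $\varepsilon_{ij}$ are identically $1$. Equivalently, we rerun the same computations with all $\varepsilon_{ij}=1$. The local system $U(1)_\alpha$ is then the constant sheaf of smooth $U(1)$-valued functions, and $\mathbb Z_\alpha=\mathbb Z$.

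\textbf{Step 1 (local sections and lifts).} First we need a cover $(U_i)$ of $B^{\mathrm{sph}}(G)$ over which $E^{\mathrm{sph}}(G)$ is trivial. Then we need local lifts $\widehat g_{ij}:U_{ij}\to\widehat G_D$ of the transition functions $g_{ij}$.

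For the universal bundle the natural cover is $U_i=\{x_i\neq 0\}$, which is $G$-invariant and $D$-open. The section $s_i$ normalizes the label $g_i$ to $1_G$, and the transition functions are $g_{ij}=g_ig_j^{-1}$, read on representatives.

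Smooth lifts of $g_{ij}$ exist locally whenever $\widehat G_D\to G$ admits smooth local sections. This is part of what "central extension of diffeological Lie groups" in the integration assumption should provide: $\widehat G_D\to G$ is a principal $U(1)$-bundle. Refining the cover if needed, we choose such lifts.

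\textbf{Step 2 (the cocycle and its class).} We form
\[
a_{ijk}=\widehat g_{ij}\widehat g_{jk}\widehat g_{ik}^{-1}.
\]
The proposition showing $a_{ijk}\in A$ and the twisted cocycle theorem, with trivial twist, give a Čech $2$-cocycle with values in $\underline{U(1)}$. The independence-of-choices theorem shows that its class $\beta_D\in\check H^2(B^{\mathrm{sph}}(G);\underline{U(1)})$ does not depend on the lifts. A common refinement argument handles changes of cover. By the theorem identifying $[a]$ with the class of the lifting gerbe, $\beta_D$ is the class of the Schwinger lifting gerbe.

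\textbf{Step 3 (Dixmier--Douady class).} We apply the exponential sequence $0\to\mathbb Z\to\underline{\mathbb R}\to\underline{U(1)}\to 0$. Its connecting map $\check H^2(\,\cdot\,;\underline{U(1)})\to\check H^3(\,\cdot\,;\mathbb Z)$ is the $\alpha=0$ case of the twisted Dixmier--Douady definition. We set $\mathrm{DD}(D)$ to be the image of $\beta_D$.

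Concretely, we choose local logarithms $a_{ijk}=e^{2\pi i r_{ijk}}$, possibly after refining to a good cover, and take $n_{ijkl}=(\delta r)_{ijkl}\in\mathbb Z$. Only the connecting homomorphism is needed here, not the isomorphism. It is well defined on classes by the usual snake-lemma argument on Čech complexes.

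\textbf{Main obstacle.} The hard part is the sheaf-theoretic input on the diffeological space $B^{\mathrm{sph}}(G)$. We need $D$-open covers fine enough that $U(1)$-valued functions admit logarithms on multiple intersections, and we need $\underline{\mathbb R}$ to be acyclic (fine) so that the connecting map lands in and is identified with $H^3(\,\cdot\,;\mathbb Z)$.

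The first approach I would try is to use smooth partitions of unity subordinate to $\{x_i\neq0\}$, namely $x_i^2$ multiplied by a cut-off. These are smooth on $E^{\mathrm{sph}}(G)$, $G$-invariant, and hence descend. With them, $\underline{\mathbb R}$ is fine by the standard averaging homotopy.

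For logarithms, I would refine to a cover whose intersections are contractible. Alternatively, I would simply define $H^3(B^{\mathrm{sph}}(G);\mathbb Z)$ as Čech cohomology of the $D$-topology with respect to covers on which the needed logarithms exist, and interpret the statement in that sense.
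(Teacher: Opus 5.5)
Your proposal is correct and follows the same route as the paper. The paper only invokes the standard fact that a central $U(1)$-extension gives a $U(1)$-banded lifting gerbe with Dixmier--Douady class in $H^3(\,\cdot\,;\mathbb Z)$, applied to the universal bundle; you make this explicit via the \v{C}ech construction of Section~\ref{sec:gerbes} with trivial twist, followed by the exponential connecting map.

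Two small corrections. First, with the paper's left action $h\cdot[(x_i,g_i)]=[(x_i,hg_i)]$, the $G$-invariant transition functions on $U_{ij}$ are $g_i^{-1}g_j$ (up to inversion); your $g_ig_j^{-1}$ is not invariant, so it does not define a function on the base. Second, fineness of $\underline{\mathbb R}$ is needed only to make the connecting map an isomorphism, not to produce the class, and local logarithms of a smooth $f$ always exist on the $D$-open sets $f^{-1}\bigl(U(1)\setminus\{-f(x)\}\bigr)$.
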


\begin{proof}
A central extension of \(G\) by \(U(1)\) defines a lifting problem for any
principal \(G\)-bundle. The obstruction to lifting a principal \(G\)-bundle to
a principal \(\widehat G_D\)-bundle is represented by a \(U(1)\)-banded gerbe.
Its Dixmier--Douady class lies in degree-three integral cohomology. Applying
this construction to the universal spherical bundle gives the stated class.
\end{proof}

\subsection{Pullback by classifying maps}

Let
\[
f:X\to B^{\mathrm{sph}}(G)
\]
be a classifying map, and let
\[
P=f^*E^{\mathrm{sph}}(G)
\]
be the associated principal \(G\)-bundle.

\begin{theorem}
The pullback class
\[
f^*\mathrm{DD}(D)
\in
H^3(X;\mathbb Z)
\]
is the Dixmier--Douady class of the lifting gerbe of \(P\) associated with the
central extension
\[
1\to U(1)\to \widehat G_D\to G\to 1.
\]
\end{theorem}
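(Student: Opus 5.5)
The argument is naturality of the lifting gerbe construction under pullback, carried out at the level of Čech cocycles. Everything is done in the untwisted setting: the group is \(G\), there is no \(\mathbb Z_2\)-component, so \(\alpha=0\) and \(U(1)_\alpha=U(1)\). The plan has three steps.

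\textbf{Step 1 (universal cocycle).} Choose a \(D\)-open cover \((V_a)_a\) of \(B^{\mathrm{sph}}(G)\) over which the universal bundle \(E^{\mathrm{sph}}(G)\to B^{\mathrm{sph}}(G)\) admits smooth local sections \(s_a\). Let \(g_{ab}\) be the resulting transition functions. After refining the cover if necessary, choose smooth lifts \(\widehat g_{ab}:V_{ab}\to\widehat G_D\).

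By the proposition on the Čech description of the obstruction class, in the special case \(\sigma\) trivial, the functions
\[
a_{abc}=\widehat g_{ab}\,\widehat g_{bc}\,\widehat g_{ac}^{-1}
\]
form a \(U(1)\)-valued Čech \(2\)-cocycle. Its class is the class of the Schwinger lifting gerbe, and \(\mathrm{DD}(D)\) is its image under the connecting map \(\partial:H^2(\,\cdot\,;U(1))\to H^3(\,\cdot\,;\mathbb Z)\).

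\textbf{Step 2 (pullback).} Consider the cover \((f^{-1}V_a)_a\) of \(X\), which is \(D\)-open because \(f\) is smooth. The sections \(\tilde s_a(x)=(x,s_a(f(x)))\) trivialize \(P=f^*E^{\mathrm{sph}}(G)\), and its transition functions are \(g_{ab}\circ f\). The maps \(\widehat g_{ab}\circ f\) are lifts of these transition functions, so the obstruction cocycle of \(P\) computed with these lifts is exactly \(a_{abc}\circ f=f^*a\).

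By the theorem on independence of choices (again with \(\sigma\) trivial), together with independence of the trivializing cover under refinement, the class of the lifting gerbe of \(P\) is therefore \(f^*[a]\). Equivalently, at the level of stacks, pulling back local lifts gives a morphism \(f^*\mathcal G(E^{\mathrm{sph}}(G),\widehat G_D)\to\mathcal G(P,\widehat G_D)\) of \(U(1)\)-banded gerbes, and this morphism is an equivalence.

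\textbf{Step 3 (Dixmier--Douady).} The exponential sequence \(0\to\mathbb Z\to\mathbb R\to U(1)\to0\) is natural under \(f\). Hence \(\partial\circ f^*=f^*\circ\partial\), which gives
\[
f^*\mathrm{DD}(D)=\partial(f^*[a])=\mathrm{DD}(\mathcal G(P,\widehat G_D)).
\]

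\textbf{Main obstacle.} The delicate point is in Steps 1–2: showing that smooth local lifts \(\widehat g_{ab}\) exist on a cover, and that Čech cohomology on the \(D\)-topology is independent of the cover, so that the identification \(H^2(\cdot;U(1))\cong H^3(\cdot;\mathbb Z)\) is valid in this setting.

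To get the lifts, I would use the integrated extension \(\widehat G_D\to G\) from the integration assumption. That assumption should make \(\widehat G_D\to G\) a locally trivial principal \(U(1)\)-bundle, so it has local smooth sections; composing these with \(g_{ab}\), after refining the cover, produces the lifts.

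For the cohomological identification, I would take the fineness of \(\mathbb R\) as given, as the paper does in the subsection on the twisted Dixmier--Douady class. Cover independence then follows by passing to the direct limit over refinements, and the pullback is compatible with refinements.
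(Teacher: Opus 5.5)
Your proposal is correct and follows the same route as the paper. The paper's proof is just the functoriality of the lifting gerbe under pullback, i.e.\ the equivalence $f^*\mathcal G(E^{\mathrm{sph}}(G),\widehat G_D)\simeq\mathcal G(P,\widehat G_D)$ you mention at the end of Step 2, and your Čech-level computation $a\circ f$ together with naturality of the exponential connecting map $\partial$ makes explicit what the paper asserts in one line. Note also that for this statement you only need naturality of $\partial$, not that it is an isomorphism, so the fineness of $\mathbb R$ you list as a delicate point plays no role here.
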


\begin{proof}
The lifting gerbe construction is functorial under pullback of principal
bundles. Since
\[
P=f^*E^{\mathrm{sph}}(G),
\]
the lifting gerbe of \(P\) is the pullback of the universal lifting gerbe on
\(B^{\mathrm{sph}}(G)\). Therefore its Dixmier--Douady class is
\[
f^*\mathrm{DD}(D).
\]
\end{proof}

\subsection{Interpretation}

\begin{remark}
The class
\[
\mathrm{DD}(D)
\]
should be interpreted as the higher obstruction associated with the
Schwinger-type defect of \(D\). It measures the obstruction to lifting the
\(G\)-symmetry to the centrally extended symmetry group
\[
\widehat G_D.
\]
\end{remark}

\begin{remark}
In field-theoretic terminology, this is an anomaly class: the Schwinger term
is the infinitesimal cocycle, while the Dixmier--Douady class is its global
gerbe-theoretic counterpart.
\end{remark}

\subsection{Summary}

Under the restricted Hilbert and integrability assumptions above, the
equivariance defect of a Dirac-type operator gives rise to:
\[
\text{defect}
\quad\Rightarrow\quad
\text{Schwinger cocycle}
\quad\Rightarrow\quad
\text{central extension}
\quad\Rightarrow\quad
\text{lifting gerbe}
\quad\Rightarrow\quad
\text{Dixmier--Douady class}.
\]
\section{An explicit example: loop groups and the Schwinger cocycle}
\label{sec:loop-example}

We now describe an explicit model in which the defect class can be computed.
This example is classical and provides the prototype for the construction above.

\subsection{The polarized Hilbert space}

Let
\[
H=L^2(S^1,\mathbb C^N)
\]
and consider the Fourier polarization
\[
H=H_+\oplus H_-,
\]
where
\[
H_+
=
\overline{\operatorname{span}}\{z^n e_a \mid n\geq0,\ 1\leq a\leq N\},
\]
and
\[
H_-
=
\overline{\operatorname{span}}\{z^n e_a \mid n<0,\ 1\leq a\leq N\}.
\]

Let
\[
D=-i\frac{d}{d\theta}
\]
be the standard Dirac operator on the circle. Its positive and negative spectral subspaces are precisely \(H_+\) and \(H_-\), up to the convention for the zero mode.

\subsection{The loop algebra action}

Let \(K\) be a compact Lie group with Lie algebra \(\mathfrak k\), and let
\[
L\mathfrak k=C^\infty(S^1,\mathfrak k)
\]
be its loop algebra.

Each \(X\in L\mathfrak k\) acts on \(H\) by pointwise multiplication:
\[
(M_X\psi)(\theta)=X(\theta)\psi(\theta).
\]

With respect to the polarization \(H=H_+\oplus H_-\), the operator \(M_X\) has off-diagonal blocks of Hilbert--Schmidt type. Hence
\[
M_X\in \mathfrak{gl}_{\mathrm{res}}(H).
\]

\subsection{The Schwinger cocycle}

\begin{definition}
The Schwinger cocycle on \(L\mathfrak k\) is defined by
\[
c(X,Y)
=
\operatorname{Tr}\bigl(
(M_X)_{-+}(M_Y)_{+-}
-
(M_Y)_{-+}(M_X)_{+-}
\bigr),
\]
where the block decomposition is taken with respect to
\[
H=H_+\oplus H_-.
\]
\end{definition}

\begin{theorem}
The cocycle \(c\) is given explicitly by
\[
c(X,Y)
=
\frac{1}{2\pi i}
\int_{S^1}
\operatorname{tr}\bigl(X\,dY\bigr).
\]
Equivalently, if \(z=e^{i\theta}\), then
\[
c(X,Y)
=
\frac{1}{2\pi i}
\int_{|z|=1}
\operatorname{tr}\left(X(z)\frac{dY}{dz}(z)\right)\,dz.
\]
\end{theorem}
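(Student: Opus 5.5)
The plan is a direct Fourier computation. Expand the smooth loops as
\[
X(\theta)=\sum_{n\in\mathbb Z}X_n e^{in\theta},\qquad Y(\theta)=\sum_{n\in\mathbb Z}Y_n e^{in\theta},\qquad X_n,Y_n\in\mathfrak k\subset M_N(\mathbb C).
\]
The coefficients decay faster than any power of \(|n|\). In the basis \(\{z^m e_a\}\), multiplication acts by \(M_X(z^m v)=\sum_n z^{n+m}X_n v\). So the matrix of \(M_X\) is a block Toeplitz matrix whose \((k,m)\) entry is \(X_{k-m}\in M_N(\mathbb C)\).

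\textbf{Step 1: the off-diagonal blocks.} The block \((M_Y)_{+-}:H_-\to H_+\) consists of the entries \(Y_{k-m}\) with \(m<0\le k\). The block \((M_X)_{-+}:H_+\to H_-\) consists of the entries \(X_{k-m}\) with \(k<0\le m\). Their Hilbert--Schmidt norms are
\[
\sum_{n>0}n\,\|Y_n\|^2,\qquad \sum_{n>0}n\,\|X_{-n}\|^2 ,
\]
and both are finite by rapid decay. Hence the products appearing in the definition of \(c\) are trace class, consistent with the proposition on the Schwinger cocycle proved earlier.

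\textbf{Step 2: the first trace.} \((M_X)_{-+}(M_Y)_{+-}\) acts on \(H_-\). Its diagonal entry at \(z^m e_a\) (\(m<0\)) is
\[
\sum_{n\ge -m}X_{-n}Y_n .
\]
Summing over \(m<0\) and exchanging the order of summation, each \(n>0\) occurs for exactly \(n\) values of \(m\). This gives
\[
\operatorname{Tr}\bigl((M_X)_{-+}(M_Y)_{+-}\bigr)=\sum_{n>0}n\,\operatorname{tr}(X_{-n}Y_n).
\]
Exchanging sums is legitimate by absolute convergence, which is again rapid decay. The same argument gives
\[
\operatorname{Tr}\bigl((M_Y)_{-+}(M_X)_{+-}\bigr)=\sum_{n>0}n\,\operatorname{tr}(Y_{-n}X_n).
\]
Substitute \(n\mapsto -n\) in the second sum and use cyclicity of \(\operatorname{tr}\). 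The difference then becomes
\[
c(X,Y)=\sum_{n\in\mathbb Z}n\,\operatorname{tr}(X_{-n}Y_n).
\]

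\textbf{Step 3: comparison with the integral.} We have \(dY=\sum_n in\,Y_n e^{in\theta}\,d\theta\). By orthogonality of characters,
\[
\int_{S^1}\operatorname{tr}(X\,dY)=2\pi i\sum_n n\,\operatorname{tr}(X_{-n}Y_n).
\]
This matches Step 2 after division by \(2\pi i\). The \(z\)-form of the statement follows from \(dz=iz\,d\theta\) and \(\frac{dY}{dz}\,dz=dY\).

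\textbf{Main obstacle.} The only delicate points are bookkeeping, not analysis. First, one must count the \(m\)'s correctly; the zero-mode convention (\(n\ge0\) placed in \(H_+\)) affects only which boundary index is counted, and a shift in the polarization convention changes the answer at most by a coboundary. Second, one must fix the ordering of the blocks in the trace so that the overall sign matches the stated normalization. I would check this sign on the test case \(X=z^{-1}A\), \(Y=zB\) before writing the general argument.
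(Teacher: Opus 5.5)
Your proposal is correct and follows the same route as the paper: Fourier-expand $X$ and $Y$, compute the two block traces to obtain $\sum_{n\in\mathbb Z} n\,\operatorname{tr}(X_{-n}Y_n)$, and match this with the orthogonality/residue evaluation of $\frac{1}{2\pi i}\int_{S^1}\operatorname{tr}(X\,dY)$. Your counting argument (each $n>0$ occurs for exactly $n$ values of $m$), together with the Hilbert--Schmidt and absolute-convergence justifications, supplies the step the paper only asserts as ``a direct trace computation'', and the sign check you propose with $X=z^{-1}A$, $Y=zB$ does give $\operatorname{tr}(AB)$ on both sides (the only harmless slip is that the coefficients $X_n$ lie in $\mathfrak k\otimes\mathbb C$ rather than in $\mathfrak k$).
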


\begin{proof}
Write the Fourier expansions
\[
X(z)=\sum_{m\in\mathbb Z}X_m z^m,
\qquad
Y(z)=\sum_{n\in\mathbb Z}Y_n z^n.
\]

Multiplication by \(X\) sends the Fourier mode \(z^r\) to
\[
\sum_m X_m z^{m+r}.
\]

The block \((M_X)_{-+}\) consists of terms sending \(H_+\) to \(H_-\), hence those for which
\[
r\geq0,\qquad m+r<0.
\]
Similarly, \((M_Y)_{+-}\) consists of terms sending \(H_-\) to \(H_+\).

A direct trace computation gives
\[
\operatorname{Tr}\bigl((M_X)_{-+}(M_Y)_{+-}\bigr)
-
\operatorname{Tr}\bigl((M_Y)_{-+}(M_X)_{+-}\bigr)
=
\sum_{m\in\mathbb Z} m\, \operatorname{tr}(X_{-m}Y_m).
\]

On the other hand,
\[
dY
=
\sum_m mY_m z^{m-1}dz.
\]
Thus
\[
\operatorname{tr}(X\,dY)
=
\sum_{m,n}
n\,\operatorname{tr}(X_mY_n)z^{m+n-1}dz.
\]
The residue picks out the terms \(m+n=0\), so
\[
\frac{1}{2\pi i}
\int_{|z|=1}
\operatorname{tr}(X\,dY)
=
\sum_n n\,\operatorname{tr}(X_{-n}Y_n).
\]
This is exactly the Schwinger cocycle.
\end{proof}

\subsection{Central extension}

\begin{proposition}
The cocycle \(c\) defines a central extension
\[
0\longrightarrow \mathbb C
\longrightarrow
\widehat{L\mathfrak k}
\longrightarrow
L\mathfrak k
\longrightarrow 0,
\]
with bracket
\[
[(X,a),(Y,b)]
=
([X,Y],c(X,Y)).
\]
\end{proposition}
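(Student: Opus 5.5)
The plan is to apply the general principle of the earlier proposition on central extensions defined by a Lie algebra \(2\)-cocycle: the vector space \(\mathfrak g\oplus\mathbb R\) with bracket \([(\xi,a),(\eta,b)]=([\xi,\eta],c_D(\xi,\eta))\) is a Lie algebra precisely when the cocycle is bilinear, skew and closed. Here we replace \(\mathbb R\) by \(\mathbb C\) and \(c_D\) by the loop-algebra Schwinger cocycle \(c\). So the proof reduces to three verifications for \(c\) on \(L\mathfrak k\): that \(c(X,Y)\) is well defined and bilinear, that it is skew, and that it satisfies the Chevalley--Eilenberg cocycle identity. Then the bracket is Lie and \(\mathbb C\) is central, and the sequence is exact by construction.

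\emph{Well-definedness and skew-symmetry.} I would use the residue formula of the preceding theorem, \(c(X,Y)=\sum_n n\,\operatorname{tr}(X_{-n}Y_n)\). For smooth loops the Fourier coefficients decay rapidly, so the sum converges absolutely. Equivalently, this follows from the Hilbert--Schmidt property of the off-diagonal blocks noted above. Bilinearity is then immediate. For skew-symmetry, I reindex \(n\mapsto -n\) and use cyclicity of \(\operatorname{tr}\):
\[
c(Y,X)=\sum_n n\,\operatorname{tr}(Y_{-n}X_n)=\sum_n(-n)\operatorname{tr}(Y_nX_{-n})=-c(X,Y).
\]
In integral form this is just \(\int_{S^1}\operatorname{tr}(X\,dY+Y\,dX)=\int_{S^1}d\operatorname{tr}(XY)=0\).

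\emph{Cocycle identity.} This is the only real step. I would work with the integral expression \(c(X,Y)=\frac{1}{2\pi i}\int_{S^1}\operatorname{tr}(X\,dY)\). Expanding \(d[Y,Z]=[dY,Z]+[Y,dZ]\) gives
\[
c(X,[Y,Z])=\tfrac{1}{2\pi i}\int\operatorname{tr}\bigl(X[dY,Z]+X[Y,dZ]\bigr).
\]
Using \(\operatorname{tr}(A[B,C])=\operatorname{tr}([A,B]C)\) and summing cyclically over \((X,Y,Z)\), the six terms should cancel in pairs, via \(\operatorname{tr}(X\,dY\,Z-X\,Z\,dY)\) against the corresponding terms from the other cyclic shifts. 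This is pointwise algebra under the integral; no integration by parts is needed once skew-symmetry is known.

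Alternatively, one can quote the general fact from the earlier proposition that \(c_{\mathrm{Sch}}\) is a cocycle on \(\mathfrak{gl}_{\mathrm{res}}(H)\). Then observe that \(X\mapsto M_X\) is a Lie algebra morphism, since pointwise multiplication satisfies \(M_{[X,Y]}=[M_X,M_Y]\), so \(c\) is its pullback.

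Finally, I would check Jacobi for the extended bracket. The \(L\mathfrak k\) component is Jacobi in \(L\mathfrak k\). The \(\mathbb C\) component is the cyclic sum \(c([X,Y],Z)+\text{cyc}\), which vanishes by the previous step. Centrality holds because \((0,a)\) brackets to \(([0,\eta],c(0,\eta))=0\).

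The main obstacle is essentially convention-checking rather than mathematics: the sign and normalization (the factor \(1/2\pi i\) and the zero-mode convention) must be consistent so that the trace formula and the integral formula really agree. For the extension statement itself, however, any nonzero scalar multiple of a cocycle works, so this does not affect the proof.
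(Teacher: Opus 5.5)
Your overall route is the paper's own. Its proof simply asserts that \(c\) is skew and satisfies the cocycle identity, then forms \(L\mathfrak k\oplus\mathbb C\) with the stated bracket. Your Jacobi and centrality checks are fine. So is your fallback argument: pull back \(c_{\mathrm{Sch}}\) along the Lie algebra morphism \(X\mapsto M_X\), exactly as in the paper's proposition on \(c_\rho\).

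However, the direct verification of the cocycle identity, which you single out as the only real step, is wrong as written. Put \(\phi:=\operatorname{tr}(X[Y,Z])\), which is invariant under cyclic permutations of \((X,Y,Z)\). Expanding \(d[Y,Z]=[dY,Z]+[Y,dZ]\) in each cyclic summand gives, pointwise on \(S^1\),
\[
\operatorname{tr}\bigl(X\,d[Y,Z]\bigr)+\operatorname{tr}\bigl(Y\,d[Z,X]\bigr)+\operatorname{tr}\bigl(Z\,d[X,Y]\bigr)=2\,d\phi .
\]
The six terms coincide in pairs instead of cancelling. For instance, the \(dY\)-terms of the first and third summands are \(\operatorname{tr}(X\,dY\,Z-X\,Z\,dY)\) and \(\operatorname{tr}(Z\,X\,dY-Z\,dY\,X)\), and these are \emph{equal} by cyclicity of \(\operatorname{tr}\). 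So the identity is not pointwise algebra under the integral. It holds only because \(\int_{S^1}d\phi=0\), i.e.\ it genuinely uses periodicity of the loops. On an interval, the same computation leaves a boundary term proportional to \(\phi(1)-\phi(0)\).

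The repair is immediate, and there are three ways to do it. First, you can integrate the exact form \(2\,d\phi\) over the circle. Second, you can use skew-symmetry on two of the three cyclic terms, writing the cyclic sum as \(c(X,[Y,Z])-c([Z,X],Y)-c([X,Y],Z)\). Ad-invariance of the trace, \(\operatorname{tr}(X[dY,Z])=\operatorname{tr}([Z,X]\,dY)\) and \(\operatorname{tr}(X[Y,dZ])=\operatorname{tr}([X,Y]\,dZ)\), then yields a true pointwise cancellation of four terms. That is presumably what ``once skew-symmetry is known'' was meant to capture, but it is not the six-term pairing you describe. Third, you can drop the hand computation and rely on the pullback argument. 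It needs only \(M_{[X,Y]}=[M_X,M_Y]\) and the cocycle property of \(c_{\mathrm{Sch}}\) already stated in the paper.
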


\begin{proof}
The bilinear form \(c\) is skew-symmetric and satisfies the Lie algebra cocycle identity. Hence it defines a central extension of \(L\mathfrak k\).
\end{proof}

\subsection{Relation with the defect of the Dirac operator}

Let \(D=-i\frac{d}{d\theta}\). For \(X\in L\mathfrak k\), compute
\[
[D,M_X]
=
-iM_{X'},
\]
where
\[
X'=\frac{dX}{d\theta}.
\]

Thus the failure of multiplication operators to commute with the Dirac operator is measured by differentiation of the loop.

\begin{proposition}
The Schwinger cocycle can be expressed in terms of the Dirac defect by
\[
c(X,Y)
=
\operatorname{Tr}_{\mathrm{res}}
\bigl(
M_X[D,M_Y]
\bigr).
\]
\end{proposition}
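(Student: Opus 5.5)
The plan is to reduce everything to the Fourier computation already carried out in the proof of the explicit formula for \(c\). That proof gives
\[
c(X,Y)=\sum_{m\in\mathbb Z} m\,\operatorname{tr}(X_{-m}Y_m)=\frac{1}{2\pi i}\int_{|z|=1}\operatorname{tr}\Bigl(X\,\frac{dY}{dz}\Bigr)dz .
\]
First I will use the commutator identity \([D,M_Y]=-iM_{Y'}\), so that
\[
M_X[D,M_Y]=-i\,M_XM_{Y'}=-i\,M_{XY'}
\]
is again a multiplication operator. Writing \(Y'=\sum_n in\,Y_n e^{in\theta}\), the operator \(-iM_{XY'}\) is multiplication by \(\sum_{m,n} n\,X_mY_n e^{i(m+n)\theta}\). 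On the basis vector \(z^re_a\), its diagonal matrix coefficient (the coefficient of \(z^r\) in the image) is therefore
\[
\sum_{m} m\,(X_{-m}Y_m)_{aa},
\]
which is independent of \(r\).

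This already shows where the difficulty lies. The operator \(M_X[D,M_Y]\) is not trace class, and its diagonal is constant rather than summable, so \(\operatorname{Tr}_{\mathrm{res}}\) cannot mean the ordinary trace. It also cannot mean a symmetric graded trace such as \(\operatorname{Tr}(\epsilon\,\cdot)\), which would return \(0\) on a constant diagonal. The main obstacle is therefore to fix the meaning of \(\operatorname{Tr}_{\mathrm{res}}\) so that it is consistent with the restricted-trace conventions of the section on the Schwinger cocycle. I will commit to the normalized \emph{trace per Fourier mode}, summed over the internal index \(a\):
\[
\operatorname{Tr}_{\mathrm{res}}(A):=\lim_{N\to\infty}\frac{1}{2N+1}\sum_{|r|\le N}\sum_{a=1}^N\langle A z^re_a,z^re_a\rangle .
\]
Equivalently, up to the standard normalization, this is the Wodzicki residue of \(A|D|^{-1}\) on \(S^1\), i.e. \(\frac{1}{2\pi}\int_{S^1}\operatorname{tr}\,\sigma_0(A)\,d\theta\). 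For multiplication operators this trace is the mean of the pointwise matrix trace. I will check that it is well defined and cyclic on the relevant operators.

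With this definition, the constant diagonal computed above gives
\[
\operatorname{Tr}_{\mathrm{res}}\bigl(M_X[D,M_Y]\bigr)=\sum_m m\,\operatorname{tr}(X_{-m}Y_m)=\frac{1}{2\pi}\int_{S^1}\operatorname{tr}(XY')\,d\theta .
\]
Comparing with the formula for \(c\) recalled at the start, and checking that \(\frac{1}{2\pi i}\operatorname{tr}(X\,dY)\) with \(dY=iY'\,d\theta\) equals \(\frac{1}{2\pi}\operatorname{tr}(XY')\,d\theta\), yields \(c(X,Y)=\operatorname{Tr}_{\mathrm{res}}(M_X[D,M_Y])\).

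As a consistency check, I will verify that the right-hand side is skew-symmetric by integrating \(\operatorname{tr}(XY')\) by parts on \(S^1\). I will also note that, since \(\operatorname{tr}(XY'+X'Y)=\operatorname{tr}(XY)'\) integrates to zero, only the off-diagonal blocks contribute. This matches the block formula \(\operatorname{Tr}((M_X)_{-+}(M_Y)_{+-}-(M_Y)_{-+}(M_X)_{+-})\) and explains why the restricted trace reproduces the Schwinger term. Finally, the convention for the zero mode does not affect any of these sums.
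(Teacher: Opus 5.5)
Your reduction is the paper's: $[D,M_Y]=-iM_{Y'}$ turns $M_X[D,M_Y]$ into the multiplication operator $-iM_{XY'}$, whose trace is then compared with the Fourier/residue formula for $c$. The decisive step, however, is yours. The paper never defines $\operatorname{Tr}_{\mathrm{res}}$. It only asserts that this trace ``extracts the residue contribution of the commutator with the polarization'', and it concludes only up to normalization. You instead fix a polarization-free functional, the mean trace per Fourier mode. This is a Wodzicki-residue/Dixmier-type trace: it vanishes on trace-class operators and is insensitive to the zero mode. You then observe that $M_X[D,M_Y]$ has constant diagonal $\sum_m m\,\operatorname{tr}(X_{-m}Y_m)$, and you read off $c(X,Y)$ exactly from the preceding theorem. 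This gives a precise identity with no normalization ambiguity. The price is that the polarization no longer enters the right-hand side, so the proposition becomes a reformulation of the Fourier formula for $c$.

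Several details need fixing.
\begin{itemize}
\item Since $Y'=dY/d\theta$, one has $dY=Y'\,d\theta$ (not $iY'\,d\theta$), and $\frac{1}{2\pi}\int_{S^1}\operatorname{tr}(XY')\,d\theta=i\sum_m m\,\operatorname{tr}(X_{-m}Y_m)$. So the correct intermediate value is $\frac{1}{2\pi i}\int_{S^1}\operatorname{tr}(XY')\,d\theta$. Your two slips cancel, and the integral detour can simply be dropped in favour of comparing the Fourier sums directly.
\item Your dismissal of $\operatorname{Tr}(\epsilon\,\cdot)$ is not right as stated. With a symmetric cutoff and $z^0\in H_+$, it returns exactly the constant diagonal, hence $c(X,Y)$. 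The genuine objection is that its value depends on the cutoff, and it changes sign or vanishes under other zero-mode conventions.
\item Your closing heuristic fails for your own functional. The products $(M_X)_{\pm\mp}(M_{Y'})_{\mp\pm}$ are trace class, so they have zero mode average. The agreement with the block formula therefore comes only from the two Fourier computations, not from off-diagonal contributions.
\item Rename the cutoff, since $N$ already denotes the rank of $\mathbb C^N$.
\end{itemize}
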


\begin{proof}
Since
\[
[D,M_Y]=-iM_{Y'},
\]
we obtain
\[
\operatorname{Tr}_{\mathrm{res}}
\bigl(M_X[D,M_Y]\bigr)
=
-i\,\operatorname{Tr}_{\mathrm{res}}(M_XM_{Y'}).
\]
The restricted trace extracts the residue contribution of the commutator with the polarization, and this is precisely
\[
\frac{1}{2\pi i}
\int_{S^1}\operatorname{tr}(X\,dY).
\]
Hence the two expressions agree up to the conventional normalization.
\end{proof}

\begin{remark}
This formula shows explicitly that the Schwinger term is the trace-level shadow of the failure of the loop algebra action to commute with the Dirac operator.
\end{remark}

\subsection{Gerbe class}

The central extension of \(L\mathfrak k\) integrates, under the usual integrality assumptions, to a central extension of the loop group
\[
1\longrightarrow U(1)
\longrightarrow
\widehat{LK}
\longrightarrow
LK
\longrightarrow
1.
\]

This central extension defines a lifting gerbe over the classifying space
\[
B LK.
\]

\begin{definition}
The Dixmier--Douady class associated with the Schwinger cocycle is
\[
\operatorname{DD}_{\mathrm{Sch}}
\in
H^3(BLK,\mathbb Z).
\]
\end{definition}

\begin{proposition}
For a principal \(LK\)-bundle
\[
P\to X
\]
classified by
\[
f:X\to BLK,
\]
the obstruction to lifting \(P\) to a principal \(\widehat{LK}\)-bundle is
\[
f^*\operatorname{DD}_{\mathrm{Sch}}
\in
H^3(X,\mathbb Z).
\]
\end{proposition}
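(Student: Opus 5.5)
The plan is to reduce the statement to the functoriality of the lifting-gerbe construction under pullback, as in the proof of the preceding theorem on the Schwinger lifting gerbe over \(B^{\mathrm{sph}}(G)\). In the loop-group setting this means comparing the lifting gerbe of \(P\) with the pullback along \(f\) of the universal lifting gerbe of \(ELK\to BLK\). Take \(BLK\) to be either the simplicial Milnor model or \(B^{\mathrm{sph}}(LK)\), with \(ELK\to BLK\) universal for \(D\)-numerable bundles by the classification theorem of Section~\ref{sec:classifying-properties}. Then \(P\cong f^*ELK\). Define \(\operatorname{DD}_{\mathrm{Sch}}\) as the Dixmier--Douady class of the lifting gerbe of the universal bundle for the extension \(1\to U(1)\to\widehat{LK}\to LK\to 1\). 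The claim is then that the class of the lifting gerbe is natural.

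First, I pass to Čech data. I choose a \(D\)-open cover \((V_a)\) of \(BLK\) with transition functions \(g_{ab}:V_{ab}\to LK\) for \(ELK\), and local lifts \(\widehat g_{ab}\) to \(\widehat{LK}\). As in the untwisted case (\(\sigma\) trivial, \(\alpha=0\)) of the obstruction-cocycle computation in Section~\ref{sec:gerbes}, the expression \(a_{abc}=\widehat g_{ab}\widehat g_{bc}\widehat g_{ac}^{-1}\) is a \(U(1)\)-valued Čech \(2\)-cocycle. Its class is independent of the lifts, and its image under the connecting map of \(0\to\mathbb Z\to\mathbb R\to U(1)\to0\) is \(\operatorname{DD}_{\mathrm{Sch}}\). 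Second, I pull back to \(X\) via the cover \(U_a=f^{-1}(V_a)\): the bundle \(f^*ELK\) has transition functions \(g_{ab}\circ f\), and \(\widehat g_{ab}\circ f\) are valid lifts. Consequently its obstruction cocycle is exactly \(f^*a_{abc}\). Third, the exponential sequence is natural in \(X\), and the connecting homomorphism commutes with \(f^*\). This gives \(\operatorname{DD}(f^*ELK)=f^*\operatorname{DD}_{\mathrm{Sch}}\). Finally, an isomorphism \(P\cong f^*ELK\) changes transition functions by a coboundary \(g'_{ab}=k_a^{-1}g_{ab}k_b\). Lifting the \(k_a\) as well changes \(a_{abc}\) by at most a coboundary, so the class is an isomorphism invariant of \(P\). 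Combined with the vanishing criterion (with \(\alpha=0\)), this identifies it as the complete obstruction to lifting \(P\) to \(\widehat{LK}\).

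The main obstacle is the existence of local lifts \(\widehat g_{ab}\). These lifts require that \(\widehat{LK}\to LK\) admit smooth local sections, that is, that it be a locally trivial \(U(1)\)-bundle in the diffeological sense. They also require covers fine enough in the \(D\)-topology, with the diffeological Čech cohomology of the sheaf \(\underline{\mathbb R}\) vanishing in positive degrees so that the exponential sequence gives the isomorphism to \(H^3(X;\mathbb Z)\). For the first point I would invoke the standard Pressley--Segal construction: \(\widehat{LK}\) is a smooth Fréchet Lie group extension, hence locally trivial. For the second I would restrict to \(D\)-numerable bundles, where smooth partitions of unity make \(\underline{\mathbb R}\) fine, as assumed in Section~\ref{sec:gerbes}. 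A secondary issue is that the cover on \(X\) is pulled back rather than chosen good. I would handle this by passing to the colimit over refinements in Čech cohomology, which is compatible with \(f^*\).
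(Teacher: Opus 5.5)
Your proposal is correct and follows the paper's argument: the lifting gerbe of $P\cong f^*ELK$ is the pullback along $f$ of the universal lifting gerbe over $BLK$, so its Dixmier--Douady class is $f^*\operatorname{DD}_{\mathrm{Sch}}$, and the vanishing criterion (with $\alpha=0$) makes this class the complete obstruction. The only difference is one of detail: you carry this out explicitly on \v{C}ech cocycles, and you flag the hypotheses the paper's one-line proof leaves implicit, namely local sections of $\widehat{LK}\to LK$, fineness of the smooth $\mathbb R$-sheaf, and refinement of covers. You also do not need the universality theorem of Section~\ref{sec:classifying-properties}, which is stated for $H_\sigma$-bundles rather than $LK$-bundles, since $P\cong f^*ELK$ is part of the hypothesis.
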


\begin{proof}
The central extension
\[
1\to U(1)\to \widehat{LK}\to LK\to1
\]
defines a lifting gerbe over \(BLK\). Pulling this gerbe back along the classifying map \(f\) gives the lifting gerbe of \(P\). Its Dixmier--Douady class is therefore
\[
f^*\operatorname{DD}_{\mathrm{Sch}}.
\]
The vanishing of this class is equivalent to the existence of a lift of \(P\) to a \(\widehat{LK}\)-bundle.
\end{proof}

\subsection{Interpretation for spherical Milnor spaces}

In the setting of spherical Milnor classifying spaces, one may take
\[
G=LK.
\]
Then the universal spherical bundle
\[
E^{\mathrm{sph}}(LK)\to B^{\mathrm{sph}}(LK)
\]
carries the same Schwinger defect mechanism.

The Dirac operator on the circle provides the polarization, the loop algebra acts by restricted operators, and the defect of equivariance produces the cocycle
\[
c(X,Y)
=
\frac{1}{2\pi i}
\int_{S^1}
\operatorname{tr}(X\,dY).
\]

Thus the universal defect class on
\[
B^{\mathrm{sph}}(LK)
\]
pulls back to the usual Schwinger--Dixmier--Douady class on any space classified by a map
\[
X\to B^{\mathrm{sph}}(LK).
\]

\begin{remark}
This example shows that the defect characteristic class introduced above is not merely formal: in the classical loop group case, it reproduces the standard Schwinger cocycle and the associated Dixmier--Douady obstruction.
\end{remark}

\section{Current groups, spin manifolds and non-spin twists}
\label{sec:current-groups-spin}

We now extend the loop group example to current groups over compact manifolds. This provides a higher-dimensional source of Schwinger-type cocycles and defect gerbes.

\subsection{Current groups over a spin manifold}

Let \(M\) be a compact oriented Riemannian spin manifold of dimension \(m\), and let
\[
K
\]
be a compact Lie group with Lie algebra \(\mathfrak k\).

The current group is
\[
\mathcal G_M:=C^\infty(M,K),
\]
and its Lie algebra is
\[
\mathfrak g_M:=C^\infty(M,\mathfrak k).
\]

Let
\[
S\to M
\]
be the spinor bundle, and let
\[
E:=M\times V
\]
be the trivial vector bundle associated with a finite-dimensional unitary representation
\[
K\to U(V).
\]

The Hilbert space is
\[
\mathcal H=L^2(M,S\otimes V).
\]

The Dirac operator is
\[
D_M:\Gamma(S\otimes V)\longrightarrow \Gamma(S\otimes V).
\]

The spectral decomposition of \(D_M\) gives a polarization
\[
\mathcal H=\mathcal H_+\oplus \mathcal H_-,
\]
where \(\mathcal H_+\) and \(\mathcal H_-\) are the positive and negative spectral subspaces of \(D_M\), with a finite-dimensional convention for the kernel.

\subsection{Action of the current algebra}

An element
\[
X\in C^\infty(M,\mathfrak k)
\]
acts on \(\mathcal H\) by pointwise multiplication:
\[
(M_X\psi)(p)=X(p)\psi(p).
\]

The commutator with the Dirac operator is
\[
[D_M,M_X]=c(dX),
\]
where \(c\) denotes Clifford multiplication.

\begin{proposition}
The current algebra acts by restricted operators with respect to the spectral polarization of \(D_M\), under the usual Schatten summability condition determined by \(\dim M\).
\end{proposition}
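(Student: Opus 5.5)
The statement says that the current algebra acts by restricted operators. Concretely, I need to show that for every $X\in C^\infty(M,\mathfrak k)$ the off-diagonal blocks $(M_X)_{+-}$ and $(M_X)_{-+}$ with respect to the spectral polarization of $D_M$ lie in the appropriate Schatten ideal. That ideal is Hilbert--Schmidt when $m=1$, and more generally $\mathcal L^{2p}$ with $2p>m$ (the ``usual Schatten summability condition''). I would reduce everything to a commutator estimate with the sign operator. Write $\epsilon=\operatorname{sign}(D_M)$, with the kernel placed in $\mathcal H_+$. Then
\[
[\epsilon,M_X]=2\begin{pmatrix}0&(M_X)_{+-}\\-(M_X)_{-+}&0\end{pmatrix},
\]
so it suffices to show that $[\epsilon,M_X]\in\mathcal L^{2p}$ for $2p>m$.

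The first step uses the identity already recorded before the statement, $[D_M,M_X]=c(dX)$, which is a bounded zeroth-order operator since $X$ is smooth and $M$ is compact. Next, I replace $\epsilon$ by $F=D_M|D_M|^{-1}$, modified on the finite-dimensional kernel. The difference $F-\epsilon$ has finite rank, so it does not affect Schatten membership.

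The second step is pseudodifferential. Both $\epsilon$ and $F$ are order-zero classical pseudodifferential operators, with principal symbol $c(\xi)/|\xi|$, which is scalar on the fibre of $V$. $M_X$ is an order-zero operator with principal symbol $X(p)\otimes\mathrm{id}$. This symbol acts only on the $V$ factor, so it commutes with the Clifford part $c(\xi)/|\xi|$. Consequently $[F,M_X]$ is a pseudodifferential operator of order $-1$. On a compact $m$-manifold, an operator of order $-1$ lies in $\mathcal L^{q}$ for every $q>m$, by Weyl's law: the singular values of $(1+D_M^2)^{-1/2}$ decay like $k^{-1/m}$. Taking $q=2p$ gives the claim, and for $m=1$ it recovers the Hilbert--Schmidt property used in the loop group section.

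For readers who prefer to avoid pseudodifferential calculus, I would offer an alternative second step. Use the integral formula
\[
|D|^{-1}=\tfrac{2}{\pi}\int_0^\infty (D^2+\lambda^2)^{-1}d\lambda
\]
and expand
\[
[D(D^2+\lambda^2)^{-1},M_X]
\]
with $[D,M_X]=c(dX)$. Each resulting term contains at least one extra resolvent factor compared with the bounded commutator. One then estimates the Schatten norms of terms such as $(D^2+\lambda^2)^{-1}c(dX)$ uniformly and integrates in $\lambda$.

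I expect the main obstacle to be this second step: justifying that the commutator really gains one order. Naively, $[F,M_X]$ contains the term
\[
c(dX)\,|D_M|^{-1}-D_M|D_M|^{-1}\,[|D_M|,M_X]\,|D_M|^{-1},
\]
and one must show that $[|D_M|,M_X]$ is bounded. This holds because the principal symbol $|\xi|$ of $|D_M|$ is scalar, which again uses the fact that $X$ acts only on $V$. This is the point where the argument could fail for a non-scalar symbol, so I would isolate it as a lemma. Finally, the Schatten class obtained ($\mathcal L^{2p}$, $2p>m$) coincides with Hilbert--Schmidt only when $m=1$. For $m>1$ the statement should therefore be read relative to the Schatten-restricted group $\mathrm{GL}_p$ (Mickelsson--Rajeev), and I would state this explicitly.
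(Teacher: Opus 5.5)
Your proposal is correct and follows essentially the same route as the paper: $M_X$ and $F=D_M|D_M|^{-1}$ are order-zero pseudodifferential operators with commuting principal symbols, so $[F,M_X]$ has order $-1$ and, by Weyl's law, lies in $\mathcal L^q$ for every $q>m$, which gives Hilbert--Schmidt off-diagonal blocks only when $m=1$. Your extra details (the block formula for $[\epsilon,M_X]$, the finite-rank kernel correction, and the symbol-commutation justification) fill in what the paper leaves implicit; the one slip is in your closing lemma remark, where you say that scalarity of the symbol $|\xi|$ of $|D_M|$ uses the fact that $X$ acts only on $V$ --- in fact $|\xi|$ is scalar on all of $S\otimes V$ regardless of how $X$ acts, and the hypothesis that $X$ acts only on $V$ is what makes the symbols of $F$ (or $D_M$) and $M_X$ commute.
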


\begin{proof}
The operator \(M_X\) is a zeroth-order pseudodifferential operator. Its commutator with the sign operator
\[
F:=D_M|D_M|^{-1}
\]
is of order \(-1\). On an \(m\)-dimensional compact manifold, a pseudodifferential operator of order \(-1\) belongs to Schatten classes \(L^p\) for \(p>m\). Thus the off-diagonal components of \(M_X\) with respect to the polarization are compact and belong to the appropriate restricted ideal. In dimension \(1\), this is Hilbert--Schmidt; in higher dimensions, one must use the \(p\)-restricted Grassmannian.
\end{proof}

\begin{remark}
Thus the correct higher-dimensional analogue of the loop group construction is not always the Hilbert--Schmidt restricted Grassmannian, but rather a Schatten-restricted Grassmannian adapted to the dimension of \(M\).
\end{remark}

\subsection{Schwinger cocycle in the spin case}

The defect of equivariance of the Dirac operator is
\[
\delta_D(X):=[D_M,M_X]=c(dX).
\]

The corresponding Schwinger-type cocycle is obtained by regularized trace expressions involving the polarization \(F\). Formally, one writes
\[
c_D(X,Y)
=
\operatorname{Tr}_{\mathrm{reg}}
\left(
M_X[F,M_Y]
\right).
\]

In dimension \(1\), this reduces to
\[
c_D(X,Y)
=
\frac{1}{2\pi i}\int_{S^1}\operatorname{tr}(X\,dY).
\]

In higher odd dimensions, the local index formula gives higher Schwinger cocycles of the form
\[
c_D(X_0,\ldots,X_m)
=
C_m\int_M
\operatorname{tr}
\left(
X_0\,dX_1\cdots dX_m
\right),
\]
up to normalization constants and characteristic factors determined by the Dirac operator.

\begin{proposition}
The higher Schwinger cocycle associated with \(D_M\) defines a Lie algebra cohomology class of the current algebra \(C^\infty(M,\mathfrak k)\).
\end{proposition}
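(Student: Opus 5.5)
The natural route goes through Connes' cyclic cohomology and the Connes--Moscovici (or Jaffe--Lesniewski--Osterwalder) character of a finitely summable Fredholm module. The plan has four steps.

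\textbf{Step 1: set up a Fredholm module.} Take the spectral polarization of \(D_M\), with \(F=D_M|D_M|^{-1}\) modified on the finite-dimensional kernel so that \(F^2=1\). Let \(\mathcal A=C^\infty(M,\operatorname{End}V)\) act on \(\mathcal H\) by multiplication. By the proposition above, \([F,a]\in L^p\) for every \(p>m\), so \((\mathcal H,F)\) is a \(p\)-summable Fredholm module over \(\mathcal A\).

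\textbf{Step 2: obtain a cyclic cocycle.} For an odd integer \(n\geq p\), Connes' formula
\[
\tau_n(a_0,\ldots,a_n)=\operatorname{Tr}\bigl(F[F,a_0][F,a_1]\cdots[F,a_n]\bigr)
\]
defines a cyclic \(n\)-cocycle on \(\mathcal A\). The product is a product of \(n+1\) operators in \(L^p\), hence trace class. The cocycle identity \(b\tau_n=0\) and cyclicity follow from the algebraic identities \(F^2=1\), \(F[F,a]=-[F,a]F\) and the Leibniz rule, together with the trace property.

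\textbf{Step 3: transfer to Lie algebra cohomology.} Restrict from \(\mathcal A\) to \(\mathfrak g_M=C^\infty(M,\mathfrak k)\), which sits in \(\mathcal A\) through the representation \(K\to U(V)\). Then use the standard map from cyclic cochains on an algebra to Chevalley--Eilenberg cochains on its Lie algebra, given by antisymmetrization:
\[
\omega(X_1,\ldots,X_{n+1})=\sum_{s\in S_{n}}\operatorname{sgn}(s)\,\tau_n\bigl(X_{n+1},X_{s(1)},\ldots,X_{s(n)}\bigr).
\]
This is the Loday--Quillen--Tsygan transfer. It is a chain map up to normalization: it sends \(b\)-closed cyclic cochains to \(d_{\mathrm{CE}}\)-closed cochains. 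That gives a class \([\omega]\in H^{n+1}_{\mathrm{CE}}(\mathfrak g_M)\). In dimension \(1\), with \(n=1\), this recovers the Schwinger cocycle of the preceding section.

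\textbf{Step 4: show the class is well defined and match the formula.} The class should be independent of choices: the kernel convention is a finite-rank perturbation of \(F\), and varying \(n\) corresponds to Connes' periodicity operator \(S\). Both change \(\tau_n\) only by coboundaries, so the CE class is unchanged. To connect with the formula \(C_m\int_M\operatorname{tr}(X_0\,dX_1\cdots dX_m)\), apply the Connes--Moscovici local index formula. It identifies the cohomology class of \(\tau_n\) with a local cocycle given by Wodzicki residues; after \(\hat A\)-corrections, its leading term is exactly that integral. The local cocycle is manifestly closed because \(d\operatorname{tr}(X_0\,dX_1\cdots dX_m)\) integrates by Stokes.

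\textbf{Main obstacle.} I expect the hardest step to be the analytic one. In higher dimensions the naive \(\operatorname{Tr}(M_X[F,M_Y])\) is not trace class. The fix is to commit to the Connes cocycle with enough commutators, taking \(n\geq m\). The price is that the resulting cocycle has degree \(n+1\) rather than degree \(2\). If a degree-two cocycle is required, as for central extensions, one must instead use the Mickelsson--Rajeev construction on the \(L^p\)-restricted group. There the cocycle is defined by a regularized trace, \(\operatorname{Tr}_{\mathrm{reg}}(M_X[F,M_Y])\), which is closed only up to a coboundary. Verifying that correction term is where I expect most of the work to lie.
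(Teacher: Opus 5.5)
Your Steps 1--3 are correct in substance, once the repair below is made. You share the paper's starting point, the Chern character of the Dirac spectral triple, but the key mechanism is different.

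The paper's proof is only a sketch. It presents the cocycle as a transgression of that Chern character and derives the cocycle identity from closedness of the local index density and cyclicity of a \emph{regularized} trace, or equivalently from the descent equations. You avoid regularization altogether. With $m+1$ commutators, Connes' character $\tau_m(a_0,\ldots,a_m)=\operatorname{Tr}(F[F,a_0]\cdots[F,a_m])$ is an honest trace. You then pass to Lie algebra cohomology by the Loday--Quillen--Tsygan antisymmetrization, so $d_{\mathrm{CE}}\omega=0$ follows purely algebraically from $b\tau_m=0$ and cyclicity. Restricting along the Lie algebra map $\mathfrak g_M\to C^\infty(M,\operatorname{End}V)$ is harmless.

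Your route buys a rigorous cocycle of the right degree $m+1$, canonically attached to $D_M$. For $m=1$ it is exactly the paper's Schwinger cocycle, since $\operatorname{Tr}(\epsilon[\epsilon,A][\epsilon,B])=4c_{\mathrm{Sch}}(A,B)$. The descent route lands directly on the local formula and ties it to anomaly theory, but the paper does not carry it out. Since the statement concerns the $(m+1)$-argument cochain, your closing detour through degree-two Mickelsson--Rajeev cocycles is unnecessary.

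\textbf{Summability condition.} In Step 2 you only need $n+1\geq p$. With your condition $n\geq p>m$, the case $n=m$ is excluded, and so is $n=1$ for $m=1$, which you use in Step 3. Your class would then sit in degree at least $m+3$.

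\textbf{Periodicity remark.} The remark in Step 4 is false. The cocycle $\tau_{n+2}$ is cohomologous to a multiple of $S\tau_n$ in $HC^{n+2}$. The antisymmetrization sends $\tau_n$ and $S\tau_n$ to classes of degrees $n+1$ and $n+3$, so ``the CE class is unchanged'' has no meaning. Nor does the antisymmetrization kill $S$-images: for $M_N(\mathbb C)$, $S[\operatorname{tr}]$ is represented, up to a constant, by $\operatorname{tr}(a_0a_1a_2)$, whose antisymmetrization is the nonzero Cartan cocycle $\operatorname{tr}(X_0[X_1,X_2])$.

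\textbf{The local cochain.} Your remark that ``$d\operatorname{tr}(X_0\,dX_1\cdots dX_m)$ integrates by Stokes'' proves nothing. That is an $(m+1)$-form on $M$, so it vanishes for degree reasons. The correct argument is that $\phi_M(a_0,\ldots,a_m)=\int_M\operatorname{tr}(a_0\,da_1\cdots da_m)$ is a cyclic cocycle:
\begin{itemize}
\item $b\phi_M=0$ follows from the Leibniz rule and the trace property;
\item cyclicity follows from Stokes applied to the $(m-1)$-form $\operatorname{tr}(a_m a_0\,da_1\cdots da_{m-1})$.
\end{itemize}
Applying your Step 3 directly to $\phi_M$ proves the proposition for the local formula with no operator theory at all.

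\textbf{Comparing $\tau_m$ with the local formula.} The Connes--Moscovici equality holds only in periodic cyclic cohomology. It must be brought down to $HC^m$ before you antisymmetrize, since the antisymmetrization is defined in a fixed degree. Here this works: by Connes' computation, the kernel of $HC^m(C^\infty(M,\operatorname{End}V))\to HP$ consists of boundaries of $(m+1)$-dimensional currents, and there are none. The result is $[\tau_m]=c_m[\phi_M]+(S\text{-terms carrying }\hat A)$. Those $S$-terms need not vanish in Lie algebra cohomology, which is exactly what the paper's hedge ``up to characteristic factors determined by the Dirac operator'' absorbs.
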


\begin{proof}
The expression is obtained from the transgression of the Chern character of the Dirac spectral triple. The cocycle identity follows from the closedness of the corresponding local index density and the cyclicity of the regularized trace. Equivalently, it is the current algebra cocycle determined by the descent equations for the Chern character.
\end{proof}

\subsection{Central extensions and higher gerbes}

For \(M=S^1\), the Schwinger cocycle integrates to a central extension
\[
1\to U(1)\to \widehat{LK}\to LK\to 1.
\]

For higher-dimensional \(M\), the corresponding object is no longer, in general, an ordinary central extension by \(U(1)\). Instead, one obtains higher extensions or gerbes over the current group.

\begin{definition}
The defect gerbe of the current group \(C^\infty(M,K)\) is the gerbe whose Dixmier--Douady-type class is represented by the higher Schwinger cocycle associated with \(D_M\).
\end{definition}

\begin{remark}
In dimension \(1\), this reduces to the usual lifting gerbe associated with the central extension of the loop group. In higher dimensions, it is naturally a higher gerbe or a higher group extension.
\end{remark}

\subsection{Pullback to spherical Milnor classifying spaces}

Take
\[
G=C^\infty(M,K).
\]

The spherical Milnor classifying space
\[
B^{\mathrm{sph}}(G)
\]
carries the universal principal current group bundle.

The Dirac operator \(D_M\) produces a universal defect class
\[
\beta_D^{\mathrm{univ}}
\]
on
\[
B^{\mathrm{sph}}(C^\infty(M,K)).
\]

For any classifying map
\[
f:X\to B^{\mathrm{sph}}(C^\infty(M,K)),
\]
the pullback
\[
f^*\beta_D^{\mathrm{univ}}
\]
is the defect class of the associated current group bundle over \(X\).

\begin{theorem}
In the spin case, the defect class associated with the current group
\[
C^\infty(M,K)
\]
is determined by the Dirac operator on \(M\) and is functorial under pullback by classifying maps.
\end{theorem}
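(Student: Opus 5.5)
The statement has two parts: the defect class is \emph{determined by} $D_M$, and it is \emph{functorial} under pullback along classifying maps. I would prove them separately, mirroring the argument already given for the loop group case and for the Schwinger lifting gerbe in Section~\ref{sec:defect-schwinger-gerbe}.

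\emph{Determination by $D_M$.} The only input entering the construction is the Dirac operator $D_M$. From it we read off the spectral polarization $\mathcal H=\mathcal H_+\oplus\mathcal H_-$ and the sign operator $F=D_M|D_M|^{-1}$. By the proposition on restricted operators, the infinitesimal current action lands in the $p$-restricted Lie algebra for $p>m$. The defect $\delta_D(X)=[D_M,M_X]=c(dX)$ and the regularized Schwinger-type cocycle $c_D$ are then built from $F$ and the multiplication operators alone. By the proposition on higher Schwinger cocycles, $c_D$ is closed and defines a class in $H^\bullet_{\mathrm{CE}}(C^\infty(M,\mathfrak k))$.

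Next I would check that this class does not depend on the auxiliary choices:
\begin{itemize}
\item the convention for $\ker D_M$;
\item the regularization of the trace.
\end{itemize}
Changing the kernel convention modifies $F$ by a finite-rank operator. Changing the regularization modifies $c_D$ by the Chevalley--Eilenberg differential of a cochain of the form $X\mapsto \operatorname{Tr}(R M_X)$, with $R$ smoothing or finite rank. Either way, the class is unchanged. By the definition of the defect gerbe, the gerbe class is represented by $c_D$. Hence $\beta_D^{\mathrm{univ}}$ on $B^{\mathrm{sph}}(C^\infty(M,K))$ is determined by $D_M$, once we use the integration assumption of Section~\ref{sec:defect-schwinger-gerbe} (or its higher analogue).

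\emph{Functoriality.} Let $f:X\to B^{\mathrm{sph}}(G)$ and $P=f^*E^{\mathrm{sph}}(G)$. The defect class of $P$ is defined by applying the same lifting or higher-gerbe construction to $P$, using the same group-level data coming from $D_M$. Lifting gerbes are functorial: the gerbe of $f^*E$ is $f^*$ of the gerbe of $E$. This was already used in the theorem on pullback of $\mathrm{DD}(D)$. Therefore the defect class of $P$ equals $f^*\beta_D^{\mathrm{univ}}$. Combined with the classification theorem for $D$-numerable bundles, this also gives homotopy invariance.

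\emph{Main obstacle.} The hard step is the first one in dimension $m>1$. There the "gerbe" is a higher object, and passing from the Lie algebra cocycle $c_D$ to a global class requires an integrality and integration result that the paper only assumes. I would therefore state the determination claim at the level of the Lie algebra class and, conditionally on that assumption, at the level of the global class. For the independence-of-regularization step, I would first try to show that the difference of two regularized traces is a local, coboundary-type term, using the descent equations mentioned after the higher Schwinger cocycle.
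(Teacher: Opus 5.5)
Your proposal is correct at the paper's level of rigor and follows the same route as the paper's proof. In both, $D_M$ fixes the polarization, the restricted current-algebra representation and the Schwinger cocycle built from $[D_M,M_X]$, and functoriality follows from naturality of the lifting (or higher) gerbe under pullback of the universal bundle. Your extra checks go beyond the paper, which simply asserts these points: the independence of the kernel convention and of the trace regularization, and your explicit flagging of the integration hypothesis for $m>1$.
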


\begin{proof}
The Dirac operator defines the polarization and hence the restricted current algebra representation. The Schwinger cocycle is constructed functorially from the commutators
\[
[D_M,M_X].
\]
Therefore the resulting gerbe or higher gerbe class is natural with respect to pullback of principal current group bundles. Since classifying maps pull back the universal bundle, the associated defect class is the pullback of the universal one.
\end{proof}

\subsection{The non-spin case}

We now consider the case where \(M\) is not spin.

If \(M\) is not spin, there is no globally defined spinor bundle \(S\), and therefore no ordinary Dirac operator on spinors. Consequently, the construction above cannot be applied directly.

There are three natural replacements.

\subsubsection{The \(\mathrm{Spin}^c\) case}

Assume that \(M\) admits a \(\mathrm{Spin}^c\)-structure. Then there exists a complex spinor bundle
\[
S^c\to M
\]
and a \(\mathrm{Spin}^c\)-Dirac operator
\[
D_M^c:\Gamma(S^c\otimes V)\to \Gamma(S^c\otimes V).
\]

The current algebra acts again by multiplication, and the defect is
\[
\delta_{D^c}(X)=[D_M^c,M_X]=c(dX).
\]

\begin{proposition}
For a \(\mathrm{Spin}^c\)-manifold \(M\), the current group defect construction goes through with \(D_M\) replaced by \(D_M^c\).
\end{proposition}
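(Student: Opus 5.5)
The plan is to rerun the spin-case argument from the preceding subsections verbatim, checking at each step that only properties shared by $D_M$ and $D_M^c$ are used. Those properties are: (i) a first-order elliptic, essentially self-adjoint operator on a compact manifold; (ii) its principal symbol is Clifford multiplication; and (iii) the commutator identity $[D_M^c,M_X]=c(dX)$. The $\mathrm{Spin}^c$ Dirac operator is built from the Levi-Civita connection together with a unitary connection on the determinant line bundle. The auxiliary connection only adds a zeroth-order, scalar-valued term $\tfrac12 c(A)$. Since $M_X$ acts on the $V$ factor and commutes with Clifford multiplication by forms, this term commutes with $M_X$. The commutator therefore still equals $c(dX)$, which is the identity stated just before the proposition.

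First, I would fix the spectral polarization $\mathcal H^c=L^2(M,S^c\otimes V)=\mathcal H^c_+\oplus\mathcal H^c_-$ of $D_M^c$, with the same finite-dimensional convention on the kernel. I then set $F^c=D^c_M|D^c_M|^{-1}$, modified by a finite-rank operator on the kernel.

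Second, I would repeat the Schatten argument of the restricted-action proposition. $F^c$ is a classical pseudodifferential operator of order $0$, and its principal symbol $c(\xi)/|\xi|$ coincides with that of the spin case, because it depends only on the Clifford module structure. $M_X$ is of order $0$ with scalar (in the spinor factor) principal symbol. Hence $[F^c,M_X]$ has order $-1$, lies in $L^p$ for $p>m$, and so $M_X$ lies in the $p$-restricted Lie algebra.

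Third, I would define the cocycle $c_{D^c}(X,Y)=\operatorname{Tr}_{\mathrm{reg}}(M_X[F^c,M_Y])$ and its higher analogues. The cocycle identity follows, as in the spin case, from cyclicity of the regularized trace together with the transgression and descent argument for the Chern character of the spectral triple $(C^\infty(M),\mathcal H^c,D^c_M)$. The local index density now carries an extra factor $e^{c_1(L)/2}$ in place of the $\hat A$ contribution alone. This changes the normalization constants and characteristic factors, which the statement explicitly allows, but not closedness, since the density is still a closed form.

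Finally, the passage to central extensions and defect gerbes, and the pullback to $B^{\mathrm{sph}}(C^\infty(M,K))$, goes through unchanged. Those steps used only the existence of the restricted representation and a Lie algebra cocycle, and naturality under classifying maps is formal, exactly as in the preceding theorem.

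The main obstacle is the Schatten/regularized-trace step: one must verify that the pseudodifferential calculus and the local index formula apply to $D^c_M$. I would handle this by noting that $D^c_M$ is a generalized Dirac operator on a Clifford module, namely $S^c\otimes V$, twisted by a Hermitian connection. Every cited analytic fact holds for generalized Dirac operators, since the heat-kernel and local index theorems are stated in that generality. The rest of the argument only needs to say that the spin structure entered the earlier proofs solely through this Clifford-module structure.
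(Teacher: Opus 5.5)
Your proposal is correct and follows the same route as the paper. The paper's proof simply observes that $D_M^c$ is a first-order elliptic operator whose principal symbol is Clifford multiplication, so that $[D_M^c,M_X]=c(dX)$ and the spectral polarization and Schwinger cocycles are defined exactly as in the spin case. Your extra checks make explicit what the paper leaves implicit: notably that the determinant-line term $\tfrac12 c(A)$ is zeroth order and commutes with $M_X$, which is what actually justifies passing from equal principal symbols to the same commutator; the symbol argument for $[F^c,M_X]$; and the $e^{c_1(L)/2}$ factor in the index density.
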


\begin{proof}
The proof is identical to the spin case. The \(\mathrm{Spin}^c\)-Dirac operator is a first-order elliptic operator with the same principal symbol given by Clifford multiplication. Hence the commutator with multiplication by a current is again
\[
[D_M^c,M_X]=c(dX).
\]
The spectral polarization and the corresponding Schwinger cocycles are therefore defined in the same way.
\end{proof}

\begin{remark}
The resulting defect class now depends on the chosen \(\mathrm{Spin}^c\)-structure and on the determinant line bundle of that structure.
\end{remark}

\subsubsection{The twisted spin case}

If \(M\) is not spin but has a spin lifting gerbe, one may replace the spinor bundle by a twisted spinor module.

The obstruction to a spin structure is
\[
w_2(M)\in H^2(M;\mathbb Z_2).
\]

Its integral lift, when present through the Bockstein homomorphism, gives the obstruction
\[
W_3(M)\in H^3(M;\mathbb Z).
\]

\begin{definition}
A twisted spinor bundle is a module over the Clifford algebra bundle defined locally on spin charts, with transition functions twisted by the spin lifting gerbe.
\end{definition}

In this case, the Dirac operator is not an ordinary global operator on a vector bundle, but a twisted Dirac operator acting on sections of a gerbe module.

\begin{proposition}
For a non-spin manifold admitting a twisted spinor module, the defect construction defines a gerbe-twisted Schwinger class.
\end{proposition}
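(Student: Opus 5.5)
The plan is to realize the twisted spinor data as a Dirac operator on a module over the spin lifting gerbe, and then to rerun the restricted-trace construction of the spin case locally on spin charts. The \(W_3(M)\)-twist is carried along by the band, and the gluing argument is the one already used for \(A_\alpha\)-banded gerbes in Section~\ref{sec:gerbes}.

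\textbf{Local data.} Choose a good cover \((M_a)\) of \(M\) by spin charts, with local spinor bundles \(S_a\) and local Dirac operators \(D_a\) on \(\Gamma(M_a,S_a\otimes V)\). Lifts of the \(SO(m)\)-transition functions to \(\mathrm{Spin}(m)\) give isomorphisms \(\varphi_{ab}:S_b\to S_a\). These satisfy \(\varphi_{ab}\varphi_{bc}=a_{abc}\varphi_{ac}\), where \((a_{abc})\) is a \(U(1)\)-valued (in fact \(\mathbb Z_2\)-valued) cocycle representing the spin lifting gerbe, whose class is \(W_3(M)\) after the Bockstein. This is exactly the structure of a \(\tau\)-twisted vector bundle in the sense of Section~\ref{sec:twisted-k-theory}. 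The local Dirac operators agree on overlaps up to conjugation by \(\varphi_{ab}\), because all of them have principal symbol given by Clifford multiplication, as in the \(\mathrm{Spin}^c\) proposition. Hence they glue to a twisted Dirac operator \(D^{tw}_M\) on sections of the gerbe module. Since \(a_{abc}\) is scalar, the space of such sections is still a Hilbert space \(\mathcal H^{tw}\).

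\textbf{Defect and cocycle.} The current algebra acts on \(\mathcal H^{tw}\) by pointwise multiplication, because \(M_X\) commutes with the scalar \(a_{abc}\). As before, \([D^{tw}_M,M_X]=c(dX)\). The spectral polarization of \(D^{tw}_M\) and the pseudodifferential order estimate, namely that \([F,M_X]\) has order \(-1\) and lies in the Schatten class \(L^p\) for \(p>m\), carry over verbatim since they are local statements. So \(\mathfrak g_M\) acts by restricted operators, and the regularized Schwinger cocycle \(c_{D^{tw}}(X,Y)=\operatorname{Tr}_{\mathrm{reg}}(M_X[F,M_Y])\), or its higher analogue, is defined. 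Its cocycle property again follows from the descent equations, since the local index density is computed chart by chart.

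\textbf{Gerbe and twisting.} Under the integrability assumption of Section~\ref{sec:defect-schwinger-gerbe}, this gives an extension of \(C^\infty(M,K)\) and a lifting gerbe over \(B^{\mathrm{sph}}(C^\infty(M,K))\). The twisting by \(W_3(M)\) enters through the identification of the Fock-type data: changing local spin lifts multiplies the \(\varphi_{ab}\) by coboundaries, which changes \(a_{abc}\) by a coboundary, exactly as in the independence-of-choices theorem. The class obtained should therefore be understood as a Schwinger class with coefficients twisted by the spin lifting gerbe, i.e.\ a gerbe-twisted class. Functoriality under classifying maps is as in the spin case.

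\textbf{Main obstacle.} The hardest step is showing that the regularized trace defining the cocycle does not depend on the local spin lifts. The cyclicity argument must survive conjugation by the \(\varphi_{ab}\). I would first try to prove this by noting that the \(\varphi_{ab}\) are unitary and differ only by central scalars, so conjugation preserves both the polarization and \(\operatorname{Tr}_{\mathrm{reg}}\).
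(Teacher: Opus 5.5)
\textbf{Gap 1: the global Hilbert space does not exist.} You assert that, because \(a_{abc}\) is scalar, the sections of the gerbe module still form a Hilbert space \(\mathcal H^{tw}\) carrying a global operator \(D^{tw}_M\). Scalarity is irrelevant; what matters is whether \((a_{abc})\) is a coboundary, and for non-spin \(M\) it is not. A global section \((s_a)\) with \(s_a=\varphi_{ab}s_b\) satisfies, on \(M_{abc}\), \(s_a=\varphi_{ab}\varphi_{bc}s_c=a_{abc}\varphi_{ac}s_c=a_{abc}s_a\). Hence it vanishes wherever \(a_{abc}=-1\). Since \([a]=w_2(M)\neq0\), this happens for every choice of lifts, and the vanishing locus depends on that choice. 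So \(\mathcal H^{tw}\) is not the \(L^2\)-space of a bundle over the closed manifold \(M\). It consists of spinors on a choice-dependent open piece of \(M\), vanishing on the rest, and \(D^{tw}_M\) acting on it has no canonical self-adjoint realization. Consequently the sign operator \(F=D|D|^{-1}\) and the spectral polarization are unavailable. They are global spectral objects, so the Schatten estimate for \([F,M_X]\) does not ``carry over verbatim'' from local computations. Rescaling the \(\varphi_{ab}\) by \(U(1)\)-valued functions to kill \(a\) is possible only when \(W_3(M)=0\), i.e.\ in the \(\mathrm{Spin}^c\) case already treated. This is also why projective Dirac operators in the sense of Mathai--Melrose--Singer exist only as kernels near the diagonal. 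A smaller point: equal principal symbols give agreement of the \(D_a\) only modulo zeroth-order terms. The exact intertwining holds because \(\varphi_{ab}\) lifts the frame changes and so intertwines the local spin connections, up to a central sign.

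\textbf{Gap 2: the twist never enters, and the central extension is unavailable.} Even granting \(\mathcal H^{tw}\), nothing in your argument makes the resulting class twisted. The coboundary computation you cite only shows that the class of the spin gerbe is independent of the lifts. Your proposed resolution of the main obstacle (the trace is insensitive to conjugation by central unitary scalars) would prove that \(c_{D^{tw}}\) does not depend on the twisting data at all. You would end with an ordinary Lie algebra cocycle and an ordinary class in \(H^3(B^{\mathrm{sph}}(C^\infty(M,K));\mathbb Z)\), so ``gerbe-twisted'' would be a label rather than a conclusion. The paper's (brief) argument is organized differently: it never globalizes. It runs the spin-case construction chart by chart and glues the local Schwinger cocycles with the transition data of the spin lifting gerbe, so the twist sits in the gluing and the global object is a cocycle with gerbe-twisted coefficients. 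Finally, orientable manifolds of dimension \(\le 3\) are spin, so here \(m\ge 4\). Then \([F,M_X]\) lies only in \(L^p\) for \(p>m\), and products of two such commutators need not be trace class. So the Hilbert--Schmidt Schwinger 2-cocycle, together with the central-extension and integrability assumption of Section~\ref{sec:defect-schwinger-gerbe}, is not available. As the paper notes for higher-dimensional \(M\), one obtains higher extensions or gerbes instead, so your step producing ``an extension of \(C^\infty(M,K)\)'' has to be replaced.
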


\begin{proof}
Locally, on spin charts, the construction is identical to the spin case. On overlaps, local spinor bundles differ by the transition data of the spin lifting gerbe. The local Schwinger cocycles therefore glue not as ordinary differential forms, but as cocycles with coefficients twisted by the gerbe. The resulting global object is a gerbe-twisted defect class.
\end{proof}

\subsubsection{The de Rham-type replacement}

If no spin or \(\mathrm{Spin}^c\)-structure is chosen, one can still use the de Rham operator
\[
D_{\mathrm{dR}}=d+d^*
\]
acting on differential forms.

The current algebra acts by multiplication through a representation of \(K\) on an auxiliary vector bundle. Then
\[
[D_{\mathrm{dR}},M_X]
\]
is again a first-order defect controlled by \(dX\).

\begin{proposition}
The de Rham operator produces a canonical defect construction on any compact oriented Riemannian manifold.
\end{proposition}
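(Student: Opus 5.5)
The plan is to treat the de Rham operator as a first-order elliptic operator whose principal symbol is Clifford multiplication. Then the argument already given for the spin and \(\mathrm{Spin}^c\) cases can be repeated verbatim. Nothing needs to be chosen beyond the orientation and metric.

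Concretely, set
\[
\mathcal H_{\mathrm{dR}}=L^2(M,\Lambda^\bullet T^*M\otimes V),
\]
where \(V\) carries the unitary representation \(K\to U(V)\) and is trivialized as \(M\times V\). Let \(D_{\mathrm{dR}}=d+d^*\) be coupled to the trivial connection on \(V\). This is defined on every compact oriented Riemannian manifold, with no spin hypothesis: \(\Lambda^\bullet T^*M\) is always a Clifford module under \(c(\xi)=\xi\wedge\cdot-\iota_{\xi^\sharp}\). The first step is the symbol computation
\[
[D_{\mathrm{dR}},M_X]=c(dX),\qquad X\in C^\infty(M,\mathfrak k).
\]
It follows from the Leibniz rule \(d(X\psi)=dX\wedge\psi+X\,d\psi\) and from the adjoint identity for \(d^*\). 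So the defect \(\delta_{D_{\mathrm{dR}}}(X)\) is a zeroth-order operator controlled by \(dX\), as in the spin case.

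The second step builds the polarization and the restricted action. Since \(D_{\mathrm{dR}}\) is self-adjoint and elliptic of order one, its spectral decomposition gives \(\mathcal H_{\mathrm{dR}}=\mathcal H_+\oplus\mathcal H_-\), with a finite-dimensional convention on the harmonic forms. With \(F=D_{\mathrm{dR}}|D_{\mathrm{dR}}|^{-1}\), the commutator \([F,M_X]\) is a pseudodifferential operator of order \(-1\). Hence it lies in the Schatten classes \(L^p\) for \(p>m\), exactly as in the proposition on restricted operators. The current algebra therefore acts through the appropriate Schatten-restricted Lie algebra. The Schwinger-type cocycle \(\operatorname{Tr}_{\mathrm{reg}}(M_X[F,M_Y])\) is then defined by the same regularized trace as before, or by its higher version in odd dimensions.

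The last step addresses the word \emph{canonical}. Every ingredient depends only on the Riemannian metric and the orientation, namely \(d^*\) and the Hodge grading, together with the fixed representation \(V\). Two metrics are joined by a linear path, and along this path the polarizations vary continuously in the restricted Grassmannian. The resulting cocycles then differ by a coboundary, so the class is independent of the metric.

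I expect the main obstacle to be that this cocycle is not an honest replacement for the spinor one. Since \(\Lambda^\bullet T^*M\cong S\otimes S^*\) locally, the local index density of \(D_{\mathrm{dR}}\) is the Euler/\(L\)-type density rather than the \(\hat A\) density, and the top-degree Schwinger term may degenerate or vanish in some dimensions. I would handle this by claiming only that a well-defined defect construction exists, following the statement as written. I would not claim nontriviality of the class, and would leave any comparison with the spin case to a remark.
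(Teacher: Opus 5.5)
Your first two steps are the paper's argument, written out in more detail. The paper only observes that $D_{\mathrm{dR}}=d+d^*$ exists without a spin structure, is first-order elliptic, and has commutator with $M_X$ given by exterior multiplication and contraction with $dX$. From this it obtains a defect and a regularized trace cocycle \emph{whenever the Schatten conditions hold}. Your explicit Hilbert space, the identity $[D_{\mathrm{dR}},M_X]=c(dX)$, and the order $-1$ argument for $[F,M_X]$ fill in what the paper leaves implicit. The last of these works because $X$ acts on the $V$ factor and so commutes with the symbol of $F$. Like the paper, you should keep the trace cocycle conditional: for $m\ge 2$ the commutators lie only in $L^p$ with $p>m$, and the regularization is not specified. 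With that caveat, this already proves the statement. In the paper, ``canonical'' only means that nothing beyond the oriented Riemannian structure has to be chosen (no spin or $\mathrm{Spin}^c$ structure), which you already say in your first paragraph.

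Your metric-independence step is not needed, and for $m\ge 2$ it is wrong as argued.
\begin{itemize}
\item The principal symbol of $F_g$ is $c_g(\xi)/|\xi|_g$ with $c_g(\xi)=\xi\wedge\cdot-\iota_{\xi^{\sharp_g}}$, and it depends on $g$.
\item Identify the spaces of forms in the obvious way and take $g_1\neq g_0$. The $\xi\wedge\cdot$ component of the symbol of $F_{g_1}-F_{g_0}$ is then nonzero wherever $g_1\neq g_0$.
\item So this order-zero operator is not compact. The two polarizations therefore do not lie in a common (Schatten-)restricted Grassmannian, and a fortiori the linear path of metrics does not move inside one.
\item Since $L^2(M,\Lambda^\bullet T^*M\otimes V)$ itself depends on $g$, you might try a pointwise unitary identification $A$ instead. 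But $A$ would have to satisfy $A\,c_{g_1}(\xi)A^{-1}=\bigl(|\xi|_{g_1}/|\xi|_{g_0}\bigr)c_{g_0}(\xi)$ for all $\xi$. Linearity in $\xi$ then forces $g_1$ to be pointwise conformal to $g_0$.
\end{itemize}
Hence the fact that cocycles of polarizations in one restricted class differ by a coboundary cannot be invoked here. If you want metric independence, you need a different argument, for example an explicit local formula for the cocycle. Otherwise, simply drop that paragraph.
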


\begin{proof}
The de Rham operator exists without a spin structure. It is elliptic and first order. Multiplication by a smooth current has commutator
\[
[D_{\mathrm{dR}},M_X]
\]
given by exterior multiplication and contraction with \(dX\), depending on the representation. Hence it produces a well-defined operator defect and an associated regularized trace cocycle, whenever the corresponding Schatten conditions are satisfied.
\end{proof}

\begin{remark}
This replacement is less spin-geometric, but it is canonical on any oriented Riemannian manifold.
\end{remark}

\subsection{Comparison of the three cases}

The situation can be summarized as follows.

\[
\begin{array}{c|c|c|c}
\text{Structure on }M & \text{Operator} & \text{Defect class} & \text{Nature} \\
\hline
\text{spin} & D_M & \beta_D & \text{ordinary Schwinger/gerbe class} \\
\mathrm{Spin}^c & D_M^c & \beta_{D^c} & \text{depends on determinant line} \\
\text{twisted spin} & D_{\mathrm{tw}} & \beta_{\mathrm{tw}} & \text{gerbe-twisted class} \\
\text{oriented Riemannian} & d+d^* & \beta_{\mathrm{dR}} & \text{de Rham defect class}
\end{array}
\]

\begin{remark}
Thus the spin case gives the cleanest analogue of the loop group Schwinger cocycle. The non-spin case does not destroy the construction, but shifts it from ordinary spinorial geometry to either \(\mathrm{Spin}^c\)-geometry, gerbe-twisted spin geometry, or de Rham-type geometry.
\end{remark}

\subsection{Conclusion}

Current groups over spin manifolds provide the natural higher-dimensional generalization of the loop group example. The defect of equivariance of the Dirac operator produces Schwinger-type cocycles and higher gerbe classes.

For non-spin manifolds, the construction persists only after replacing the spin Dirac operator by an appropriate substitute: a \(\mathrm{Spin}^c\)-Dirac operator, a gerbe-twisted Dirac operator, or the de Rham operator.

\vskip 12pt

\paragraph{Conflict of interest statement:} The author declares no conflict of interest.

\vskip 12pt

\paragraph{\bf Acknowledgements:} J.-P.M  thanks the France 2030 framework programme Centre Henri Lebesgue ANR-11-LABX-0020-01 
for creating an attractive mathematical environment.

\vskip 12pt

\paragraph{\bf Declaration of generative AI and AI-assisted technologies in the writing process}

During the preparation of this work the author used ChatGPT and Mistral AI in order to smoothen the expression in English. After using this tool/service, the author reviewed and edited the content as needed and takes full responsibility for the content of the publication.

\end{document}